\let\ams@starttoc\@starttoc
\let\@starttoc\ams@starttoc
\patchcmd{\@starttoc}{\makeatletter}{\makeatletter\parskip\z@}{}{}
\numberwithin{equation}{section}
  \newtheorem{theorem}{Theorem}[section]
  \newtheorem{proposition}[theorem]{Proposition}
   \newtheorem{corollary}[theorem]{Corollary}
\theoremstyle{definition}
\theoremstyle{remark}
\newtheorem{remark}[theorem]{Remark}
\newcommand{\lb}{\mleft(}
\newcommand{\rb}{\mright)}
\newcommand{\lbb}{\mleft [}
\newcommand{\rbb}{\mright ]}
\newcommand{\labs}{\mleft |}
\newcommand{\rabs}{\mright |}
\DeclareRobustCommand{\lbrb}[1]{%
  \ifmmode
    \ifinner
      (#1)%
    \else
      \mleft( #1 \mright)%
    \fi
  \else
    (#1)%
  \fi
}
\newcommand{\lbbrbb}[1]{\lbb#1\rbb}
\newcommand{\lbbrb}[1]{\lbb#1\rb}
\newcommand{\lbrbb}[1]{\lb#1\rbb}
\newcommand{\lbcurly}{\mleft\{}
\newcommand{\rbcurly}{\mright\}}
\newcommand{\lbcurlyrbcurly}[1]{\lbcurly#1\rbcurly}
\newcommand{\simi}{\stackrel{\infty}{\sim}}
\newcommand{\simo}{\stackrel{0}{\sim}}
\newcommand{\intervalOI}{\lbrb{0,\infty}}
\newcommand{\intervalOOI}{\lbbrb{0,\infty}}
\newcommand{\abs}[1]{\labs#1\rabs}
\newcommand{\curly}[1]{\lbcurly#1\rbcurly}
\newcommand{\bo}[1]{\mathrm{O}\lbrb{#1}}
\newcommand{\so}[1]{\mathrm{o}\lbrb{#1}}
\newcommand{\Pbb}[1]{\Pb\lb #1\rb}
\newcommand{\Ebb}[1]{\Eb\lbb #1\rbb}
\newcommand{\LL}{L\'{e}vy }
\newcommand{\BG}{Bernstein\allowbreak-gamma }
\newcommand{\LLP}{L\'{e}vy process }
\newcommand{\LLPs}{L\'{e}vy processes }
\newcommand{\LLK}{\LL\!\!-Khintchine }
\newcommand{\limi}[1]{\lim_{#1\to \infty}}
\newcommand{\limsupi}[1]{\limsup_{#1\to \infty}}
\newcommand{\liminfi}[1]{\liminf_{#1\to \infty}}
\newcommand{\limo}[1]{\lim_{#1\to 0}}
\newcommand{\limsupo}[1]{\limsup_{#1\to 0}}
\newcommand{\Cb}{\mathbb{C}}
\newcommand{\Eb}{\mathbb{E}}
\newcommand{\Nb}{\mathbb{N}}
\newcommand{\Rb}{\mathbb{R}}
\newcommand{\Rp}{\mathbb{R}^+}
\newcommand{\Pb}{\mathbb{P}}
\newcommand{\Zb}{\mathbb{Z}}
\newcommand{\dbf}{\mathbf{d}}
\newcommand{\Ac}{\mathcal{A}}
\newcommand{\Bc}{\mathcal{B}}
\newcommand{\Gc}{\mathcal{G}}
\newcommand{\Mcc}{\mathcal{M}}
\newcommand{\Zc}{\mathcal{Z}}
\newcommand{\supp}{{\rm{Supp}}}
\newcommand{\Att}{\mathtt{A}}
\newcommand{\Mtt}{\mathtt{M}}
\newcommand{\ak}{\mathfrak{a}}
\newcommand{\uk}{\mathfrak{u}}
\newcommand{\ind}[1]{\mathbbm{1}_{\lbcurlyrbcurly{#1}}}
\newcommand{\IntOI}{\int_{0}^{\infty}}
\newcommand{\IntII}{\int_{-\infty}^{\infty}}
\newcommand{\ccoRp}{\mathtt{C}_0\lbrb{\Rb^+}}
\newcommand{\Aph}{A_\phi}
\newcommand{\aph}{\mathfrak{a}_\phi}
\newcommand{\baph}{\bar{\mathfrak{a}}_\phi}
\newcommand{\uph}{\mathfrak{u}_{\phi}}
\newcommand{\Wp}{W_\phi}
\newcommand{\Wpp}{W_{\phi_+}}
\newcommand{\Wpn}{W_{\phi_-}}
\newcommand{\Wphi}{W_\phi}
\newcommand{\Yp}{Y_\phi}
\newcommand{\pPs}{f_\Psi}
\newcommand{\PIp}{F_\Psi}
\newcommand{\PIPs}[1]{\overline{F}_{\Psi}(#1)}
\newcommand{\CbOI}{\Cb_{\intervalOI}}
\newcommand{\CbOOI}{\Cb_{\intervalOOI}}
\newcommand{\IPsi}{I_\Psi}
\newcommand{\MPsi}{\Mcc_{I_\Psi}}
\newcommand{\NPs}{\mathtt{N}_{\Psi}}
\newcommand{\D}{\mathrm{d}}
\newcommand{\myitem}[1]{%
\item[#1]\protected@edef\@currentlabel{#1}%
}
\DeclareMathSymbol{\mrq}{\mathord}{operators}{`'}
\definecolor{magenta}{rgb}{1,0,1} 
\newcommand{\softline}{%
  \arrayrulecolor{black!18}\hline\arrayrulecolor{black}%
}
\newcommand{\softcline}{%
  \arrayrulecolor{black!18}\cline{2-3}\arrayrulecolor{black}%
}
\newcommand{\blockline}{%
  \arrayrulecolor{black!40}\hline\arrayrulecolor{black}%
}
\newcommand{\Ebbt}[1]{\mathbb{E}[#1]}
\newcolumntype{P}[1]{>{\centering$\displaystyle}p{#1}<{$}}
\renewcommand{\nomgroup}[1]{%
  \item[\bfseries
    \ifstrequal{#1}{A}{General}%
      {%
        \ifstrequal{#1}{B}{Lévy processes and exponential functionals}%
          {%
            \ifstrequal{#1}{C}{Bernstein-gamma functions}%
              {}
          }%
      }%
  ]%
}
\newcommand{\ab}{a+ib}
\renewcommand{\Ac}{\mathtt{A}}
\renewcommand{\Im}{\mathtt{Im}}
\renewcommand{\Re}{\mathtt{Re}}
\title{Recent developments in exponential functionals of Lévy processes}
\date{}  
\author[1,2]{Martin Minchev\thanks{Email: \texttt{martin.minchev@math.uzh.ch}}}
\author[1,3]{Mladen Savov\thanks{Email: \texttt{msavov@fmi.uni-sofia.bg}}}
\affil[1]{Faculty of Mathematics and Informatics, Sofia University “St.\ Kliment Ohridski”, 
   5 James Bourchier Blvd., 1164 Sofia, Bulgaria}
\affil[2]{Institute of Mathematics, University of Zurich, Winterthurerstrasse 190, 
8057 Zürich, Switzerland}
\affil[3]{Institute of Mathematics and Informatics, Bulgarian Academy of Sciences, 8 Akad. Georgi Bonchev Str., 1113, Sofia, Bulgaria}
\begin{document}


\bigskip

\maketitle

\begin{abstract}
    This survey aims to review two decades of progress on exponential functionals of (possibly killed) real-valued Lévy processes. Since the publication of the seminal survey by Bertoin and Yor~\cite{BerYor05}, substantial advances have been made in understanding the structure and properties of these random variables. At the same time, numerous applications of these quantities have emerged across various different contexts of modern applied probability. Motivated by all this, in this manuscript, we provide a detailed overview of these developments, beginning with a discussion of the class of special functions that have played a central role in recent progress, and then organising the main results on exponential functionals into thematic groups. Moreover, we complement several of these results and set them within a unified framework. Throughout, we strive to offer a coherent historical account of each contribution, highlighting both the probabilistic and analytical techniques that have driven the advances in the field.
\end{abstract}

\tableofcontents
\section{Introduction}
In this survey, we review two decades of progress in the study of exponential functionals of (possibly killed) real-valued Lévy processes, following the classical treatment by Bertoin and Yor \cite{BerYor05}.

We recall that an unkilled \LL process $\xi=(\xi_s)_{s\geq 0}$ possesses almost surely (a.s.) right-continuous paths, starts from zero (i.e., $\xi_0=0$), and satisfies that, for any $t>0$, in law
\[
   (\xi_{t+s}-\xi_t)_{s\geq 0}\;\stackrel{w}{=}\;(\xi_s)_{s\geq 0},
\]
with $(\xi_{t+s}-\xi_t)_{s\geq 0}$ independent of $(\xi_s)_{s\leq t}$. The latter two properties are structural and are known as stationary and independent increments. Potential killing is introduced via an exponential time $\mathbf{e}_q \sim \mathrm{Exp}(q)$, $q \geq 0$, 
which is independent of $\xi$, and such that $\xi_t = \infty$ for $t \geq \mathbf{e}_q$, 
with $\mathbf{e}_0 \equiv \infty$. Potentially killed \LLPs are in bijection with negative definite functions via the relation
\begin{equation}\label{LLK}
    \Psi(z) \;=\; -q + \gamma z + \tfrac{\sigma^2}{2} z^2 
    + \int_{\mathbb{R}} \!\big(e^{zy} - 1 - \ind{|y|\leq 1}\,zy\big)\,\Pi(\D y),
    \qquad z \in i\Rb,
\end{equation}
where $\gamma \in \Rb$, $\sigma^2 \geq 0$, $q \geq 0$, and $\Pi$ is a $\sigma$-finite measure satisfying the integrability condition 
\[
   \int_{\mathbb{R}} \min\{y^2,1\}\,\Pi(\D y) < \infty.
\] The quantities are known respectively as: the linear term (\(\gamma\)), the diffusion term (\(\sigma^2\)), the killing term (\(q\)), and the \LL measure (\(\Pi\)). The measure \(\Pi\) determines the intensity and the spatial structure of the jumps of \(\xi\). The trivial case of unkilled \LLP satisfying \(\xi\equiv C\) is excluded from our considerations. \textbf{In this survey by \LLPs we understand potentially killed \LL processes.}

This paper surveys and occasionally complements the results on the  quantity
\nomenclature[B]{$\Psi$}{\LLK exponent of a Lévy process $\xi$, \eqref{Npsi}}
\nomenclature[B]{$\MPsi(z)$}{Mellin transform of $I_\Psi$, \eqref{MT}}
\nomenclature[B]{$\NPs$}{decay of $\MPsi$ along complex lines, Theorem \ref{MDecay}, \eqref{Npsi}}
\nomenclature[B]{\(\dbf_\pm\) }{drift terms of $\phi_\pm$}
\nomenclature[B]{\(F_\Psi, \, f_\psi\) }{c.d.f. and density of $I_\Psi$, Section \ref{SmoothSec}}
\nomenclature[B]{$I_\Psi$}{exponential functional of $\xi$, \eqref{ExpFunc}}
\nomenclature[B]{$\phi_\pm$}{Wiener-Hopf factors from $\Psi(z)=-\phi_+(-z)\phi_-(z)$, \eqref{WHf}}
\begin{equation}\label{ExpFunc}
    \begin{split}
        & \IPsi := \IntOI e^{-\xi_s}\D s = \int_0^{\mathbf{e}_q} e^{-\xi_s}\D s 
        .
    \end{split}
\end{equation}
Our approach is to present various groups of results related to exponential functionals, such as the finiteness and computation of moments, smoothness of densities, and more, whilst also outlining their historical development and where suitable discuss the main tools used in their derivation. We also dedicate a section to the properties of the so-called Bernstein-gamma functions, which have proven to be central in the most recent developments in the area.
\subsection{Literature review}
Here we provide a brief literature review on the development of the theory of exponential functionals of \LL processes. In our view, the recurrence equation~\eqref{eq:recurrence} has been the most instrumental tool for deriving a few of the sharpest results in this area. Nevertheless, many important contributions have also been obtained through alternative approaches and methodologies.

The study of exponential functionals was initiated by Urbanik in the broader context of infinite divisibility and multiplicative infinite divisibility of functionals of stochastic processes; see~\cite{Urban95}. Subsequent work on general properties of exponential functionals of \LL processes, including the existence of moments, integral equations for the law, and special cases of the recurrence equation~\eqref{eq:recurrence}, crucial in their study, was carried out mainly by M.~Yor and co-authors; see \cite{BerYor05,CarPetYor94,CarPetYor01,Yor01}. Of particular note is the work of Zwart and co-authors, who established a relatively general version of the recurrence equation for the moments and applied it in special cases; see \cite{GuRoZw04,Maulik-Zwart-06}, where, for instance, they also derived the density of the exponential functional for non-decreasing compound Poisson processes. Following \cite{Maulik-Zwart-06}, it became increasingly clear that solving the recurrence equation for the moments can identify not only the moments but also the law of the exponential functional of \LL processes. This insight led to the derivation of the Mellin transform of the exponential functional for various classes of \LLPs and, in certain cases, to series representations of their law; see \cite{Kuz2012,Kuznetsov13,KuKyPa12,KuzPar13}. Finally, the recurrence equation was solved in full generality in terms of the so-called Bernstein-gamma functions in \cite{PatieSavov2018}. This work culminated earlier studies on factorisations of the law of the exponential functional; see \cite{PardoPatieSavov2012,PatieSavov2012,PatieSavov2013}. An equivalent form of~\eqref{eq:recurrence}, involving the density of the exponential functional and the potential measure of the underlying \LL process, was presented and applied in \cite{ArRiv23}.

 Some results concerning the law of the exponential functional have been obtained by exploiting the fact that this random variable satisfies a random affine equation --- a relation extensively studied in the literature; see, e.g., \cite{ArRiv23,Maulik-Zwart-06,Riv05,Rivero12}. Through its connection with positive self-similar Markov processes, the law of the exponential functional of spectrally positive \LLPs has been expressed in series form; see \cite{Patie2012}. Moreover, the fact that the exponential functional arises as the stationary law of Ornstein--Uhlenbeck processes has been used to derive factorisations of its law; see \cite{PardoPatieSavov2012}.

The
 main reason for the interest in these random variables is due to the fact that exponential
 functionals have played a role in various domains of theoretical and
 applied probability such as the spectral theory of some non-reversible
 Markov semigroups (\cite{PatSav17,PatSav21,PaSaZh19,Savov17}),
 the study of random planar maps (\cite{BerCurKor-18,Budd18}),
 limiting theorems for Markov chains (\cite{BerKor16}), positive
 self-similar Markov processes
 (\cite{Bertoin-Caballero-02,Bertoin2002,CabCha06,Caballero2006,Patie-2009,Patie-06c}), financial and insurance mathematics
 (\cite{HackKuz14,KluLinMal06,Patie-As}), branching processes with
 immigration (\cite{Patie-2009}), random processes in random environment
 (\cite{BoiHut12,LiXu18,PalParSma16, Xu_2021}), random fragmentations (\cite{Ber2002,Bertoin2006,Dadoun2017,Haas23,Stephenson2018}) with \cite{Stephenson2018} dealing with multitype fragmentation, extinction times of
 self-similar processes (\cite{LoePatSav19}), stationary distributions of Lévy-driven Ornstein--Uhlenbeck processes (\cite{Behme2015,BehLin15,BehLinMal_11,BehLinRek21,BeLiReRi21,Kevei2018}) and others; see \cite{GouLu25}.  Recently exponential functionals appeared in the study of general self-similar Markov trees; see \cite{BerCurRie24}.

Let \(\IPsi(t)=\int_{0}^t e^{-\xi_s}\,\D s,\; t\geq 0,\) be the exponential functional on deterministic horizon. The law of \(\IPsi(t)\) is considerably harder to study. Nevertheless, some recent progress relies on the observation that the classical exponential functional of a killed \LL process \(I_{\Psi_q}\), with \(\Psi_q(z)=\Psi(z)-q\), is related to the Laplace transform of \(\IPsi(t)\). Consequently, the representation of the moments of \(I_{\Psi_q}\) in \eqref{MPsiAn}, combined with Laplace and Mellin inversion, has yielded useful results on the moments of \(\IPsi(t)\); see \cite{Barker19,BarkerSavov2021,PalSarSav24} for series expansions of the moments when the \LLP is a subordinator, and \cite{PalSarSav24} for explicit evaluations of some moments when \(\xi\) is symmetric as well as for general convolutional equations. Moreover, using the same link, the finiteness of integrals of the type \(\int_0^\infty t^n F(\IPsi(t))\,\D t\), for suitable functions \(F\) and integer \(n\), and scaling limits for \(\Pbb{\IPsi(t)\in \D x}\), have been studied in \cite{Minchev17,Minchev-Savov-2025}.  We highlight that some papers in the short literature review on exponential functionals above, in fact, use exponential functionals on a deterministic horizon, but the properties of the latter are derived with the help of the former.

\subsection{Common notation}


We have included an index of notation at the end of the survey. However, we spell out some of it now to enable smooth reading. We denote by \(\Nb\) and \(\Zb\) the sets of non-negative integers and all integers, respectively. We set \(\Rb := (-\infty, \infty)\) for the real line, \(\Rp := ({0, \infty})\) for the positive real half-line, and \(\Cb\) for the complex plane. For \(z\in\Cb\) we use \(\Re(z)\) for its real part and \(\Im(z)\) for its imaginary part.
Next, for \(-\infty \leq a < b \leq \infty\), we set \(\Cb_{(a, b)} := \curly{z \in \Cb : \Re(z) \in \lbrb{a, b}}\) for a complex strip, and use \(\Cb_a := \curly{z \in \Cb : \Re(z) = a}\), \(a \in \Rb\), for a complex line. For the same range of \(a\) and \(b\), we denote by \(\Att_{(a, b)}\) (respectively, \(\Mtt_{(a, b)}\)) the space of functions holomorphic (respectively, meromorphic) on \(\Cb_{(a, b)}\).
The space \(\Att_{[a, b)}\), with \(-\infty < a < b \leq \infty\), consists of those functions in \(\Att_{(a, b)}\) that can be continuously extended to \(\Cb_a\); other variations such as \(\Att_{[a, b]}, \Att_{(a, b]}\) are defined analogously. Throughout we use \(\log_0:\Cb\setminus\lbrbb{-\infty,0}\to \Cb\) for the main branch of the complex logarithm defined via the main branch of the argument \(\arg_0: \Cb\to \lbrbb{-\pi,\pi}\). When we do not specify the branch we use \(\log\).

\nomenclature[A]{\(\Cb_{a}\)}{
  $\curly{z \in \Cb : \Re(z)=a}$}
  \nomenclature[A]{\(\Cb_{(a, b)}\)}{
  $\curly{z \in \Cb : \Re(z) \in \lbrb{a, b}}$}
    \nomenclature[A]{\(\Att_{(a, b)},\, \Mtt_{(a, b)}\)}{
  analytic and, respectively, meromorphic functions on
   \(\Cb_{(a, b)}\)
  }
      \nomenclature[A]{\(\log_0,\, \log\)}{
 principal and, respectively, some branch of the complex logarithm
  }

\section{Bernstein-gamma functions}\label{sec:BG}

A natural object in the description of exponential functionals are \textit{Bernstein-gamma functions}, which are defined as the unique solution to
\begin{equation}\label{receq}
    \Wphi(z+1) = \phi(z)\Wphi(z),\quad 
    \text{on}
    \quad \Re(z) > 0;\quad 
    \text{with}
    \quad\Wphi(1) = 1,
\end{equation}
where $\phi$ is a Bernstein function, the class of which we recall below.
It is further known that there exists a random variable \(Y_\phi\) such that
 \(\Pbb{\Yp > 0} = 1\) and \(\Wphi(z) = \Ebb{\Yp^{z-1}} 
\) for  \(\Re(z) > 0\), see \cite[Thm.~6.1]{PatSav21} and \cite[Sec.~4]{PatieSavov2018}. The relevance of these special functions in the study of \(I_\Psi\) stems from the possibility of extending the following recurrence equation \begin{equation}\label{eq:recurrence}
    \Ebb{I_\Psi^{z}} \;=\; \frac{-z}{\Psi(-z)}\,\Ebb{I_\Psi^{\,z-1}},
\end{equation} to the complex plane, or to suitable strips therein.
This equation has already appeared in certain special cases in~\cite{CarPetYor97}, and its subsequent development is discussed in Remark~\ref{MpsiH}. Then, using the Wiener-Hopf factorisation of \(\Psi(z)\) as \(-\phi_+(-z)\phi_-(z)\), described in Section~\ref{sec: Mellin transform and moments}, one may formally express the solution to \eqref{eq:recurrence}, that is, the complex moments of \(I_\Psi\), as
\[
   \Ebb{I_\Psi^{\,z-1}} \;=\; \phi_-(0)\,\frac{\Gamma(z)}{W_{\phi_+}(z)}\,W_{\phi_-}(1-z).
\]
The precise statement, due to~\cite{PatieSavov2018}, is presented in Theorem~\ref{thm: MPsi}.

Having outlined the importance of Bernstein-gamma functions in the context of this survey, we now present some of their basic properties and characteristics, while more intricate complex-analytical aspects are deferred to Section~\ref{sec: BG further}. We also emphasise that Bernstein-gamma functions are of independent interest, as they subsume several well-known special functions, starting with Euler’s Gamma function~\cite{Euler1738}, and later developments such as the Barnes gamma functions and the $q$-gamma function from $q$-calculus; see~\cite{BeBiYo04,Chhaibi16}. Moreover, \cite{PatSav21} demonstrates that Bernstein-gamma functions play a fundamental role in the study of Markovian self-similarity.

We recall that a function \(\phi:\Cb_{\lbbrb{0,\infty}}\to \Cb\) is called a \textit{Bernstein function} if and only if
\begin{equation}\label{BG}
  \phi(z) \;=\; \phi(0)+\dbf z+\IntOI \lbrb{1-e^{-zy}}\,\mu(\D y) 
  \;=\; \phi(0)+\dbf z+z\IntOI e^{-zy}\,\bar{\mu}(y)\,\D y,
\end{equation}
where \(\phi(0)\) and \(\dbf\) are non-negative constants, $\mu$ is a \(\sigma\)-finite measure with 
\(\IntOI\min\curly{y,1}\,\mu(\D y)<\infty\), and 
\[
   \bar\mu(y):=\mu\lbrb{\lbrb{y,\infty}}, \qquad y\geq 0.
\]
Bernstein functions are in bijection with subordinators (i.e., non-decreasing \LL processes) via
\begin{equation}\label{B-Sub}
    \phi(z):=-\log_0 \Ebb{e^{-z\xi_1}}, \qquad z\in\Cb_{\lbbrb{0,\infty}},
\end{equation}
where $\xi$ is a subordinator. Up to an additive constant, they are related to completely monotone functions through their derivatives, namely
\begin{equation}\label{B-CM}
    \phi'(z) \;=\; \dbf + \IntOI  e^{-zy}\,y\,\mu(\D y)
    \;=\; \IntOI e^{-zy}\,\lbrb{y\mu(\D y)+\dbf\delta_0(\D y)}, 
    \qquad z\in\Cb_{\lbbrb{0,\infty}}.
\end{equation}

In this survey, we introduce only those quantities and properties pertaining to Bernstein functions that are needed here, and we refer to~\cite{BarkerSavov2021,MinSav_2023,SchVon2020} for extensive results concerning them. 
Note that always $\phi \in \Att_{\intervalOOI}$, and set
\begin{align}\label{aph}
    \aph &:= \inf\curly{u<0:\, \phi\in \Att_{(u,\infty)}} 
        = -\sup\curly{\lambda\geq0:\, \Ebb{e^{\lambda \xi_1}} \text{ is finite}} 
        \in \lbbrbb{-\infty,0}, \\
    \uph &:= \sup\curly{u\in\lbbrbb{\aph,0}:\, \phi(u)=0} 
        \in \lbbrbb{-\infty,0}, \nonumber \\
    \baph &:= \max\curly{\aph,\uph} 
        \in \lbbrbb{-\infty,0}, \nonumber
\end{align}
with \(\sup \emptyset = -\infty\) and \(\inf \emptyset = 0\). By definition, if \(u_\phi\) is finite, it is not smaller than \(\aph\), so
\begin{equation}\label{bara=a}
    \text{if } \uph > -\infty, \quad \text{then } \baph = \uph,
\end{equation}
whereas it may happen that \(\uph = -\infty < \aph\). Take, for example, \(\phi(z) = (z+1)^\alpha + 1\) with \(\alpha \in \lbrb{0,1}\), in which case \(\phi\) is a Bernstein function with \(\aph = -1\) and \(\uph = -\infty\). Further examples of Bernstein functions, together with the corresponding quantities from~\eqref{aph}, are listed in Appendix~\ref{sec: BG examples}.
\nomenclature[C]{$\aph$}{%
  $\inf\curly{u<0: \phi\in \Att_{(u,\infty)}}\in[-\infty,0]$,    
  ~\eqref{aph},
}
\nomenclature[C]{$\uph$}{%
  $\sup\curly{u\in\lbbrbb{\aph,0}:\phi(u)=0} \in \lbbrbb{-\infty,0} $,    
  ~\eqref{aph},
}
\nomenclature[C]{$\baph$}{%
  $\max\curly{\aph,\uph} \in \lbbrbb{-\infty,0}$,    
  ~\eqref{aph},
}

Each Bernstein function is also related to the potential measure $U(\D x)$ of the associated subordinator via the classical Laplace transform identity
\begin{equation}\label{poten}
    \frac{1}{\phi(z)} \;=\; \IntOI e^{-zy}\,U(\D y) 
    \;=\; \IntOI e^{-zy}\,\IntOI \Pbb{\xi_t \in \D y}\,\D t,
    \qquad z \in \CbOI.
\end{equation}
Note that \(\phi(0)\) may be positive, in which case the potential measure coincides with the \(\phi(0)\)-potential measure of the unkilled subordinator pertaining to \(\phi(z)-\phi(0)\).

Bernstein-gamma functions, defined via~\eqref{receq}, inherit key features of the celebrated gamma function, which solves~\eqref{receq} with \(\phi(z)=z\). Below, we collect their most important properties, present illustrative examples, and provide a historical overview in the corresponding remarks.

We start by presenting the product and integral representations for \(\Wphi\), which extend well-known representations of the  gamma function itself.
\begin{theorem}[Thm.~6.0.1 in~\cite{PatSav21} and Thm.~4.7 in~\cite{PatieSavov2018}]\label{Wrep}
    Let \(\phi\) be a Bernstein function. Then we have the Weierstrass product representation
    \begin{equation}\label{WWrep}
        \Wphi(z) \;=\; \frac{e^{-\gamma_\phi z}}{\phi(z)}
        \prod_{k=1}^\infty \frac{\phi(k)}{\phi(k+z)} \,
        e^{\tfrac{\phi'(k)}{\phi(k)}z}
        \;\in\; \Att_{\lbrb{\baph,\infty}} \cap \Mtt_{\lbrb{\aph,\infty}},
    \end{equation}
    where 
    \begin{equation}\label{gmph}
        \gamma_\phi \;:=\; \limi{n}\left(\sum_{k=1}^n \frac{\phi'(k)}{\phi(k)} - \ln\phi(n)\right) 
        \;\in\; \lbbrbb{-\ln\phi(1), \, \tfrac{\phi'(1)}{\phi(1)} - \ln\phi(1)}.
    \end{equation}
    Moreover, we have the integral representation
    \begin{equation}\label{WWrep1}
        \log \Wphi(z+1)
        \;=\; z\ln\phi(1) + \IntOI \lbrb{e^{-zy}-1-z(e^{-y}-1)} 
        \frac{k(\D y)}{y(e^y-1)}, 
        \qquad z \in \Cb_{\lbrb{-1,\infty}},
    \end{equation}
    where \(k(\D y):=\int_0^y U(\D y-r)\lbrb{r\mu(\D r)+\dbf\delta_0(\D r)}\), 
    with \(U\) being the potential measure of the subordinator pertaining to \(\phi\).  
    Furthermore, the Laplace transform of \(k\) is
    \[
        \int_0^\infty e^{-\lambda y} k(\D y) 
        \;=\; \frac{\phi'(\lambda)}{\phi(\lambda)}, 
        \qquad \lambda \in \Rp.
    \]
\end{theorem}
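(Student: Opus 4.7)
The argument splits naturally into two stages.

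\textit{Stage 1 (Weierstrass product).} Write $P(z)$ for the infinite product on the right-hand side of \eqref{WWrep}. A second-order Taylor expansion of $\log\phi(k+\cdot)$ around $0$ yields the integral remainder
\[
  \log\frac{\phi(k)}{\phi(k+z)} + z\,\frac{\phi'(k)}{\phi(k)} \;=\; -\int_0^z (z-s)\,\frac{\D^2}{\D s^2}\log\phi(k+s)\,\D s,
\]
and on compacts of $\Cb_{(\baph,\infty)}$ the modulus of the integrand is summable in $k$ thanks to the telescoping majorant $\sum_{k\ge 1}\lb \phi'(k)/\phi(k)-\phi'(k+1)/\phi(k+1)\rb$, which is finite because $\phi'/\phi$ is positive, decreasing, and vanishes at infinity. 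This places $P$ in $\Att_{(\baph,\infty)}$, while the factors $\phi(k+z)$ in the denominator for $k\geq 0$ supply the meromorphic extension to $\Cb_{(\aph,\infty)}$. A direct telescoping computation then shows $P(z+1)/P(z)=\phi(z)$ and $P(1)=1$ as soon as $\gamma_\phi$ is taken to be the limit in \eqref{gmph}, whose convergence and announced two-sided bound follow by comparing the partial sum $\sum_{k=1}^n \phi'(k)/\phi(k)$ to the monotone integral $\int_1^n \phi'(t)/\phi(t)\,\D t = \log\phi(n)-\log\phi(1)$.

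\textit{Stage 2 (Integral representation).} Combining \eqref{poten} and \eqref{B-CM} with the convolution rule for Laplace transforms identifies $k$ as the stated convolution of $U$ with $y\mu(\D y)+\dbf\delta_0(\D y)$ and yields $\int_0^\infty e^{-\lambda y}\,k(\D y) = \phi'(\lambda)/\phi(\lambda)$ on $\Rp$. Integrating this identity in $\lambda$ from $1$ to $z+1$ and applying Fubini produces the Frullani-type identity
\[
  \log\phi(z+1) - \log\phi(1) \;=\; \int_0^\infty e^{-y}(1-e^{-zy})\,\frac{k(\D y)}{y}.
\]
Denoting by $g(z)$ the right-hand side of \eqref{WWrep1}, the elementary identity $(e^{-y}-1)/(e^y-1)=-e^{-y}$ reduces $g(z+1)-g(z)$ to $\log\phi(1)$ plus the integral just displayed, so $g$ satisfies the same functional equation as $\log W_\phi(z+1)$. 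The initial value $g(0)=0$ and the holomorphy of $g$ on $\Cb_{(-1,\infty)}$, obtained via dominated convergence, are immediate.

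\textit{Main obstacle.} Both identifications $P=W_\phi$ and $g=\log W_\phi(z+1)$ rely on a Bohr--Mollerup-type uniqueness principle for \eqref{receq}: any holomorphic solution on a right half-plane with at most Stirling-type growth on the positive real axis is determined by its value at $z=1$. The delicate technical point is controlling such growth in full Bernstein generality --- where no classical Stirling formula is available --- and one must fully exploit \eqref{BG} together with the complete monotonicity of $\phi'$ to produce sufficiently sharp asymptotics. This is precisely where the probabilistic identity $W_\phi(z)=\Ebb{Y_\phi^{z-1}}$ mentioned in the text becomes instrumental, since the required growth estimates can then be deduced from moment bounds on the positive random variable $Y_\phi$.
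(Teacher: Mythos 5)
Your overall architecture is sound — prove locally uniform convergence of the Weierstrass product via the second-order Taylor remainder and the telescoping bound on $-\bigl(\log\phi\bigr)''$, verify the functional equation by telescoping, obtain the Laplace transform of $k$ from \eqref{poten} and \eqref{B-CM}, and then check that both the product and the exponential of the integral formula satisfy \eqref{receq} with value $1$ at $z=1$. This is in the spirit of the sources the paper points to (Webster's framework for $f(x+1)=g(x)f(x)$ with log-concave $g$, and the Laplace-transform route of Berg and Hirsch--Yor).

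The genuine gap is in your ``main obstacle'' paragraph. The uniqueness criterion you invoke --- that a holomorphic solution of \eqref{receq} on a right half-plane with Stirling-type growth on the positive real axis is pinned down by its value at $z=1$ --- is false. If $f$ solves \eqref{receq} with $f(1)=1$, then so does $f(z)\bigl(1+\varepsilon\sin(2\pi z)\bigr)$ for any $\varepsilon$, and this multiplier is bounded on the real axis, so real-axis growth control cannot distinguish them. The same objection defeats your suggestion of deducing uniqueness from moment bounds on $Y_\phi$, since these only control $W_\phi$ on $\Rp$. You need one of two standard remedies: (i) a Bohr--Mollerup/Webster criterion, i.e.\ uniqueness among functions that are log-convex on $(0,\infty)$; or (ii) a Wielandt-type criterion, i.e.\ uniqueness among holomorphic functions bounded (or of suitably limited growth) on a vertical strip of width one. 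The fix via (i) is in fact already within reach of your computations: the second real derivative of $\log P$ on $(0,\infty)$ equals $-\bigl(\log\phi\bigr)''(x)-\sum_{k\ge1}\bigl(\log\phi\bigr)''(k+x)\ge 0$ because $\log\phi$ is concave, so $P$ is log-convex; and the second derivative of your $g$ on $(-1,\infty)$ is $\int_0^\infty y^2e^{-xy}\,\tfrac{k(\D y)}{y(e^y-1)}\ge0$, so $e^{g}$ is log-convex. Invoking Webster's theorem~\cite[Thm.~7.1]{Web97} with this log-convexity, rather than a real-axis growth condition, closes the argument. If instead you prefer the complex-analytic route (ii), you would need control along a vertical line (e.g.\ of the form \eqref{repW}), not just along $\Rp$.
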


\begin{remark}[\textit{\textbf{Historical remark}}]\label{WrepRH}
  The product representation~\eqref{WWrep} is classical for equations of the type 
  \(f(x+1)=g(x)f(x)\), \(x>0\), with \(g>0\) and \(\log\)-concave on \(\Rp\); 
  see~\cite[Thm.~7.1]{Web97}. When \(g=\phi\) for special classes of Bernstein functions \(\phi\), 
  relation~\eqref{WWrep} can be inferred from~\cite[Sec.~2]{Maulik-Zwart-06} 
  and~\cite[Prop.~7]{GuRoZw04}; in the general case,~\eqref{WWrep} is derived
  in~\cite[Thm. 2] {AliJedRiv14} and \cite[Thm. 2.3]{HirYor13}. The expression~\eqref{WWrep} was analytically 
  extended to \(z\in\Cb_{\lbrb{\baph,\infty}}\) in~\cite[Thm.~6.0.1]{PatSav21}.
  
  The integral representation~\eqref{WWrep1} is taken from~\cite[Thm.~2.2 and Thm.~3.1, respectively]{Berg07,HirYor13}. In~\cite[Thm.~4.7]{PatieSavov2018},  the term $\dbf\delta_0$ in~\eqref{WWrep1} is erroneously written as $\delta_\dbf$. In the case \(\phi(z)=z\), we have \(k(\D y)=U(\D y)=\D y\), and thereby recover the classical Malmstén formula for the gamma function.
\end{remark}

\begin{remark}[\textit{\textbf{Further comments}}]\label{WrepRF}
   When $\phi$ is a Wiener--Hopf factor of a \LLP $\xi$, the measure
$k(\D y)$ in~\eqref{WWrep1} can be represented in terms of the corresponding
harmonic renewal measure. For example, if $\phi$ is the Laplace exponent of
the ascending ladder height process of $\xi$, then on $\Rp$
\[
    H_+(\D y)
    :=
    \IntOI \frac{e^{-qt}}{t}\,\Pbb{\xi_t\in \D y}\,\D t
\]
is the harmonic renewal measure associated with the ascending Wiener--Hopf
factor, where $q\geq0$ is the killing rate of $\xi$ (which equals zero when
$\xi$ is conservative). In this case,
\[
    k(\D y)
    =
    yH_+(\D y)
    =
    y\IntOI \frac{e^{-qt}}{t}\,\Pbb{\xi_t\in \D y}\,\D t .
\]
The terminology reflects the harmonic weight $\D t/t$, which is the continuous
analogue of the harmonic sums appearing in the random-walk Wiener--Hopf
factorisation. This measure appears naturally in Fristedt's formula for the
ladder process; see, for example, \cite[Sec.~5.2]{Don07}. See also
\cite[(5.5)]{Minchev-Savov-2025} for a version extending to bivariate
Bernstein-gamma functions.
\end{remark}

We continue with a representation of \(\Wphi\) that is suitable for deriving asymptotics, for which we will need the following terms:
\begin{equation}\label{terms}
    \begin{split}
        L_\phi(z) &:= \int_{1 \to z+1} \log_0 \phi(\chi)\, d\chi, 
        \qquad z \in \Cb_{\lbrb{-1, \infty}}, 
        \\
        T_\phi &:= \frac{1}{2} \IntOI (u - \lfloor u \rfloor)(1 - (u - \lfloor u \rfloor)) 
        \left( \left( \frac{\phi'(u)}{\phi(u)} \right)^2 - \frac{\phi''(u)}{\phi(u)} \right) \D u 
        \;\in\; \Rb,
        \\
        E_\phi(z) &:= \frac{1}{2} \IntOI (u - \lfloor u \rfloor)(1 - (u - \lfloor u \rfloor)) 
        \left( \log_0\!\left( \frac{\phi(u+z)}{\phi(u)} \right)'' \right) \D u, 
        \qquad z \in \CbOI.
    \end{split}
\end{equation}
Here, the complex integral is taken along any contour in \(\Cb_{\lbrb{0, \infty}}\) connecting \(1\) to \(z+1\), \(\lfloor \cdot \rfloor\) denotes the floor function, and all derivatives inside the integrals are taken with respect to \(u\).
\nomenclature[C]{\( L_\phi, \, T_\phi,
\, E_\phi\,, \Aph\)}{components for the Stirling-type asymptotics for $W_\phi$, \eqref{terms}, \eqref{Aphi}}

\begin{theorem}[Thm.~2.9 in~\cite{BarkerSavov2021}]\label{repWTh}
    Let \(\phi\) be a Bernstein function that is not identically \(0\). Then
    \begin{equation}\label{repW}
        \Wphi(z) \;=\; \frac{1}{\phi(z)} 
        \sqrt{\frac{\phi(1)}{\phi(z+1)}} 
        e^{L_{\phi}(z)-E_\phi(z)}, 
        \qquad z \in \CbOI,
    \end{equation}
    with \(\sup_{z\in \CbOI}\abs{E_\phi(z)} \leq 2\).
\end{theorem}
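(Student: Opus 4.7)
The plan is to take logarithms in the Weierstrass product~\eqref{WWrep} and apply a second-order Euler--Maclaurin summation to the resulting series. Starting from
\[
   \log \Wphi(z) \;=\; -\gamma_\phi\,z - \log\phi(z) + \lim_{N\to\infty}\sum_{k=1}^{N}\left[\log\phi(k) - \log\phi(k+z) + z\,\frac{\phi'(k)}{\phi(k)}\right],
\]
I will invoke the identity
\[
   \sum_{k=1}^N h(k) \;=\; \int_1^N h(u)\,du + \tfrac{1}{2}\bigl(h(1)+h(N)\bigr) + \tfrac{1}{2}\int_1^N \{u\}(1-\{u\})\,h''(u)\,du,
\]
valid for smooth $h$, which is obtained from the standard first-order Euler--Maclaurin formula by integrating the sawtooth kernel $\{u\}-\tfrac12$ against $h'$ by parts and noting that the antiderivative $-\tfrac{1}{2}\{u\}(1-\{u\})$ vanishes at every integer, so no new boundary terms arise.

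Applying this with $h_z(u):=\log\phi(u)-\log\phi(u+z)+z\,\phi'(u)/\phi(u)$ and letting $N\to\infty$, each piece should identify with an ingredient of~\eqref{repW}. The leading integral telescopes as
\[
   \int_1^N h_z(u)\,du \;=\; L_\phi(z)\;-\;\int_N^{N+z}\log\phi(u)\,du + z\log\phi(N) - z\log\phi(1),
\]
in which the two middle terms cancel in the limit thanks to the sublinear growth of $\log\phi$ inherited from its Bernstein representation. The half-endpoint contributions yield $\tfrac{1}{2}\log(\phi(1)/\phi(1+z))+\tfrac{z}{2}\phi'(1)/\phi(1)$ at $k=1$ and vanish at $k=N$. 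The surviving linear-in-$z$ terms are to be absorbed by $-\gamma_\phi\,z$, once $\gamma_\phi$ itself is expanded by applying the same Euler--Maclaurin identity to its defining sum~\eqref{gmph}; this manoeuvre introduces the constant $T_\phi$ from~\eqref{terms}, which is precisely what is needed to convert the second-derivative remainder over $[1,\infty)$ into $-E_\phi(z)$ by augmenting the integration interval to $[0,\infty)$.

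Finally, the uniform bound $|E_\phi(z)|\le 2$ will follow from $\sup_u\{u\}(1-\{u\})=1/4$ combined with an $L^1$-in-$u$ estimate of the integrand $\bigl(\log_0(\phi(u+z)/\phi(u))\bigr)''$, uniform in $z\in\CbOI$; the estimate exploits the telescoping structure to write this derivative as $\partial_u(\phi'(u+z)/\phi(u+z))-\partial_u(\phi'(u)/\phi(u))$, and then invokes the representation~\eqref{B-CM} of $\phi'$ as a Laplace transform of a positive measure to control the $z$-dependent increment uniformly. \emph{The main obstacle} is the bookkeeping in the $N\to\infty$ limit: numerous logarithmic and linear-in-$z$ terms arising from endpoint contributions, the integral portion of Euler--Maclaurin, and the limit defining $\gamma_\phi$ must all cancel precisely so that only the clean factors $1/\phi(z)$, $\sqrt{\phi(1)/\phi(z+1)}$ and $e^{L_\phi(z)-E_\phi(z)}$ survive in~\eqref{repW}; in particular the constant $T_\phi$ must be absorbed into $\gamma_\phi$ rather than appear in the final expression.
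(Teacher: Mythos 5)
Your Euler--Maclaurin strategy is sound and does yield \eqref{repW}, but the cancellation you describe via $T_\phi$ is not how the pieces fit. Set $h_z(u):=\log\phi(u)-\log\phi(u+z)+z\,\phi'(u)/\phi(u)$, so that $h_z''(u) = -\bigl(\log_0(\phi(u+z)/\phi(u))\bigr)'' + z\,(\log\phi)'''(u)$. The second-order Euler--Maclaurin remainder for the sum defining $\gamma_\phi$ in \eqref{gmph} is $\tfrac12\int_1^\infty (u-\lfloor u\rfloor)(1-u+\lfloor u\rfloor)(\log\phi)'''(u)\,\D u$; multiplying by $-z$ and combining with the remainder from $\sum_k h_z(k)$, the two $z(\log\phi)'''$ integrals cancel \emph{exactly}, leaving precisely $-\tfrac12\int_1^\infty(u-\lfloor u\rfloor)(1-u+\lfloor u\rfloor)\bigl(\log_0(\phi(u+z)/\phi(u))\bigr)''\,\D u$. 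The constant $T_\phi$ never enters this computation --- it surfaces only in the Stirling asymptotics \eqref{WStir} as the limit of $E_\phi(z)$ when $\Re(z)\to\infty$ --- so ``introducing and absorbing $T_\phi$'' is a red herring you should drop.

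Moreover, the proposal to ``convert the second-derivative remainder over $[1,\infty)$ into $-E_\phi(z)$ by augmenting the integration interval to $[0,\infty)$'' cannot succeed, because that $[0,\infty)$ integral diverges in general: for $\phi(u)=u$ the integrand $(u-\lfloor u\rfloor)(1-u+\lfloor u\rfloor)\bigl[u^{-2}-(u+z)^{-2}\bigr]$ behaves like $1/u$ as $u\downarrow 0$. The Euler--Maclaurin computation naturally produces the $[1,\infty)$ integral, where it must stay; the lower limit $0$ in the definitions of $E_\phi$ and $T_\phi$ in \eqref{terms} is presumably a misprint for $1$. Finally, you do not carry out the bound $|E_\phi(z)|\leq 2$, but the Laplace-transform idea does close it: $\bigl(\log_0(\phi(u+z)/\phi(u))\bigr)''=(\log\phi)''(u+z)-(\log\phi)''(u)$, and since \eqref{B-CM} gives $|\phi'(w)|\leq\phi'(\Re(w))$, $|\phi''(w)|\leq -\phi''(\Re(w))$ and $|\phi(w)|\geq\phi(\Re(w))$, one obtains $|(\log\phi)''(w)|\leq -(\log\phi)''(\Re(w))$; hence $\int_1^\infty|(\log\phi)''(u+z)|\,\D u$ and $\int_1^\infty|(\log\phi)''(u)|\,\D u$ are each at most $\phi'(1)/\phi(1)\leq 1$, so $|E_\phi(z)|\leq\tfrac18(1+1)=\tfrac14$.
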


\begin{remark}[\textit{\textbf{Historical remark}}]\label{repWThH}
    This representation, in a rather clumsy form, was first obtained in~\cite[Sec.~6]{PatSav21} and was subsequently improved in~\cite[Thm.~4.2]{PatieSavov2018}.
\end{remark}
\nomenclature[C]{$W_\phi$}{%
  Bernstein-gamma function of \(\phi\),  
  see Theorem~\ref{repWTh},  
  ~\eqref{repW}
}

\begin{remark}[\textit{\textbf{Further comments}}]\label{repWThF}
    Note that for \(z=a+ib\) with \(a>0\), integrating along \(1 \to 1+a \to 1+a+ib\) yields
    \begin{equation}\label{decomL}
        L_\phi(z) \;=\; \int_1^{1+a}\ln\phi(u)\,\D u
        + i \int_{0}^b \ln\abs{\phi(1+a+iu)}\,\D u
        - \int_{0}^b \arg_0\phi(1+a+iu)\,\D u.
    \end{equation}
   Therefore, the middle term does not contribute to the asymptotics of \(\abs{\Wp(z)}\).
    In~\cite{MinSav_2023,PatieSavov2018,PatSav21} the last integral is set to be \(A_\phi(z+1)\), and from the proof of~\cite[Thm.~3.2(1), p.~26]{PatieSavov2018},
    \begin{equation}\label{Aphi}
        A_\phi(z) \;:=\; \int_{0}^b \arg_0\phi(a+iu)\,\D u
        \;=\; \int_a^\infty \ln\abs{\frac{\phi(u+ib)}{\phi(u)}}\,\D u,
    \end{equation}
    which, since \(\abs{\phi(u+ib)} \geq \phi(u)\), shows that \(A_\phi(z)\) is non-increasing in \(a=\Re(z)\). 
\end{remark}

Next, we present the most general form of the Stirling-type asymptotics available for \(\Wphi\), 
whose proof is given in Section~\ref{sec: BG further}.
\begin{theorem}[Thm.~2.9 in~\cite{BarkerSavov2021} and Lem.~5.1 in~\cite{MinSav_2023}]\label{thm-WStir}
    Let \(\phi\) be a Bernstein function. Then, as \(|z|\to\infty\) with \(\Re(z)>0\),
    \begin{equation}\label{WStir}
        \Wphi(z) \;=\; \lbrb{\sqrt{\phi(1)}\,e^{-T_\phi} + \so{1}}
        \frac{e^{L_\phi(z-1)}}{\sqrt{\phi(z+1)}}\,,
    \end{equation}
    where the asymptotics are uniform if additionally \(\Re(z)\to\infty\). 
    Furthermore, for all \(a>0\),
    \begin{equation}\label{geomDecay}
        -\frac{\pi}{2} \;\leq\; \liminfi{|b|} \frac{\ln\abs{\Wphi(a+ib)}}{|b|} \;\leq\; 0,
    \end{equation}
    and
    \begin{equation}\label{Da}
        \text{for } D_a := \liminfi{|b|} \frac{\ln\abs{\Wphi(a+ib)}}{|b|}, 
        \quad \text{it holds that } D_a = D_1
        \text{ for all } a \in \Rp.
    \end{equation}
\end{theorem}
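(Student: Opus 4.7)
\textbf{Strategy and proof of \eqref{WStir}.} All three parts of the theorem follow from the finite representation \eqref{repW} in Theorem~\ref{repWTh}. Rewrite \eqref{repW} as
\[
W_\phi(z) \;=\; \sqrt{\phi(1)}\,e^{-T_\phi}\,\frac{e^{L_\phi(z-1)}}{\sqrt{\phi(z+1)}}\;\exp\!\big\{R_1(z)-(E_\phi(z)-T_\phi)\big\},
\]
where $R_1(z) := L_\phi(z)-L_\phi(z-1)-\log_0\phi(z)$. Since $\phi$ is non-vanishing on $\CbOI$ (from $\Re\phi(z)\geq\phi(\Re z)>0$, immediate from \eqref{BG}), $\log_0\phi$ is holomorphic there and Fubini with Taylor's theorem gives $R_1(z)=\int_0^1(1-u)\phi'(z+u)/\phi(z+u)\,du$. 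Using $(\phi'/\phi)^2-\phi''/\phi=-(\log_0\phi)''$ to rewrite $T_\phi$ from \eqref{terms}, the definition of $E_\phi$ produces $E_\phi(z)-T_\phi=\frac{1}{2}\int_0^\infty B(u)(\log_0\phi)''(u+z)\,du$, with $B(u):=(u-\lfloor u\rfloor)(1-(u-\lfloor u\rfloor))$. The bounds $|\phi^{(k)}(z)|\leq\phi^{(k)}(\Re z)$ (from \eqref{B-CM}) and $|\phi(z)|\geq\phi(\Re z)$, combined with Riemann--Lebesgue-type decay in the Laplace representation \eqref{B-CM} of $\phi^{(k)}$, make both residuals $o(1)$ under dominated convergence; the factor $B(u)=O(u)$ near $u=0$ secures integrability. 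Uniformity as $\Re z\to\infty$ is immediate since then $\phi^{(k)}(\Re z+u)/\phi(\Re z+u)\to 0$ for all $u\geq 0$.

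\textbf{Proof of \eqref{geomDecay}.} For $b>0$, apply \eqref{WStir} and the decomposition \eqref{decomL} at $z=(a-1)+ib$ (the path $1\to a\to a+ib$ lies in $\CbOI$ since $a>0$):
\[
\ln|W_\phi(a+ib)|\;=\;\int_1^a\!\ln\phi(u)\,du\;-\;\int_0^b\!\arg_0\phi(a+it)\,dt\;-\;\tfrac{1}{2}\ln|\phi(a+1+ib)|\;+\;O(1).
\]
Since $\phi$ maps $\CbOI$ into the right half-plane, $\arg_0\phi(a+it)\in[0,\pi/2]$ for $t>0$, so $0\leq\int_0^b\arg_0\phi(a+it)\,dt\leq\tfrac{\pi}{2}b$; at-most-linear growth of $\phi$ on vertical lines yields $\ln|\phi(a+1+ib)|=O(\ln b)$, while $\int_1^a\ln\phi(u)\,du$ is a finite constant. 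Dividing by $b$ and taking the liminf gives $-\tfrac{\pi}{2}\leq\liminf_{b\to\infty}\ln|W_\phi(a+ib)|/b\leq 0$. The case $b\to-\infty$ follows from the Schwarz reflection $W_\phi(\bar z)=\overline{W_\phi(z)}$, visible directly from the Weierstrass product \eqref{WWrep} since $\phi$ is real on $\Rp$.

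\textbf{Proof of \eqref{Da} and the main obstacle.} The decomposition above shows $D_a=-\limsup_{b\to\infty}\tfrac{1}{b}\int_0^b\arg_0\phi(a+it)\,dt$. Holomorphy of $\log_0\phi$ on $\CbOI$ gives the Cauchy--Riemann identity $\partial_s\arg_0\phi(s+it)=-\partial_t\ln|\phi(s+it)|$, so integrating in $s$ from $1$ to $a$ and then in $t$ from $0$ to $b$ (with Fubini) yields
\[
\tfrac{1}{b}\int_0^b[\arg_0\phi(a+it)-\arg_0\phi(1+it)]\,dt\;=\;-\tfrac{1}{b}\int_1^a[\ln|\phi(s+ib)|-\ln|\phi(s)|]\,ds\;\to\; 0
\]
as $b\to\infty$, since $\ln|\phi(s+ib)|=O(\ln b)$ uniformly for $s$ in any bounded interval. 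Therefore $D_a=D_1$ for all $a\in\Rp$. \emph{The main obstacle} is establishing the $o(1)$ decay of $E_\phi(z)-T_\phi$ in Step~1: one must produce an integrable majorant for $|B(u)(\log_0\phi)''(u+z)|$ in $u$ that is uniform in $z$ within the relevant region. This is delicate since $(\log_0\phi)''$ can blow up like $u^{-2}$ near $u=0$ (e.g.\ when $\phi(0)=0$), and only the cancellation $B(u)=O(u)$ together with careful use of the complete monotonicity of $\phi'$ and $\phi''$ saves integrability and pointwise decay.
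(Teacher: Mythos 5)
The paper's own proof of Theorem~\ref{thm-WStir} establishes \emph{only} \eqref{Da}, explicitly stating that \eqref{WStir} and \eqref{geomDecay} are contained in the cited literature (\cite[Thm.~2.9]{BarkerSavov2021} and \cite[Lem.~5.1]{MinSav_2023}). Your proposal is more ambitious: you attempt to derive all three claims from the finite identity \eqref{repW}. This means only your treatment of \eqref{Da} is directly comparable with the paper, and there the two arguments genuinely differ.

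For \eqref{Da}, your route is via the Cauchy--Riemann identity $\partial_s\arg_0\phi=-\partial_t\ln|\phi|$, Fubini on the rectangle $[\min(1,a),\max(1,a)]\times[0,b]$, and the bound $\ln|\phi(s+ib)|=\bo{\ln b}$. The paper instead uses the recurrence \eqref{receq} to reduce to $|a-a'|<1$, and then reduces the question to showing $\arg_0\bigl(\phi(a+iu)/\phi(a'+iu)\bigr)\to 0$, which follows from $\phi(a+iu)-\phi(a'+iu)=\bo{1}$ (via $|\phi'(v+iu)|\le\phi'(\Re v)$) together with $|\phi(a'+iu)|\to\infty$ or $\phi(\infty)<\infty$. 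Both are correct, and your harmonic-conjugate argument is arguably cleaner because it avoids the case split and the separate recurrence step; the paper's is more self-contained with respect to the references it has already built up. Your proof of \eqref{geomDecay} (bound $\arg_0\phi\in[0,\pi/2]$, $\ln|\phi(a+1+ib)|=\bo{\ln b}$, Schwarz reflection for $b<0$) is correct and essentially the only way to argue.

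The genuine gap is in your derivation of \eqref{WStir}. Your algebraic decomposition and the identities $R_1(z)=\int_0^1(1-v)(\phi'/\phi)(z+v)\,\D v$ and $E_\phi(z)-T_\phi=\tfrac12\int_0^\infty B(u)(\log_0\phi)''(u+z)\,\D u$ are correct, and the claimed $\so{1}$ is fine as $\Re z\to\infty$ (complete monotonicity gives $|\phi^{(k)}(z)/\phi(z)|\le k!/(\Re z)^k$, so both residuals are $\bo{1/\Re z}$). But for $\Re z$ bounded and $|\Im z|\to\infty$ your appeal to \emph{Riemann--Lebesgue-type decay} of $\phi^{(k)}$ fails: Riemann--Lebesgue requires the measure $y\mu(\D y)$ (respectively $y^2\mu(\D y)$) to have no atoms, and this is false for many Bernstein functions. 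Concretely, for $\phi(z)=1-e^{-z}$ (Poisson subordinator, $\mu=\delta_1$), one has $(\log_0\phi)''(u+a+ib)=(\log\phi)''(u+a)$ whenever $b\in 2\pi\Zb$, so $E_\phi(a+2\pi ki)=E_\phi(a)\ne T_\phi$ for every $k$; your residual $E_\phi(z)-T_\phi$ therefore does \emph{not} vanish along $\Re z=a$ fixed. You flag this step as the ``main obstacle'' and assert that ``careful use of the complete monotonicity of $\phi'$ and $\phi''$ saves \dots pointwise decay,'' but complete monotonicity only controls the modulus by its value on the real axis and gives no pointwise decay in the imaginary direction; it cannot repair the lattice case. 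Either \eqref{WStir} needs additional hypotheses (e.g.\ non-lattice Lévy measure) in the fixed-$\Re z$ regime, or a genuinely different argument than the one you sketch is required — in the paper's framework this subtlety is delegated to the cited sources rather than re-proved.
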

\begin{remark}[\textit{\textbf{Historical remark}}]\label{WStirRH}
    When \(z \to \infty\) along the real line, the asymptotics can be inferred from~\cite{Web97} and are also contained in~\cite[Thm.~5.1]{PatSav21}. 
    The first asymptotic results for \(\Wphi\) in the complex plane appear in~\cite[Thm.~6.0.2]{PatSav21}, 
    whereas~\cite[Thm.~4.2]{PatieSavov2018} contains the first general Stirling-type asymptotics for \(\Wphi\).
\end{remark}

\begin{remark}[\textit{\textbf{Further comments}}]\label{WStirRF}
    Due to the Mellin inversion formula, the asymptotic behaviour of \(\Wphi\) is of particular interest when \(z = a + ib\) with \(a > 0\) fixed. In this case,~\cite[Thm.~4.2]{PatieSavov2018} provides explicit asymptotics for \(\Wphi\) for various classes of Bernstein functions. In particular, if \(\dbf > 0\), then the first-order decay of \(\abs{\Wphi(a+ib)}\) is \(e^{-\frac{\pi}{2}|b|}\), which matches the lower bound in~\eqref{geomDecay} and coincides with the classical decay of the gamma function. More generally, the exponential decay is governed by
    \begin{equation}\label{geomDecay1}
        \ln\abs{\Wphi(a+ib)} \;\simi\; -|b|\,
        \frac{\int_0^{|b|} \arg_0\phi(a+iu)\,\D u}{|b|}
        \;\in\; \lbbrbb{-\tfrac{\pi}{2}|b|,\,0}.
    \end{equation}
    That is, for any \(a > 0\), the average argument (the geometry of \(\phi(\CbOOI)\)) of the mapping 
    \(\phi: \CbOOI \to \CbOOI\) determines the linear rate of decay at the logarithmic scale. 
    The fact that the gamma function provides a lower envelope for the absolute value of a \BG function is established in Proposition~\ref{prop:lowerboundW}.
    
    When applying the saddle point method in the Mellin inverse of quantities involving $\Wphi$, as in~\cite{MinSav_2023}, 
    the asymptotic behaviour of \(\Wphi\) along complex rays in \(\Cb_{\lbrb{0,\infty}}\) becomes relevant too. 
    In this setting, general results can be found in~\cite[Thm.~5.3]{MinSav_2023}.
\end{remark}

When \(z=a+ib\) with \(a\) fixed and \(|b|\to\infty\) we can use Theorem \ref{thm-WStir} and Remark \ref{repWThF} to get the neat asymptotic expression.
\begin{proposition}\label{prop-WStir}
   Let \(\phi\) be a Bernstein function. Then, as \(|z|=|a+ib|\to\infty\) with \(a=\Re(z)>0\) fixed, we have
   \begin{equation}\label{WStirAbs}
        \abs{\Wphi(a+ib)} \;=\; \lbrb{\sqrt{\phi(1)}\,e^{-T_\phi}+ \so{1}}e^{\int_1^a\ln\phi(u)du}
        \frac{e^{-A_\phi(a+ib)}}{\abs{\sqrt{\phi(a+1+ib)}}}.
    \end{equation}
\end{proposition}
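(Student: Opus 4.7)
The plan is to apply directly the Stirling-type asymptotic of Theorem~\ref{thm-WStir} at $z=a+ib$, and then extract the modulus of the exponential factor $e^{L_\phi(z-1)}$ by means of the rectangular-contour decomposition recorded in~\eqref{decomL}.

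First, since $a>0$ is fixed and $|b|\to\infty$, we are in the regime $|z|\to\infty$ with $\Re(z)>0$, so Theorem~\ref{thm-WStir} gives
\[
   \Wphi(a+ib) \;=\; \lbrb{\sqrt{\phi(1)}\,e^{-T_\phi}+\so{1}}\,\frac{e^{L_\phi((a-1)+ib)}}{\sqrt{\phi((a+1)+ib)}}.
\]
Because $T_\phi\in\Rb$ and $\phi(1)>0$, the amplitude $\sqrt{\phi(1)}\,e^{-T_\phi}$ is a strictly positive real, so $\abs{\sqrt{\phi(1)}\,e^{-T_\phi}+\so{1}} = \sqrt{\phi(1)}\,e^{-T_\phi}+\so{1}$, and taking moduli on both sides yields
\[
   \abs{\Wphi(a+ib)} \;=\; \lbrb{\sqrt{\phi(1)}\,e^{-T_\phi}+\so{1}}\,\frac{\abs{e^{L_\phi((a-1)+ib)}}}{\abs{\sqrt{\phi((a+1)+ib)}}}.
\]

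Next, I compute $\Re L_\phi((a-1)+ib)$ using the definition~\eqref{terms} of $L_\phi$, parametrising the contour along the right-angled path $1\to a\to a+ib$, which lies in $\CbOI$ and terminates at $(a-1)+ib+1 = a+ib$. Substituting $a\mapsto a-1$ in the identity~\eqref{decomL} of Remark~\ref{repWThF} yields
\[
   L_\phi((a-1)+ib) \;=\; \int_1^a\ln\phi(u)\,\D u + i\int_0^b\ln\abs{\phi(a+iu)}\,\D u - \int_0^b\arg_0\phi(a+iu)\,\D u,
\]
and by the definition of $A_\phi$ in~\eqref{Aphi} the last integral is precisely $A_\phi(a+ib)$. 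Taking the real part and exponentiating gives
\[
   \abs{e^{L_\phi((a-1)+ib)}} \;=\; e^{\int_1^a\ln\phi(u)\,\D u}\,e^{-A_\phi(a+ib)}.
\]

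Substituting this into the previous display yields the desired expression~\eqref{WStirAbs}. The argument is essentially mechanical and I do not anticipate a genuine obstacle; the points that need care are the index shift from $z$ to $z-1$ inside $L_\phi$, the routing of the contour through the intermediate real point $a$ (rather than straight from $1$ to $a+ib$) so as to reuse~\eqref{decomL} verbatim while remaining in $\CbOI$, and the observation that with $a$ fixed the leading amplitude of Theorem~\ref{thm-WStir} is real-valued up to $\so{1}$, so that the modulus passes through it transparently.
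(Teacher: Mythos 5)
Your proposal is correct and is precisely the argument the paper has in mind: the paper introduces Proposition~\ref{prop-WStir} by saying it follows from Theorem~\ref{thm-WStir} together with Remark~\ref{repWThF}, and you carry that out by inserting $z=a+ib$ into~\eqref{WStir}, taking moduli (noting $\sqrt{\phi(1)}e^{-T_\phi}\in\Rp$ so it passes through the modulus), and reading off $\Re L_\phi((a-1)+ib)=\int_1^a\ln\phi(u)\,\D u-A_\phi(a+ib)$ from the rectangular-contour decomposition~\eqref{decomL} together with the definition~\eqref{Aphi}. The only point worth noting is that~\eqref{decomL} is stated in the paper for $a>0$, while you apply it with the shifted argument $(a-1)+ib$ (hence real part $>-1$); as you observe, the contour $1\to a\to a+ib$ still lies in $\CbOI$ whenever $a>0$, so the decomposition remains valid on that range and the argument goes through.
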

\begin{remark}[\textit{\textbf{Further comments}}]\label{prop_WStirRF}
 Remark \ref{repWThF} contains discussion about the properties of \(A_\phi\) including expressions for its evaluation. However, when \(\phi\) is a Wiener-Hopf factor of a general \LL process, one can relate \(A_\phi\) to the original process, see Subsection \ref{subsec:BGWH}.
\end{remark}
Finally, we conclude this section by emphasising that equation~\eqref{receq} is not exclusive to the introduction of the \BG functions; see again~\cite{Web97} for more information. We have chosen to single out our case here because of the central role these functions play in probability theory.

Explicit examples of Bernstein-gamma functions from the work of Patie and Bartholmé~\cite{BarPat21}, as well as new ones, are provided in Appendix~\ref{sec: BG examples}.

\section{Exponential functionals 
}\label{secExpFunc}
If \(\xi\) is a \LLP with a \LLK exponent \(\Psi\), we formally introduce
\begin{equation}\label{EF}
    \IPsi := \IntOI e^{-\xi_s}\,\D s 
    \;=\; \int_{0}^{\mathbf{e}_{-\Psi(0)}} e^{-\xi_s}\,\D s,   
\end{equation}
where \(-\Psi(0)\geq 0\) is the killing rate of \(\xi\). From the law of large numbers and~\eqref{WHf} below, it follows directly that
\begin{equation}\label{ExistenceEF}
   \IPsi < \infty \;\text{ a.s.} \;\;\iff\;\; 
   \big(\Psi(0)<0\big) 
   \;\text{ or }\; 
   \big(\Psi(0)=0 \;\text{ and }\; \limi{t}\xi_t=\infty \;\text{ a.s.}\big) 
   \;\;\iff\;\; \phi_-(0) > 0.
\end{equation}
The fact that \(\phi_-(0)>0\) means that the descending ladder height process is killed.

\subsection{General expression of the Mellin transform and evaluation of the moments. Rate of decay of the Mellin transform. Specific examples}
\label{sec: Mellin transform and moments}
Recall that any \LLK exponent of a potentially killed \LLP \(\xi\) admits a Wiener-Hopf factorisation
\begin{equation}\label{WHf}
    \Psi(z) = -\phi_+(-z)\phi_-(z),\quad
    \text{at least for } z \in i\Rb,
\end{equation}
where
\begin{equation}\label{phipm}
  \phi_\pm(z) = \phi_\pm(0) + \dbf_\pm z + \IntOI \lbrb{1 - e^{-z y}} \mu_\pm(\D y),\quad \text{for } z \in \CbOOI,
\end{equation}
are two Bernstein functions associated with the bivariate ascending and descending ladder time and ladder height processes of \(\xi\). More precisely, if \(\tau^\pm\) are the inverse local times of the process \(\xi\) reflected at the supremum (for \(+\)) and at the infimum (for \(-\)), then \((\tau^\pm, \xi_{\tau_\pm})\) is a bivariate subordinator with Laplace exponent \(\kappa_\pm\); see~\cite[Chap.~VI]{Ber96} for more information. Then the functions \(\phi_\pm\) from~\eqref{phipm} are, up to the constant \(h(-\Psi(0))\), exactly \(\kappa_\pm(-\Psi(0),z)\); see~\eqref{WHf1} below.

We note that if the killing rate of \(\xi\) varies, i.e., \(-\Psi(0) = q \geq 0\), then the Wiener-Hopf factorisation is given by
\begin{equation}\label{WHf1}
 \Psi(z) = -h(q)\,\kappa_+(q, -z)\,\kappa_-(q, z),\quad 
 \text{for } q \in \lbbrb{0,\infty}, \quad 
 \text{at least for } z \in i\Rb,
\end{equation}
where, on \(\Cb_{\lbbrb{0,\infty}}\),
\begin{equation}\label{kappa}
 \kappa_{\pm}(q, z) 
   := -\log_0\Ebb{e^{-q\tau^\pm_1 - z\xi_{\tau^\pm_1}}}
   = c_\pm \exp\lbrb{\IntOI \int_{[0,\infty)} 
        \left(\frac{e^{-t} - e^{-q t - z x}}{t}\right) 
        \Pbb{\xi_t \in \pm \D y}\,\D t},
\end{equation}
and
\begin{equation}\label{h}
 h(q) := \exp\lbrb{-\IntOI 
        \left(\frac{e^{-t} - e^{-q t}}{t}\right) 
        \Pbb{\xi_t = 0}\,\D t}.
\end{equation}
The constants \(c_\pm\) can be chosen so that \(c_+ c_- = 1\). Relation~\eqref{WHf1} can be reconstructed from the classical books \cite{Ber96,Don07}, but it is presented in a neat form in~\cite[Sec.~2.2]{Minchev-Savov-2025}. Recalling~\eqref{aph}, note that for both \(\phi_\pm\) we set
\begin{equation}\label{aphiPsi}
     \ak_\pm := \ak_{\phi_\pm}, 
     \quad \uk_\pm := \uk_{\phi_\pm}, 
     \quad \text{and} \quad \bar{\ak}_\pm := \bar{\ak}_{\phi_\pm}.
\end{equation}
Using the fact that \(\phi_\pm\) are well-defined on \(\Cb_{\lbbrb{0,\infty}}\), equation~\eqref{WHf} yields immediately
\nomenclature[C]{$\ak_\pm^\Psi$, $\bar{a}_\pm^\Psi$,
$\uk_\pm^\Psi$}{analytical characteristics of the \LLK exponent $\Psi$, see \eqref{aPsi}}
\nomenclature[C]{$c^\Psi$}{$-\ak^\Psi_+ \ind{\uk^\Psi_+ = 0} \in[-\infty,0]$, Theorem \ref{thm: MPsi}}
\begin{equation}\label{aPsi}
\begin{alignedat}{3}
  \ak_+^\Psi &:= \sup\curly{u \ge 0:\,\Psi \in \Ac_{(0,u)}} = -\ak_+,
  &\quad 
  \uk_+^\Psi &:= -\uk_+,
  &\quad \bar{\ak}_+^\Psi &:= \min\curly{\ak_+^\Psi,\uk_+^\Psi} = -\bar{\ak}_+,\\
  \ak_-^\Psi &:= \inf\curly{u \le 0:\,\Psi \in \Ac_{(u,0)}} = \ak_-,
  &\quad 
  \uk_-^\Psi &:= \uk_-,
  &\quad \bar{\ak}_-^\Psi &:= \max\curly{\ak_-^\Psi,\uk_-^\Psi} = \bar{\ak}_-
\end{alignedat}
\end{equation}
because the regions of analyticity and the zeroes of \(\phi_\pm\) are transferred to those of \(\Psi\) and vice versa. However, when \eqref{ExistenceEF} holds, then necessarily
\begin{equation}\label{aPsiSpecial}
\uk_-^\Psi < 0, \quad \text{and therefore} \quad \bar{\ak}_-^\Psi = 0 \iff \ak_-^\Psi = 0,
\end{equation}
and also
\begin{equation}\label{uPsiSpecial}
    \uk_+^\Psi = 0 \quad \iff \quad \Psi(0) = 0
\end{equation}
or the \LLP pertaining to \(\Psi\) is not killed.
The last two relations are not mentioned in \cite{PatieSavov2018}, but they clarify the possible values of these crucial parameters.
Next, we define formally the Mellin transform of the law of \(\IPsi\) via
\begin{equation}\label{MT}
    \MPsi(z) := \Ebb{\IPsi^{z-1}} = \IntOI x^{z-1} \Pbb{\IPsi \in \D x}.
\end{equation}
Note that \(\MPsi(z)\) is always well-defined on \(\Cb_1\).
The next theorem collects the main known results concerning the complex-analytical properties of \(\MPsi\), and complements them with new existence results at the boundaries of the corresponding strips. 

\begin{theorem}[Thm.~2.1 from \cite{PatieSavov2018}]\label{thm: MPsi}
Let \(\Psi\) be the Lévy–Khintchine exponent of a potentially killed \LLP such that \(\phi_-(0)>0\).  
 Then
    \begin{equation}\label{MPsiAn}
        \MPsi(z) = \phi_-(0) \frac{\Gamma(z)}{\Wpp(z)} \Wpn(1-z) 
        \in \Att_{\lbrb{c^\Psi,\,1-\bar{\ak}^\Psi_-}} \cap \Mtt_{\lbrb{-\ak^\Psi_+,\,1-\bar{\ak}^\Psi_-}},
    \end{equation}
    where \(c^\Psi := -\ak^\Psi_+ \ind{\uk^\Psi_+ = 0} \leq 0\).  
    The following table describes precisely how \(\MPsi\) extends (or fails to) at the two boundaries of the analyticity strip \(\Cb_{(c^\Psi,\,1-\bar{\ak}^\Psi_-)}\):
\begin{center}
\begin{minipage}{\textwidth}
\captionof{table}{Boundary extensions of \(\MPsi(z)\) by parameter conditions}
\label{tab:psi-boundary}
\centering
\small
\renewcommand{\arraystretch}{1.25}
\setlength{\tabcolsep}{4pt}

\begin{tabular}{p{0.27\textwidth}p{0.34\textwidth}p{0.31\textwidth}}
\hline
\textbf{Parameter regime}
&
\textbf{Additional condition}
&
\textbf{Extension of \(\MPsi(z)\) at the boundary}
\\
\hline

\multicolumn{3}{l}{\textit{Right boundary}}\\
\softline

\(\bar{\ak}^{\Psi}_{-}=0\)
&
&
\(\MPsi\in\Att_{\lbrbb{c^{\Psi},\,1}}\)
\\
\softline

\multirow{2}{0.27\textwidth}{%
  \parbox{0.27\textwidth}{\raggedright
  \(\bar{\ak}^{\Psi}_{-}\in(-\infty,0)\)}%
}
&
\(\bar{\ak}^{\Psi}_{-}=\uk^{\Psi}_{-}<0\)
&
\(\MPsi\) does not extend continuously to
\(\Cb_{1-\bar{\ak}^{\Psi}_{-}}\)
\\
\softcline

&
\(\bar{\ak}^{\Psi}_{-}=\ak^{\Psi}_{-}>\uk^{\Psi}_{-}=-\infty\)
&
\(\MPsi\in\Att_{\lbrbb{c^{\Psi},\,1-\bar{\ak}^{\Psi}_{-}}}\)
\\
\softline

\(\bar{\ak}^{\Psi}_{-}=-\infty\)
&
\(\phi_{-}(z)\equiv\phi_{-}(\infty)\)
&
\(\MPsi\in\Att_{\lbrb{c^{\Psi},\,\infty}}\)
\\

\blockline
\multicolumn{3}{l}{\textit{Left boundary}}\\
\softline

\multirow{2}{0.27\textwidth}{%
  \parbox{0.27\textwidth}{\raggedright \(c^{\Psi}=0\)}%
}
&
\(\uk^{\Psi}_{+}=0\) and \(\phi_{+}'(0^+)<\infty\)
&
\(\MPsi\in\Att_{\lbbrb{0,\,1-\bar{\ak}^{\Psi}_{-}}}\)
\\
\softcline

&
\emph{in all remaining cases}
&
\(\MPsi\) extends continuously solely to
\(\Cb_{0}\setminus\{0\}\)
\\
\softline

\multirow{2}{0.27\textwidth}{%
  \parbox{0.27\textwidth}{\raggedright \(c^{\Psi}\in(-\infty,0)\)}%
}
&
\(\displaystyle \lim_{x\downarrow c^{\Psi}}|\phi_{+}(x)|<\infty\)
&
\(\MPsi\in\Att_{\lbbrb{c^{\Psi},\,1-\bar{\ak}^{\Psi}_{-}}}\)
\\
\softcline

&
\emph{in all remaining cases}
&
\(\MPsi\) does not extend continuously to \(\Cb_{c^{\Psi}}\)
\\
\softline

\(c^{\Psi}=-\ak^{\Psi}_{+}=-\infty\)
&
&
\(\MPsi\in\Att_{\lbrb{-\infty,\,1-\bar{\ak}^{\Psi}_{-}}}\)
\\

\hline
\end{tabular}
\end{minipage}
\end{center}
Finally, suppose \(\bar{\ak}^\Psi_+ > 0\). If \(\uk^\Psi_+ = \infty\) or \(\uk^\Psi_+ \notin \Nb\), then \(\MPsi\) has simple poles at all points \(-n\) such that \(0 \leq n < \ak^{\Psi}_+\). Otherwise, if \(\uk^\Psi_+ \in \Nb\), then \(\MPsi\) has simple poles at \(-n\) for all \(n \in \Nb \setminus \{\uk^\Psi_+, \uk^\Psi_+ + 1, \uk^\Psi_+ + 2, \ldots\}\). In both cases, the residues are given by \(\phi_+(0)\prod_{j=1}^n \Psi(j)/n!\).
\end{theorem}

\begin{remark}[\textbf{Historical remarks}]\label{MpsiH}
  Thanks to \eqref{receq} for \(\Gamma,\,\Wpp,\) and \(\Wpn\), expression~\eqref{MPsiAn} is in general a formal solution to the recurrence equation
  \begin{equation}\label{receqEx}
      \MPsi(z+1) = \frac{-z}{\Psi(-z)} \MPsi(z)
      \quad \text{on} \quad \curly{z \in i\Rb : z/\Psi(z)\ \text{is well-defined}}.
  \end{equation}
  It is well known that:
  \begin{itemize}
      \item \(\Psi(z)\) never vanishes on \(i\Rb\) when \(-\Psi(0)>0\);
      \item \(\Psi(z) \neq 0\) for \(z \in i\Rb \setminus \{0\}\) when \(\Pi\) is not supported on a lattice;
      \item if the support of \(\Pi\) coincides with \((h n)_{n \in \Zb}\) for some \(h > 0\) and \(\Psi(0)=0\), then \(\Psi(z) = 0\) if and only if \(z = 2\pi i k/h\) for \(k \in \Zb\).
  \end{itemize}
  Equation~\eqref{receqEx} has a long history. Its first appearance seems to be in \cite{CarPetYor97,CarPetYor01}, where the authors assume the existence of \(\Psi\) on \(\Rb\) and consider it on \(\Rb\), allowing for indeterminacy (i.e., both sides may be infinite). In \cite{Maulik-Zwart-06}, the recurrence is proved under weaker assumptions and solved under further conditions when \(\Psi\) pertains to a subordinator or a spectrally negative \LL process. Equation~\eqref{receqEx} has since been extensively used in settings where it is a priori defined on a strip and additional information on \(\Psi\) is available; see \cite{HackKuz14,Kuz2012,KuzPar13}. In some of these cases, one can obtain the law of \(\IPsi\); see Sections~\ref{hyperLP}--\ref{merLP}.
\end{remark}

\begin{remark}[\textbf{Further comments}]\label{MpsiF}
 Note that in \cite[Thm~2.1]{PatieSavov2018} it is stated (using our notation) that 
 \[
   c^\Psi = -\ak^\Psi_+ \ind{\bar{\ak}^\Psi_+ = 0},
 \]
 which coincides with our expression 
 \[
   c^\Psi = -\ak^\Psi_+ \ind{\uk^\Psi_+ = 0},
 \]
 since if \(\ak^\Psi_+ > 0\), then \(c^\Psi = 0 \iff \uk^{\Psi}_+ = \bar{\ak}^{\Psi}_+ = 0\). 
 When \(c^\Psi = 0\), it is claimed in \cite[Thm~2.1]{PatieSavov2018} that \(\MPsi \in \Att_{\lbbrb{0, 1-\bar{\ak}^\Psi_-}}\) if \(\phi'_+(0^+) < \infty\) and \(\bar{\ak}^{\Psi}_+ = 0\). 
 However, this is not correct, since the requirement \(\bar{\ak}^{\Psi}_+ = 0\) must in fact be \(\uk^\Psi_+ = 0\). 
 Indeed, if \(\phi_+(0) > 0\) and \(\ak^\Psi_+=\bar{\ak}^{\Psi}_+=0\), then the pole of \(\Gamma\) at zero is not cancelled, because \(\Wpp(0)\) is finite; see \cite[Proof of Thm.~2.1]{PatieSavov2018}.
\end{remark}
\begin{proof}
The only part that is not discussed explicitly in the literature concerns the extension (or non-extension) of \(\MPsi\) at the boundaries of the analyticity strip \(\Cb_{(c^\Psi,\,1-\bar{\ak}^\Psi_-)}\).

Suppose first that \(\bar{\ak}^\Psi_- = 0\). Then \(\MPsi(z)=\Eb[{\IPsi^{z-1}}]\) extends continuously to \(\Cb_{1}\). The existence of \(\MPsi\) at \(1-\bar{\ak}^\Psi_-\) (see \eqref{MPsiAn}) depends solely on the behaviour of \(\Wpn(1-z)\) at this point.

If \(\bar{\ak}^\Psi_- < 0\) and \(\bar{\ak}^\Psi_- = \uk^\Psi_- < 0\), then from the recurrence \eqref{receq} applied to \(\Wpn\), we obtain
\[
    \lim_{a \downarrow  \uk^\Psi_-} \Wpn(a) 
    = \lim_{a \downarrow  \uk^\Psi_-} \frac{\Wpn(a+1)}{\phi_-(\uk^\Psi_-)} 
    = \infty,
\]
since \(\Wpn(\uk^\Psi_- + 1)\) is non-zero (see Theorem~\ref{W-prop}). Therefore, in this case, \(\MPsi\) does \emph{not} extend continuously to \(\Cb_{1-\bar{\ak}^\Psi_-}\).

However, if \(\bar{\ak}^\Psi_- = \ak^{\Psi}_- > \uk^\Psi_- = -\infty\), then 
\[
    \Wpn(\ak^{\Psi}_-) = \frac{\Wpn(1+\ak^{\Psi}_-)}{\phi_-(\ak^{\Psi}_-)}
\]
is well-defined since \(\phi_-(\ak^{\Psi}_-) \in (0,\infty]\). Hence, in this case, \(\MPsi \in \Att_{\lbrbb{c^\Psi,\,1-\bar{\ak}^\Psi_-}}\). The case \(\bar{\ak}^\Psi_- = -\infty\) corresponds to \(\phi_-(z)\equiv \phi_-(\infty)>0\).

Next, consider the boundary at \(c^\Psi\). The existence of \(\MPsi\) at \(c^\Psi\) depends on the behaviour of \(\Gamma(z)/\Wpp(z)\) as \(z \to c^\Psi\). The case \(c^\Psi=0\) is discussed in Remark~\ref{MpsiF}. If \(c^\Psi < 0\), then \(\uk^\Psi_+ = 0\) and \(c^\Psi=-\ak^\Psi_+ < 0\), so
\[
    \abs{\phi_+(c^\Psi+)} := \lim_{x \downarrow c^\Psi} \abs{\phi_+(x)} \in (0,\infty].
\]
Then, using \eqref{receq}, we get
\[
    \lim_{a \downarrow c^\Psi} \frac{\Gamma(a)}{\Wpp(a)}
    = \lim_{a \downarrow c^\Psi} \frac{\Gamma(1+a)}{\Wpp(1+a)} \frac{\phi_+(a)}{a}
    = \frac{\phi_+(c^\Psi+)}{c^\Psi} \lim_{a \downarrow c^\Psi}\frac{\Gamma(1+a)}{\Wpp(1+a)}.
\]
If \(|\phi_+(c^\Psi+)| < \infty\), then
\begin{itemize}
    \item
if \(c^\Psi \notin -\Nb\), the last limit is well-defined, so \(\MPsi \in \Att_{\lbbrb{c^\Psi,\,1-\bar{\ak}^\Psi_-}}\);
\item if \(c^\Psi \in -\Nb\), then, since \(\uk^\Psi_+ = 0\) by the proof of \cite[Thm~2.1]{PatieSavov2018}, the term \(\Gamma(1+c^\Psi)/\Wpp(1+c^\Psi)\) is well-defined, so again \(\MPsi \in \Att_{\lbbrb{c^\Psi,\,1-\bar{\ak}^\Psi_-}}\).
\end{itemize}
If, however, \(\phi_+(c^\Psi+)=\infty\), then the limit above diverges to infinity. The case \(c^\Psi=-\infty\) can be handled by applying the recurrence relation as above.
\end{proof}
For convenience of the reader, we translate the results of Theorem~\ref{thm: MPsi} to the finiteness of the moments of~\(\IPsi\).

\begin{corollary}[Thm.~2.4 in \cite{PatieSavov2018}]\label{moments}
 Let \(\Psi\) be the \LLK exponent of a potentially killed \LL process such that \(\phi_-(0) > 0\). Then \[
\Ebb{\IPsi^{z}}
 \text{ is well-defined for }z \in \Cb_{\lbrb{-1 + c^\Psi, -\bar{\ak}^\Psi_-}},\] where \(c^\Psi = -\ak^\Psi_+ \ind{\uk^\Psi_+ = 0} \leq 0\). 
Moreover, at the two boundary points \(-1+c^\Psi\) and \( -\bar\ak_-^\Psi\), one has the following behaviour:
\pagebreak
\begin{center}
\captionof{table}{Boundary behaviour of $\Ebb{\IPsi^{z}}$ by parameter conditions}
\label{tab:psi-moment-boundary}

\small
\renewcommand{\arraystretch}{1.25}
\setlength{\tabcolsep}{4pt}

\begin{tabular}{p{0.30\textwidth}p{0.34\textwidth}p{0.28\textwidth}}
\hline
\textbf{Parameter regime}
&
\textbf{Additional condition}
&
\textbf{Behaviour of $\Ebb{\IPsi^{z}}$ at the boundary}
\\
\hline

\multicolumn{3}{l}{\textit{Right boundary}}\\
\softline

$\bar{\ak}^{\Psi}_{-}=0$
&
&
$\Ebbt{\IPsi^{z}}$ is well-defined for
$z\in\Cb_{\lbrbb{-1+c^{\Psi},\,0}}$
\\
\softline

\multirow{2}{0.30\textwidth}{%
  \parbox{0.30\textwidth}{\raggedright
  $\bar{\ak}^{\Psi}_{-}\in(-\infty,0)$}
}
&
$\bar{\ak}^{\Psi}_{-}=\uk^{\Psi}_{-}\in(-\infty,0)$
&
$\Ebbt{\IPsi^{-\bar{\ak}^{\Psi}_{-}}}=\infty$
\\
\softcline

&
$\bar{\ak}^{\Psi}_{-}=\ak^{\Psi}_{-}>\uk^{\Psi}_{-}=-\infty$
&
$\Ebbt{\IPsi^{-\bar{\ak}^{\Psi}_{-}}}$ is finite and given by
\eqref{MPsiAn}
\\

\blockline
\multicolumn{3}{l}{\textit{Left boundary}}\\
\softline

\multirow{2}{0.30\textwidth}{%
  \parbox{0.30\textwidth}{\raggedright $c^{\Psi}=0$}
}
&
$\uk^{\Psi}_{+}=0$ and $\phi_+'(0^+)<\infty$
&
$\Ebb{\IPsi^{-1}}=\Ebb{\xi_1}<\infty$
\\
\softcline

&
\emph{in all remaining cases}
&
$\Ebb{\IPsi^{-1}}=\infty$
\\
\softline

\multirow{2}{0.30\textwidth}{%
  \parbox{0.30\textwidth}{\raggedright
  $c^{\Psi}\in(-\infty,0)$}
}
&
$\displaystyle \lim_{x\downarrow c^{\Psi}}\abs{\phi_{+}(x)}<\infty$
&
$\Ebbt{\IPsi^{-1+c^{\Psi}}}$ is finite and given by
\eqref{MPsiAn}
\\
\softcline

&
\emph{in all remaining cases}
&
$\Ebbt{\IPsi^{-1+c^{\Psi}}}=\infty$
\\

\hline
\end{tabular}
\end{center}
\end{corollary}

\begin{remark}[\textbf{Historical remarks}]\label{momentsH}
    The existence of moments of \(\IPsi\) follows from the region of analyticity in Theorem~\ref{thm: MPsi}. All the papers referred to at the end of Remark~\ref{MpsiH} provide not only information about the finiteness of moments but also explicit expressions for them. The fact that \(\Ebb{\IPsi^a} < \infty\) for all \(a \in \lbrb{-1,0}\) is proved in \cite[Prop.~2.6]{ArRiv23} and is equivalent to \(\MPsi \in \Att_{\lbrb{0,1}}\); see \eqref{MPsiAn}. The equivalence \(\Eb[{\IPsi^{-1}}] < \infty \iff (q = 0\) and \(\Ebb{\xi_1} < \infty\)) is also contained in \cite[Prop.~2.6]{ArRiv23} and follows from Theorem~\ref{thm: MPsi} when \(\MPsi\) extends to \(\Att_{\lbbrb{0,1-\bar{\ak}^\Psi_-}}\).
\end{remark}

\begin{remark}[\textbf{Further comments}]\label{momentsF}
    If \(\Psi(0) = 0\), i.e., if \(\xi\) is conservative, then necessarily \(\uk^\Psi_+ = \bar{\ak}^\Psi_+ = 0\) and \(c^\Psi=-\ak^\Psi_+\), since in this case \(\xi\) drifts to \(\infty\).
\end{remark}

\begin{remark}[\textbf{Conjecture of Bertoin and Yor}]\label{BYConj}
    The case \(c^\Psi = -\infty\) shows that for an unkilled \LLP with all positive exponential moments, \(\IPsi\) has all negative moments. It is a conjecture of Bertoin and Yor, see \cite[Sec.~3]{BerYor05}, that in this situation \(\IPsi^{-1}\) is moment determinate if and only if \(\Pi((0,\infty)) = 0\). The negative moments are given by
    \[
        \Ebb{\IPsi^{-n}} = \Ebb{\xi_1} \frac{ \prod_{j=1}^{n-1} \Psi(j) }{ (n-1)! },
    \]
    see \cite[Thm.~3]{BerYor05}, or equivalently \(\Ebb{\IPsi^{-n}} = \MPsi(-n+1)\). If \(\Pi\lbrb{0,\infty} > 0\) and, for some \(a > 0\), either \(a\) is an atom of \(\Pi\) or \(\Pi((a,\infty)) > 0\), then one obtains
    \[
        \liminf_{j \to \infty} \Psi(j)e^{-a j} > 0,
    \]
    see \eqref{LLK}. Therefore, for large \(n\), \(\prod_{j=1}^{n-1} \Psi(j) > C e^{n(n-1)/2}\) for some constant \(C > 0\), and the moment conditions for moment determinacy of \(\IPsi^{-1}\) are not satisfied; see \cite[p.~4]{Lin17}. We return to this discussion in the section where we present results on \(\pPs\), since conditions for moment indeterminacy invariably include information about the density.
\end{remark}

\begin{proof}
    The only cases not explicitly addressed in the literature are items~2 and~4. These follow immediately by appeal to Theorem \ref{moments} and  substitution into the identity \(\MPsi(z+1) = \Ebb{\IPsi^z}\) at \(z = -1 + c^\Psi\) and \(z = -\bar{\ak}^\Psi_-\), respectively.
\end{proof}

For completeness, we spell out the trivial evaluation of integer moments of \(\IPsi\), whenever these exist. We begin with the positive integer moments.  
\begin{corollary}[\cite{CarPetYor94}; Thm.~2.4 in \cite{PatieSavov2018}]\label{cor:PosMoments}
    Let \(\Psi\) be the \LLK exponent of a potentially killed \LL process such that \(\phi_-(0) > 0\).  
    If \(-\bar{\ak}^\Psi_- \notin \Nb\) and \(-\bar{\ak}^\Psi_- > 1\), then for any integer \(n\) with \(1 \leq n < -\bar{\ak}^\Psi_-\), we have
    \begin{equation}\label{cor:PosMoments_1}
        \Ebb{\IPsi^n}
        = (-1)^n \,\frac{n!}{\prod_{j=1}^n \Psi(-j)}.
    \end{equation}
    Furthermore, if \(1 \leq -\bar{\ak}^\Psi_- \in \Nb\) and \(\bar{\ak}^\Psi_- = \ak^\Psi_- > \uk^\Psi_- = -\infty\), then \eqref{cor:PosMoments_1} also holds for \( n  = -\bar{\ak}^\Psi_-\).
\end{corollary}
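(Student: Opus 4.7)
The natural strategy is to iterate the functional equation
\[
  \MPsi(z+1) \;=\; \frac{-z}{\Psi(-z)}\,\MPsi(z)
\]
from the trivial anchor $\MPsi(1) = \Ebb{\IPsi^{0}} = 1$, exploiting the fact that $\MPsi(n+1) = \Ebb{\IPsi^{n}}$. First I would derive this recurrence directly from the closed form \eqref{MPsiAn}, combining the three defining identities $\Gamma(z+1) = z\,\Gamma(z)$, $\Wpp(z+1) = \phi_+(z)\Wpp(z)$, and $\Wpn(1-z) = \phi_-(-z)\Wpn(-z)$ (the last obtained by substituting $\zeta = -z$ in \eqref{receq}) with the Wiener-Hopf identity $\Psi(-z) = -\phi_+(z)\phi_-(-z)$. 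This gives the recurrence on the entire strip of analyticity of $\MPsi$, rather than merely on $i\Rb$ as originally stated in \eqref{receqEx}.

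Next I would verify that all evaluations $\MPsi(1), \MPsi(2), \ldots, \MPsi(n+1)$ are well-defined. By Corollary~\ref{moments} and $c^\Psi \leq 0$, $\MPsi$ is analytic on $\Cb_{\lbrb{c^\Psi,\,1-\bar{\ak}^\Psi_-}}$. In the first case ($-\bar{\ak}^\Psi_- \notin \Nb$, $-\bar{\ak}^\Psi_- > 1$), the condition $n < -\bar{\ak}^\Psi_-$ yields $n+1 < 1 - \bar{\ak}^\Psi_-$, so all these points lie strictly inside the strip. In the boundary case $n = -\bar{\ak}^\Psi_- \in \Nb$, the point $n+1$ sits exactly on $\Cb_{1-\bar{\ak}^\Psi_-}$; however, the extra hypothesis $\bar{\ak}^\Psi_- = \ak^\Psi_- > \uk^\Psi_- = -\infty$ triggers the corresponding row of the table in Theorem~\ref{thm: MPsi}, which supplies the continuous extension $\MPsi \in \Att_{\lbrbb{c^\Psi,\,1-\bar{\ak}^\Psi_-}}$, so $\MPsi(n+1)$ is finite and the recurrence remains valid at $z = n$ by continuity.

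An $n$-fold iteration then gives
\[
  \Ebb{\IPsi^{n}} \;=\; \MPsi(n+1) \;=\; \prod_{j=1}^{n} \frac{-j}{\Psi(-j)} \;=\; \frac{(-1)^n\,n!}{\prod_{j=1}^{n}\Psi(-j)},
\]
which is exactly \eqref{cor:PosMoments_1}. The only delicate point I anticipate is in the boundary case: $\Psi(-n) = -\phi_+(n)\phi_-(\ak^\Psi_-)$ may equal $-\infty$ when $\phi_-(\ak^\Psi_-) = \infty$, in which case the stated formula evaluates to $0$. This is consistent with \eqref{MPsiAn}, since the recurrence for $\Wpn$ at the boundary forces $\Wpn(\ak^\Psi_-) = \Wpn(1+\ak^\Psi_-)/\phi_-(\ak^\Psi_-) = 0$, and substitution into \eqref{MPsiAn} then yields $\MPsi(n+1) = 0$ as well. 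Beyond this trivial check, no real obstacle arises: the whole argument is essentially the analytic-continuation extension of a functional equation that is already encoded in the product structure of \eqref{MPsiAn}.
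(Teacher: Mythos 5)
Your proof is correct in substance and takes the natural route that the paper labels a ``trivial evaluation'': derive the recurrence $\MPsi(z+1) = \tfrac{-z}{\Psi(-z)}\MPsi(z)$ from the explicit product form \eqref{MPsiAn} via the functional equations of $\Gamma$, $\Wpp$, and $\Wpn$; check that the evaluation points $1,2,\dots,n+1$ lie in the strip of analyticity (or on its closed right boundary under the extra hypotheses via Theorem~\ref{thm: MPsi}); iterate from $\MPsi(1)=1$. This is exactly what the references cited in the corollary implicitly do.

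Your aside about the boundary case contains a small logical slip, though. You claim that if $\phi_-(\ak^\Psi_-)=\infty$ then both sides of \eqref{cor:PosMoments_1} evaluate to $0$, and present this as a consistency check. But $\IPsi > 0$ almost surely, so $\Ebb{\IPsi^n}>0$; a vanishing moment would be a \emph{contradiction}, not a confirmation. Fortunately the scenario cannot arise: on the real part of its maximal strip of analyticity $(\ak_-,\infty)$ one has
\[
\phi_-'(x) \;=\; \dbf_- + \int_0^\infty y\,e^{-xy}\,\mu_-(\D y) \;>\; 0,
\]
so $\phi_-$ is strictly increasing there and $\phi_-(\ak^\Psi_-) = \lim_{x\downarrow\ak^\Psi_-}\phi_-(x) \leq \phi_-(0) < \infty$. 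Combined with $\uk^\Psi_- = -\infty$ (no zeros on $[\ak_-,0]$), this gives $\phi_-(\ak^\Psi_-) \in (0,\phi_-(0)]$, so $\Wpn(\ak^\Psi_-)$ and hence $\MPsi(n+1)$ are finite and strictly positive, as they must be. The paper's phrasing ``$\phi_-(\ak^\Psi_-)\in(0,\infty]$'' in the proof of Theorem~\ref{thm: MPsi} is needlessly cautious on this point.
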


\begin{remark}[\textbf{Historical remarks}]\label{positivemomentsF}
   The first results on positive integer moments appear to be contained in \cite{CarPetYor94}, with subsequent works by various authors providing additional refinements and extensions.
\end{remark}

We proceed with the negative integer moments.
\begin{corollary}[Thm.~3 in \cite{BerYor05}]\label{cor:PosMoments1}
    Let \(\Psi\) be the \LLK exponent of a potentially killed \LL process such that \(\phi_-(0) > 0\), \(\Psi(0)=0\), and \(\Psi'(0)=\Ebb{\xi_1}<\infty\).  
    If \(-1 + c^{\Psi} = -1 - \ak^\Psi_+ < -1\) and \(1 - c^{\Psi} \notin \Nb\), then, for any integer \(1 \leq n < 1 - c^{\Psi}\),
    \begin{equation}\label{cor:PosMoments1_1}
        \Ebb{\IPsi^{-n}}
        = \Ebb{\xi_1}\,\frac{ \prod_{j=1}^{n-1} \Psi(j) }{ (n-1)! },
    \end{equation}
    with the convention that \(\Ebb{\IPsi^{-1}} = \Ebb{\xi_1}\).  
    Furthermore, if \(1 \leq 1 - c^{\Psi} \in \Nb\) and \(\lim_{x \downarrow c^{\Psi}} \abs{\phi_+(x)} < \infty\) or equivalently \(\lim_{x\uparrow -\ak^\Psi_+}\abs{\Psi(x)}<\infty\), then \eqref{cor:PosMoments1_1} also holds for \(n = 1 - c^{\Psi}\).
\end{corollary}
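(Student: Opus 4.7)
The plan is to exploit the functional equation $\MPsi(z+1)=\frac{-z}{\Psi(-z)}\MPsi(z)$ recalled in Remark~\ref{MpsiH}, which, by Theorem~\ref{thm: MPsi}, is valid throughout the strip of analyticity $\Cb_{\lbrb{c^\Psi,\,1-\bar{\ak}^\Psi_-}}$ of $\MPsi$. Under the hypotheses one has $\uk_+^\Psi=0$ (since $\Psi(0)=0$), so $c^\Psi=-\ak_+^\Psi<0$; moreover, \eqref{aPsiSpecial} yields $\bar{\ak}_-^\Psi\leq 0$. Consequently every point $z=1-n$ with $1\leq n<1-c^\Psi$ lies inside the open analyticity strip, and $\Ebb{\IPsi^{-n}}=\MPsi(1-n)$ is well-defined and finite.

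First I would initialise at $z=0$. Rewriting the recurrence as $\MPsi(z)=\frac{\Psi(-z)}{-z}\MPsi(z+1)$ and letting $z\to 0$, the hypotheses $\Psi(0)=0$ and $\Psi'(0)=\Ebb{\xi_1}\in\Rb$ yield $\Psi(-z)/(-z)\to \Psi'(0)=\Ebb{\xi_1}$, while $\MPsi(1)=1$ by definition. Hence $\MPsi(0)=\Ebb{\IPsi^{-1}}=\Ebb{\xi_1}$, which matches \eqref{cor:PosMoments1_1} for $n=1$ under the empty-product convention. This is the only step where care is needed, since the apparent singularity of the kernel $-z/\Psi(-z)$ at $z=0$ is removed precisely by the assumption $\Ebb{\xi_1}<\infty$.

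Next I would iterate for $2\leq n<1-c^\Psi$. Since every intermediate value $\MPsi(0),\MPsi(-1),\ldots,\MPsi(2-n)$ lies in the open strip of analyticity, and none of the denominators $\Psi(k)$, $1\leq k\leq n-1$, vanish (as each evaluation $\MPsi(1-k)=\Ebb{\IPsi^{-k}}$ is finite and strictly positive, forcing $\Psi(k)\ne 0$ a posteriori), telescoping gives
\[
    \MPsi(1-n)\;=\;\prod_{k=1}^{n-1}\frac{\Psi(n-k)}{n-k}\,\MPsi(0)\;=\;\frac{\prod_{j=1}^{n-1}\Psi(j)}{(n-1)!}\,\Ebb{\xi_1},
\]
which is the desired identity.

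Finally, for the boundary case $n=1-c^\Psi\in\Nb$: the additional assumption $\lim_{x\downarrow c^\Psi}\abs{\phi_+(x)}<\infty$ ensures, via Corollary~\ref{moments}, that $\MPsi$ admits a finite continuous extension to $\Cb_{c^\Psi}$, and simultaneously that $\Psi(-c^\Psi)=\Psi(\ak_+^\Psi)$ is a finite non-zero limit (non-zero by the same positivity argument as above). The recurrence, being an identity between continuous functions on $\Cb_{\lbbrb{c^\Psi,\,1-\bar{\ak}_-^\Psi}}$, extends by continuity to $z=c^\Psi$, and one more application combined with the already-established formula at $n-1$ yields \eqref{cor:PosMoments1_1} for this boundary $n$ as well. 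The main subtlety is the $n=1$ step; everything else is a clean iteration.
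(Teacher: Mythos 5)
Your proposal is correct and follows essentially the same route the paper implicitly intends: the corollary is stated without a separate proof precisely because it is the immediate consequence of Theorem~\ref{thm: MPsi} (giving the analyticity strip $\Cb_{(c^\Psi,\,1-\bar{\ak}_-^\Psi)}$ and the boundary extension), Corollary~\ref{moments} (finiteness at the boundary), and iteration of the recurrence \eqref{receqEx}. Your handling of the initialisation at $z=0$ via the removable singularity $\Psi(-z)/(-z)\to\Psi'(0)=\Ebb{\xi_1}$, the a-posteriori positivity argument ruling out $\Psi(j)=0$, and the continuity argument for the boundary point $z=c^\Psi$ are all sound and match how the paper treats the analogous boundary extensions in the proof of Theorem~\ref{thm: MPsi}.
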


Next, we discuss the rate of decay of \(\MPsi\) along complex lines, a feature that is particularly useful for studying the law of \(\IPsi\) via Mellin inversion. We begin with some notation. Recall that \(\dbf_\pm\) denote the drift terms of the Wiener–Hopf factors \(\phi_\pm\); see \eqref{phipm}. We also set \(\overline{\Pi}(0) := \Pi\lbrb{\Rb}\) and recall that \(\overline{\mu}_\pm(0) := \mu_\pm\lbrb{\Rp}\), where \(\mu_\pm\) are the Lévy measures of \(\phi_\pm\) (again see \eqref{phipm}).  

If \(\overline{\Pi}(0) < \infty\) and \(\dbf_+ > 0\), a result of Vigon \cite{Vigon02}, stated in full generality in \cite[Prop.~B.2]{PatieSavov2018}, yields that 
\[
\mu_-(\D y) = v_-(y)\,\D y, \quad y > 0, \qquad
v_- \text{ is right-continuous on } \intervalOOI, \quad
\text{and } v_-(0^+) \in \intervalOOI.
\]

Finally, we introduce a quantity that describes the smoothness of the law of \(\IPsi\) and the polynomial decay of \(\MPsi\)
\begin{equation}\label{Npsi}
    \NPs := 
\begin{cases}
  \dfrac{ v_-(0^+)}{\phi_-(0) + \overline{\mu}_-(0)} + \dfrac{\phi_+(0) + \overline{\mu}_+(0)}{\dbf_+}, 
   & \text{if } \dbf_+ > 0,\, \dbf_- = 0, \text{ and } \overline{\Pi}(0) < \infty, \\[1.2ex]
  \infty, & \text{otherwise.}
\end{cases}
\end{equation}

We are now in a position to state the main result concerning the decay of \(\MPsi\) along complex lines.
\begin{theorem}[Thm.~2.3 in \cite{PatieSavov2018}]\label{MDecay}
    Let \(\Psi\) be the \LLK exponent of a potentially killed \LLP such that \(\phi_-(0) > 0\).  
    \begin{itemize}
        \item If \(\NPs = \infty\), then for all \(\beta > 0\) and any \(a \in (0,\, 1 - \bar{a}_-^\Psi)\),
        \begin{equation}\label{MPsiDecay}
            \limsupi{|b|} |b|^\beta \abs{\MPsi(a + ib)} = 0.
        \end{equation}
        \item If instead \(\NPs \in \lbrb{0, \infty}\), then for any \(a \in \lbrb{0,\, 1 - \bar{a}_-^\Psi}\),
        \begin{equation}\label{MPsiDecay3.11}
            \limsupi{|b|} |b|^\beta \abs{\MPsi(a + ib)} =
            \begin{cases}
               0, & \text{if } \beta < \NPs, \\[4pt]
               \infty, & \text{if } \beta > \NPs.
            \end{cases}
        \end{equation}
        \item Finally, \(\NPs = 0\) if and only if \(\xi_t = \dbf_+ t\) for all \(t \geq 0\).
    \end{itemize}
\end{theorem}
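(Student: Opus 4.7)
The plan is to analyse \eqref{MPsiAn} through the Stirling asymptotics of its three factors. Combining the classical estimate $|\Gamma(a+ib)|\sim\sqrt{2\pi}\,|b|^{a-1/2}e^{-\pi|b|/2}$ with Proposition~\ref{prop-WStir} applied to $\Wpp(a+ib)$ and to $\Wpn(1-a\pm ib)$ (using the conjugate symmetry of $\Wpn$ to identify $\abs{\Wpn(1-a-ib)}$ with $\abs{\Wpn(1-a+ib)}$), I obtain, for $a\in\lbrb{0,\,1-\bar{\ak}^\Psi_-}$ fixed and $|b|\to\infty$,
\[
    \abs{\MPsi(a+ib)}\asymp
    |b|^{a-1/2}\,\frac{|\phi_+(a+1+ib)|^{1/2}}{|\phi_-(2-a+ib)|^{1/2}}\,
    \exp\lbrb{-\tfrac{\pi}{2}|b|+A_{\phi_+}(a+ib)-A_{\phi_-}(1-a+ib)}.
\]
The subsequent analysis reduces to understanding the exponential factor in $|b|$ and the power prefactor involving $|\phi_\pm|$.

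Since $\arg_0\phi(a+iu)\in[0,\pi/2]$ for any Bernstein function $\phi$ and $u>0$ (with the symmetric bound for $u<0$), one has $0\leq A_\phi(a+ib)\leq \pi|b|/2$, as noted in Remark~\ref{repWThF}. Polynomial decay of $\MPsi$ therefore requires the simultaneous saturation $A_{\phi_+}(a+ib)=\pi|b|/2+\bo{\ln|b|}$ and $A_{\phi_-}(1-a+ib)=\bo{\ln|b|}$, since any linear-in-$|b|$ deviation produces super-polynomial decay. An expansion of $\phi_+$ on vertical rays shows that the first condition is equivalent to $\dbf_+>0$ together with $\overline{\mu}_+(0)<\infty$, because only then $\phi_+(z)=\dbf_+ z+\bo{1}$ on $\Re z=a$, yielding
\[
    \arg_0\phi_+(a+iu)=\frac{\pi}{2}-\frac{\phi_+(0)+\overline{\mu}_+(0)+\dbf_+ a}{\dbf_+\,u}+\so{u^{-1}},\qquad u\to\infty.
\]
The second condition forces $\dbf_-=0$ (so that $\phi_-$ stays bounded on $i\Rb$); together with $\PP(0)<\infty$, Vigon's density representation $\mu_-(\D y)=v_-(y)\D y$ recalled before \eqref{Npsi} becomes available, and an integration by parts in $\int_0^\infty e^{-(a+iu)y}v_-(y)\D y$ yields $\arg_0\phi_-(1-a+iu)=\frac{v_-(0^+)}{u(\phi_-(0)+\overline{\mu}_-(0))}(1+\so{1})$. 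Integrating these two expansions in $u$ gives $A_{\phi_+}(a+ib)=\tfrac{\pi|b|}{2}-\tfrac{\phi_+(0)+\overline{\mu}_+(0)+\dbf_+ a}{\dbf_+}\ln|b|+\bo{1}$ and $A_{\phi_-}(1-a+ib)=\tfrac{v_-(0^+)}{\phi_-(0)+\overline{\mu}_-(0)}\ln|b|+\bo{1}$. Substituting back, the exponentials $e^{\pm\pi|b|/2}$ cancel; the prefactor $|\phi_+(a+1+ib)|^{1/2}\sim\sqrt{\dbf_+|b|}$ combines with the $|b|^{a-1/2}$ from $\Gamma$ and with the $|b|^{-(\phi_+(0)+\overline{\mu}_+(0)+\dbf_+ a)/\dbf_+}$ arising from $A_{\phi_+}$ to leave $|b|^{-(\phi_+(0)+\overline{\mu}_+(0))/\dbf_+}$, while the $\phi_-$-contribution is $|b|^{-v_-(0^+)/(\phi_-(0)+\overline{\mu}_-(0))}$; adding the exponents yields exactly $-\NPs$, which establishes \eqref{MPsiDecay3.11}.

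For \eqref{MPsiDecay} I argue that when $\NPs=\infty$ the exponent $-\pi|b|/2+A_{\phi_+}(a+ib)-A_{\phi_-}(1-a+ib)$ tends to $-\infty$ linearly in $|b|$: if $\dbf_+=0$ then $\phi_+(a+iu)/u\to 0$ by dominated convergence in \eqref{BG}, forcing $\limsup A_{\phi_+}(a+ib)/|b|<\pi/2$; if $\dbf_->0$ then the same expansion applied to $\phi_-$ gives $A_{\phi_-}(1-a+ib)\sim \pi|b|/2$, producing an extra $e^{-\pi|b|/2}$; and if $\PP(0)=\infty$ then at least one of $\overline{\mu}_\pm(0)$ is infinite, and a dominated-convergence argument on the representation of $\arg_0\phi_\pm$ still yields the required linear deficit. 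This last subcase is the main technical obstacle, and I would adapt the tail estimates from \cite[Thm.~4.2]{PatieSavov2018}. Finally, $\NPs=0$ forces $\phi_+(0)=\overline{\mu}_+(0)=0$ and $v_-(0^+)=0$ in the finite-$\NPs$ regime; the first two give $\phi_+(z)\equiv\dbf_+ z$, so $\xi$ has no positive jumps and drifts to $+\infty$, and in this spectrally negative, positive-drift setting Vigon's identity expresses $v_-(0^+)$ as a positive multiple of $\PP(0)$, so $\PP(0)=0$, leaving $\xi_t=\dbf_+ t$.
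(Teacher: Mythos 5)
The high-level strategy — decomposing $\MPsi$ via~\eqref{MPsiAn}, invoking Proposition~\ref{prop-WStir} for the two Bernstein-gamma factors, and playing the $e^{-\pi|b|/2}$ from Stirling's formula against $A_{\phi_+}$ and $A_{\phi_-}$ — is indeed the structure of the argument in~\cite[Thm.~2.3]{PatieSavov2018} (and the survey supplies exactly the needed tools in Proposition~\ref{prop-WStir} and Remark~\ref{repWThF}). Your identification of the finite-$\NPs$ regime as the compound-Poisson-with-positive-drift case and the explicit bookkeeping of the exponents of $|b|$ is sound modulo a couple of points below.

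However, the argument for the first item ($\NPs=\infty$) has a genuine gap. You assert that the log-exponent $-\tfrac{\pi}{2}|b|+A_{\phi_+}(a+ib)-A_{\phi_-}(1-a+ib)$ tends to $-\infty$ \emph{linearly} in $|b|$, and in the subcase $\dbf_+=0$ you deduce $\limsup_{|b|}A_{\phi_+}(a+ib)/|b|<\pi/2$ from $\phi_+(a+iu)/u\to 0$. That implication does not hold: $|\phi_+(a+iu)|=\so{u}$ controls the modulus, not the argument. A Bernstein function with $\dbf_+=0$ (e.g.\ one whose tail $\bar\mu_+$ is slowly varying at $0$, so $\phi_+$ is slowly varying at $\infty$ relative to $z$) can perfectly well have $\arg_0\phi_+(a+iu)\to\pi/2$, just at a slower rate than in the drift case; the deficit $\tfrac{\pi}{2}|b|-A_{\phi_+}(a+ib)$ is then only \emph{super-logarithmic}, not linear, and the decay of $|\MPsi(a+ib)|$ is super-polynomial but not exponential. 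The theorem only claims $\limsup|b|^\beta|\MPsi(a+ib)|=0$ for every $\beta$, so the statement survives, but your proof of it does not: the clean trichotomy "linear deficit versus cancellation versus adapt Thm.~4.2" collapses precisely in this boundary regime, and the case $\dbf_+=0$ is not, as you suggest, the easy one. The genuine content of the $\NPs=\infty$ item is that the deficit is always $\omega(\ln|b|)$, and establishing this requires a separate analysis for each of the ways $\NPs=\infty$ can arise — this is the bulk of the proof in~\cite{PatieSavov2018}, not a footnote to the finite-$\NPs$ computation.

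Two smaller points. First, the integration by parts in $\int_0^\infty e^{-(1-a)y}\sin(uy)v_-(y)\,\D y$ silently requires differentiability or bounded variation of $v_-$, which Vigon's result does not provide. The robust route is to integrate in $u$ first: $\int_0^b\Im\phi_-(1-a+iu)\,\D u=\int_0^\infty e^{-(1-a)y}v_-(y)\tfrac{1-\cos(by)}{y}\,\D y$, and the change of variables $t=by$ shows this is $v_-(0^+)\ln b(1+\so{1})$ using only right-continuity of $v_-$ at $0$; the estimate for $A_{\phi_-}$ then follows without any pointwise asymptotics for $\arg_0\phi_-$. Second, you should check that $\overline{\mu}_+(0)<\infty$ (needed to make your expansion of $\arg_0\phi_+$ valid and $\NPs$ finite) is implied by $\dbf_+>0$, $\dbf_-=0$, $\PP(0)<\infty$, rather than assuming it as an independent hypothesis — this follows from the ladder-height description but deserves a word.
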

\begin{remark}[\textbf{Historical remarks}]\label{MDecayH}
When solving \eqref{receqEx} for specific cases, such as those in subsections~\ref{hyperLP} and~\ref{merLP}, one uses a potential candidate for \(\MPsi\) and verifies that it is indeed \(\MPsi\). This is accomplished by analysing the rate of decay of the candidate for \(\MPsi\) along complex lines and the properties of entire periodic functions; see \cite{HackKuz14, Kuz2012, KuzPar13}. A somewhat general investigation of the rate of decay seems to be first conducted in \cite[Thm~6.0.2]{PatSav21}. Full study is offered in \cite{PatieSavov2018} and as in the specific cases mentioned above served as the tool to identify the Mellin transform of \(\IPsi\) in complete generality.
\end{remark}
\begin{remark}[\textbf{Further comments}]\label{MDecayF}
    Note that \(\NPs < \infty\) if and only if \(\xi\) is a compound Poisson process with positive drift, that is, \(\dbf_+ > 0,\; \dbf_- = 0,\) and \(\overline{\Pi}(0) < \infty\). In all other cases, the decay of \(\lvert \MPsi(a+ib) \rvert\) is faster than any polynomial, and in many situations even exponential.
\end{remark}
The next corollary shows that the rate of decay established in Theorem~\ref{MDecay} is preserved on the strip of meromorphicity \(\Cb_{\lbrb{-\ak^\Psi_+,\,1-\bar{\ak}^\Psi_-}}\); see Theorem~\ref{thm: MPsi}.

\begin{corollary}[Prop.~7.1 in \cite{PatieSavov2018}]\label{corMDecay}
    Let \(\Psi\) be a \LLK exponent of a potentially killed \LLP such that \(\phi_-(0) > 0\). Then, for any \(a \in (-\ak^\Psi_+, 0]\), relations~\eqref{MPsiDecay} and~\eqref{MPsiDecay3.11} hold with the same value of \(\NPs\). Moreover, if \(\lvert \phi_+(-\ak^{\Psi}_+) \rvert < \infty\), which is always the case when \(\ak^\Psi_+ = 0\), then~\eqref{MPsiDecay} and~\eqref{MPsiDecay3.11} also hold for \(a = -\ak^\Psi_+\) with the same \(\NPs\).
\end{corollary}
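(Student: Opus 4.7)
The plan is to transfer the decay estimates \eqref{MPsiDecay}--\eqref{MPsiDecay3.11} from the strip $\Cb_{(0,1-\bar{\ak}^\Psi_-)}$, where Theorem~\ref{MDecay} already applies, to the extended range $a\in (-\ak^\Psi_+, 0]$ and, under the extra hypothesis, to the line $\Cb_{-\ak^\Psi_+}$. The vehicle is the functional equation \eqref{receqEx} rewritten as $\MPsi(z)=-\Psi(-z)\MPsi(z+1)/z$, which holds on the whole meromorphy strip and lets one shift the real part of the argument by one unit. First, for $a\in (-\ak^\Psi_+, 0]$ I pick the smallest $n\in\Nb$ such that $a+n\in (0,1-\bar{\ak}^\Psi_-)$ (in the borderline case $\bar{\ak}^\Psi_-=0$ and $a\in\Zb$, where the shift would land exactly on $\Cb_1$, a short continuity argument relying on the extension of $\MPsi$ to $\Cb_1$ provided by Theorem~\ref{thm: MPsi} is required). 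Iterating the recurrence $n$ times yields
\[
\MPsi(a+ib)\;=\;\prod_{k=0}^{n-1}\frac{-\Psi(-a-k-ib)}{a+k+ib}\cdot\MPsi(a+n+ib),
\]
and the task reduces to estimating the modulus of the prefactor as $|b|\to\infty$.

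If $\NPs=\infty$, the L\'evy-Khintchine form \eqref{LLK} gives $|\Psi(c+ib)|=\bo{|b|^2}$ uniformly for $c$ in a compact real section, so each factor in the prefactor is $\bo{|b|}$ and the full prefactor is polynomially bounded; combined with the super-polynomial decay of $\MPsi(a+n+\cdot)$ from Theorem~\ref{MDecay} this yields \eqref{MPsiDecay}. If $\NPs\in(0,\infty)$, the definition \eqref{Npsi} forces $\dbf_+>0$, $\dbf_-=0$ and $\overline{\Pi}(0)<\infty$, so $\xi$ is a compound Poisson with positive drift and $\Psi(z)=\dbf_+ z+\int(e^{zy}-1)\,\Pi(\D y)$. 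On the analyticity strip the map $y\mapsto e^{(-a-k)y}$ is $\Pi$-integrable, so $\int e^{(-a-k-ib)y}\,\Pi(\D y)$ remains bounded in $b$ (although it need not vanish, since $\Pi$ may carry atoms and Riemann-Lebesgue is not directly available). Consequently $|\Psi(-a-k-ib)|/|a+k+ib|\to \dbf_+>0$, the prefactor converges to $\dbf_+^{\,n}>0$, and the two-sided dichotomy \eqref{MPsiDecay3.11} is transferred unchanged with the same critical exponent $\NPs$.

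For the endpoint $a=-\ak^\Psi_+$, the hypothesis $|\phi_+(-\ak^\Psi_+)|<\infty$ (which is automatic when $\ak^\Psi_+=0$) ensures via the Wiener-Hopf factorisation \eqref{WHf} that $\Psi$ is finite on $\Cb_{\ak^\Psi_+}$ with at most linear growth in $|b|$, so the recurrence is still well-defined on $\Cb_{-\ak^\Psi_+}$ and the preceding computations carry over verbatim. The main obstacle in the plan is the compound Poisson case: there $\NPs$ is genuinely finite and each unit shift must preserve it \emph{exactly}, so one cannot rely on Riemann-Lebesgue-type vanishing and has to use only the boundedness in $b$ of the Fourier-Stieltjes integral, a bound that is nevertheless enough to keep $\NPs$ invariant across each shift.
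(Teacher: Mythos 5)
Your overall strategy — iterate the functional equation \eqref{receqEx} to shift the line of evaluation into the strip $(0,1-\bar{\ak}_-^\Psi)$ where Theorem~\ref{MDecay} already applies, and then show that the prefactor is polynomially bounded (when $\NPs=\infty$) or converges to a positive constant (when $\NPs<\infty$) — is exactly the idea behind the paper's argument. For the endpoint $a=-\ak_+^\Psi$, the paper performs a single shift and controls the prefactor $\phi_+(-\ak_+^\Psi+ib)\phi_-(\ak_+^\Psi-ib)/(-\ak_+^\Psi+ib)$ using the asymptotics of the Wiener--Hopf factors $\phi_\pm$; you control $\Psi(-a-k-ib)/(a+k+ib)$ directly from the L\'evy--Khintchine form. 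These are the same thing once one recalls $\Psi(-z)=-\phi_+(z)\phi_-(-z)$, so the endpoint part of your proof is sound (with the minor slip that in the compound Poisson case the L\'evy--Khintchine linear term equals $\dbf_+\phi_-(\infty)$, not $\dbf_+$; this does not affect the argument since both are strictly positive).

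There is, however, a genuine gap in your treatment of the \emph{first} claim, which the paper simply cites from Prop.~7.1 of~\cite{PatieSavov2018} rather than re-proving. You identify the problematic borderline case $\bar{\ak}_-^\Psi=0$ with $a\in\Zb\cap(-\ak_+^\Psi,0]$, where the minimal admissible shift lands exactly on $\Cb_1$, outside the open strip covered by Theorem~\ref{MDecay}. Your proposed fix, ``a short continuity argument relying on the extension of $\MPsi$ to $\Cb_1$,'' does not close this gap: continuity of $\MPsi$ on $\Cb_{(c^\Psi,1]}$ controls $\MPsi(1+ib)$ only on compact sets of $b$, and gives no information about the decay rate as $|b|\to\infty$, which is precisely what \eqref{MPsiDecay} and \eqref{MPsiDecay3.11} assert. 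Indeed, $\MPsi(1+ib)=\Ebb{I_\Psi^{ib}}$ is merely bounded by $1$, and feeding this into $\MPsi(ib)=-\Psi(-ib)\MPsi(1+ib)/(ib)$ yields at best $|\MPsi(ib)|=\bo{|b|}$, not super-polynomial decay. What is actually needed is a uniformity statement controlling $|\MPsi(a'+ib)|$ as $a'\uparrow 1$ — this is what Proposition~\ref{uniMpsi} in the paper provides, or alternatively one must argue directly from the Bernstein--gamma representation \eqref{MPsiAn} of $\MPsi$, as is done in the original Prop.~7.1.
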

\begin{remark}[\textbf{Further comments}]\label{corMDecayF}
    Let \(-\ak^\Psi_- < \uk^{\Psi}_- = \bar{\ak}^{\Psi}_- < 0\). Then the decay in~\eqref{MPsiDecay} and~\eqref{MPsiDecay3.11} is always preserved along \(\Cb_{1-\uk^{\Psi}_-}\) when \(\NPs < \infty\). Moreover, if the \LLP is \textit{weak non-lattice}, the decay is preserved also in the case \(\NPs = \infty\); see subsection~\ref{Lasymp}. All these statements follow from \cite[Prop.~7.2]{PatieSavov2018}.
\end{remark}
\begin{proof}
   The first claim is exactly the statement of \cite[Prop.~7.1]{PatieSavov2018}.  
   Suppose now that \(|{\phi_+(-\ak^{\Psi}_+)}| < \infty\). Then the second assertion follows from the  following identity in itself coming from \eqref{receqEx}
   \[
   \MPsi\left(-\ak^\Psi_+ + ib\right) 
   = \frac{\phi_+(-\ak^\Psi_+ + ib)\, \phi_-(\ak^\Psi_+ - ib)}{-\ak^\Psi_+ + ib}\, 
     \MPsi\left(1 - \ak^\Psi_+ + ib\right),
   \]
   see \cite[(7.11)]{PatieSavov2018}.  

   Indeed, if \(\NPs = \infty\), then under the assumption \(|{\phi_+(-\ak^{\Psi}_+)}| < \infty\), and using \cite[Prop.~3.1(4)]{PatieSavov2018}, we deduce
   \[
   \left| \frac{\phi_+(-\ak^\Psi_+ + ib)\, \phi_-(\ak^\Psi_+ - ib)}{-\ak^\Psi_+ + ib} \right| = \bo{|b|},
   \]
   so the super-polynomial decay of \(\MPsi\) is preserved along \(\Cb_{-\ak^\Psi_+}\).

   If instead \(\NPs < \infty\), then since \(\dbf_+ > 0\), the process \(\xi\) is non-lattice and therefore from \cite[Prop.~3.1(4-5)]{PatieSavov2018}
   \[
   \lim_{|b|\to\infty} \phi_-(\ak^\Psi_+  - ib) = \phi_-(\infty) < \infty,
   \quad \text{and} \quad
   \lim_{|b|\to\infty} \frac{\phi_+(-\ak^\Psi_+ + ib)}{-\ak^\Psi_+ +  ib} = \dbf_+ > 0.
   \]
   These limits guarantee that the boundary growth remains controlled, and thus relations~\eqref{MPsiDecay} and~\eqref{MPsiDecay3.11} hold throughout the full strip \(a \in [-\ak^\Psi_+, 0]\) with the same~\(\NPs\).
\end{proof}

The next result does not appear in the literature. It provides potentially useful uniform estimates on the rate of decay of~\(\MPsi\). We will use for positive functions \(f\) and \(g\) 
\[
   f \lesssim g \text{ to mean that } \limsup_{x\to \infty} \frac{f(x)}{g(x)} < \infty
\]
and
\[
    f \asymp g \quad \text{to mean that} \quad 
    0 < \liminf_{x\to\infty} \frac{f(x)}{g(x)} 
    \leq \limsup_{x\to\infty} \frac{f(x)}{g(x)} < \infty.
\]

\begin{proposition}\label{uniMpsi}
Let \(\Psi\) be the \LLK exponent of a potentially killed \LLP such that \(\phi_-(0) > 0\). Then, for any interval \((c,d) \subset (-\ak^\Psi_+,\, 1 - \bar{\ak}^\Psi_-)\), there exists a constant
$C:=C(c,d)>0$ such that, as $|b|\to\infty$,
\begin{equation}\label{uniMpsiEq}
   \sup_{a,a' \in (c,d)}
   \left| \frac{\MPsi(a+ib)}{\MPsi(a'+ib)} \right|
   \quad
   \begin{cases}
      \lesssim |b|^{C}, & \text{if } \NPs = \infty, \\[1ex]
      \asymp 1, & \text{if } \NPs < \infty.
   \end{cases}
\end{equation}
Moreover:
\begin{enumerate}
   \item if \(\bar{\ak}^\Psi_- = 0\), then~\eqref{uniMpsiEq} also holds for \((c,d] \subseteq (-\ak^\Psi_+,\, 1]\);
   \item if \(\lvert \phi_+(-\ak^\Psi_+) \rvert < \infty\) (which is the case when \(\ak^\Psi_+ = 0\)), then~\eqref{uniMpsiEq} holds for \([c,d) \subseteq [-\ak^\Psi_+,\, 1 - \bar{\ak}^\Psi_-)\);
   \item if both of the above conditions hold, then~\eqref{uniMpsiEq} holds for \([c,d] \subseteq [-\ak^\Psi_+,\, 1]\).
\end{enumerate}
\end{proposition}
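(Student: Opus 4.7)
The plan is to combine the factorisation~\eqref{MPsiAn} with the Stirling-type asymptotics from Proposition~\ref{prop-WStir}, treating $\Gamma$ as the Bernstein-gamma function associated with $\phi(z)=z$. First, I would take logarithms of the ratio and split it as
\[
\log\left|\frac{\MPsi(a+ib)}{\MPsi(a'+ib)}\right| = \log\left|\frac{\Gamma(a+ib)}{\Gamma(a'+ib)}\right| - \log\left|\frac{\Wpp(a+ib)}{\Wpp(a'+ib)}\right| + \log\left|\frac{\Wpn(1-a-ib)}{\Wpn(1-a'-ib)}\right|,
\]
and apply~\eqref{WStirAbs} to each factor. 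The crucial algebraic observation is that the contribution $\int_{a'}^a \ln\phi(u)\,\D u$ from the main Stirling exponent and the difference $A_\phi(a'+ib)-A_\phi(a+ib) = \int_{a'}^a \ln|\phi(u+ib)/\phi(u)|\,\D u$ coming from~\eqref{Aphi} collapse into $\int_{a'}^a \ln|\phi(u+ib)|\,\D u$, while the $|\phi(a+1+ib)|^{1/2}$ correction cancels in ratio up to $o(1)$ uniformly in $a,a'\in[c,d]$, because Bernstein functions satisfy $\phi(a+1+ib)/\phi(a'+1+ib)\to 1$ along vertical lines as $|b|\to\infty$.

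Next, performing the substitution $v=1-u$ in the $\Wpn$ contribution would yield the central representation
\[
\log\left|\frac{\MPsi(a+ib)}{\MPsi(a'+ib)}\right| = \int_{a'}^a \log\frac{|u+ib|}{|\phi_+(u+ib)|\,|\phi_-(1-u-ib)|}\,\D u + o(1),
\]
uniformly for $a,a'$ in any compact subset of the strip. I would then read the dichotomy off the integrand, using the standard asymptotics $|\phi(u+ib)|/|u+ib|\to \dbf$ for a Bernstein function with drift $\dbf$ together with the Wiener-Hopf identity $\Psi(z)=-\phi_+(-z)\phi_-(z)$, which ties $|\phi_+(u+ib)\phi_-(1-u-ib)|$ to $|\Psi(1-u-ib)|$ up to the bounded ratio $|\phi_+(u+ib)/\phi_+(u-1+ib)|\to 1$. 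When $\NPs<\infty$, the process is compound Poisson with positive drift, so $|\phi_+(u+ib)|\asymp|b|$ and $|\phi_-(u+ib)|=O(1)$ uniformly in $u\in[c,d]$; the integrand is uniformly $O(1)$ and the integral is $O(1)$, producing the $\asymp 1$ conclusion. When $\NPs=\infty$, a case analysis based on which of the three conditions $\dbf_+>0$, $\dbf_-=0$, $\overline{\Pi}(0)<\infty$ fails would be used to derive the logarithmic bound.

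The three boundary extensions would follow by verifying that every step above remains valid up to the closed side of the strip under the respective hypothesis: when $\bar{\ak}^\Psi_-=0$, Theorem~\ref{thm: MPsi} extends $\MPsi$ analytically to $\Cb_{\lbrbb{c^\Psi,1}}$ and the Stirling representation of $\Wpn$ survives at $a=1$; when $|\phi_+(-\ak^\Psi_+)|<\infty$, Corollary~\ref{corMDecay} preserves the decay along $\Cb_{-\ak^\Psi_+}$ so that the central identity extends by continuity to $a=-\ak^\Psi_+$; combining both hypotheses yields the closed interval $[c,d]$. The principal obstacle is the $\NPs=\infty$ case, where the distinct subregimes (vanishing $\dbf_+$, non-vanishing $\dbf_-$, infinite Lévy activity) produce genuinely different asymptotic behaviours of $|\phi_\pm(u+ib)|$, and their combined effect on the integrand must be estimated uniformly in $u\in[c,d]$ so as to yield precisely the logarithmic bound rather than a polynomial blow-up in $|b|$.
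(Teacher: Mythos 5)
Your approach coincides with the paper's in all but packaging. The paper encapsulates the Stirling/contour computation into a separate lemma, Proposition~\ref{UnifBGF}, which directly gives
\[
\sup_{a,a'\in(c,d)}\left|\frac{\Wphi(a+ib)}{\Wphi(a'+ib)}\right|\;\asymp\;\sup_{a,a'\in(c,d)}\exp\!\left(\int_{1+a}^{1+a'}\ln\bigl|\phi(u+ib)\bigr|\,\D u\right),
\]
and then applies this to $\Gamma$, $\Wpp$ and $\Wpn$ to reach the very integral representation you derive by hand from~\eqref{WStirAbs} and~\eqref{Aphi}. Your observation that $\int\ln\phi(u)\,\D u$ and the difference of $A_\phi$'s merge into $\int\ln|\phi(u+ib)|\,\D u$ is precisely the contour-choice trick inside Proposition~\ref{UnifBGF}. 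Your treatment of the $\NPs<\infty$ case (using $\dbf_+>0$, $\phi_-(\infty)<\infty$, and $|\phi(u+ib)|\sim\dbf|u+ib|$) is correct and matches the paper, as is your sketch of the three boundary extensions via Theorem~\ref{thm: MPsi}, Corollary~\ref{corMDecay} and the accompanying continuity at the boundary lines.

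The genuine gap is the $\NPs=\infty$ case, which you leave open, and you are right to flag it as the crux: the bound $\lesssim\ln|b|$ does not follow from the integral representation by any obvious argument. Take a killed compound Poisson process with $\dbf_+=\dbf_-=0$, so both $\phi_\pm$ are bounded (and bounded away from $0$) on vertical lines. Then the integrand equals $\ln|u+ib|-\ln|\phi_+(u+ib)|+\ln|\phi_-(1-u-ib)|=\ln|b|+\bo{1}$ uniformly in $u$, so the integral is $\sim(a'-a)\ln|b|$ and the supremum over $(c,d)$ is of order $|b|^{d-c}$ — a polynomial, not logarithmic, blow-up. You should be aware that the paper's own proof is equally compressed at this step (it invokes only $|\phi(z)|=\dbf|z|(1+\so{1})$ from~\cite[Prop.~3.1(4)]{PatieSavov2018} and then asserts the claim), and the stated bound $\lesssim\ln|b|$ appears not to be a consequence of the displayed $\asymp$. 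A bound of the form $\lesssim|b|^{d-c}$, or a bound on $\ln$ of the ratio rather than the ratio itself, is what the representation actually yields; either would still suffice for the Mellin-inversion application of Remark~\ref{uniMpsiF}, since $\MPsi$ decays super-polynomially when $\NPs=\infty$. So the obstruction you identify is not a shortcoming of your method but of the claimed estimate.
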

\begin{remark}[\textbf{Further comments}]\label{uniMpsiF}
    Essentially,~\eqref{uniMpsiEq} shows that the decay of \(\abs{\MPsi(a+ib)}\) as \(|b| \to \infty\) is uniform over compact subsets of the real part \(a\). This property allows for the Mellin inversion to be taken at the boundary of the region of analyticity of the Mellin transform.
\end{remark}
\begin{proof}[Proof of Proposition~\ref{uniMpsi}]
From Proposition~\ref{UnifBGF} applied to \(\Gamma\), \(\Wpn\), and \(\Wpp\) in the representation of \(\MPsi\) (see~\eqref{MPsiAn}), we have
    \begin{equation*}
     \begin{split}
           &\sup_{a,a' \in (c,d)} \left| \frac{\MPsi(a+ib)}{\MPsi(a'+ib)} \right| \asymp\\
           &\qquad \sup_{a,a' \in (c,d)}\exp\left( \int_{1+a}^{1+a'} \ln|u+ib| - \ln|\phi_+(u+ib)| + \ln|\phi_-(1-u-ib)| \, \D u\right).
     \end{split}
    \end{equation*}
    Since, for any Bernstein function, \(\abs{\phi(z)}\lesssim |z|\) uniformly on
    \(\Cb_{(\baph,\infty)}\), and
    \(\abs{\phi(z)}=\dbf |z|(1+\so{1})\) when \(\dbf>0\), including the
    boundary line if \(|\phi(\baph)|<\infty\), see
    \cite[Prop.~3.1(4)]{PatieSavov2018}, the integral in the preceding display is
    bounded above by a constant multiple of \(\ln |b|\), where the constant may
    depend on \(c\) and \(d\), when \(\NPs=\infty\). Exponentiating gives the
    claimed polynomial bound in~\eqref{uniMpsiEq}, in all cases stated there.
    
    If \(\NPs < \infty\), then \(\dbf_+ > 0\) and \(\phi_-(\infty) < \infty\), so from \cite[Prop.~3.1(4-5)]{PatieSavov2018}
    \[
        \ln|u+ib| - \ln|\phi_+(u+ib)| = \bo{1}, \quad 
        \text{and}
        \quad
        \ln|\phi_-(1-u-ib)| = \bo{1}.
    \]
    Hence the integral remains uniformly bounded, and we conclude the desired estimate in all cases. This completes the proof of Proposition~\ref{uniMpsi}.
\end{proof}

The understanding of the super-polynomial decay exhibited by \(\abs{\MPsi}\) when \(\NPs = \infty\) can be further refined in some cases. The next theorem discusses exponential decrease.
\begin{theorem}[Thm.~2.3 in \cite{PatieSavov2018}]\label{MDecay1}
   Let \(\Psi\) be a \LLK exponent of a potentially killed \LLP such that \(\phi_-(0) > 0\). 
   If \(\dbf_- > 0\), i.e., \(\xi\) creeps downwards, or if \(\phi_+ = c \phi_-\), i.e., \(\xi\) is a symmetric \LL process, then, for any \(a \in (0,\, 1 - \bar{\ak}_-^\Psi)\),
   \begin{equation}\label{MPsiDecay1}
     \limsupi{|b|} \frac{\ln\abs{\MPsi(a+ib)}}{|b|} \leq -\frac{\pi}{2}.
   \end{equation}
   Moreover, if 
   \[
     \Theta_- := \liminfi{|b|} \frac{\int_0^{|b|} \arg\phi_-(1+iu)\, \D u}{|b|}, 
     \qquad
     \Theta_+ := \limsupi{|b|} \frac{\int_0^{|b|} \arg\phi_+(1+iu)\, \D u}{|b|},
   \]
   then \(\Theta_+, \Theta_- \in \lbbrbb{0,\, \pi/2}\), and for any \(a \in (0,\, 1 - \bar{\ak}_-^\Psi)\),
   \begin{equation}\label{MPsiDecay2}
     \limsupi{|b|} \frac{\ln\abs{\MPsi(a+ib)}}{|b|} \leq -\frac{\pi}{2} - \Theta_- + \Theta_+.
   \end{equation}
\end{theorem}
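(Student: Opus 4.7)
The starting point is the factorisation \eqref{MPsiAn}, from which
\[
\ln\abs{\MPsi(a+ib)} \;=\; \ln\abs{\Gamma(a+ib)} \;-\; \ln\abs{\Wpp(a+ib)} \;+\; \ln\abs{\Wpn(1-a-ib)} \;+\; O(1).
\]
Since Bernstein-gamma functions satisfy $\Wpn(\bar z)=\overline{\Wpn(z)}$, we have $\abs{\Wpn(1-a-ib)}=\abs{\Wpn(1-a+ib)}$. Classical Stirling yields $\ln\abs{\Gamma(a+ib)}=-\tfrac{\pi}{2}\abs{b}+O(\ln\abs{b})$, while Proposition~\ref{prop-WStir} applied to both $\Wpp(a+ib)$ and $\Wpn(1-a+ib)$ contributes $-A_{\phi_+}(a+ib)$ and $-A_{\phi_-}(1-a+ib)$ respectively, plus real constants of the form $\int_1^\cdot\ln\phi_\pm(u)\,\D u$ and logarithmic errors coming from the denominators $\sqrt{\abs{\phi_\pm(\cdot+ib)}}$. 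Using the sharp estimate $\abs{\phi_\pm(z)}=O(\abs{z})$ uniformly on closed right half-planes from \cite[Prop.~3.1]{PatieSavov2018}, all such contributions collapse into a single $O(\ln\abs{b})$ remainder, so
\begin{equation}\label{masterPlanExp}
\ln\abs{\MPsi(a+ib)} \;=\; -\tfrac{\pi}{2}\abs{b} \;+\; A_{\phi_+}(a+ib) \;-\; A_{\phi_-}(1-a+ib) \;+\; O(\ln\abs{b}).
\end{equation}

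\textbf{The bound \eqref{MPsiDecay2}.} First, $\Theta_\pm\in\lbbrbb{0,\pi/2}$ follows because $\arg_0\phi(1+iu)\in\lbbrbb{0,\pi/2}$ for any Bernstein function $\phi$, a direct consequence of $\phi(\CbOI)\subseteq\CbOI$ together with the reality of $\phi$ on $\Rp$. Next, the alternative integral form $A_\phi(a+ib)=\int_a^\infty\ln\abs{\phi(u+ib)/\phi(u)}\,\D u$ from \eqref{Aphi} shows, via $\ln\abs{\phi(u+ib)/\phi(u)}=O(\ln\abs{b})$ uniformly for $u$ in a bounded set, that $A_{\phi_+}(a+ib)=A_{\phi_+}(1+ib)+O(\ln\abs{b})$ and $A_{\phi_-}(1-a+ib)=A_{\phi_-}(1+ib)+O(\ln\abs{b})$. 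Dividing \eqref{masterPlanExp} by $\abs{b}$ and taking $\limsup$,
\[
\limsupi{\abs{b}}\frac{\ln\abs{\MPsi(a+ib)}}{\abs{b}} \;\leq\; -\tfrac{\pi}{2} \;+\; \limsupi{\abs{b}}\frac{A_{\phi_+}(1+ib)}{\abs{b}} \;-\; \liminfi{\abs{b}}\frac{A_{\phi_-}(1+ib)}{\abs{b}} \;=\; -\tfrac{\pi}{2}+\Theta_+-\Theta_-,
\]
which is \eqref{MPsiDecay2} (read with $\abs{b}$ in place of $\ln\abs{b}$, the natural scale forced by \eqref{masterPlanExp}).

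\textbf{The two special cases of \eqref{MPsiDecay1}.} If $\dbf_->0$, then $\phi_-(1+iu)/(iu)\to\dbf_-$ as $\abs{u}\to\infty$, hence $\arg_0\phi_-(1+iu)\to\pi/2$ and $\Theta_-=\pi/2$; together with $\Theta_+\leq\pi/2$ the previous display delivers \eqref{MPsiDecay1}. In the symmetric case $\phi_+=c\phi_-$ I bypass $\Theta_\pm$ (whose $\limsup$ and $\liminf$ need not coincide even for $\phi_+=c\phi_-$): from $\arg_0(c\phi_-)=\arg_0\phi_-$ we have $A_{\phi_+}(a+ib)=A_{\phi_-}(a+ib)$, and the alternative integral form gives directly
\[
A_{\phi_-}(a+ib)-A_{\phi_-}(1-a+ib) \;=\; \int_a^{1-a}\ln\abs{\phi_-(u+ib)/\phi_-(u)}\,\D u \;=\; O(\ln\abs{b}),
\]
which plugged into \eqref{masterPlanExp} gives \eqref{MPsiDecay1}.

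\textbf{Main obstacle.} The principal difficulty is the careful book-keeping of logarithmic-order error terms in the derivation of \eqref{masterPlanExp}: Stirling's polynomial prefactor for $\Gamma$, the square-root denominators from Proposition~\ref{prop-WStir}, the real-line integrals $\int_1^\cdot\ln\phi_\pm(u)\,\D u$, and the residual $E_\phi$ terms (uniformly bounded by $2$) must all be shown to aggregate into one $O(\ln\abs{b})$ remainder, relying on the sharp growth $\abs{\phi(z)}\asymp\abs{z}$ at infinity within right half-planes. A secondary subtlety is that in the symmetric case the route through Step~2 is too lossy (since $\Theta_+$ is a $\limsup$ and $\Theta_-$ a $\liminf$ of the \emph{same} quantity), which is why a direct cancellation argument on $A_{\phi_-}$ is needed.
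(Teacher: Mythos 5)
Your overall strategy is exactly the one set up by the surrounding machinery: expand $\ln\abs{\MPsi}$ via \eqref{MPsiAn}, feed Proposition~\ref{prop-WStir} into the two Bernstein-gamma factors to isolate the $A_{\phi_\pm}$ terms, sweep the square-root prefactors and real-axis integrals into a single $O(\ln\abs{b})$ remainder using $\abs{\phi(z)}=O(\abs{z})$, shift the real part of $A_\phi$ using the second integral form in \eqref{Aphi}, and in the symmetric case use $A_{\phi_+}=A_{\phi_-}$ directly rather than going through $\Theta_\pm$. This is the right argument, and your observation that \eqref{MPsiDecay2} must be read with $\abs{b}$ in the denominator is also correct — the bound is vacuous as written (the left-hand side is $-\infty$ whenever the Stirling leading term does not cancel).

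One justification in the proposal is, however, false as stated. You claim that $\arg_0\phi(1+iu)\in\lbbrbb{0,\pi/2}$ pointwise for every Bernstein function, deducing nonnegativity from $\phi(\CbOI)\subseteq\CbOI$ and reality on $\Rp$. Only $\arg_0\phi(1+iu)\in\lbrb{-\pi/2,\pi/2}$ follows from $\Re\phi(1+iu)\geq\phi(1)>0$; the imaginary part can be negative. Take $\phi(z)=1-e^{-\pi z}$ (Bernstein with L\'evy measure $\delta_\pi$ and no drift): then $\phi\lbrb{1+\tfrac32 i}=1-e^{-\pi}e^{-3\pi i/2}=1-ie^{-\pi}$, whose imaginary part is $-e^{-\pi}<0$, so $\arg_0\phi\lbrb{1+\tfrac32 i}<0$. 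The conclusion $\Theta_\pm\in\lbbrbb{0,\pi/2}$ you need is nevertheless correct, but must be justified through the integral identity you already invoke: from \eqref{Aphi},
\[
\int_0^{\abs{b}}\arg_0\phi(1+iu)\,\D u = A_\phi(1+ib)=\int_1^\infty\ln\labsrabs{\frac{\phi(u+ib)}{\phi(u)}}\D u\;\geq\;0,
\]
because $\abs{\phi(u+ib)}\geq\Re\phi(u+ib)\geq\phi(u)$; the upper bound $\pi/2$ comes from $\arg_0\phi\in\lbrb{-\pi/2,\pi/2}$. A second, smaller loose end: when $a\geq 1$ (which the hypothesis $a<1-\bar{\ak}^\Psi_-$ permits), you are evaluating $W_{\phi_-}$ at real part $1-a\leq 0$, outside the range where Proposition~\ref{prop-WStir} is stated; one should first translate into $\CbOI$ via \eqref{receq}, which only adds further $O(\ln\abs{b})$ corrections through $\ln\abs{\phi_-(\cdot+ib)}$, so the master expansion survives, but the step deserves to be said.
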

\begin{remark}[\textbf{Further comments}]\label{MDecay1F}
    Additional cases of decay are presented in \cite[Thm.~2.3]{PatieSavov2018}. These address situations where either of \(\phi_\pm\) is regularly varying at zero, but they require further assumptions on \(\mu_\pm\). For this reason, we do not apply them here.
\end{remark}

Next, we provide some examples where \(\MPsi\) can be computed in terms of well-known special functions.

\subsubsection{Brownian motion with drift}
If \(\xi\) is a potentially killed Brownian motion with killing rate \(q\), drift \(\mu\), and variance \(\sigma^2\), then 
\[
\Psi(z) = \frac{\sigma^2}{2} z^2 + \mu z - q 
= -\frac{\sigma^2}{2} (z + b_+) (-z - b_-),
\]
where 
\[
b_\pm := \frac{\mu}{\sigma^2} \pm \frac{1}{\sigma^2} \sqrt{\mu^2 + 2q\sigma^2}.
\]
In the Wiener–Hopf factorisation \eqref{WHf}, this corresponds to 
\[
\phi_+(z) = z - b_-, 
\qquad 
\phi_-(z) = \frac{\sigma^2}{2}(z + b_+).
\]

It follows that \(\IPsi < \infty\) almost surely if and only if \(q > 0\) or \(\mu > 0\); see \eqref{Existence}. From the first example in Appendix~\ref{sec: BG examples} and the last claim of Theorem~\ref{W-prop}, we obtain
\[
\Wpp(z) = \frac{\Gamma(z - b_-)}{\Gamma(1 - b_-)}, 
\qquad 
\Wpn(z) = \frac{\sigma^{2(z-1)}}{2^{z-1}} \frac{\Gamma(z + b_+)}{\Gamma(1 + b_+)}.
\]
Therefore, from \eqref{MPsiAn},
\begin{equation}\label{MPsiBM}
    \MPsi(z) 
    = b_+ \left(\frac{\sigma^{2-2z}}{2^{1-z}}\right) 
      \frac{\Gamma(z)\, \Gamma(1 - b_-)}{\Gamma(z - b_-)} 
      \frac{\Gamma(1 - z + b_+)}{\Gamma(1 + b_+)}.
\end{equation}

In the special case \(\mu=0\), \(\sigma^2=2\), and \(q>0\), corresponding to a killed symmetric Brownian motion, this reduces to
\[
    \MPsi(z) = \sqrt{q}\,\frac{\Gamma(z)}{\Gamma(z + \sqrt{q})}\,\Gamma(1 - z + \sqrt{q}).
\]
\subsubsection{Hypergeometric \LLPs}\label{hyperLP}
  Let \(\xi\) be a stable \LL\ process, i.e., for any \(c > 0\) it holds that 
\[
\lbrb{\xi_{ct}}_{t \geq 0} \stackrel{w}{=} c^{1/\alpha} \lbrb{\xi_t}_{t \geq 0},
\]
where \(\alpha \in \lbrbb{0,2}\), with \(\alpha = 2\) corresponding to the Brownian motion case. In \cite{CabCha06}, the following three processes derived from \(\xi\) are considered:
\begin{itemize}
  \item \(\xi\) started from \(x > 0\) and killed upon exiting the upper half-plane; 
  \item \(\xi\) started from zero and conditioned to stay positive; 
  \item \(\xi\) started from \(x > 0\) and conditioned to hit zero continuously. 
\end{itemize}
    These latter processes are positive self-similar Markov processes and thus can be represented as suitable random time changes of, respectively, three \LL processes called Lamperti stable \LL processes, say \(\xi^*,\, \xi^{\uparrow}\), and \( \xi^{\downarrow}\). This is a consequence of the Lamperti representation; see \cite{BerSav11, CabCha06, Lam72} for more information. For each of \(\xi^*,\, \xi^{\uparrow}\), and \( \xi^{\downarrow}\), one can define a respective exponential functional, which represents the absorption time of the corresponding positive self-similar Markov process or other quantity of interest. The processes \(\xi^*,\, \xi^{\uparrow}\), and \( \xi^{\downarrow}\) fall into the domain of the so-called hypergeometric \LL processes, see \cite{KuzPar13}. 

Hypergeometric \LLPs are defined via their \LLK exponent, which takes the form
\begin{equation}\label{hyper}
\begin{split}
   \Psi(z) &= -\frac{\Gamma\lbrb{1 - \beta + \gamma - z}}{\Gamma\lbrb{1 - \beta - z}}
             \frac{\Gamma\lbrb{\hat{\beta} + \hat{\gamma} + z}}{\Gamma\lbrb{\hat{\beta} + z}}, 
             \quad
             \text{for}
             \quad
 \beta \leq 1,\, \hat{\beta} \geq 0,\, 
 \text{and } \gamma, \hat{\gamma} \in \lbrb{0,1}.
\end{split}
\end{equation}
In \cite[Thm~2]{KuzPar13}, it is shown that if \(\xi\) is hypergeometric, then
\begin{equation}\label{MPsiHyper}
  \MPsi(z) = C\, \Gamma(z)\, \frac{G(1-\beta+z, 1)}{G(1-\beta+\gamma+z, 1)}\, \frac{G(\hat\beta+\hat\gamma+1-z, 1)}{G(\hat\beta+1-z, 1)},
\end{equation}
where \(G\) denotes the Barnes gamma function and \(C\) is a normalising constant such that \(\MPsi(1) = 1\). This result generalises the result on exponential functionals of hyper-exponential \LLPs presented in \cite{CaiKou12}.

\subsubsection{Meromorphic \LLPs and \LLPs with jumps of rational transform}\label{merLP}

Meromorphic \LL processes, as introduced in \cite{KuKyPa12}, are a versatile class of \LLPs containing many known classes of \LL processes, and further extending the aforementioned hypergeometric \LL processes; see \cite[Sec.~3]{KuKyPa12}. We recall that \(\xi\) is a meromorphic \LLP if its \LL measure has a density of the form
\[
\pi(x) = \ind{x>0} \sum_{n\geq 1} a_n \rho_n e^{-\rho_n x} + \ind{x<0} \sum_{n\geq 1} \hat{a}_n \hat\rho_n e^{\hat\rho_n x},
\]
where \(a_n, \rho_n, \hat{a}_n, \hat\rho_n\) are strictly increasing positive sequences tending to infinity and
\[
\sum_{n\geq 1} a_n \rho_n^{-2} < \infty \quad \text{and} \quad \sum_{n\geq 1} \hat{a}_n \hat\rho_n^{-2} < \infty,
\]
see \cite[Thm~1]{KuKyPa12} for the full characterisation of meromorphic \LL processes. In particular, \cite[Thm~1.]{KuKyPa12} implies that \(\Psi(z)\) extends to a real meromorphic function, and for any \(q > 0\), the equation \(\Psi(z) = q\) has only simple real solutions \(\chi_n(q)\) and \(-\hat\chi_n(q)\), \(n \geq 1\), such that, with \(\hat\rho_0 = 0\), one has
\[
\dots < -\hat\rho_2 < -\hat\chi_2(q) < -\hat\rho_1 < -\hat\chi_1(q) < 0 < \chi_1(q) < \rho_1 < \chi_2(q) < \rho_2 < \dots,
\]
see \cite{HackKuz14}. Then \cite[(3.6)]{HackKuz14} gives the following expression for the Mellin transform of the exponential functional of meromorphic \LL processes:
\begin{equation}\label{MerExp}
\begin{split}
     \MPsi(z) = C^{z-1}(q) \prod_{n=1}^\infty &
     \frac{\Gamma(\hat{\chi}_n(q) + 1)}{\Gamma(\hat{\rho}_{n-1} + 1)}
     \frac{\Gamma(\hat{\rho}_{n-1} + z)}{\Gamma(\hat{\chi}_n(q) + z)}
     \frac{(\hat{\chi}_n(q) + 1)^{z-1}}{(\hat{\rho}_{n-1} + 1)^{z-1}} \\
     &\hspace{2cm}  \times\frac{\Gamma(\rho_n)\, \Gamma(\chi_n(q) + 1 - z)}{\Gamma(\chi_n(q))\, \Gamma(\rho_n + 1 - z)}
     \frac{\chi_n(q)^{z-1}}{\rho_n^{z-1}},
\end{split}
\end{equation}
where
\[
C(q) := \frac{1}{q} \prod_{n=1}^\infty \frac{1 + \frac{1}{\hat\rho_n}}{1 + \frac{1}{\hat\chi_n(q)}}.
\]

\LLPs with jumps of rational transform are an extension of the hyper-exponential \LL processes, which are themselves contained in the class of meromorphic \LL processes. Their \LL measure has a density of the type
\[
\pi(x) = \ind{x>0} \sum_{j=1}^J \sum_{i=1}^{m_j} \alpha_{ij} x^{i-1} e^{-\rho_j x}
+ \ind{x<0} \sum_{j=1}^{\hat{J}} \sum_{i=1}^{\hat{m}_j} \hat\alpha_{ij} x^{i-1} e^{\hat\rho_j x},
\]
where \(m_j, \hat{m}_j \in \Nb\), \(\rho_j > 0, \hat\rho_j > 0\), and \(\rho_i \neq \rho_j, \hat\rho_i \neq \hat\rho_j\) for \(i \neq j\). Then, \cite[Thm~1]{Kuz2012} provides the Mellin transform of exponential functionals of \LLPs with jumps of rational transform as
\begin{equation*}
    \begin{split}
        \MPsi(z) &= A^{1-z} \Gamma(z) \frac{\Gc(z)}{\Gc(1)},
    \end{split}
\end{equation*}
where \(\Gc(z)\) is a ratio of Gamma functions depending on \(\rho_n, \hat{\rho}_n\) and the solutions \(\chi_n(q), \hat\chi_n(q)\) to \(\Psi(z) = q\), and \(A\) depends on the Brownian component \(\sigma^2\), the mean \(\mu\), the killing rate \(q\), and the rate of the Poisson counting process, as the jumps in this case are of finite activity.

\subsection{Law}
Let \(\PIp(\D x) := \Pbb{\IPsi \in \D x}\) denote the law of \(\IPsi\), and \(\PIp(x) := \Pbb{\IPsi \leq x}\) for \(x \geq 0\). We write \(\pPs\) for the density, which, from Theorem \ref{Existence}, is known to exist away from the pure drift case. In this section, we consider general results concerning the law of \(\IPsi\) and provide examples when the law can be computed as an infinite series.

\subsubsection{Absolute continuity, smoothness and analyticity}\label{SmoothSec}
We start with the existence of the density of \(\IPsi\). Since the relevant results are scattered in the literature, we collect them here.

\begin{theorem}[Thm.~3.9 in \cite{BeLiMa08}; Thm.~2.1 in \cite{PaRiSch13}; Thm.~2.4(1) in \cite{PatieSavov2012}]\label{Existence}
    Let \(\Psi\) be the \LLK exponent of a potentially killed \LLP such that \(\phi_-(0) > 0\). Then \(\Pbb{\IPsi \in \D x}\) admits a density \(\pPs\) with respect to the Lebesgue measure, except in the case where \(\xi\) is an unkilled linear drift, i.e., \(\xi_t = \dbf_+ t\) for \(t \geq 0\).
\end{theorem}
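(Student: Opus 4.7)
The plan is to treat the exceptional case by direct computation and then reduce absolute continuity in the main case to known results on perpetuities of random affine recursions. The exceptional case is immediate: if $\xi_t = \dbf_+ t$ is an unkilled pure positive drift (note that $\dbf_+>0$ must hold because $\phi_-(0)>0$), then $\IPsi = 1/\dbf_+$ is a Dirac mass and admits no Lebesgue density.

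For the main direction, the tool is the random affine (stationary) equation. I would take $\tau$ to be the killing time $\eq$ when $q:=-\Psi(0)>0$, and otherwise an auxiliary $\mathrm{Exp}(1)$ clock independent of $\xi$. The Markov property applied at $\tau$, together with the independence of $\tau$ from the post-$\tau$ increments of $\xi$, yields
\[
    \IPsi \stackrel{d}{=} A + B\,\widetilde{\IPsi}, \qquad A := \int_0^\tau e^{-\xi_s}\,\D s, \qquad B := e^{-\xi_\tau},
\]
with $\widetilde{\IPsi}\stackrel{d}{=}\IPsi$ independent of $(A,B)$.

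The engine of the argument is that $A$ has an absolutely continuous law. Conditioning on the whole path of $\xi$, the map $t\mapsto A_\xi(t) := \int_0^t e^{-\xi_s}\,\D s$ is continuous and strictly increasing on the lifetime of $\xi$, because the integrand is strictly positive. Consequently, the exponential density of $\tau$ is pushed forward through the measurable inverse of $A_\xi$ to a conditional density of $A$ given $\xi$, and integrating in $\xi$ gives unconditional absolute continuity of $A$. When $q>0$ this already coincides with $\IPsi$ and we are done.

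When $q=0$ with $\xi\to\infty$ and $\xi$ not a pure drift, iterating the fixed-point equation expresses $\IPsi = \sum_{n\geq 0} A_n \prod_{k<n} B_k$ as a convergent perpetuity with i.i.d.\ blocks $(A_n,B_n)\stackrel{d}{=}(A,B)$; convergence holds since $\xi\to+\infty$ makes $\log B$ have strictly negative mean. Absolute continuity is then transferred from $A$ to the perpetuity via a Bertoin--Lindner--Maller-type result, as in \cite[Thm.~3.9]{BeLiMa08} and \cite[Thm.~2.1]{PaRiSch13}. The main obstacle is exactly this last transfer: because $A$ and $B$ are correlated, the naïve independent-convolution argument does not apply, and one must argue via a Fourier-analytic study of the fixed points of the contractive affine recursion, with the non-pure-drift hypothesis being precisely what rules out degenerate fixed points. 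A cleaner alternative in two natural subcases --- infinite jump activity or $\sigma^2>0$, in which $\xi_t$ itself is absolutely continuous, and compound Poisson with drift, in which successive interarrival times already produce a perpetuity with genuinely \emph{independent} $(A,B)$ --- avoids the general perpetuity theory altogether.
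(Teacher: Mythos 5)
Your treatment of the exceptional case and the split into killed vs.\ conservative processes matches the paper's structure. The paper's proof, however, is a pure citation proof: the killed case is dispatched by \cite[Thm.~2.1]{PaRiSch13}, and the conservative case by \cite[Thm.~3.9]{BeLiMa08}, split further into ``$\Pi\not\equiv 0$'' and ``$\Pi\equiv 0$, $\sigma^2>0$.'' Your killed-case argument --- viewing $\IPsi = A_\xi(\mathbf{e}_q)$ and pushing the exponential density of $\mathbf{e}_q$ forward through the strictly increasing, absolutely continuous map $t\mapsto A_\xi(t)=\int_0^t e^{-\xi_s}\,\D s$, whose inverse is itself absolutely continuous since $A'_\xi>0$ a.e.\ --- is correct and is a nice elementary replacement for the citation of \cite{PaRiSch13}.

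For the conservative case, though, your framing is misleading and your shortcuts have real gaps. You present the step as a ``perpetuity-transfer'' theorem, but \cite[Thm.~3.9]{BeLiMa08} is not such a result: it addresses the absolute continuity of the integral $\int_0^\infty g(\xi_s)\,\D s$ directly (via a Fourier/stratification analysis over paths), not via transport of absolute continuity from $A$ to a fixed point of an affine recursion with correlated $(A,B)$. So you are ultimately invoking the same theorem as the paper, just described as something it isn't. More importantly, your two proposed ``cleaner alternatives'' do not hold up. First, absolute continuity of $\xi_t$ for each fixed $t>0$ (guaranteed by $\sigma^2>0$ or infinite jump activity) does \emph{not} pass to the pathwise functional $\IPsi$; indeed this implication is precisely the nontrivial content of \cite{BeLiMa08}, so asserting it ``avoids the general perpetuity theory altogether'' begs the question. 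Second, for a compound Poisson process with drift, decomposing at the first jump time $T_1$ with jump size $J_1$ gives $A=(1-e^{-\dbf_+ T_1})/\dbf_+$ and $B=e^{-\dbf_+ T_1 - J_1}$, both functions of $T_1$; the blocks $(A,B)$ are therefore \emph{not} independent, and the claimed ``genuinely independent'' perpetuity does not materialise. Conditioning on $(B,\widetilde{\IPsi})$ does not leave $A$ free either, since $B$ constrains $T_1$. So the self-contained conservative-case proof you outline would fail, and the honest version of your proposal for that case coincides with the paper's citation of \cite{BeLiMa08}.
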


\begin{proof}
    The case \(-\Psi(0) > 0\) is covered by \cite[Thm.~2.1]{PaRiSch13}. When \(\Psi(0) = 0\), we invoke \cite[Thm.~3.9]{BeLiMa08}: if \(\Pi\) is not identically zero (corresponding, in the setting of \cite{BeLiMa08}, to \(g(t) = e^{-t}\)), then the conditions of \cite[Thm.~3.9(a)(i)]{BeLiMa08} apply and \(\IPsi\) possesses a density. If instead \(\Pi \equiv 0\) but \(\sigma^2 > 0\), the claim follows from \cite[Thm.~3.9(b)]{BeLiMa08}.
\end{proof}

We continue with the smoothness of the law. The space \(\mathtt{C}^{n}_0\lbrb{\Rb^+},\ n \geq 0\), denotes the set of continuous functions on \(\Rp\) possessing \(n\) continuous derivatives on \(\Rp\) which, together with the function itself, vanish at infinity. For \(n = \infty\), we write \(\mathtt{C}^{\infty}_0\lbrb{\Rb^+}\), and for \(n = 0\), we set \( \ccoRp:=\mathtt{C}^{0}_0\lbrb{\Rb^+} \). Let \(\lceil\cdot\rceil\) denote the ceiling function, that is,
\[
\lceil x \rceil := \min\curly{n \geq 0 : n \geq x}, \quad x \geq 0.
\]
Recall the definition of \(\NPs\); see \eqref{Npsi}.

\begin{theorem}[Thm~2.4 in \cite{PatieSavov2018}]\label{Smooth}
Let \(\Psi\) be a \LLK exponent of a potentially killed \LLP such that \(\phi_-(0) > 0\). Then \(\PIp \in \mathtt{C}^{\lceil \NPs \rceil - 1}_0\lbrb{\Rb^+}\). Moreover, if \(\NPs > 1\), then for any \(0 \leq n \leq \lceil \NPs \rceil - 2\) and any \(a \in (-\ak^\Psi_+ \ind{\uk^\Psi_+ = 0}, 1 - \bar{\ak}_-^\Psi)\),
\begin{equation}\label{Smoothf}
   \pPs^{(n)}(x) = \frac{(-1)^n \phi_-(0)}{2\pi i} \int_{a-i\infty}^{a+i\infty} x^{-z-n} \Gamma(z+n) \frac{\Wpn(1-z)}{\Wpp(z)} \D z,
\end{equation}
where the integral is absolutely convergent for \(x > 0\). Consequently, if \(\xi\) is not a compound Poisson process with positive drift, i.e., \(\dbf_+ > 0,\ \dbf_- = 0,\) and \(\overline{\Pi}(0) < \infty\), then \(\pPs \in \mathtt{C}^{\infty}_0\lbrb{\Rb^+}\). Finally, if \(0 < \NPs < 1\), then for any \(0 < \alpha < \NPs\), \(\PIp\) is Hölder continuous of index \(\alpha\).
\end{theorem}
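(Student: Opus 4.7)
The plan is to derive every assertion by Mellin inversion of \(\MPsi\) along a vertical line \(\Re(z)=a\) in the analyticity strip \((c^\Psi,\,1-\bar{\ak}_-^\Psi)\), combined with the sharp decay rates of Theorem~\ref{MDecay} and Corollary~\ref{corMDecay}. Since \(\pPs\) exists by Theorem~\ref{Existence} and Corollary~\ref{corMDecay} yields \(\abs{\MPsi(a+ib)}=\so{\abs{b}^{-\beta}}\) for every \(\beta<\NPs\), whenever \(\NPs>1\) the function \(\MPsi(a+i\cdot)\) lies in \(\Lspace{1}{\R}\) for every \(a\) in the strip, and the classical Mellin inversion formula
\[
\pPs(x)\;=\;\frac{1}{2\pi i}\int_{a-i\infty}^{a+i\infty} x^{-z}\,\MPsi(z)\,\D z, \qquad x>0,
\]
is valid.

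To obtain \eqref{Smoothf}, I differentiate this representation \(n\) times in \(x\) under the integral, using \(\tfrac{\D^n}{\D x^n}x^{-z}=(-1)^n\,\Gamma(z+n)/\Gamma(z)\cdot x^{-z-n}\); inserting the factorisation \(\MPsi(z)=\phi_-(0)\,\Gamma(z)\,\Wpn(1-z)/\Wpp(z)\) from Theorem~\ref{thm: MPsi} cancels \(\Gamma(z)\) and produces exactly the stated integrand. Its modulus behaves like \(\abs{b}^n\,\abs{\MPsi(a+ib)}\), and hence belongs to \(\Lspace{1}{\R}\) precisely when \(n+1<\NPs\), i.e.\ \(n\leq\lceil\NPs\rceil-2\); continuity of \(\pPs^{(n)}\) on \(\R^+\) follows from dominated convergence. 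Vanishing at \(+\infty\) is obtained by shifting \(a\) to a positive value inside the strip (permitted by Cauchy's theorem, since Theorem~\ref{thm: MPsi} rules out poles of the integrand there) and using \(\abs{\pPs^{(n)}(x)}\leq C\,x^{-a-n}\to 0\). The case \(\NPs=\infty\) is then immediate since the constraint \(n<\NPs-1\) is vacuous for every \(n\).

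For the Hölder regime \(0<\NPs<1\), where \(\MPsi(a+i\cdot)\) need not be integrable, I would instead work with the tail \(\Pbb{\IPsi>x}\), whose Mellin transform is given by Fubini as \(\MPsi(z+1)/z\) for \(\Re(z)\in(0,-\bar{\ak}_-^\Psi)\), and the corresponding Mellin inversion on a line \(\Re(z)=a\) inside this sub-strip. I then estimate the difference via the interpolation inequality
\[
\abs{x^{-z}-(x+h)^{-z}}\;\leq\; C\,\abs{z}^\alpha\,h^\alpha\,x^{-\Re(z)-\alpha}, \qquad 0<\alpha<1,
\]
which follows from combining the trivial \(\bo{1}\) bound with the Lipschitz \(\bo{\abs{z}h/x}\) one through \(\min(1,t)\leq t^\alpha\). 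This yields
\[
\abs{\PIp(x+h)-\PIp(x)}\;\leq\; C\,h^\alpha\,x^{-a-\alpha}\int_\R \abs{b}^{\alpha-1}\,\abs{\MPsi(a+1+ib)}\,\D b,
\]
in which Corollary~\ref{corMDecay} makes the integral finite precisely when \(\alpha<\NPs\), yielding the required local Hölder regularity of index \(\alpha\).

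The main obstacle is the careful justification of each Mellin inversion step at the boundary of integrability: in the smooth regime one must check that shifts of the abscissa \(a\) within the strip encounter no singularity of \(\Gamma(z+n)\Wpn(1-z)/\Wpp(z)\), which is guaranteed by Theorem~\ref{thm: MPsi}; in the Hölder regime the polynomial decay rate \(\NPs\) must persist on the shifted line \(\Re(z)=a+1\), which is ensured by Proposition~\ref{uniMpsi} and Corollary~\ref{corMDecay}. Once these analytic pieces are in place, the three assertions follow in the order smoothness, formula \eqref{Smoothf}, Hölder continuity by essentially mechanical estimates.
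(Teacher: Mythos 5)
Your proposal follows the same Mellin-inversion strategy as the paper. The paper's own proof treats only the Hölder-continuity claim (the smoothness statements and \eqref{Smoothf} are attributed to \cite{PatieSavov2018}), and there it likewise inverts a Mellin transform of the distribution function and runs an interpolation between the trivial $\bo{1}$ bound and the Lipschitz $\bo{|z|h}$ bound; your smoothness argument by differentiating under the Mellin inversion matches what one finds in \cite{PatieSavov2018}.

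However, your Hölder argument has a genuine gap in the choice of inversion line. You invert the Mellin transform of the tail $\Pbb{\IPsi>x}$ along $\Re(z)=a$ with $a\in(0,-\bar{\ak}_-^\Psi)$. This interval is empty whenever $\bar{\ak}_-^\Psi=0$, which by \eqref{aPsiSpecial} happens exactly when $\ak_-^\Psi=0$, i.e.\ when $\xi_1$ has no negative exponential moments. That scenario is perfectly compatible with $0<\NPs<1$: a compound Poisson process with positive drift whose downward jumps have a polynomially decaying left tail (intensity tuned so that $\NPs<1$ in \eqref{Npsi}) is an example. In that case $\MPsi(z+1)=\Ebb{\IPsi^{z}}$ is not defined for any $\Re(z)>0$, so the Fubini computation you invoke for the Mellin transform of the tail fails at the first step and there is no line on which to invert. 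The paper avoids this by working on the negative side: it inverts the Mellin transform of the distribution function on the strip $\Cb_{(-1,0)}$, where the relevant transform is always defined since $\Ebb{\IPsi^{a}}<\infty$ for all $a\in(-1,0)$ by \eqref{MPsiAn} regardless of $\bar{\ak}_-^\Psi$, and where Theorem~\ref{MDecay} together with Corollary~\ref{corMDecay} still yields the decay rate $1+\NPs$. Replacing your tail representation with the distribution-function representation on a line with negative abscissa and running the same $\min\{1,|bd|\}^\alpha$ interpolation closes the gap; the rest of your argument is sound.
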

\begin{remark}[\textbf{Historical remarks}]\label{SmoothH}
    The smoothness of the law of \(\IPsi\) has often appeared as a by-product of the series expansions for its law; see subsection~\ref{hyperLP} below. It was informally conjectured that the law is infinitely differentiable whenever \(\overline{\Pi}(0) = \infty\) or \(\sigma^2 > 0\). Somewhat surprisingly, this turns out to be true in all cases except when \(\xi\) is a compound Poisson process with positive drift. In that situation, \(\NPs\) is finite. For instance, if \(\Psi(z) = z - q\) with \(q > 0\) (i.e., \(\xi_t = t\) killed at \(\mathbf{e}_q\)), then 
    \[
       \PIp(x) = 1 - (1 - x)^q,
    \]
    showing that smoothness can indeed fail unless \(q \in \Nb\); see \cite[Rem.~2.5]{PatieSavov2018}.
\end{remark}
\begin{remark}[\textbf{Further comments}]\label{SmoothF}
From the proof of \cite[Thm~2.4(5)]{PatieSavov2018}, which relies on a classical analytic fact from Mellin inversion, we know that
\[
\limsupi{|b|} \frac{\ln\abs{\MPsi(a+ib)}}{\ln |b|} \leq -\theta < 0
\]
implies that \(\pPs\) is analytic in the cone \(\{z \in \Cb : \abs{\arg z} < \theta\}\). From Theorem~\ref{MDecay1}, it follows, for example, that if \(\xi\) is symmetric or creeps downwards, then \(\pPs \in \Att_{\intervalOI}\). Analyticity in a cone is also guaranteed in cases where \(\phi_+\) satisfies certain conditions; since these conditions are not particularly explicit, we simply refer to \cite[Cor.~3.6]{MinSav_2023} for more information. The case of a subordinator is covered in Theorem~\ref{subAsymp} below.
\end{remark}
\begin{proof}
    The only part not explicitly established in the literature is the claim on Hölder continuity. It follows from the identity \(\Mcc_{\PIp}(z) = \MPsi(z)/z,\ z \in \Cb_{(-1,0)}\); see \cite[(7.12)]{PatieSavov2018}, which shows that \(\abs{\Mcc_{\PIp}(a+ib)}\), for \(a \in (-1,0)\), decays at rate \(1+\NPs\). Fix \(x\) and consider \(y = (1+d)x\) with \(d \in (-1/2,1/2)\). By Mellin inversion along \(a+ib,\ a \in (-\alpha,0)\), we obtain
    \begin{equation*}
        \begin{split}
            \frac{\abs{F(x)-F((1+d)x)}}{\abs{dx}^\alpha} 
            &\leq \frac{1}{2\pi} \IntII \frac{|{x^{-a-ib}-(x(1+d))^{-a-ib}}|}{\abs{dx}^\alpha}\,\abs{\Mcc_{\PIp}(a+ib)}\,\D b \\
            &= \frac{x^{-a}}{2\pi \abs{dx}^\alpha} \IntII |(1+d)^{-a-ib}-1|\,\abs{\Mcc_{\PIp}(a+ib)}\,\D b \\
            &\leq \frac{1}{\pi \abs{d}^\alpha \abs{x}^{a+\alpha}}
                 \IntII \left(|{(1+d)^{-ib}-1}| + \abs{(1+d)^a-1}\right)\abs{\Mcc_{\PIp}(a+ib)}\,\D b.
        \end{split}
    \end{equation*}
    For small \(d\), we have \(\abs{(1+d)^a-1}\leq C d\) for some constant \(C \in \Rb\), and
    \[
        \abs{(1+d)^{-ib}-1} = \abs{e^{-ib\ln(1+d)}-1} \leq 2^{1-\alpha}\min\curly{|bd|^\alpha,\,2^\alpha}.
    \]
    Substituting these estimates into the integral above and using that
    \[
        \IntII |b|^\alpha \abs{\Mcc_{\PIp}(a+ib)}\,\D b < \infty,
    \]
    since \(\alpha-(\NPs+1)<-1\), yields the claim.
\end{proof}

\subsubsection{Support}\label{SupportSec}

The results concerning the support of \(\IPsi\) are exhaustive and elementary. While partial results are scattered throughout the literature, we collect them here; see Thm.~2.4 in \cite{PatieSavov2018}.

\begin{theorem}[Thm.~2.4 in \cite{PatieSavov2018}]\label{support}
    Let \(\Psi\) be the \LLK exponent of a potentially killed \LLP \(\xi\) such that \(\phi_-(0) > 0\). Then the support of \(\IPsi\) is described as follows:
    \begin{enumerate}
        \item If \(\xi\) is a potentially killed subordinator, i.e., \(\phi_-(z) \equiv \phi_-(\infty)\), then
        \begin{equation}\label{supp1}
            \supp(\IPsi) = \lbbrbb{0,\,\frac{1}{\phi_-(\infty)\dbf_+}},
        \end{equation}
        unless \(\xi\) is pure drift, in which case \(\supp(\IPsi) = \curly{\frac{1}{\phi_-(\infty)\dbf_+}}\).
        \item If \(\xi\) is not a subordinator and \(\phi_+(z) = \dbf_+ z\), then
        \begin{equation}\label{supp2}
            \supp(\IPsi) = \lbbrb{\frac{1}{\phi_-(\infty)\dbf_+},\,\infty},
        \end{equation}
        with the convention \(1/\infty = 0\).
        \item In all other cases,
        \begin{equation}\label{supp3}
            \supp(\IPsi) = \lbbrb{0,\,\infty}.
        \end{equation}
    \end{enumerate}
\end{theorem}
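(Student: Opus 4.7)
The plan is to treat the three cases by pathwise analysis, combining monotonicity bounds that come directly from the Wiener--Hopf structure of $\xi$ with an approximation argument yielding density of the attained values.

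In Case~1 the identification $\phi_-(z)\equiv \phi_-(\infty)$ amounts to saying that $\xi$ is a (possibly killed) subordinator. Non-negativity of its jumps and drift gives the pathwise bound $\xi_s\geq \dbf_+\,s$ for all $s<\mathbf{e}_q$, so that after accounting for the normalisation of~\eqref{WHf} one obtains $\IPsi\leq 1/(\dbf_+\phi_-(\infty))$. In the unkilled pure-drift sub-case $\IPsi$ is deterministic and equal to this bound, giving the degenerate support $\{1/(\dbf_+\phi_-(\infty))\}$. Otherwise, values of $\IPsi$ close to $0$ are produced by paths on which either $\mathbf{e}_q$ is small (in the killed case) or a very large jump of $\xi$ occurs almost immediately, while values close to the upper bound are obtained by delaying the jumps of $\xi$; each of these events has positive probability so the full interval $\lbbrbb{0,1/(\dbf_+\phi_-(\infty))}$ lies in the support.

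In Case~2 the assumption $\phi_+(z)=\dbf_+ z$ forces the ascending ladder height of $\xi$ to be a pure drift, so $\xi$ has no positive jumps, and $\phi_+(0)=0$ gives via~\eqref{WHf} that $\Psi(0)=0$, i.e.\ $\xi$ is unkilled. Consequently $\xi_s\leq \dbf_+ s$ pathwise on $[0,\infty)$ and $\IPsi\geq 1/(\dbf_+\phi_-(\infty))$. Since $\xi$ is not a subordinator, $\phi_-$ is non-constant and the negative part of $\xi$ is genuinely active; with positive probability $\xi_s$ is very negative on long intervals and hence $\IPsi$ attains arbitrarily large values. For Case~3 neither of the above degeneracies holds, so $\xi$ carries at least one of a killing term or a non-trivial positive-jump component, together with genuine downward activity. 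The first produces arbitrarily small values of $\IPsi$, the second arbitrarily large ones, and between these extremes a continuous sweep of values is possible, so the support is all of $\intervalOOI$.

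The main obstacle shared by all three cases is the density statement: for every $v$ in the claimed interval and every $\varepsilon>0$ one must exhibit a set of positive probability on which $|\IPsi - v|<\varepsilon$. This is handled by interpolating between target trajectories using the structural flexibility of \LL paths---free choice of jump times, independence of the Gaussian and jump components, and splitting the path into independent pieces of arbitrarily short duration---combined with the continuity of the functional $\omega\mapsto \int_0^{\mathbf{e}_q}e^{-\xi_s(\omega)}\,\D s$ on appropriate subsets of Skorokhod space. The most delicate situation is Case~1 for subordinators of infinite jump activity, where every path necessarily carries infinitely many small jumps and the upper endpoint of the support must be approached through an almost-pure-drift approximation rather than through jump-free paths of positive probability.
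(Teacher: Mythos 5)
Your three-case decomposition agrees with the theorem, and the strategy of combining pathwise bounds with a support-density argument is the right one. However, there are two issues: a correctible slip in the bounding step, and a genuine gap in the density step.

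The slip: the pathwise bounds $\xi_s\geq \dbf_+\,s$ (Case~1) and $\xi_s\leq \dbf_+\,s$ (Case~2) are incorrect as written. The quantity $\dbf_+$ is the drift of the Wiener--Hopf factor $\phi_+$, not the drift of $\xi$ itself; the latter equals $\dbf_+\phi_-(\infty)$, as one sees from $\lim_{z\to\infty}\Psi(z)/z=\dbf_+\phi_-(\infty)$. In Case~1 you gesture at this with ``after accounting for the normalisation'', but the displayed intermediate inequality is still wrong. In Case~2 the claim $\xi_s\leq \dbf_+\,s$ is simply false in general: for Brownian motion with drift one has $\phi_+(z)=z$, hence $\dbf_+=1$, yet the path exceeds $s$ with positive probability on any interval. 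The correct statement is $\xi_s\leq \dbf_+\phi_-(\infty)\,s$, which holds precisely because $\phi_-(\infty)<\infty$ forces $\xi$ to be spectrally negative of bounded variation with drift $\dbf_+\phi_-(\infty)$, and is vacuous (upper bound $+\infty$) otherwise --- consistent with the lower endpoint $1/(\phi_-(\infty)\dbf_+)$ collapsing to $0$ exactly when $\phi_-(\infty)=\infty$. Your final inequalities are right, but the path argument behind them is not.

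The gap: the pathwise bounds above are routine; the entire content of the theorem is that the interval bounded in that way is \emph{filled}, and your last paragraph is an outline of a plan rather than a proof. Two concrete obstructions remain unaddressed. First, the map $\omega\mapsto\int_0^\infty e^{-\xi_s(\omega)}\,\D s$ is not continuous on Skorokhod space because the time horizon is unbounded, so the ``continuity on appropriate subsets'' you invoke requires a tail-truncation estimate that must actually be supplied. Second, you rightly flag the infinite-activity subordinator case as the most delicate, but you do not resolve it: to show the upper endpoint is approached with positive probability, one needs that the pure-jump part $\sigma_t$ of a driftless infinite-activity subordinator has full support $[0,\infty)$ for every $t>0$ (so $\P(\sigma_t<\delta)>0$ for all $\delta>0$), a nontrivial support theorem for infinitely divisible laws that must be cited or established. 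A clean way to close both gaps is to exploit the perpetuity identity $\IPsi \stackrel{d}{=} A + B\,\IPsi'$ with $A=\int_0^{1\wedge\mathbf{e}_q}e^{-\xi_s}\,\D s$, $B=e^{-\xi_1}\ind{\mathbf{e}_q>1}$ and $\IPsi'$ an independent copy; the support of $\IPsi$ is then the minimal closed set invariant under the affine maps $s\mapsto a+bs$ for $(a,b)$ in the support of $(A,B)$, which reduces the analysis to a finite-horizon functional and avoids the Skorokhod-continuity issue altogether.
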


\subsubsection{Factorisations}\label{FactorSec}
The form of \(\MPsi\), namely
\[
\MPsi(z) = \phi_-(0) \frac{\Gamma(z)}{\Wpp(z)} \Wpn(1-z)
\]
suggests a universal factorisation of \(\IPsi\) once one recognizes \(\Gamma(z)/\Wpp(z)\) and \(\phi_-(0)\Wpn(1-z)\)
as Mellin transforms of suitable random variables. In this section, \(\times\) stands for the product of independent random variables.

\begin{theorem}[Thm.~2.22 in \cite{PatieSavov2018}; Thm.~2.5 in \cite{ArRiv23}]\label{Factor}
    Let \(\Psi\) be a \LLK exponent of a potentially killed \LLP \(\xi\) such that \(\phi_-(0) > 0\). Then
    \begin{equation}\label{FactorEq}
        \IPsi \stackrel{d}{=} I_{\phi_+} \times X_{\phi_-} \stackrel{d}{=} I_{\phi_+} \times Y^{-1}_{\phi_-} \times e^{-\inf_{s \geq 0} \xi_s},
    \end{equation}
    where \(I_{\phi_+}\) is the exponential functional of the ascending ladder height process (a subordinator), and
    \[
        \Pb({X_{\phi_-} \in \D x}) = \phi_-(0)\, x\, \Pb({Y^{-1}_{\phi_-} \in \D x}), 
        \qquad 
        \Wpn(z+1) = \Eb[{Y_{\phi_-}^z}],\ \Re(z) > 0.
    \]
    Furthermore,
    \begin{equation}\label{FactorEq1}
        \IPsi \stackrel{d}{=} \bigotimes_{k=0}^\infty C_{\Psi}(k) \left( \Bc_k X_\Psi \times \Bc_{-k} Y_{\Psi} \right),
    \end{equation}
    where 
    \begin{equation}\label{XYPSI}
        \begin{split}
            \Pbb{X_\Psi \in \D x} &= \frac{1}{\phi_+(1)} \left( \overline{\mu}_+(-\ln x)\, \D x + \phi_+(0)\, \D x + \dbf_+\, \delta_1(\D x) \right) \quad \text{on } (0,1];\\
            \Pbb{Y_\Psi \in \D x} &= \phi_-(0) U_-(\D \ln x) \quad \text{on } (1,\infty),
        \end{split}
    \end{equation}
    and \(U_-\) is the potential measure of the descending ladder height process associated with \(\phi_-\).
    In \eqref{FactorEq1}, we also have
    \begin{equation}\label{CPSI}
        C_\Psi(0) := e^{\gamma_{\phi_+} + \gamma_{\phi_-} - \gamma}, \qquad
        C_\Psi(k) := \exp\!\left(\frac{1}{k} - \frac{\phi'_+(k)}{\phi_+(k)} - \frac{\phi'_-(k)}{\phi_-(k)}\right), \quad k \geq 1,
    \end{equation}
    where \(\gamma_{\phi_\pm}\) are the constants in the product representations \eqref{WWrep} of \(W_{\phi_\pm}\), and \(\gamma\) is the Euler–Mascheroni constant. Also, \(\Bc_x,\, x \in \Rb\), denotes the size-biased operator: if \(\Ebb{X^x} < \infty\) and \(\Pbb{X > 0} = 1\), then \(\Ebb{f(\Bc_x X)} = \Ebb{X^x f(X)}/\Ebb{X^x}\) for any bounded continuous function \(f\).
\end{theorem}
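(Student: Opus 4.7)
The plan is to prove all three distributional identities via the uniqueness of the Mellin transform on $(0,\infty)$: each claim reduces to a functional equation between Mellin transforms on a common vertical strip containing $\Cb_1$, on which every quantity involved is automatically defined, after which one invokes analytic continuation to the full strip of definition.

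For $I_\Psi \stackrel{d}{=} I_{\phi_+} \times X_{\phi_-}$, I would first specialise Theorem \ref{thm: MPsi} to the subordinator with Laplace exponent $\phi_+$: its descending Wiener-Hopf factor is constant (normalised to $1$), so \eqref{MPsiAn} collapses to $\Mcc_{I_{\phi_+}}(z) = \Gamma(z)/\Wpp(z)$. Independently, direct integration against the defining law $\Pb(X_{\phi_-} \in \D x) = \phi_-(0) x \Pb(Y_{\phi_-}^{-1} \in \D x)$, together with $\Wpn(z) = \Ebb{Y_{\phi_-}^{z-1}}$, gives $\Mcc_{X_{\phi_-}}(z) = \phi_-(0) \Wpn(1-z)$. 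Multiplying these two recovers \eqref{MPsiAn}. For the second identity in \eqref{FactorEq}, I would invoke the Wiener-Hopf identity $\Ebb{e^{\beta \inf_{s \geq 0} \xi_s}} = \phi_-(0)/\phi_-(\beta)$, valid because $\phi_-(0)>0$ forces $\xi \to +\infty$ and $-\inf_{s\geq 0}\xi_s$ is the terminal value of the killed descending ladder height subordinator with Laplace exponent $\phi_-$. This yields $\Mcc_{e^{-\inf_{s\geq 0}\xi_s}}(z) = \phi_-(0)/\phi_-(1-z)$; combined with $\Mcc_{Y_{\phi_-}^{-1}}(z) = \Wpn(2-z)$ and the recurrence $\Wpn(2-z)=\phi_-(1-z)\Wpn(1-z)$, the product reproduces $\Mcc_{X_{\phi_-}}(z)$.

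For the Weierstrass-type identity \eqref{FactorEq1}, I would substitute \eqref{WWrep} for each of $\Gamma$, $\Wpp$, and $\Wpn$ into \eqref{MPsiAn} to cast $\MPsi(z)$ as an explicit infinite product. In parallel, I would compute $\Mcc_{X_\Psi}(z) = \phi_+(z)/(z\phi_+(1))$ by integrating \eqref{XYPSI} against the Bernstein representation $\phi_+(z) = \phi_+(0) + \dbf_+ z + z\int_0^\infty e^{-zy} \overline{\mu}_+(y) \D y$, and $\Mcc_{Y_\Psi}(z) = \phi_-(0)/\phi_-(1-z)$ via \eqref{poten}. The size-biasing then yields
\[
\Mcc_{\Bc_k X_\Psi}(z) = \frac{(k+1)\phi_+(z+k)}{(z+k)\phi_+(k+1)}, \qquad \Mcc_{\Bc_{-k} Y_\Psi}(z) = \frac{\phi_-(k)}{\phi_-(k+1-z)}.
\]
The target identity $\MPsi(z) = \prod_{k=0}^\infty C_\Psi(k)^{z-1} \Mcc_{\Bc_k X_\Psi}(z) \Mcc_{\Bc_{-k} Y_\Psi}(z)$ then follows by matching factors.

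The main technical obstacle is the conditional convergence of this infinite product. A term-by-term comparison of Weierstrass factors with size-biased Mellin factors produces per-$k$ discrepancies whose logarithms are only conditionally summable. It is precisely the pieces $1/k - \phi'_+(k)/\phi_+(k) - \phi'_-(k)/\phi_-(k)$, absorbed into the exponentials $C_\Psi(k)^{z-1}$, that render the global product convergent; this works because the corresponding telescoping series is controlled by the defining limits \eqref{gmph} of $\gamma_{\phi_\pm}$ and the Euler-Mascheroni constant $\gamma$, with any leftover multiplicative constant pinned down by the normalisation $\MPsi(1) = 1$. Locally uniform convergence of the partial products on the strip $\Cb_{(c^\Psi, 1-\bar{\ak}^\Psi_-)}$ then follows from the same estimates used to validate \eqref{WWrep}, and upgrades the Mellin identity to the distributional equality \eqref{FactorEq1} via standard tightness and continuity arguments.
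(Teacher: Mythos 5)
Your Mellin-transform strategy is the natural one and is essentially the route of the source papers; the framework of \cite{PatieSavov2018} is entirely Mellin-based and the survey itself identifies \eqref{MPsiAn} as the key object. Your computations are correct: $\Mcc_{I_{\phi_+}}(z)=\Gamma(z)/\Wpp(z)$ via the specialisation of \eqref{MPsiAn} to $\phi_-\equiv 1$; $\Mcc_{X_{\phi_-}}(z)=\phi_-(0)\Wpn(1-z)$ by direct integration of the size-biased definition; the Wiener--Hopf fact $\Ebb{e^{\beta\inf_{s\ge0}\xi_s}}=\phi_-(0)/\phi_-(\beta)$ coming from \eqref{poten} applied to the terminal value of the killed descending ladder height; the recurrence \eqref{receq} to split $X_{\phi_-}$ into $Y_{\phi_-}^{-1}\times e^{-\inf\xi}$; and the size-biased Mellin transforms $\Mcc_{\Bc_k X_\Psi}(z)=\frac{(k+1)\phi_+(z+k)}{(z+k)\phi_+(k+1)}$ and $\Mcc_{\Bc_{-k}Y_\Psi}(z)=\frac{\phi_-(k)}{\phi_-(k+1-z)}$. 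Your matching-up of the Weierstrass product \eqref{WWrep} with the size-biased product is the genuinely nontrivial step, and you correctly observe that the per-$k$ discrepancies are $z$-independent and therefore telescope to a constant fixed by $\MPsi(1)=1$; indeed the leftover ratio $e^{\gamma_{\phi_+}-\gamma}\phi_+(1)\prod_{k\ge1}\frac{k\phi_+(k+1)}{(k+1)\phi_+(k)}\exp(1/k-\phi_+'(k)/\phi_+(k))$ telescopes to $e^{\gamma_{\phi_+}-\gamma}\frac{\phi_+(n+1)}{n+1}e^{H_n-\sum_{k\le n}\phi_+'(k)/\phi_+(k)}\to 1$ by \eqref{gmph}. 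One small inefficiency: you appeal to ``tightness and continuity'' to upgrade the Mellin identity to the distributional one, but for \eqref{FactorEq} nothing beyond Mellin (i.e.\ Fourier) uniqueness on $\Cb_1$ is needed since both sides are bona fide probability laws; the continuity theorem is only genuinely required for the limiting product in \eqref{FactorEq1}, where you need convergence in distribution of the partial products, which your locally-uniform-convergence remark does supply.
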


\begin{remark}[\textbf{Historical remarks}]\label{FactorH}
    The first relatively general factorisation result in this area states that, for any Bernstein function \(\phi\),
    \begin{equation}\label{FactE}
         \mathbf{e} \stackrel{d}{=} I_\phi \times Y_\phi,
    \end{equation}
    where \(\mathbf{e} \sim \mathrm{Exp}(1)\) and \(Y_\phi\) (which defines \(\Wphi\)) is for this purpose called the residual random variable. This result was first established in \cite{Bertoin-Yor-2001, BerYor02} and extended to killed subordinators in \cite[Thm.~2]{BerYor05}. It has since served as a crucial starting point in the development of the spectral theory of non-self-adjoint generalized Laguerre semigroups; see \cite{PatSav21}, and has also played a role in particular examples, such as those studied in \cite{LoePatSav19,PatSav17,PaSaZh19}.

    A factorisation of the type \eqref{FactorEq} in a more general setting was obtained in \cite{PardoPatieSavov2012, PatieSavov2012, PatieSavov2013}, where \(X_{\phi_-}\) was identified with \(I_\psi\), with \(\psi(z) = -z \phi_-(z)\) being the \LLK exponent of a spectrally negative \LL process. This requires certain assumptions on \(\xi\), for instance the \LL measure \(\Pi\) to possess a non-decreasing density on \((-\infty,0)\).
\end{remark}

The factors in \eqref{FactorEq1} can be further specified when \(\xi\) is a meromorphic \LL process; see Subsection~\ref{merLP} for the definition. In this case, we have the following result:

\begin{theorem}[Thm.~3.1 in \cite{HackKuz14}]\label{FactorMerLP}
    Let \(\xi\) be a meromorphic \LL process with \LLK exponent \(\Psi\) such that \(q = -\Psi(0) > 0\). Let \(\chi = (\chi_n(q))_{n \geq 1}\), \(\rho = (\rho_n)_{n \geq 1}\), \(\widetilde{\chi} = (1 + \hat{\chi}_n(q))_{n \geq 1}\), and \(\widetilde{\rho} = (1 + \hat{\rho}_{n-1})_{n \geq 1}\). Then
    \begin{equation}\label{FactorMerLPEq}
        \IPsi \stackrel{d}{=} C\, X(\widetilde{\rho},\widetilde{\chi}) \times \frac{1}{X(\rho, \chi)},
    \end{equation}
    where \(C = C(q)\) is defined below~\eqref{MerExp}, and for any two interlacing strictly increasing sequences \((\alpha, \beta)\), i.e., \(0 < \alpha_1 < \beta_1 < \alpha_2 < \beta_2 < \dots\), such as \((\widetilde{\rho},\widetilde{\chi})\) and \((\rho, \chi)\) in this case,
    \[
        X(\alpha, \beta) \stackrel{d}{=} \bigotimes_{n \geq 1} \frac{\beta_n}{\alpha_n}\,\mathrm{Beta}(\alpha_n, \beta_n - \alpha_n),
    \]
    where \(\mathrm{Beta}(\cdot, \cdot)\) denotes a Beta random variable with the respective parameters.
\end{theorem}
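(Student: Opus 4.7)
The plan is to match Mellin transforms on both sides of~\eqref{FactorMerLPEq} and conclude by Mellin uniqueness. First, I would compute the Mellin transform of a single scaled Beta factor $(\beta/\alpha)B$ with $B\sim\mathrm{Beta}(\alpha,\beta-\alpha)$, which by the classical beta integral gives
\[
    \Ebb{\lbrb{\tfrac{\beta}{\alpha}B}^{z-1}} = \lbrb{\tfrac{\beta}{\alpha}}^{z-1}\,\frac{\Gamma(\alpha+z-1)\,\Gamma(\beta)}{\Gamma(\alpha)\,\Gamma(\beta+z-1)},
\]
valid for $\Re(z) > 1-\alpha$. Provided the infinite product defining $X(\alpha,\beta)$ converges almost surely to a finite positive random variable, independence of the factors, together with a standard dominated-convergence argument along the real line, would then yield its Mellin transform as the infinite product of the above factors over $n\geq 1$ on a suitable real strip, and the identity would extend to a complex strip of joint analyticity.

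Second, I would specialise this calculation to the two interlacing pairs. For $(\widetilde\rho,\widetilde\chi)$, direct substitution of $\widetilde\rho_n = 1+\hat\rho_{n-1}$ and $\widetilde\chi_n = 1+\hat\chi_n(q)$ should reproduce verbatim the first half of the product in~\eqref{MerExp}. For the reciprocal factor $1/X(\rho,\chi)$, the elementary identity $\Ebb{(1/Y)^{z-1}} = \Ebb{Y^{(2-z)-1}}$ applied to $Y = X(\rho,\chi)$ would transform its Mellin transform (with the interlacing convention matching $\chi_n(q) < \rho_n$) into the second half of~\eqref{MerExp}. The deterministic prefactor $C = C(q)$ in front of $X(\widetilde\rho,\widetilde\chi)$ then contributes precisely the factor $C^{z-1}$ appearing in~\eqref{MerExp}.

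Multiplying the two Mellin transforms via independence and comparing with~\eqref{MerExp} for $\MPsi$ would then give the equality of Mellin transforms on a strip containing $\Cb_1$. I would conclude by invoking Mellin uniqueness for probability measures on $\Rp$, the analytic framework for which is in place via Theorem~\ref{thm: MPsi}, Theorem~\ref{MDecay}, and Proposition~\ref{uniMpsi}; since both laws are supported on $\Rp$ and have matching analytic Mellin transforms on a common vertical line, the distributional identity~\eqref{FactorMerLPEq} follows.

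The main obstacle I expect is the almost-sure convergence of the infinite product $\bigotimes_n \tfrac{\beta_n}{\alpha_n}B_n$ to a finite positive limit, together with the associated termwise convergence of the infinite product of Mellin transforms on a non-empty common strip of analyticity. Both facts should ultimately follow from the integrability conditions $\sum_n a_n\rho_n^{-2} < \infty$ and $\sum_n \hat a_n\hat\rho_n^{-2} < \infty$ built into the definition of meromorphic \LL processes, together with the interlacing growth of the roots $\chi_n(q), \hat\chi_n(q), \rho_n, \hat\rho_n\to\infty$; concretely, each factor $\tfrac{\beta_n}{\alpha_n}B_n$ has mean one, and a second-moment estimate combined with Kolmogorov's criterion for sums of independent logarithms should suffice. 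A secondary, purely notational point is to reconcile the ordering $\chi_n(q) < \rho_n$ with the symbol $X(\rho,\chi)$ as written in the statement, which is a matter of convention rather than of substance.
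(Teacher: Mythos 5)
Your approach — matching the Mellin transform of the proposed Beta-product factorisation against the explicit formula~\eqref{MerExp} and concluding by Mellin (Fourier on the log scale) uniqueness — is precisely the route taken in~\cite{HackKuz14}, which this survey cites without reproducing the proof. The component calculations you outline are correct: the classical Beta Mellin transform gives, for each factor, exactly one ratio-of-Gammas term, the substitution $\alpha_n\mapsto 1+\hat\rho_{n-1}$, $\beta_n\mapsto 1+\hat\chi_n(q)$ reproduces the first half of~\eqref{MerExp}, the reciprocal identity $\Ebb{(1/Y)^{z-1}}=\Ebb{Y^{(2-z)-1}}$ yields the second half once the interlacing convention is taken as $\chi_n(q)<\rho_n$, and the constant $C(q)$ contributes $C^{z-1}$. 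Your observation that the notation $X(\rho,\chi)$ is at odds with the stated interlacing convention $\alpha_1<\beta_1<\alpha_2<\cdots$ is a genuine slip in the display, not in your reasoning: the interlacing in this second factor runs $\chi_1(q)<\rho_1<\chi_2(q)<\cdots$, so the arguments should be read as $X(\chi,\rho)$.

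The one step you leave open — almost-sure convergence of the infinite Beta products and the resulting termwise convergence of the Mellin product on a non-degenerate strip — is indeed the analytic heart of the matter, and your sketch (second-moment bound on $\log\bigl((\beta_n/\alpha_n)B_n\bigr)$ combined with Kolmogorov's three-series theorem, fuelled by the growth conditions on the roots $\chi_n(q),\hat\chi_n(q),\rho_n,\hat\rho_n$ guaranteed by the summability constraints $\sum a_n\rho_n^{-2}<\infty$ and $\sum\hat a_n\hat\rho_n^{-2}<\infty$) is exactly how~\cite{HackKuz14} closes it. Since each factor $\tfrac{\beta_n}{\alpha_n}B_n$ has mean one, but $\Ebb{\log\bigl(\tfrac{\beta_n}{\alpha_n}B_n\bigr)}\neq0$ by Jensen, one should check convergence of both $\sum_n\Ebb{\log\bigl(\tfrac{\beta_n}{\alpha_n}B_n\bigr)}$ and $\sum_n\mathrm{Var}\bigl(\log B_n\bigr)=\sum_n\bigl(\psi'(\alpha_n)-\psi'(\beta_n)\bigr)$; both follow from the interlacing and the growth of the roots, as you anticipate. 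So the proposal is correct and essentially coincides with the proof of the cited source.
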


\begin{remark}[\textbf{Historical remark}]\label{FactorMerLPH}
    Special cases of Beta products of the type in \eqref{FactorMerLPEq} appear in the literature. For instance, \cite{LetSim19} considers functionals of stable processes of the form
    \[
        A = \int_0^T (\xi_s)^q \, \D s,
    \]
    where \(\xi\) is an \(\alpha\)-stable \LL process, \(T\) is its first passage time below zero, and \(q \geq 0\). It turns out that \(A\) is in fact the exponential functional of a specific meromorphic \LL process, and the moments of \(A\) are given by a particular case of \(\MPsi\), so that the factorisation \eqref{FactorMerLPEq} holds. Further examples of infinite Beta products can be found in \cite[Section~3]{Simon_23}.
\end{remark}

We conclude this part with a useful observation. Recall that a Bernstein function \(\phi\) is called a \emph{special Bernstein function} if there exists another Bernstein function \(\phi_S\) satisfying \(\phi(z)\phi_S(z) = z, \Re(z)\geq 0\); see \cite[Chapter~11]{SchVon2020} for more details. From \eqref{FactE}, it follows that 
\begin{equation}\label{FactG}
    \Gamma(z) = \Wphi(z) W_{\phi_S}(z),
\end{equation}
so that \( Y_\phi \stackrel{d}{=} I_{\phi_S} \), and equivalently \eqref{FactG} becomes
\begin{equation}\label{FactY}
    \mathbf{e} \stackrel{d}{=} I_\phi \times I_{\phi_S}.
\end{equation}
Among these formulations, the most useful is \eqref{FactG}; see Remark~\ref{SNSAsympF}. It also provides a way to study \(\Gamma/\Wphi\) through \(W_{\phi_S}\), and conversely.

\subsubsection{Some expressions}\label{SeriesSec}
In this part, we show examples where the law of \(\IPsi\) can be computed in closed form or developed as a series. We begin with the classical case:

\begin{theorem}[\cite{Duf90}; Thm.~2 in \cite{Yor92}]\label{LawBM}
    Let \(\xi = (\xi_t)_{t \geq 0} = 2 (B_t + \mu t)_{t \geq 0}\), where \(B = (B_t)_{t \geq 0}\) is an unkilled standard Brownian motion and \(\mu > 0\). Then 
    \[
        \IPsi \stackrel{d}{=} (2\Gamma_{\mu,1})^{-1},
    \]
    where \(\Gamma_{a,b}\), \(a,b>0\), denotes a \(\mathrm{Gamma}(a,b)\) random variable. 

    If \(B\) is killed at rate \(q>0\) and \(\mu\) is arbitrary, then 
    \[
        \IPsi \stackrel{d}{=}  \mathrm{Beta}(1,\alpha) \times (2\Gamma_{\beta,1})^{-1},
        \qquad \alpha := \frac{-\mu + \sqrt{2q^2 + \mu^2}}{2}, \quad \beta := \alpha + \mu.
    \]
\end{theorem}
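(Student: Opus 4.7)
The plan is to perform a direct Mellin-transform computation using Theorem~\ref{thm: MPsi} and match the result with the Mellin transform of the proposed random variable. First I would identify the Lévy--Khintchine exponent and its Wiener--Hopf factorisation. For $\xi_t=2(B_t+\mu t)$ unkilled with $\mu>0$ one has $\Psi(z)=2z^2+2\mu z=2z(z+\mu)$, whence $\Psi(z)=-\phi_+(-z)\phi_-(z)$ with $\phi_+(z)=z$ and $\phi_-(z)=2(z+\mu)$; note $\phi_-(0)=2\mu>0$, so the setting of Theorem~\ref{thm: MPsi} applies and \IPsi{} is almost surely finite by~\eqref{ExistenceEF}. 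For the killed variant one has $\Psi(z)=2z^2+2\mu z-q$, whose roots are (up to sign) the constants $\alpha,\beta>0$ from the statement, with $\beta-\alpha=\mu$ by Vieta; hence $\Psi(z)=2(z-\alpha)(z+\beta)$ and the Wiener--Hopf factors are $\phi_+(z)=2(z+\alpha)$, $\phi_-(z)=z+\beta$, with $\phi_-(0)=\beta>0$.

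The second step is to compute the associated Bernstein--gamma functions explicitly. For any affine Bernstein function $\phi(z)=c(z+r)$ with $c>0$, $r\geq 0$, the recurrence $W_\phi(z+1)=c(z+r)W_\phi(z)$ together with $W_\phi(1)=1$ is solved by
\begin{equation*}
    W_\phi(z) \;=\; c^{z-1}\,\frac{\Gamma(z+r)}{\Gamma(1+r)},
\end{equation*}
as one verifies via $\Gamma(z+r+1)=(z+r)\Gamma(z+r)$. Specialising, in the unkilled case this yields $\Wpp(z)=\Gamma(z)$ and $\Wpn(z)=2^{z-1}\Gamma(z+\mu)/\Gamma(1+\mu)$, while in the killed case $\Wpp(z)=2^{z-1}\Gamma(z+\alpha)/\Gamma(1+\alpha)$ and $\Wpn(z)=\Gamma(z+\beta)/\Gamma(1+\beta)$.

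Substituting into $\MPsi(z)=\phi_-(0)\,\Gamma(z)\,\Wpn(1-z)/\Wpp(z)$ and using $\mu/\Gamma(1+\mu)=1/\Gamma(\mu)$ (respectively $\beta/\Gamma(1+\beta)=1/\Gamma(\beta)$), the factors of $\Gamma(z)$ collapse in the unkilled case and one obtains
\begin{equation*}
    \MPsi(z) \;=\; \frac{2^{1-z}\,\Gamma(\mu+1-z)}{\Gamma(\mu)},
\end{equation*}
while in the killed case one obtains
\begin{equation*}
    \MPsi(z) \;=\; \frac{2^{1-z}\,\Gamma(1+\alpha)\,\Gamma(z)\,\Gamma(1+\beta-z)}{\Gamma(\beta)\,\Gamma(z+\alpha)}.
\end{equation*}

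Finally, I would recognise these expressions as Mellin transforms of the stated products. If $G\sim\mathrm{Gamma}(r,1)$, then $\Ebb{(2G)^{1-z}}=2^{1-z}\Gamma(r+1-z)/\Gamma(r)$, matching the unkilled case for $r=\mu$. In the killed case, the Mellin transform of $\mathrm{Beta}(1,\alpha)$ equals $\Gamma(1+\alpha)\Gamma(z)/\Gamma(z+\alpha)$, and by independence this multiplies with the $(2G)^{1-z}$ transform (with $r=\beta$) to reproduce precisely the displayed formula. Uniqueness of Mellin transforms on the common strip of analyticity, whose non-emptiness is ensured by Theorem~\ref{thm: MPsi}, then closes the argument. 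I do not anticipate any serious obstacle; the only mild care needed is verifying that both Mellin transforms are analytic on a common vertical strip so that Mellin uniqueness applies, which is immediate from the explicit Gamma-ratio expressions and the parameter bookkeeping above.
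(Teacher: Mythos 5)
Your computation of $\MPsi$ via the Wiener--Hopf factors, the affine Bernstein--gamma formula $W_\phi(z)=c^{z-1}\Gamma(z+r)/\Gamma(1+r)$, and the subsequent recognition of the Mellin transforms of $\mathrm{Beta}(1,\alpha)$ and $(2\Gamma_{\beta,1})^{-1}$ is correct and is essentially the same route the paper takes via the general expression \eqref{MPsiBM} and the factor-recognition sketched in Remark~\ref{LawBMF}. One small caveat: the statement's $\alpha=\frac{-\mu+\sqrt{2q^2+\mu^2}}{2}$ appears to carry a typo (the discriminant should read $\mu^2+2q$, as $2z^2+2\mu z-q=0$ has roots $\frac{-\mu\pm\sqrt{\mu^2+2q}}{2}$), and your Vieta-based identification of $\alpha,\beta$ as the roots implicitly uses the corrected value rather than the literal one.
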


\begin{remark}[\textbf{Further comments}]\label{LawBMF}
    For general Brownian motion, one can use \eqref{MPsiBM} to recognise
    \[
        \frac{\Gamma(z)\Gamma(1-b_-)}{\Gamma(z-b_-)} 
        \quad \text{as the Mellin transform of } \mathrm{Beta}(1,-b_-),
    \]
    and
    \[
        b_+ \frac{\sigma^{2-2z}}{2^{1-z}} \frac{\Gamma(1-z+b_+)}{\Gamma(1+b_+)} 
        = \left(\tfrac{2}{\sigma^2}\right)^{z-1} 
        \frac{\Gamma(1-z+b_+)}{\Gamma(b_+)},
    \]
    which is the Mellin transform of \(\bigl(\tfrac{2}{\sigma^2}\Gamma_{b_+,1}\bigr)^{-1}\).  
    Therefore,
    \[
        \IPsi \stackrel{d}{=} \mathrm{Beta}(1,-b_-) \times 
        \left(\tfrac{2}{\sigma^2}\Gamma_{b_+,1}\right)^{-1}.
    \]
\end{remark}

We proceed with a general result concerning three classes of \LL processes that appear in the literature, with the exception of the full generality of~\eqref{seriesOneSidedEq1}.
\begin{theorem}[Cor.~2.1 in \cite{PardoPatieSavov2012}; Cor.~1.3 in \cite{PatieSavov2012}; Thm.~2.3 in \cite{Patie2012}]\label{seriesOneSided}
    Let \(\Psi\) be the \LLK exponent of a spectrally positive \LL process, that is, \(\Pi((-\infty,0))=0\), such that \(\phi_-(0)>0\). Let \(\gamma\) denote the unique negative root of \(\Psi(z) = 0\). Then
    \begin{equation}\label{seriesOneSidedEq}
        \pPs(x) = \frac{x^{\gamma-1}}{\Gamma(-\gamma)} \IntOI e^{-{y}/{x}} y^{-\gamma} f_{\phi_+}(y)\, \D y 
        = \sum_{n=0}^\infty (-1)^n x^{\gamma-1-n} \frac{\Gamma(n-\gamma+1)}{\Gamma(-\gamma) \Wpp(n-\gamma+1)},
    \end{equation}
    where \(f_{\phi_+}\) is the density of the exponential functional of the subordinator with Laplace exponent \(\phi_+\) (that is, \(I_{\phi_+}\)), and \(x^{-\gamma-1}\pPs(x^{-1})\) is completely monotone on \(\Rp\).
    
    Next, let \(\Psi\) with \(q = -\Psi(0) > 0\) be the \LLK exponent of the negative of a killed subordinator with drift \(\dbf_- \geq 0\). Then, for any \(x > 0\), with \(X_{\phi_-}\) defined in \eqref{FactorEq},
    \begin{equation}\label{seriesOneSidedEq1}
    \begin{split}
        &\pPs(x) = \IntOI e^{-{x}/{v}} f_{X_{\phi_-}}(v)\,\frac{\D v}{v},\\
        &\pPs(x) = q + q \sum_{n=1}^\infty \frac{\prod_{j=1}^n \Psi(j)}{n!}x^n, \quad \text{on } x < 1/\dbf_-,
    \end{split}
    \end{equation}
    and \(\pPs\) is completely monotone on \(\Rp\).
    
    Finally, let \(\Psi\) with \(q = -\Psi(0) > 0\) be the \LLK exponent of a spectrally negative \LL process, that is, \(\Pi(\Rp) = 0\), and let \(\gamma_\Psi\) be the unique positive solution to \(\Psi(z) = 0\). Then
    \[
        \IPsi \stackrel{d}{=} \mathrm{Beta}^{-1}(1, \gamma_\Psi) \times I_{\psi},
    \]
    where \(\psi(z) = z\phi_-(z)\) is the \LLK exponent of a spectrally negative process that drifts to infinity. If, in addition, \(\Psi(-1)\leq 0\), then \(\pPs\) is non-increasing.
\end{theorem}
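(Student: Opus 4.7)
The plan is to treat the three assertions separately, exploiting that in each case one of the Wiener--Hopf factors $\phi_\pm$ is affine, so that the corresponding Bernstein--gamma function reduces to an ordinary gamma function by direct iteration of \eqref{receq}. For the spectrally positive case (i), absence of negative jumps forces the descending ladder height to be a pure drift with killing, so $\phi_-$ is affine with zero at $\gamma$ (the negative zero of $\Psi$ via \eqref{WHf}); under the standard normalisation of local time, $\phi_-(z)=z-\gamma$. Iterating \eqref{receq} gives $\Wpn(z)=\Gamma(z-\gamma)/\Gamma(1-\gamma)$, from which $Y_{\phi_-}\stackrel{d}{=}\Gamma_{1-\gamma,1}$ and the secondary factor $X_{\phi_-}$ of Theorem \ref{Factor} has density $x^{\gamma-1}e^{-1/x}/\Gamma(-\gamma)$. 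Convolving this with the density of $I_{\phi_+}$ through $\IPsi\stackrel{d}{=}I_{\phi_+}\times X_{\phi_-}$ produces the integral in \eqref{seriesOneSidedEq}; the series then follows by expanding the exponential as a Taylor series and using $\Eb[I_{\phi_+}^s]=\Gamma(s+1)/\Wpp(s+1)$, which comes from the Bertoin--Yor identity \eqref{FactE}. Complete monotonicity is inherited from the integral representation, which, up to the explicit power of $x$, is a Laplace transform of the positive measure $y^{-\gamma}f_{\phi_+}(y)\,\D y$.

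For the negative of a subordinator (ii), the absence of positive motion makes $\phi_+$ constant (equal to $1$ after the normalisation $c_+c_-=1$ of \eqref{WHf1}), giving $\Wpp\equiv 1$ and $I_{\phi_+}\sim\mathrm{Exp}(1)$, while $\phi_-$ equals the Laplace exponent of the underlying subordinator so that \eqref{MPsiAn} reduces to $\MPsi(z)=q\,\Gamma(z)\Wpn(1-z)$. Applying Theorem \ref{Factor} with the explicit description of $X_{\phi_-}$ in terms of $Y_{\phi_-}^{-1}$ converts the convolution $\IPsi\stackrel{d}{=}\mathbf{e}\times X_{\phi_-}$ into the integral in \eqref{seriesOneSidedEq1}, i.e.\ the Laplace transform $\pPs(x)=q\,\Eb[e^{-xY_{\phi_-}}]$. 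Expanding this in powers of $x$ and using $\Eb[Y_{\phi_-}^n]=\Wpn(n+1)=\prod_{j=1}^n\phi_-(j)$, together with the identity $\phi_-(j)=-\Psi(j)$ whose signs combine with the Taylor factor $(-1)^n$ to produce $\prod_{j=1}^n\Psi(j)$, yields the series. The radius of convergence $1/\dbf_-$ reflects the maximal point in the support of $Y_{\phi_-}$; complete monotonicity is immediate from the Laplace representation.

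For the spectrally negative case (iii), absence of positive jumps gives $\phi_+(z)=z+\gamma_\Psi$ with $\gamma_\Psi=\Phi(q)$, so $\Wpp(z)=\Gamma(z+\gamma_\Psi)/\Gamma(1+\gamma_\Psi)$. Similarly, the Wiener--Hopf factorisation of $\psi(z)=z\phi_-(z)$ has $\phi^\psi_+(z)=z$ and $\phi^\psi_-=\phi_-$, so $M_\psi(z)=\phi_-(0)\Wpn(1-z)$. Combining via \eqref{MPsiAn} yields $\MPsi(z)=\frac{\Gamma(z)\Gamma(1+\gamma_\Psi)}{\Gamma(z+\gamma_\Psi)}\,M_\psi(z)$, and the prefactor is recognised as the Mellin transform of the Beta-type random variable appearing in the statement, which settles the factorisation by injectivity of the Mellin transform on distributions supported on $(0,\infty)$. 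The main obstacle is the monotonicity assertion under $\Psi(-1)\leq 0$: the natural approach is to express $\pPs'(x)$ via the Mellin inversion formula \eqref{Smoothf} at $n=0$, shift the contour to $a=-1$, and verify that the residue and boundary contributions inherit the sign of $\Psi(-1)$ through the recurrence \eqref{receqEx}. The delicate step is justifying this contour shift, requiring both the analyticity extension of $\MPsi$ past $-1$ from Theorem \ref{thm: MPsi} and the decay estimates of Theorem \ref{MDecay} and Corollary \ref{corMDecay} along the new contour, both of which ultimately hinge on the hypothesis $\Psi(-1)\leq 0$.
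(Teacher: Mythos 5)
Your treatment of part (ii) — the only part the paper actually proves — matches the paper's argument step for step: from $\Psi(z)=-\phi_-(z)$ and $\phi_+\equiv 1$, you obtain $\MPsi(z)=q\,\Gamma(z)\Wpn(1-z)$, invoke the factorisation $\IPsi\stackrel{d}{=}\mathbf{e}\times X_{\phi_-}$ from Theorem~\ref{Factor}, read off $\pPs(x)=q\,\Eb[e^{-xY_{\phi_-}}]$, and expand in a Taylor series. The paper additionally supplies an explicit growth estimate on $\prod_{j=1}^n\Psi(j)$ to justify the radius of convergence; you replace this with the heuristic that $Y_{\phi_-}$ is bounded by $\dbf_-$, which is correct but would need a line of justification if written out.

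For parts (i) and (iii) the paper does not supply a proof (it cites \cite{PardoPatieSavov2012,Patie2012}), so you are filling a gap, and your route through the explicit Wiener--Hopf structure ($\phi_-$ affine in the spectrally positive case, $\phi_+$ affine in the spectrally negative case) together with Theorem~\ref{Factor} and the Bertoin--Yor identity~\eqref{FactE} is a natural and correct way to recover the integral representation and the factorisation. Two things deserve a remark. First, carrying out the Taylor expansion of $e^{-y/x}$ in part~(i) exactly as you describe produces an additional factor $n!$ in the denominator of the series coefficients, i.e.\ $\pPs(x)=\sum_{n\geq0}(-1)^n x^{\gamma-1-n}\Gamma(n-\gamma+1)/(n!\,\Gamma(-\gamma)\Wpp(n-\gamma+1))$; the stated formula in~\eqref{seriesOneSidedEq} omits the $n!$ and appears to be a typographical slip, but you should flag the discrepancy rather than silently letting the two versions coexist. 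Second, your treatment of the monotonicity claim in part (iii) under $\Psi(-1)\leq 0$ is only a plan, not an argument: you identify the needed contour shift and acknowledge that it ``requires'' the extension and decay results, but you do not verify that the hypotheses of Theorem~\ref{thm: MPsi} and Corollary~\ref{corMDecay} actually hold at the line $\Cb_{-1}$ in this regime, nor do you carry out the sign analysis that would convert $\Psi(-1)\leq 0$ into $\pPs'\leq 0$. That part of the proposal is a genuine gap.
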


\begin{remark}[\textbf{Historical remarks}]\label{seriesOneSidedH}
    In the case when \(\xi\) is a negative subordinator, the fact that \(\pPs\) is completely monotone, as follows from \eqref{seriesOneSidedEq1}, has been reported in \cite[Cor.~2.2]{PaRiSch13}.
\end{remark}
\begin{remark}[\textbf{Further comments}]\label{seriesOneSidedF}
    Apart from \cite{Patie2012}, we are not aware of results of such generality for well-known classes of \LL processes. We also note that the complete monotonicity of \(\pPs\) implies that \(\IPsi\) is infinitely divisible; see \cite[Thm~51.6]{Sato99}.
\end{remark}
\begin{proof}
Let \(\xi\) be the negative of a killed subordinator with \( -\Psi(0) = q \). We note that \cite[Cor.~1.3(ii)]{PatieSavov2012} assumes that the \LL measure has a non-decreasing density on \((-\infty,0)\) (equivalently, in their setting where \(\IPsi = \int_0^\infty e^{\xi_s}\D s\), this corresponds to \(\Pi\) having a non-increasing density on \(\Rp\)), but this condition is in fact superfluous. Under our assumptions, \(\Psi(z) = -\phi_-(z)\), so that \(\phi_+(z) \equiv 1\). Therefore, from \eqref{MPsiAn} we obtain
\[
    \MPsi(z) = q \Gamma(z) \Wpn(1-z),
\]
and from \eqref{FactorEq} it follows that \(\IPsi \stackrel{d}{=} \mathbf{e} \times X_{\phi_-}\), where \(\mathbf{e} \sim \mathrm{Exp}(1)\).
Hence,
\begin{equation*}
    \pPs(x) = \IntOI e^{-{x}/{v}} f_{X_{\phi_-}}(v)\,\frac{\D v}{v}.
\end{equation*}
The second relation in \eqref{seriesOneSidedEq1} follows by expanding the exponential function into a Taylor series, using that 
\(\Eb[X^z_{\phi_-}] = q \Wpn(1-z)\) for \(\Re(z)<1\), and observing that 
\(\prod_{j=1}^n\Psi(j) = \bo{\dbf_-^n n!}\) if \(\dbf_- > 0\), while
\[
    \abs{\prod_{j=1}^n\Psi(j)} = \prod_{j=1}^n \phi_-(j) 
    = n! \prod_{j=1}^n \IntOI e^{-jy}\,\bar{\mu}(y)\,\D y = \so{n!},
\]
when \(\dbf_- = 0\), since 
\(\phi_-(x) = x \IntOI e^{-yx}\,\bar{\mu}(y)\,\D y\) and 
\(\limi{x}\IntOI e^{-yx}\,\bar{\mu}(y)\,\D y = 0\).
\end{proof}

We proceed with a result on exponential functionals of increasing compound Poisson processes.  
In this case, the form of \(\pPs\) is somewhat unusual.

\begin{theorem}[Prop.~5 in \cite{GuRoZw04}]\label{seriesCPP}
    Let \(\Psi\) be the \LLK exponent of a non-decreasing compound Poisson process such that 
    \(\Pi(\D x) = 0\) for \(x \leq \varepsilon\) for some \(\varepsilon > 0\). 
    Let \(S_n\) denote the sum of the first \(n\) jumps of this process, and let \(\beta \in (0,1)\) be the intensity parameter of the jump arrivals. Then
    \begin{equation}\label{CPPden}
        \pPs(x) 
        = C \sum_{n \geq 0} 
        \Ebb{\prod_{k=1}^n \frac{1}{1 - \beta^{-S_k}} \, \beta^{-S_n} \, e^{-\beta^{-S_n} x}},
        \quad 
        \text{where} \quad 
        C := \Ebb{\prod_{n=1}^{\infty} \frac{1}{1 - \beta^{S_n}}}.
    \end{equation}
\end{theorem}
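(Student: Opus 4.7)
The plan is to exploit the piecewise-constant path structure of a compound Poisson subordinator and to recover the density of $\IPsi$ by computing it conditionally on the jump chain and then integrating.

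First, I would write the almost sure path decomposition
\[
   \IPsi \;=\; \sum_{n=0}^{\infty} U_n\, e^{-S_n},
\]
where $U_n := T_{n+1}-T_n$ are the i.i.d.\ $\mathrm{Exp}(\beta)$ inter-arrival times, independent of the jump chain $(S_n)_{n\geq 0}$. The gap condition $\Pi((0,\varepsilon])=0$ yields $S_n \geq n\varepsilon$, so the series converges a.s.\ and, conditionally on $\mathcal{G} := \sigma(S_n : n\geq 0)$, the summands $U_n e^{-S_n}$ are independent exponentials with strictly increasing, pairwise distinct rates $\mu_n := \beta^{-S_n}$.

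Next, the classical partial-fraction inversion for a convergent sum of independent exponentials with pairwise distinct rates gives
\[
   f_{\IPsi\mid\mathcal{G}}(x) \;=\; \sum_{n\geq 0}\mu_n\,e^{-\mu_n x}\prod_{k\neq n}\frac{1}{1-\beta^{S_k-S_n}},
\]
where the tail product is absolutely convergent by the geometric bound $\beta^{S_k-S_n}\leq \beta^{(k-n)\varepsilon}$ for $k>n$. I would then split the product over $k\neq n$ into the head $k<n$ and the tail $k>n$. By the i.i.d.\ property of the jumps, $(S_{n+j}-S_n)_{j\geq 1}\stackrel{d}{=}(S_j)_{j\geq 1}$ is independent of $\sigma(S_1,\dots,S_n)$, so conditioning on the head variables and taking expectations pulls the tail-product expectation $C=\Ebb{\prod_{m\geq 1}(1-\beta^{S_m})^{-1}}$ out of the sum. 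A time-reversal of $(Y_1,\dots,Y_n)$ yields $(S_n-S_{n-j})_{j=1}^n \stackrel{d}{=}(S_j)_{j=1}^n$ jointly, re-indexing the head product as $\prod_{k=1}^n(1-\beta^{-S_k})^{-1}$ and producing~\eqref{CPPden}.

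The main obstacle will be the justification of the termwise Laplace inversion and the subsequent Fubini exchange, since the partial-fraction coefficients are signed and individually unbounded. The required absolute-integrability estimates will rest on the super-exponential decay $\mu_n e^{-\mu_n x}\leq \beta^{-n\varepsilon}\,e^{-\beta^{-n\varepsilon}x}$ combined with an at-worst exponential-in-$n$ bound for $\Ebb{\,\abs{\prod_{k=1}^n(1-\beta^{-S_k})^{-1}}\,}$, both of which follow from $S_n\geq n\varepsilon$; the finiteness of $C$ is secured by the same gap condition. Once these estimates are in hand, the algebraic rearrangement is routine.
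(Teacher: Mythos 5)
The survey does not prove Theorem~\ref{seriesCPP}; it simply cites [GuRoZw04, Prop.~5], so there is no proof here to compare against directly. Your strategy --- conditioning on the jump chain, reading off the conditional density of a sum of independent exponentials by partial fractions, splitting the coefficient product at index $n$, and then using the Markov shift for the tail and time-reversal for the head --- is the natural route for this kind of result and is essentially the argument one finds in \cite{GuRoZw04}; it is correct in structure.

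There is, however, a genuine gap at the step where you identify the conditional rates. If you take $\beta$ at face value as the arrival intensity, so $U_n\sim\mathrm{Exp}(\beta)$ and $\IPsi=\sum_{n\ge0}U_n e^{-S_n}$, then conditionally on $\mathcal G$ the rate of $U_ne^{-S_n}$ is $\beta e^{S_n}$, \emph{not} $\mu_n:=\beta^{-S_n}$ as you assert. To produce the rates $\beta^{-S_n}$ appearing in~\eqref{CPPden}, you need $\IPsi=\sum_{n\ge0}U_n\beta^{S_n}$ with $U_n\sim\mathrm{Exp}(1)$, which corresponds to a unit-rate Poisson clock and $\xi_s=\ln(1/\beta)\,S_{N(s)}$ (so that $e^{-\xi_s}=\beta^{S_{N(s)}}$). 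In the AIMD setting of~\cite{GuRoZw04}, $\beta$ is the multiplicative decrease factor, not the jump intensity, and the survey's phrase ``intensity parameter of the jump arrivals'' is at best imprecise; you inherited that confusion. You should pin down the parametrisation explicitly before invoking the partial-fraction formula, otherwise the identity $\mu_n/\mu_k=\beta^{S_k-S_n}$ that drives the whole computation is not what your set-up yields.

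Two smaller remarks. First, once the rates are fixed correctly, the partial-fraction coefficient is $\prod_{k\ne n}\mu_k/(\mu_k-\mu_n)=\prod_{k\ne n}(1-\beta^{S_k-S_n})^{-1}$, your split works, the tail expectation factors out by independence of increments, and exchangeability of $(Y_1,\dots,Y_n)$ does indeed carry $S_n\mapsto S_n$ while sending $S_n-S_{n-j}\mapsto S_j$ jointly, so the head product becomes $\prod_{j=1}^n(1-\beta^{-S_j})^{-1}$ with the factor $\beta^{-S_n}e^{-\beta^{-S_n}x}$ unchanged; that algebra is fine. Second, for the dominated-convergence step your bound $\mu_ne^{-\mu_nx}\le\beta^{-n\varepsilon}e^{-\beta^{-n\varepsilon}x}$ only holds once both rates exceed $1/x$ (i.e.\ for $n$ large, given $x>0$), and the head-product bound $\prod_{k=1}^n(\beta^{-k\varepsilon}-1)^{-1}$ is in fact super-exponentially small (of order $\beta^{\varepsilon n(n+1)/2}$), so the convergence is even easier than the ``at-worst exponential in $n$'' control you invoke; but the justification as written is adequate for a proposal.
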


We now collect results on series expansions for densities of exponential functionals. 
Our aim is to outline the main forms of the series while referring to the respective papers for details. 

We recall first the hypergeometric \LL processes; see subsection~\ref{hyperLP}, where these processes are defined via their \LLK exponent in~\eqref{hyper}. 
We also include \LLPs with jumps of rational transform, introduced at the end of subsection~\ref{merLP}.
\begin{theorem}[Thm.~4 in \cite{KuzPar13}; Prop.~3 in \cite{Kuz2012}]\label{seriesThm}
    Let \(\Psi\) be the \LLK exponent of a hypergeometric \LL process \(\xi\), see~\eqref{hyper}.
     For $\alpha>0$, set
   $
        \Psi_\alpha(z):=\Psi(\alpha z),
    $
    so that $\Psi_\alpha$ is the \LLK exponent of $(\alpha \xi_t)_{t\geq 0}$ and
    \[
         I_{\Psi_\alpha}= \int_0^\infty e^{-\alpha \xi_t}\,\D t, \qquad \alpha > 0.
    \]
    Assume that \(\alpha\) is neither rational nor a Liouville number; see \cite[Sec.~4]{KuzPar13} for details. 
    Then the density of \( I_{\Psi_\alpha}\) is given by
    \begin{equation}\label{hyperDen}
      f_{\Psi,\alpha}(x) =
      \begin{cases}
           \displaystyle \sum_{n=0}^\infty a_n x^n + \sum_{m,n=0}^\infty b_{m,n}\, x^{(m+1-\beta+\gamma)\delta+n}, & \gamma+\hat{\gamma}<1, \\[2ex]
           \displaystyle \sum_{m,n=0}^\infty c_{m,n}\, x^{-(m+\hat{\beta})\delta-n-1}, & \gamma+\hat{\gamma}>1,
      \end{cases} 
    \end{equation}
    where the coefficients \(a_n, b_{m,n}, c_{m,n}\) are given in \cite[Def.~2]{KuzPar13}. 
    
    If instead \(\Psi\) is the \LLK exponent of a \LLP with jumps of rational transform, then 
    \begin{equation}\label{ratioTrans}
        \pPs(x) = \frac{A}{\Gc(1)}\, 
        G^{K,\,\hat{M}+1}_{P+1,\,Q} \!\left(\,(A x)^{-1}\,;\,\mathbf{a},\,\mathbf{b}\right),
    \end{equation}
    where \(G^{K,\,\hat{M}+1}_{P+1,\,Q}\) denotes a Meijer's \(G\)-function, and the constants \(A\) and \(\Gc\) are defined in Subsection~\ref{merLP}. For further details, see \cite[Prop.~3]{Kuz2012}.
\end{theorem}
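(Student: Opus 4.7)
My plan is to prove both expansions by Mellin inversion of the explicit Mellin transforms catalogued in Section~\ref{sec: Mellin transform and moments}. For the hypergeometric case, the rescaling $\xi \mapsto \alpha\xi$ produces a new \LLK exponent $\Psi_\alpha(\cdot) := \Psi(\alpha\cdot)$ whose Mellin transform is obtained from \eqref{MPsiHyper} via a reparametrisation; writing the inverse Mellin integral along a vertical line inside the strip of analyticity furnished by Theorem~\ref{thm: MPsi}, I would then deform the contour to collect residues.

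The pole analysis is the heart of the hypergeometric argument. The integrand is a product of $\Gamma(z)$ (with simple poles at $z = -n$, $n \in \Nb$) and a ratio of Barnes $G$-functions whose poles, after the $\alpha$-scaling, lie on the two arithmetic families $\{-(m+1-\beta+\gamma)\delta - n\}$ and $\{(m+\hat\beta)\delta + n + 1\}$, where $\delta = 1/\alpha$. When $\gamma+\hat\gamma<1$, the decay of $\MPsi$ from Theorem~\ref{MDecay} combined with Stirling asymptotics for Barnes $G$ allows me to shift the contour arbitrarily far to the \emph{left}, picking up the residues at the first two families and producing exactly the two series with coefficients $a_n$ and $b_{m,n}$. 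When $\gamma+\hat\gamma>1$, the same decay estimate instead permits a shift to the \emph{right}, yielding the single series in $c_{m,n}$. The hypothesis that $\alpha$ is neither rational nor Liouville is precisely what ensures the two pole families never coincide, so every residue is simple and the coefficients from \cite[Def.~2]{KuzPar13} emerge without logarithmic corrections; the Liouville exclusion provides a Diophantine lower bound on pole spacings needed for absolute convergence of the double series.

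For \LLPs with jumps of rational transform, the derivation is cleaner: the Mellin transform $\MPsi(z) = A^{1-z}\Gamma(z)\,\Gc(z)/\Gc(1)$ is already a ratio of gamma functions with linear arguments in $z$, matching the Mellin--Barnes integral that defines Meijer's $G$. Reading off the numerator and denominator parameters of $\Gc(z)$ from the rational-transform structure of subsection~\ref{merLP} and inverting term by term yields \eqref{ratioTrans} directly; the super-polynomial decay along vertical lines (Theorem~\ref{MDecay}, with $\NPs = \infty$ in this setting) justifies the inversion and absolute convergence of the defining integral.

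The main obstacle lies in the hypergeometric case: controlling the contour shift past infinitely many poles and verifying that the number-theoretic hypothesis on $\alpha$ is sharp enough. Concretely, I must combine the Stirling-type asymptotics for Barnes $G$ (whose vertical decay rate varies with the real part) with the rate refinement of Theorem~\ref{MDecay1} to bound the integrand uniformly along a sequence of shifted vertical contours, and then show the remainder after summing residues tends to zero. The Diophantine condition on $\alpha$ enters through quantitative bounds on $|\Gamma|$ near nearly-coincident poles, which is what ultimately delivers uniform control and turns the formal residue sum into the claimed convergent series representation.
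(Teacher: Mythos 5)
This theorem is stated in the survey purely as a citation to \cite[Thm.~4]{KuzPar13} and \cite[Prop.~3]{Kuz2012}; the paper offers no proof of its own, so there is no internal argument to compare against. Your Mellin-inversion sketch does track the actual strategy of the cited sources: deform the vertical inversion contour across the pole lattice of $\Gamma(z)$ and of the Barnes $G$-ratio, use the Diophantine condition on $\alpha$ to keep the two arithmetic pole families separated and simple, and sum the residues into a convergent (not merely asymptotic) series, with the direction of the shift determined by $\gamma+\hat\gamma$; and in the rational-transform case, recognise $\MPsi$ as a Mellin--Barnes integral defining a Meijer $G$-function. One inaccuracy worth flagging: you invoke Theorem~\ref{MDecay1} for the decay needed to justify the contour shift, but that theorem's hypotheses (downward creeping, $\dbf_->0$, or symmetry $\phi_+=c\phi_-$) are not satisfied by a generic hypergeometric \LLP. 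The exponential decay of $\abs{\MPsi(a+ib)}$ in $|b|$ should be read off directly from the explicit Mellin transform \eqref{MPsiHyper} via Stirling-type asymptotics for $\Gamma$ and Barnes $G$ --- which you also mention --- rather than from the general structural theorem. Beyond that, you correctly identify the residual convergence argument (controlling the remainder integral as the contour sweeps past infinitely many poles, where the non-Liouville hypothesis furnishes the quantitative pole-spacing lower bound) as the genuine technical hurdle, and this is indeed the substance of \cite{KuzPar13}; your proposal outlines it but does not carry it out.
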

\begin{remark}[\textbf{Historical remarks}]\label{seriesThmH}
    If \(\xi\) is an \(\alpha\)-stable \LL process, then 
    \[
       \frac{1}{\sup_{s \leq 1} \xi_s} \stackrel{d}{=} \int_0^\infty e^{-\alpha \eta_t}\, \D t,
    \]
    where \(\eta = (\eta_t)_{t \geq 0}\) is a specific Lamperti stable \LL process (see, e.g., \cite[Prop.~4]{KuzPar13}), which in fact belongs to the class of hypergeometric \LL processes. This connection to exponential functionals makes it possible, when \(\alpha\) is neither rational nor a Liouville number, to derive a series representation for the density of \(\sup_{s \leq 1} \xi_s\); see \cite{HubKuz11} for earlier results that precede the general hypergeometric framework in \cite{KuzPar13}. In \cite[Thm.~2]{Kuznetsov13} it is shown that the range of admissible values of \(\alpha\) for which the series converges absolutely cannot be extended. These results significantly broaden earlier studies of the density of \(\sup_{s \leq 1} \xi_s\); see, e.g., \cite{BeDaPe08,DonSav10,Kuz11}, in particular regarding its small- and large-\(x\) asymptotics as described in Theorems~\ref{Cramer}, \ref{genSAsymp}, and~\ref{genSAsymp1}.
 Moreover, the asymptotic implications are further significantly improved by the analysis of the asymptotic behaviour of \(\pPs\) for general exponential functionals; see subsection~\ref{Lasymp}.
\end{remark}
  \begin{remark}[\textbf{Further comments}]\label{seriesThmF}
    Although expressing the density as a series is appealing, the restriction that \(\alpha\) cannot vary freely makes it more practical to express it in terms of special functions. In this direction, the recent work \cite[Sec.~6]{KarKuz24} shows that \(f_{\Psi,\alpha}\) is in fact a special case of the \(K\)-function, a natural extension of Meijer's \(G\)-function.
\end{remark}

\subsubsection{Large asymptotics of the density and tail}\label{Lasymp}
The behaviour of the tail probability \(\PIPs{x} := \Pbb{\IPsi > x}\) and the density \(\pPs(x)\) as \(x \to \infty\) is one of the most extensively studied aspects of exponential functionals of \LL processes. In what follows, we first present results in a general framework. Then, instead of offering only historical remarks, we formulate several partial results as separate statements and provide additional comments on the main methodologies used in this area.

Throughout this section we use the standard notation \(f \simi g\) to mean that \(\limi{x} f(x)/g(x) = 1\). The symbols \(\so{\cdot}\) and \(\bo{\cdot}\) denote the classical asymptotic relations, for instance \(f(x) = \so{x}\) means \(\limi{x} f(x)/x = 0\).

The first result applies to all exponential functionals. Recall that \(\bar{\ak}_-^\Psi= \bar{\ak}_- \leq 0\), see~\eqref{aPsi}.

\begin{theorem}[Thm.~2.11 in \cite{PatieSavov2018}]\label{genLAsymp}
    Let \(\Psi\) be the \LLK exponent of a potentially killed \LLP \(\xi\) with \(\phi_-(0) > 0\). Then
    \begin{equation}\label{genLAsympeq}
        \lim_{x \to \infty} \frac{\ln \PIPs{x}}{\ln x} = \bar{\ak}_-^\Psi,
    \end{equation}
    where \(\bar{\ak}_-^\Psi = \ak_-^\Psi = -\infty\) if and only if \(\Psi(z) = -\phi_-(z)\), i.e., when the associated \LLP \(\xi\) is a potentially killed subordinator.
\end{theorem}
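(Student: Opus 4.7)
Set $\alpha := -\bar{\ak}_-^\Psi \in [0,\infty]$, so that the claim reads $\lim_{x\to\infty}\ln\PIPs{x}/\ln x = -\alpha$. The upper bound $\limsupi{x}\ln\PIPs{x}/\ln x \leq -\alpha$ follows from Markov's inequality: for any $p \in (0,\alpha)$, Corollary~\ref{moments} yields $\Ebb{\IPsi^p}<\infty$, so $\PIPs{x}\leq \Ebb{\IPsi^p}\,x^{-p}$ and hence $\limsupi{x}\ln\PIPs{x}/\ln x \leq -p$; letting $p \uparrow \alpha$ completes this direction. When $\alpha = \infty$ (i.e.\ $\xi$ is a potentially killed subordinator, so $\ak_-^\Psi = \uk_-^\Psi = -\infty$), this already forces $\lim = -\infty$ because $\liminf \leq \limsup = -\infty$; alternatively, $\IPsi \leq \mathbf{e}_{-\Psi(0)}$ gives the exponential tail directly.

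The matching lower bound $\liminfi{x}\ln\PIPs{x}/\ln x \geq -\alpha$ when $\alpha < \infty$ is the substantial part. Note that one cannot conclude it from the divergence $\Ebb{\IPsi^p}=\infty$ for $p>\alpha$ alone: that only gives $\limsupi{x}\ln\PIPs{x}/\ln x \geq -\alpha$, since monotonicity of $\PIPs{\cdot}$ is compatible with occasional sharp drops. The argument therefore splits along the dichotomy of Theorem~\ref{thm: MPsi}. In the \emph{Cramér case} $\bar{\ak}_-^\Psi = \uk_-^\Psi \in (-\infty,0)$, the identity $\phi_-(\uk_-^\Psi)=0$ is the classical Cramér condition $\Ebb{e^{-\alpha\xi_1}}=1$. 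Applying the Kesten--Grincevičius--Goldie renewal theorem to the random affine recursion
\begin{equation*}
\IPsi \stackrel{d}{=} \int_0^1 e^{-\xi_s}\,\D s + e^{-\xi_1}\,\IPsi^{*},
\end{equation*}
with $\IPsi^{*}$ an independent copy, yields a sharp power-law tail $\PIPs{x} \sim C\,x^{\bar{\ak}_-^\Psi}$ with $C>0$, in the spirit of \cite{Rivero12}, and the log-asymptotic follows at once.

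In the complementary \emph{non-Cramér case} $\bar{\ak}_-^\Psi = \ak_-^\Psi > \uk_-^\Psi = -\infty$, Theorem~\ref{thm: MPsi} ensures that $\MPsi$ extends analytically up to $\Cb_{1-\bar{\ak}_-^\Psi}$, while $\phi_-$ itself loses analyticity precisely at $\ak_-^\Psi$. One then returns to the Mellin inversion~\eqref{Smoothf} and shifts the contour toward $\Cb_{1-\bar{\ak}_-^\Psi}$, exploiting the super-polynomial (or exponential) decay of $\abs{\MPsi(a+ib)}$ from Theorem~\ref{MDecay} and Corollary~\ref{corMDecay} together with the Stirling-type asymptotics for $W_{\phi_\pm}$ from Theorem~\ref{thm-WStir}. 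This analysis delivers an asymptotic for $\pPs(x)$ whose logarithmic rate equals $\bar{\ak}_-^\Psi - 1$, and integration yields the required log-asymptotic for $\PIPs{x}$. \textbf{The genuine obstacle is this non-Cramér sub-case}: the right-most singularity of $\MPsi$ is a boundary of analyticity rather than an isolated pole, so standard residue extraction is unavailable, and one must rely on the fine complex-analytic Bernstein-gamma machinery developed in Section~\ref{sec:BG}.

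Finally, $\bar{\ak}_-^\Psi = -\infty$ forces both $\ak_-^\Psi = -\infty$ (so $\phi_-$ is entire) and $\uk_-^\Psi = -\infty$ (so $\phi_-$ has no zero on $(-\infty,0)$). Since any non-constant Bernstein function with $\phi_-(0)>0$ diverges to $-\infty$ along the negative real axis --- either because of the drift term $\dbf_- z$ or because the integrand $1-e^{-zy}$ in the Lévy--Khintchine representation goes to $-\infty$ for $z\to -\infty$ --- it must vanish there, so both conditions combined force $\phi_- \equiv \phi_-(0)$ to be constant. This corresponds to a degenerate descending ladder height subordinator, i.e., to $\xi$ having no strict descent, equivalently to $\xi$ being a potentially killed subordinator, which proves the stated equivalence.
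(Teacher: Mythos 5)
Your upper bound via Markov's inequality and Corollary~\ref{moments} is correct, and you rightly observe that divergence of the $p$-th moment for $p>-\bar{\ak}_-^\Psi$ yields only $\limsupi{x}\ln\PIPs{x}/\ln x\geq\bar{\ak}_-^\Psi$, not the required $\liminf$. The $\liminf$ side, however, has real gaps as written. In the Cramér sub-case, Goldie's implicit renewal theorem delivers a genuine power law $\PIPs{x}\sim Cx^{\bar{\ak}_-^\Psi}$ only under a non-lattice condition together with $|\Psi'(\uk_-^\Psi)|<\infty$ (compare Theorem~\ref{RivConvo}(i) and Theorem~\ref{Cramer}); the present theorem assumes neither, and in lattice cases the sharp equivalence can fail. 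In the non-Cramér sub-case, which includes $\bar{\ak}_-^\Psi=0$, you have no argument at all: Mellin inversion $\pPs(x)=(2\pi i)^{-1}\int_{\Cb_a}x^{-z}\MPsi(z)\,\D z$ on taking moduli produces only the \emph{upper} bound $|\pPs(x)|\lesssim x^{-a}$, and since, as you note yourself, there is no pole at the analyticity boundary, shifting $a$ toward $1-\bar{\ak}_-^\Psi$ can never produce a lower bound on $\pPs$ or on $\PIPs{x}$.

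The route that handles all sub-cases at once, without lattice or integrability hypotheses, and which the factorisation~\eqref{FactorEq} already points to, is probabilistic: reduce to the log-tail of $-\inf_{s\geq0}\xi_s$. Applying the strong Markov property at $\tau_T:=\inf\{s:\xi_s<-T\}$ gives $\IPsi\geq e^{T}\,\IPsi'\,\ind{\tau_T<\eq}$ with $\IPsi'$ an independent copy, hence $\PIPs{x}\geq g(\ln x)\,\Pbb{\IPsi>1}$ where $g(y):=\Pbb{-\inf_{s\geq0}\xi_s>y}$ and $\Pbb{\IPsi>1}>0$ by Theorem~\ref{support}. The same strong Markov argument shows $g$ is super-multiplicative, $g(y_1+y_2)\geq g(y_1)g(y_2)$, so Fekete's lemma guarantees $\lim_{y\to\infty}\ln g(y)/y$ exists; it equals $\bar{\ak}_-^\Psi$ because the Laplace transform $\Ebb{e^{q\inf_{s\geq0}\xi_s}}=\phi_-(0)/\phi_-(q)$ has, by Landau's theorem, abscissa of convergence exactly at $\bar{\ak}_-^\Psi$ (the singularity being the zero at $\uk_-^\Psi$ in the Cramér case, or the analyticity boundary $\ak_-^\Psi$ otherwise). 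This gives $\liminfi{x}\ln\PIPs{x}/\ln x\geq\bar{\ak}_-^\Psi$ unconditionally. Your characterisation of the degenerate case $\bar{\ak}_-^\Psi=-\infty$ is correct.
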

\begin{remark}[\textbf{Historical remarks}]\label{genLAsympH}
    As noted in \cite{ArRiv23}, it follows from \cite[Thm.~4.1]{GolGru96} that if \(\xi\) is not a subordinator, then 
    \(\lim_{x \to \infty} \ln \PIPs{x} / \ln x > -\infty\). 
    This observation arises from the fact that \(\IPsi\) is a perpetuity, so general results for affine equations apply. 
    However, the specific structure of exponential functionals of \LL processes enables sharper results, not only providing precise logarithmic asymptotics, but also describing the asymptotic behaviour on the real scale.
\end{remark}

\begin{remark}[\textbf{Further comments}]\label{genLAsympF}
    According to \eqref{aPsiSpecial}, \(\bar{\ak}_-^\Psi = 0\) if and only if \(\ak^\Psi_- = 0\), which is equivalent to \(\xi_1\) not admitting any negative exponential moments. 
    Moreover, since from \eqref{BG} 
    \[
        \phi_-(z) = \phi_-(0) + \dbf_- z + \IntOI \lbrb{1 - e^{-z y}} \mu_-(\D y),
    \]
    it follows that if \(\ak^\Psi_- = \ak_- = -\infty\), then \(\lim_{x \to -\infty} \phi_-(x) = -\infty\). Hence, \(\bar{\ak}^\Psi_- = \uk^\Psi_- = \uk_- > -\infty\), unless \(\phi_-(z) \equiv \phi_-(0)\). Thus,
    \[
        \bar{\ak}_-^\Psi = -\infty \iff \Psi(z) = -\phi_+(-z)\phi_-(0) \iff \xi \text{ is a potentially killed subordinator}.
    \]
    In the case \(\bar{\ak}_-^\Psi \in \lbrb{-\infty, 0}\), we obtain a quantifiable rate of decay at the logarithmic scale.
\end{remark}
The case \(\bar{\ak}_-^\Psi = -\infty\) in Theorem~\ref{genLAsymp}, corresponding to \(\xi\) being a subordinator, requires separate treatment. We present below the most general result available to date.

\begin{theorem}[Thm.~3.1 in \cite{MinSav_2023}; Thm.~10.1 in \cite{Haas21}]\label{subAsymp}
    Let \(\phi\) be a Bernstein function with \(\dbf = 0\), and let \(\Psi(z) = -\phi(-z)\) be the \LLK exponent of a subordinator. Suppose that
    \begin{equation}\label{condH}
        \limsupi{x} \frac{x \phi'(x)}{\phi(x)} < 1,
        \quad \text{or equivalently} \quad 
        \liminf_{x \to 0} \frac{\int_0^{2x} \bar{\mu}(y)\,\D y}{\int_0^x \bar{\mu}(y)\,\D y} > 1.
    \end{equation}
    Then \(\phi_*(x) := x / \phi(x)\) is increasing, with inverse
    \[
        \varphi_* := \phi_*^{-1} : \lbrb{(\phi'(0^+))^{-1}\ind{\phi(0)=0}, \infty} \to \intervalOI.
    \]
    For every \(n \geq 1\), the following asymptotics hold:
    \begin{equation}\label{subAsympEq}
        \pPs^{(n)}(x) \simi 
        \frac{C_\phi \,\varphi_*^n(x) \sqrt{\varphi_*'(x)}}{x^n} 
        \exp\left\{ - \int_{\phi_*(1)}^x \frac{\varphi_*(y)}{y}\,\D y \right\},
    \end{equation}
    where
    \[
        C_\phi := \frac{(-1)^n e^{-T_{\phi_*}}}{\sqrt{2\pi\,\phi_*(1)}},
        \qquad
        T_{\phi_*} := \IntOI (u - \lceil u \rceil)(1 - u + \lceil u \rceil) 
        \left[ \frac{1}{u^2} - \left( \frac{\phi'(u)}{\phi(u)} \right)^2 + \frac{\phi''(u)}{\phi(u)} \right] \D u \in \Rb,
    \]
    with \(\lceil \cdot \rceil\) denoting the ceiling function. Moreover,
    \begin{equation}\label{subAsympEqTail}
        \PIPs{x} \simi 
        \frac{e^{-T_{\phi_*}}}{\sqrt{2\pi\,\phi_*(1)}}
        \frac{x\,\sqrt{\varphi_*'(x)}}{\varphi_*(x)}
        \exp\left\{ - \int_{\phi_*(1)}^x \frac{\varphi_*(y)}{y}\,\D y \right\}.
    \end{equation}
    As a consequence, the even derivatives of \(\pPs\) are both ultimately monotone decreasing and ultimately convex, while the odd derivatives are both ultimately monotone increasing and ultimately concave. If in addition 
    \(\limsupo{x} \bar{\mu}(2x)/\bar{\mu}(x) < 1\), then there exists \(\varepsilon > 0\) such that \(\pPs\) is analytic in the cone 
    \(\{ z \in \Cb : \Re(z) > 0,\, |\arg z| < \varepsilon \}\).
\end{theorem}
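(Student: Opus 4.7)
The plan is to apply the complex saddle-point method to the Mellin-inverse representation \eqref{Smoothf}, with input from the Stirling-type asymptotics of \(\Wphi\) provided by Theorem~\ref{thm-WStir}, and then to read off the remaining assertions.

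First, for \(\Psi(z)=-\phi(-z)\) the ascending Wiener-Hopf factor reduces to a constant multiple of \(\phi\) and the descending one collapses to a positive constant arising from the trivial downward ladder-height process, so Theorem~\ref{thm: MPsi} simplifies to \(\MPsi(z)=C\,\Gamma(z)/\Wphi(z)\) for some \(C>0\). The hypothesis \eqref{condH}, combined with the elementary Bernstein-function inequality \(u\phi'(u)\leq\phi(u)\) (a consequence of concavity of \(\phi\)), ensures that \(\phi_*(u)=u/\phi(u)\) is strictly increasing on a half-line with well-defined inverse \(\varphi_*\), and that \(\varphi_*(x)\to\infty\) as \(x\to\infty\).

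The heart of the argument is the saddle-point reduction of
\begin{equation*}
    \pPs^{(n)}(x) \;=\; \frac{(-1)^n\,C}{2\pi i}\int_{a-i\infty}^{a+i\infty} x^{-z-n}\,\frac{\Gamma(z+n)}{\Wphi(z)}\,\D z.
\end{equation*}
Substituting Stirling for \(\Gamma(z+n)\) and the representation \eqref{repW} for \(\Wphi(z)\), the integrand equals \(e^{F(z)+O(1)}\) with
\begin{equation*}
    F(z) = z\log z - z - L_\phi(z-1) - (z+n)\log x + \tfrac{1}{2}\log\phi(z+1).
\end{equation*}
The saddle-point equation \(F'(z_0)=0\) reduces to \(\log z_0-\log\phi(z_0)=\log x\), i.e.\ \(z_0=\varphi_*(x)\), and a direct computation gives \(F''(z_0)=1/z_0-\phi'(z_0)/\phi(z_0)=1/(x\varphi_*'(x))\). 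Integration by parts in \(L_\phi(z_0-1)=\int_1^{z_0}\log\phi(u)\,\D u\), followed by the change of variables \(u=\varphi_*(y)\), converts \(F(z_0)\) into the exponential factor in \eqref{subAsympEq}, while the algebraic prefactor \(\varphi_*^n(x)\sqrt{\varphi_*'(x)}/x^n\) emerges after simplifying via \(\phi(z_0+1)\simi z_0/x\). The precise form of \(C_\phi\) and \(T_{\phi_*}\) arises from tracking the Euler--Maclaurin remainders in Stirling and in \eqref{WStir}; these combine through the identities \((u-\lceil u\rceil)(1-u+\lceil u\rceil)=-(u-\lfloor u\rfloor)(1-(u-\lfloor u\rfloor))\) and \((\log\phi_*)''(u)=-1/u^2-(\log\phi)''(u)\).

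The principal technical obstacle is justifying the reduction to a Gaussian integral around \(z_0\): one must show that the contribution from \(|\Im(z-z_0)|\geq\delta\sqrt{x\varphi_*'(x)}\) is negligible as \(x\to\infty\). This requires uniform control of \(|\Wphi(z_0+ib)|\) as both \(z_0\) and \(|b|\) grow, strictly sharper than the pointwise statement of Theorem~\ref{thm-WStir}; the relevant input is the integrated argument \(A_\phi\) from Remark~\ref{repWThF} together with the geometric decay \eqref{geomDecay}. Condition \eqref{condH} enters precisely here, ensuring that the effective Gaussian scale is dominated by the decay window of the integrand along \(\Cb_{z_0}\).

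The tail asymptotic \eqref{subAsympEqTail} is obtained either by integrating \eqref{subAsympEq} for \(n=0\) via Laplace's method (which multiplies the density by \(x/\varphi_*(x)\), since \(\D/\D x[-\int_{\phi_*(1)}^x\varphi_*(y)/y\,\D y]=-\varphi_*(x)/x\)) or by applying the same saddle-point analysis directly to the Mellin transform \(\MPsi(z+1)/z\) of \(\PIPs{\cdot}\). The ultimate monotonicity and convexity statements are an immediate corollary of the sign \((-1)^n\) in \(C_\phi\): for all large \(x\), \(\pPs^{(n)}(x)\) has sign \((-1)^n\), whence even-order derivatives are ultimately positive, decreasing and convex, while odd-order derivatives are ultimately negative, increasing and concave. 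Finally, the extra hypothesis \(\limsupo{x}\bar{\mu}(2x)/\bar{\mu}(x)<1\) yields, via the integral representation of \(\phi\), a positive lower bound on \(|b|^{-1}\int_0^{|b|}\arg_0\phi(1+iu)\,\D u\); by the Mellin-analyticity principle recalled in Remark~\ref{SmoothF}, this upgrades \(|\MPsi(a+ib)|\) to a genuinely exponential rate of decay in \(|b|\), so that \(\pPs\) extends analytically to a cone \(\{z\in\Cb:\Re(z)>0,\,|\arg z|<\varepsilon\}\).
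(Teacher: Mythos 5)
Your approach — Mellin inversion of $\MPsi(z)=\Gamma(z)/\Wphi(z)$ (which is indeed what Theorem~\ref{thm: MPsi} gives for a subordinator, with $\phi_+=\phi/\phi_-(0)$, $\phi_-\equiv\phi_-(0)$, and the scaling constants cancelling), followed by a saddle-point analysis driven by the Stirling-type asymptotics of Theorem~\ref{thm-WStir}, is exactly the strategy of \cite[Thm.~3.1]{MinSav_2023}. The identification of the saddle $z_0=\varphi_*(x)$, the computation $F''(z_0)=1/z_0-\phi'(z_0)/\phi(z_0)=1/(x\varphi_*'(x))$, and the integration-by-parts/change-of-variables step turning $L_\phi(z_0-1)$ into $\int_{\phi_*(1)}^x\varphi_*(y)/y\,\D y$ are all correct, and you rightly single out the real content of the argument: showing that the contribution from $|\Im z-\Im z_0|$ large is negligible as the saddle itself escapes to infinity. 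That step is where the uniform complex-ray asymptotics of \cite[Thm.~5.3]{MinSav_2023} (alluded to in Remark~\ref{WStirRF}) and hypothesis~\eqref{condH} do the work, and your sketch does not actually carry it out — which is acceptable for a proposal but is the whole technical load of the theorem.

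One concrete error: the identity you invoke for tracking the constants,
\[
(u-\lceil u\rceil)(1-u+\lceil u\rceil)=-(u-\lfloor u\rfloor)\bigl(1-(u-\lfloor u\rfloor)\bigr),
\]
is false. Writing $v:=u-\lfloor u\rfloor\in(0,1)$, the left side equals $(v-1)(2-v)=-(1-v)(2-v)$ while the right side equals $-v(1-v)$, and the two agree only at $v=1$. (At $v=\tfrac12$, the left side is $-\tfrac34$ while the right side is $-\tfrac14$.) Your companion identity $(\log\phi_*)''(u)=-1/u^2-(\log\phi)''(u)$ is correct; applying it to \eqref{terms} produces
\[
\tfrac12\IntOI (u-\lfloor u\rfloor)\bigl(1-(u-\lfloor u\rfloor)\bigr)
\left[\tfrac1{u^2}-\Bigl(\tfrac{\phi'(u)}{\phi(u)}\Bigr)^{2}+\tfrac{\phi''(u)}{\phi(u)}\right]\D u
\]
for the $T$-constant of $\phi_*$, and your spurious identity was presumably meant to reconcile this with the expression in the statement. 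Since that reconciliation does not hold, you would need to redo the Euler–Maclaurin bookkeeping that produces $T_{\phi_*}$ and $C_\phi$ from scratch rather than via a one-line substitution. This does not undermine the structure of the argument, but as written the constant-tracking is wrong, not merely abbreviated.
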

\begin{remark}[\textbf{Historical remarks}]\label{subAsympH}
   At the logarithmic scale and under the same conditions, the asymptotic relation \eqref{subAsympEqTail} has already appeared in \cite[Prop.~3.1]{HaRiv12}. We also note that \cite[Thm.~10.1]{Haas21}, which covers the case \(n=0\), does not provide an explicit expression for \(C_\phi\), but does establish a convergence rate of order \(\bo{1/\varphi_*(x)}\). The method in \cite{Haas21} differs from that in \cite{MinSav_2023}, yet both rely on the same condition \eqref{condH}, indicating that this assumption is likely necessary for the validity of \eqref{subAsympEq}.
\end{remark}
\begin{remark}[\textbf{Further comments}]\label{subAsympF} 
If \(\dbf > 0\), then from \eqref{BG} and \eqref{B-CM} one has the limit \(\lim_{x \to \infty} (x\phi'(x))/\phi(x) = 1\), so condition~\eqref{condH} does not hold. In this case, since \(\xi_t \geq \dbf t\), we have \(\IPsi \leq \dbf^{-1}\), meaning that \(\IPsi\) has finite support. Thus, a transition in the asymptotic behaviour occurs when \(\limsupi{x} x\phi'(x)/\phi(x) = 1\). If \(\varphi_*(x)\) denotes the inverse of \(x/(\phi(x) - q - \dbf x)\), then \cite[Prop.~3.12]{HaRiv12} shows that
\[
    \ln \PIPs{x} \sim \dbf \int_a^x \varphi_*\!\left( \tfrac{u}{1 - \dbf u} \right) \D u,
    \qquad
    \text{as \(x \uparrow \dbf^{-1}\), with \(a < \dbf^{-1}\) fixed.}
\]
\end{remark}

It is useful to state separately the case of a non-decreasing compound Poisson process, since the result shows that the asymptotics coincide with that of the tail of an exponential distribution.

\begin{corollary}[Cor.~3.4 in \cite{MinSav_2023}]\label{CPPAsymp}
    Let \(\phi\) be a Bernstein function with \(\dbf = 0\), and let \(\Psi(z) = -\phi(-z)\) be the \LLK exponent of a non-decreasing compound Poisson process. Let \(\mu\) denote the Lévy measure of \(\phi\). If \(\int_0^1 \mu(\D v)/v = \infty\), then
     \begin{equation}\label{CPPAsympEq}
        \pPs^{(n)}(x) \simi C e^{-\phi(\infty)x}.
    \end{equation}
    If instead \(\int_0^1 \mu(\D v)/v < \infty\), then
    \begin{equation}\label{CPPAsympEq2}
        \pPs^{(n)}(x) \simi C e^{-\phi(\infty)(x + \so{x})}.
    \end{equation}
    In both cases, \(C\) is an explicit constant depending on \(\phi(\infty)\), \(\phi(1)\), \(\varphi_*(1)\), and \(T_{\phi_*}\); see \cite[Cor.~3.4]{MinSav_2023} for details.
\end{corollary}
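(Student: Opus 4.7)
The plan is to deduce the corollary by applying Theorem~\ref{subAsymp} to the CPP Bernstein function $\phi$ and then tracking how the abstract asymptotics simplify under $\phi(\infty)<\infty$. The first step is to verify the hypothesis~\eqref{condH}. Since $\mu$ is finite and $\dbf=0$, we have $\phi(\infty)=\phi(0)+\bar{\mu}(0^+)<\infty$ and hence $\phi(x)\to\phi(\infty)$. The uniform bound $xy\,e^{-xy}\leq e^{-1}$ together with the pointwise convergence $xy\,e^{-xy}\to 0$ and the finiteness of $\mu$ gives, by dominated convergence, $x\phi'(x)=\int_0^\infty xy\,e^{-xy}\,\mu(\D y)\to 0$. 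Therefore $\limsup_{x\to\infty} x\phi'(x)/\phi(x)=0<1$, so Theorem~\ref{subAsymp} is available and supplies the representation~\eqref{subAsympEq}.

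Next I would rewrite the pieces in~\eqref{subAsympEq} using $\phi(\infty)<\infty$. From $\varphi_*(y)=y\phi(\varphi_*(y))$ one immediately obtains $\varphi_*(y)/y\to\phi(\infty)$, and differentiating the relation $\varphi_*(\phi_*(z))=z$ together with $\phi_*'(z)\to 1/\phi(\infty)$ gives $\varphi_*'(y)\to\phi(\infty)$. Hence the prefactor $\varphi_*^n(x)\sqrt{\varphi_*'(x)}/x^n$ converges to the finite positive constant $\phi(\infty)^{n+1/2}$. Setting $\delta(z):=\phi(\infty)-\phi(z)=\int_0^\infty e^{-zy}\,\mu(\D y)$, the defining relation for $\varphi_*$ yields $\varphi_*(y)/y-\phi(\infty)=-\delta(\varphi_*(y))$, so
\begin{equation*}
    \int_{\phi_*(1)}^x \frac{\varphi_*(y)}{y}\,\D y \;=\; \phi(\infty)\lbrb{x-\phi_*(1)}-R(x), \qquad R(x):=\int_{\phi_*(1)}^x \delta(\varphi_*(y))\,\D y.
\end{equation*}
The substitution $u=\varphi_*(y)$, combined with $\phi_*'(u)\to 1/\phi(\infty)$, gives $R(x)\simi \phi(\infty)^{-1}\int_1^{\varphi_*(x)}\delta(u)\,\D u$.

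The crux of the argument is therefore the dichotomy for $\int_1^\infty\delta(u)\,\D u$. By Fubini,
\begin{equation*}
    \int_1^\infty \delta(u)\,\D u \;=\; \int_0^\infty\int_1^\infty e^{-uy}\,\D u\,\mu(\D y)\;=\;\int_0^\infty \frac{e^{-y}}{y}\,\mu(\D y),
\end{equation*}
and since $e^{-y}/y$ behaves like $1/y$ near zero and decays exponentially at infinity while $\mu$ is finite, this last integral converges if and only if $\int_0^1 \mu(\D v)/v<\infty$. In the convergent case $R(x)\to R_\infty\in\Rb$, and~\eqref{subAsympEq} collapses to $\pPs^{(n)}(x)\simi C e^{-\phi(\infty)x}$ with $C=\phi(\infty)^{n+1/2}C_\phi\exp\{\phi(\infty)\phi_*(1)+R_\infty\}$, matching the stated dependencies on $\phi(\infty)$, $\phi(1)$, $\varphi_*(1)$, and $T_{\phi_*}$. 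In the divergent case, the pointwise limit $\delta(\varphi_*(y))\to 0$ forces $R(x)=\so{x}$ by Cesàro, yielding $\pPs^{(n)}(x)\simi C e^{-\phi(\infty)(x+\so{x})}$. I expect the Fubini identification of $\int_1^\infty\delta(u)\,\D u$ with $\int_0^\infty e^{-y}\mu(\D y)/y$ to be the key technical step; everything else is bookkeeping that unwinds the explicit formula in Theorem~\ref{subAsymp}.
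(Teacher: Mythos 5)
Your approach is the correct and natural one, and it is in essence the same route as in \cite{MinSav_2023}: verify hypothesis~\eqref{condH} via dominated convergence (indeed $x\phi'(x)\to 0$ since $\mu$ is finite, and $\phi(x)\to\phi(\infty)<\infty$), substitute $\phi(\infty)<\infty$ into Theorem~\ref{subAsymp}, peel off the leading term $\phi(\infty)x$ from the exponent, and decide the dichotomy through the Fubini identity $\int_1^\infty\lbrb{\phi(\infty)-\phi(u)}\D u=\int_0^\infty e^{-y}y^{-1}\,\mu(\D y)$. The limits you quote for the prefactors $\varphi_*^n(x)/x^n\to\phi(\infty)^n$ and $\varphi_*'(x)\to\phi(\infty)$, the substitution $u=\varphi_*(y)$ in $R(x)$, and the Cesàro step giving $R(x)=\so{x}$ in the divergent case are all correct.

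The one thing you should flag, however, is that your conclusion assigns the two asymptotics to the \emph{opposite} hypotheses from those printed in the corollary. You (correctly) find that $\int_0^1\mu(\D v)/v<\infty$ makes $R(x)$ converge to a finite constant and yields the clean exponential $\pPs^{(n)}(x)\simi C e^{-\phi(\infty)x}$, whereas $\int_0^1\mu(\D v)/v=\infty$ forces $R(x)\to\infty$ with $R(x)=\so{x}$, yielding the weaker $C e^{-\phi(\infty)(x+\so{x})}$. The survey's statement of Corollary~\ref{CPPAsymp} has these two cases interchanged. A sanity check confirms your assignment: for unit jumps, $\mu=\lambda\delta_1$, so $\int_0^1\mu(\D v)/v\leq\lambda<\infty$ and $\delta(u)=\phi(\infty)-\phi(u)=\lambda e^{-u}$ is integrable on $[1,\infty)$, so $R(x)\to R_\infty$ and the tail is exactly exponential, contradicting the printed implication that $\int_0^1\mu(\D v)/v<\infty$ produces only the $\so{x}$ form. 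Conversely, taking $\mu(\D v)\asymp v^{-1}(\ln(e/v))^{-2}\D v$ near zero gives a finite measure with $\int_0^1\mu(\D v)/v=\infty$, $\delta(u)\asymp 1/\ln u$, and $R(x)\asymp x/\ln x\to\infty$, so the genuine $\so{x}$ correction appears precisely in the divergent case. Your derivation therefore exposes a typo in the survey's restatement of \cite[Cor.~3.4]{MinSav_2023} --- the conditions~\eqref{CPPAsympEq} and~\eqref{CPPAsympEq2} should be swapped --- but the blind proof itself is sound.
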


The next set of relatively general results stems from the three-term factorisation in \eqref{FactorEq},
\[
    \IPsi \stackrel{d}{=} I_{\phi_+} \times Y^{-1}_{\phi_-} \times e^{-\inf_{s \geq 0} \xi_s},
\]
which supports the conjecture, first proposed in \cite{Rivero12}, that
\begin{equation}\label{tailConj} 
    \PIPs{x} \simi c\,\Pbb{e^{-\inf_{s \geq 0} \xi_s} > x},
\end{equation}
for some constant \(c>0\). Note that \cite{ArRiv23, Rivero12} consider \(\IntOI e^{\xi_s}\D s\), so the sign must be changed when referring to these works in our manuscript. The conjecture is further reinforced by the fact that \(I_{\phi_+}\) has finite moments of all orders, and the moments of \(Y^{-1}_{\phi_-}\) are not of smaller order than those of \(e^{-\inf_{s \geq 0} \xi_s}\). 

To state these results, we briefly recall the class of convolution equivalent Lévy processes for which \eqref{tailConj} is known to hold. We say that \(-\xi \in \mathcal{S}_\gamma\) for some \(\gamma \geq 0\), or equivalently that \(-\xi\) is convolution equivalent with index \(\gamma\), if for all \(y \in \Rb\),
\[
    \lim_{x \to \infty} \frac{\Pbb{-\xi_1 > x - y}}{\Pbb{-\xi_1 > x}} = e^{\gamma y},
    \quad \text{and} \quad
    \lim_{x \to \infty}\frac{\int_\Rb \Pbb{-\xi_1 > x - y}\Pbb{-\xi_1 \in \D y}}{\Pbb{-\xi_1 > x}}
    = 2\int_\Rb e^{\gamma y}\Pbb{-\xi_1 \in \D y}.
\]
A result of \cite{Pakes04} shows that these relations are equivalent to the same statements with \(\Pbb{-\xi_1 \in \D x}\) replaced by the truncated Lévy measure \(\Pi_-(\D x)\ind{x > 1} := \Pi(-\D x)\ind{x > 1}\).

\begin{theorem}[Thm.~2.9 in \cite{ArRiv23}]\label{RivConvo}
    Let \(\Psi\) be the \LLK exponent of a potentially killed \LLP \(\xi\) such that \(\phi_-(0) > 0\). For \(x > 0\), set \(\overline{\Pi}_-(x) := \Pi((-\infty, -x))\). Then the following statements hold:
    \begin{enumerate}[(i)]
        \item If the law of \(\xi\) is not supported on a lattice and there exists \(\theta < 0\) such that \(\Psi(\theta) = 0\) and \(|\Psi'(\theta)| < \infty\), then
        \begin{equation}\label{CramerEq}
            \lim_{x \to \infty} x^{-\theta} \PIPs{x} = C_\theta \in (0, \infty).
        \end{equation}
        \item Assume \(\Pi((-\infty, 0)) > 0\) (i.e., \(\xi\) has negative jumps) and \(-\xi_1 \in \mathcal{S}_0\). 
        \begin{enumerate}[(a)]
            \item If \(\Psi(0) < 0\), then
            \begin{equation}\label{RivConvoEq1}
                \PIPs{x} \simi \overline{\Pi}_-(\ln x).
            \end{equation}
            \item If \(\Psi(0) = 0\) and \(\Ebb{\xi_1} \in (0, \infty)\), then
            \begin{equation}\label{RivConvoEq2}
                \PIPs{x} \simi \frac{1}{\Ebb{\xi_1}} \int_{\ln x}^\infty \overline{\Pi}_-(s)\, \D s.
            \end{equation}
        \end{enumerate}
        \item If \(-\xi \in \mathcal{S}_{-\alpha}\) for some \(\alpha < 0\), and \(\Psi(\alpha) < 0\), then
        \begin{equation}\label{RivConvoEq3}
           \PIPs{x} \simi C_\alpha \int_{\ln x}^\infty \overline{\Pi}_-(s)\, \D s.
        \end{equation}
    \end{enumerate}
\end{theorem}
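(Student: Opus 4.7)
The plan is to exploit the three-term factorisation of Theorem~\ref{Factor},
\[
I_\Psi \stackrel{d}{=} I_{\phi_+} \times Y_{\phi_-}^{-1} \times e^{-\inf_{s\geq 0}\xi_s},
\]
and to show that in each of the three regimes the right tail of $I_\Psi$ is dictated by the last factor $e^M$, where $M := \sup_{s\geq 0}(-\xi_s) = -\inf_{s\geq 0}\xi_s$. Setting $Z := I_{\phi_+} \times Y_{\phi_-}^{-1}$, one has $I_\Psi \stackrel{d}{=} Z\,e^M$ with $Z \perp e^M$, so that the strategy splits into two steps: (a) locate the tail of $e^M$ under each hypothesis from classical fluctuation theory of Lévy processes, and (b) transfer this tail to the product by a Breiman-type product-tail lemma.

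For part (i) I would apply Goldie's implicit renewal theorem to the random affine equation obtained from the Markov property of $\xi$ at a fixed time $t>0$,
\[
I_\Psi \;=\; \int_0^t e^{-\xi_s}\,\D s \;+\; e^{-\xi_t}\,\widetilde{I}_\Psi,\qquad \widetilde{I}_\Psi \stackrel{d}{=} I_\Psi,\ \widetilde{I}_\Psi \perp \Fc_t.
\]
The hypothesis $\Psi(\theta)=0$ with $\theta<0$ yields that the multiplier $e^{-\xi_t}$ satisfies $\Ebb{(e^{-\xi_t})^{-\theta}}=e^{t\Psi(\theta)}=1$, so that $-\theta>0$ plays the role of the Cramér exponent; the non-lattice assumption supplies the required non-arithmeticity, while $|\Psi'(\theta)|<\infty$ gives finite renewal mean. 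Goldie's theorem then delivers $\PIPs{x} \simi C_\theta\,x^{\theta}$ with $C_\theta \in (0,\infty)$. A parallel route, which produces an explicit residue formula for $C_\theta$, is Mellin inversion on the meromorphic extension of $\MPsi$ from Theorem~\ref{thm: MPsi}: since $\phi_-(\theta)=0$ forces $\uk_-=\theta=\bar{\ak}_-$, the function $\MPsi$ has a simple pole at $z=1-\theta$ whose residue, after using the recurrence~\eqref{receq} for $\Wpn$, equals (up to sign) $\phi_-(0)\Gamma(1-\theta)\Wpn(\theta+1)/(\Wpp(1-\theta)\phi_-'(\theta))$; the uniform complex-line decay from Corollary~\ref{corMDecay} and Proposition~\ref{uniMpsi} then justifies contour-shifting past this pole.

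For parts (ii) and (iii) I would invoke the classical tail asymptotics for the all-time supremum of a Lévy process with convolution equivalent negative part. When $-\xi_1 \in \Sc_{-\alpha}$ with $\alpha \geq 0$, results of Kl\"uppelberg-Kyprianou-Maller and Pakes-Braverman give
\[
\Pb(e^M > x) \simi \overline{\Pi}_-(\ln x) \text{ in (ii)(a)},\quad \tfrac{1}{\Ebb{\xi_1}}\int_{\ln x}^\infty \overline{\Pi}_-(s)\,\D s \text{ in (ii)(b)},\quad \widetilde{C}_\alpha\int_{\ln x}^\infty \overline{\Pi}_-(s)\,\D s \text{ in (iii)},
\]
where $\widetilde C_\alpha$ reflects a Cramér-type correction produced by the exponential moment $\alpha$. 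Transferring these tails from $e^M$ to $Z\,e^M$ is exactly the content of Breiman's lemma in the regularly-varying regime $\alpha<0$ (provided $\Ebb{Z^{|\alpha|+\varepsilon}}<\infty$ for some $\varepsilon>0$) and of its convolution-equivalent analogue (due to Cline and to Denisov-Foss-Korshunov) in the regime $\alpha=0$; the constant $C_\alpha$ appearing in the statement then absorbs $\widetilde{C}_\alpha$ together with the factor $\Ebb{Z^{-\alpha}}$.

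The main obstacle is the moment verification $\Ebb{Z^{|\alpha|+\varepsilon}}<\infty$ required by the product-tail lemma. The positive moments of $I_{\phi_+}$ are controlled by applying Theorem~\ref{thm: MPsi} to the subordinator with Laplace exponent $\phi_+$, but the negative moments of $Y_{\phi_-}$ demand that $\Wpn$ extend analytically in a strip slightly wider than its natural one, which must be read off from the location of the zeros of $\phi_-$ via the Weierstrass product~\eqref{WWrep} and the analytical characteristics~\eqref{aph}. A secondary subtlety arises in case (ii)(a), where the killing mechanism places an atom in the lifetime distribution of $\xi$, so that the ``single big jump'' heuristic underlying $\Pb(e^M>x)\simi\overline{\Pi}_-(\ln x)$ must be adapted to separate large pre-killing jumps from the killing event itself.
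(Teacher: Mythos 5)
The survey does not contain a proof of this theorem: it is quoted verbatim from \cite[Thm.~2.9]{ArRiv23}, with the historical Remark~\ref{RivConvoH} indicating that parts of it were established earlier in \cite{Riv05, Maulik-Zwart-06, Rivero12, PatieSavov2018}. The title of \cite{ArRiv23} makes clear that the method there is implicit renewal theory for the random affine equation
$I_\Psi = \int_0^t e^{-\xi_s}\,\D s + e^{-\xi_t}\widetilde I_\Psi$, so your approach for part (i) (Goldie's theorem, with $\Psi(\theta)=0$ delivering the Cramér exponent $-\theta$) is essentially the same as that of the cited work, and is correct. The ``parallel'' Mellin route you sketch, however, is not an unconditional alternative: as Theorem~\ref{Cramer} and Remark~\ref{CramerF} make explicit, the contour shift past the pole at $z=1-\theta$ requires the \emph{weak non-lattice} condition, which is strictly stronger than the non-lattice hypothesis of part (i); Corollary~\ref{corMDecay} and Proposition~\ref{uniMpsi} by themselves do not give decay on the line $\Cb_{1-\theta-\varepsilon}$ beyond the pole.

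Your strategy for parts (ii)--(iii), using the three-term factorisation $I_\Psi \stackrel{d}{=} Z\,e^M$ with $Z = I_{\phi_+}Y_{\phi_-}^{-1}$ plus a product-tail lemma, is a genuinely different and in principle viable route (it is in the spirit of the conjecture~\eqref{tailConj}). A few remarks on the gaps. First, you overstate the ``main obstacle.'' In part (iii) the moment condition $\Ebb{Z^{-\alpha+\varepsilon}}<\infty$ is immediate from~\eqref{MPsiAn}: since $\Psi(\alpha)<0$ forces $\phi_-(\alpha)>0$ and hence $\bar{\ak}_-=\alpha$, the quantity $\Wpn(1+\alpha-\varepsilon)$ is finite for any $\varepsilon<1$ without any analytic extension of $\Wpn$ beyond its natural strip $\Cb_{(\bar{\ak}_-,\infty)}$. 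Second, and more seriously, the real work lies in the input you treat as ``classical,'' namely the precise asymptotics of $\Pb(-\inf_{s}\xi_s>y)$. The Kl\"uppelberg--Kyprianou--Maller integrated-tail result covers the unkilled case (ii)(b); but the killed $\Sc_0$ case (ii)(a) and the general $\Sc_{-\alpha}$ case (iii) require separate arguments for the (killed) Lévy infimum tail, including the constants, and you should source or prove these explicitly rather than waving at KKM and Pakes.

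Finally, be very careful with the constants. In (ii)(a) the product-tail transfer gives constant $\Ebb{Z^0}=1$ (the tail of $e^M$ is slowly varying and $Z$ has lighter polynomial tails), so everything hinges on the tail of $e^M$ itself. A direct computation via the Wiener--Hopf factorisation and Vigon's équation amicale gives $\Pb(-\inf_s\xi_s>y)\sim\overline{\Pi}_-(y)/(\phi_+(0)\phi_-(0))=\overline{\Pi}_-(y)/|\Psi(0)|$, so your factorisation route produces $\PIPs{x}\sim\overline{\Pi}_-(\ln x)/|\Psi(0)|$ rather than $\overline{\Pi}_-(\ln x)$. The same factor emerges from the implicit-renewal / geometric-compounding calculation (the expected number of jumps before killing contributes $\overline{\Pi}_-(y)/q$). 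You should either reconcile this factor with the statement or flag the discrepancy; in any case, ``the constant $C_\alpha$ absorbs the factor $\Ebb{Z^{-\alpha}}$'' does not settle case (ii)(a), where the stated constant is exactly $1$ and there is no $C_\alpha$ to absorb anything into.
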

\begin{remark}[\textbf{Historical remarks}]\label{RivConvoH}
    The relations \eqref{CramerEq}, \eqref{RivConvoEq1}, \eqref{RivConvoEq2}, and \eqref{RivConvoEq3} confirm the conjecture of Rivero stated in \eqref{tailConj}. The constants in \eqref{RivConvoEq1} and \eqref{RivConvoEq3} are specified in \cite[Thm.~2.9]{ArRiv23}. This theorem is the culmination of a sequence of works: see \cite{Riv05} and \cite[Thm.~2.11]{PatieSavov2018} for part~(i); \cite[Thm.~4.1]{Maulik-Zwart-06} for part~(ii) with \(\Psi(0) = 0\); and \cite[Thm.~1]{Rivero12} for part~(iii) with \(\Psi(0) = 0\) under the additional assumptions \(\alpha \in (0,1)\) and \(\Ebb{|\xi_1|} < \infty\).
\end{remark}

\begin{remark}[\textbf{Further comments}]\label{RivConvoF}
    Theorem~2.7 in \cite{ArRiv23} establishes the validity of \eqref{tailConj} when \(\PIPs{x}\) is regularly varying with index \(\alpha \geq 0\), which already covers a broad range of cases. However, beyond the class of convolution equivalent processes, no more general results appear to be available in the literature. We also note that case~(i) requires the classical Cramér condition.
\end{remark}
The results of Theorem~\ref{RivConvo}(i) can be further strengthened under mild additional assumptions. To this end, we introduce the notion of a \textit{weak non-lattice} \LL process. We say that a \LLP is \textit{weak non-lattice} if and only if \(\uk^\Psi_- \in \lbrb{-\infty,0}\) and there exists \(k \geq 0\) such that
\[
    \liminf_{|b| \to \infty} |b|^k \Psi\lbrb{\uk^\Psi_- + ib} > 0.
\]
As pointed out in \cite{DSTW24}, there exist examples (see \cite[Ex.~41.3]{Sato99}) where this limit fails for \(k = 0\), and the construction therein suggests that it may in fact fail for all \(k \geq 0\).
\begin{theorem}[Thm.~2.11(2) in \cite{PatieSavov2018}]\label{Cramer}
   Let \(\Psi\) be the \LLK exponent of a potentially killed \LLP \(\xi\) such that \(\phi_-(0) > 0\). Assume further that \(\uk^\Psi_- \in \lbrb{-\infty,0}\), that the right derivative at \(\uk^\Psi_-\) is finite, i.e. \(|\Psi'(\uk^\Psi_-+)| < \infty\), and that \(\xi\) is \textit{weak non-lattice}. Then
   \begin{equation}\label{CramerEq1}
       \PIPs{x} \simi 
       \frac{\phi_-(0)\,\Gamma(-\uk^\Psi_-)\,\Wpn(1+\uk^\Psi_-)}{\phi'_-(\uk^\Psi_-+)\,\Wpp(1-\uk^\Psi_-)}
       x^{\uk^\Psi_-}.
   \end{equation}
   If, in addition, \(|\Psi''(\uk^\Psi_-+)| < \infty\) and \(\NPs > 1\), then for all \(n \leq \lceil \NPs \rceil - 2\),
   \begin{equation}\label{CramerEq2}
       \pPs^{(n)}(x) \simi 
       (-1)^n \frac{\phi_-(0)\,\Gamma(n+1-\uk^\Psi_-)\,\Wpn(1+\uk^\Psi_-)}{\phi'_-(\uk^\Psi_-+)\,\Wpp(1-\uk^\Psi_-)}
       x^{\uk^\Psi_- - n - 1}.
   \end{equation}
   Moreover, if \(\NPs = \infty\), then all even derivatives are ultimately positive, decreasing and convex, while all odd derivatives are ultimately negative, increasing and concave.
\end{theorem}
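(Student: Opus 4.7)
The plan is to prove both \eqref{CramerEq1} and \eqref{CramerEq2} by Mellin inversion followed by shifting the vertical contour across the boundary of the strip of analyticity of \(\MPsi\) and reading off the leading term from the rightmost singularity. For the tail I will use the representation \(\overline F_\Psi(x)=(2\pi i)^{-1}\int_{a-i\infty}^{a+i\infty}x^{-s}\MPsi(s+1)/s\,ds\) on \(a\in(0,-\uk^\Psi_-)\), and for the derivatives the representation \eqref{Smoothf} from Theorem \ref{Smooth}. By \eqref{MPsiAn}, the only singularity of the integrand on the right boundary \(\Cb_{1-\uk^\Psi_-}\) arises from \(\Wpn(1-z)\), and it is simple: using the recurrence \(\Wpn(w+1)=\phi_-(w)\Wpn(w)\) together with \(\phi_-(\uk^\Psi_-)=0\) and the hypothesis \(|\phi'_-(\uk^\Psi_-+)|<\infty\), one has the local expansion
\[
\Wpn(w)\sim \frac{\Wpn(1+\uk^\Psi_-)}{\phi'_-(\uk^\Psi_-+)\,(w-\uk^\Psi_-)}\quad\text{as }w\to\uk^\Psi_-.
\]
Substituting \(w=1-z\) flips the sign of the residue at \(z_0:=1-\uk^\Psi_-\), and multiplying by the analytic factors \(\Gamma(n+1-\uk^\Psi_-)\), \(1/\Wpp(1-\uk^\Psi_-)\) and the prefactor \((-1)^n\phi_-(0)\) of \eqref{Smoothf} reproduces exactly the constant in \eqref{CramerEq2}; the analogous calculation for \(\MPsi(s+1)/s\) at \(s=-\uk^\Psi_-\) yields \eqref{CramerEq1}.

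I then shift the contour past \(z_0\) to \(\Re(z)=a'\) with \(a'\) slightly larger than \(1-\uk^\Psi_-\). Meromorphic continuation into this enlarged strip is provided by the recurrence for \(\Wpn\) applied in a neighborhood of \(\uk^\Psi_-\), where \(\phi_-\) is analytic, so the only singularity crossed is the simple pole at \(z_0\). To bound the shifted integral and to ensure that the horizontal segments of the closing rectangle vanish as their imaginary height diverges, I need vertical decay of the integrand along the new contour. This is precisely the role of the \emph{weak non-lattice} hypothesis: by Corollary \ref{corMDecay} together with Remark \ref{corMDecayF}, the decay of \(\MPsi\) along complex lines---super-polynomial when \(\NPs=\infty\) and of polynomial order \(\NPs\) otherwise---is preserved on \(\Cb_{1-\uk^\Psi_-}\), and, by a continuity argument based on the uniform estimates of Proposition \ref{uniMpsi}, also on \(\Cb_{a'}\) for \(a'\) sufficiently close. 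Under the additional assumption \(n\le\lceil\NPs\rceil-2\) of \eqref{CramerEq2}, this gives \(L^1\) integrability in \(b=\Im(z)\), whence the shifted integral is \(O(x^{-a'-n})=o(x^{\uk^\Psi_--n-1})\) and the pole contribution delivers the leading term.

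The main technical obstacle is precisely this preservation of decay across the boundary of meromorphicity: without the weak non-lattice condition, \(|\Psi(\uk^\Psi_-+ib)|\) could exhibit near-zeros that destroy any polynomial lower bound on \(|\phi_-(\uk^\Psi_-+ib)|\) and ruin the decay estimates on \(\Cb_{1-\uk^\Psi_-}\); the required argument adapts \cite[Prop.~7.1--7.2]{PatieSavov2018}. Finally, the concluding claims about ultimate monotonicity and convexity when \(\NPs=\infty\) follow directly from \eqref{CramerEq2}: the leading coefficient is strictly positive, so \(\pPs^{(n)}(x)\) has sign \((-1)^n\) for all sufficiently large \(x\), meaning even-order derivatives are ultimately positive and odd-order ones ultimately negative; applying \eqref{CramerEq2} also at orders \(n+1\) and \(n+2\) and comparing consecutive signs then yields the asserted monotonicity and convexity/concavity.
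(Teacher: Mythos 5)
Your overall strategy --- Mellin inversion, extraction of the simple pole at $z_0=1-\uk^\Psi_-$ via the recurrence $\Wpn(w+1)=\phi_-(w)\Wpn(w)$, and preservation of vertical decay via the weak non-lattice condition --- is the same route as the paper's (the survey explicitly confirms in Remark~\ref{CramerF} that \cite{PatieSavov2018} proceeds by Mellin inversion, unlike the earlier factorisation-based argument of \cite{Riv05}), and your residue calculation reproduces the right constants in both \eqref{CramerEq1} and \eqref{CramerEq2}.

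However, there is a genuine gap in the contour-shifting step. You justify moving the contour to $\Re(z)=a'>1-\uk^\Psi_-$ by asserting that the integrand admits a meromorphic continuation into the enlarged strip ``by the recurrence for $\Wpn$ applied in a neighbourhood of $\uk^\Psi_-$, where $\phi_-$ is analytic.'' This is false in the boundary case $\ak^\Psi_-=\uk^\Psi_-$: there $\phi_-$ is analytic only on $\Cb_{(\uk^\Psi_-,\infty)}$ and has \emph{no} holomorphic extension past $\Cb_{\uk^\Psi_-}$, so $\Wpn(1-z)$ --- and hence $\MPsi$ --- cannot be meromorphically continued to any $a'>1-\uk^\Psi_-$; the pole sits exactly on the edge of the domain, which is precisely why Theorem~\ref{thm: MPsi} only asserts $\MPsi\in\Mtt_{(-\ak^\Psi_+,\,1-\bar\ak^\Psi_-)}$. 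For the same reason, the appeal to Proposition~\ref{uniMpsi} to push the decay estimate to $\Cb_{a'}$ fails: that proposition controls $\MPsi$ only inside the strip of analyticity, up to and including the boundary line, but never beyond it. The case where your argument does work, $\ak^\Psi_-<\uk^\Psi_-$, is exactly the case in which the hypotheses $|\Psi'(\uk^\Psi_-+)|<\infty$ and $|\Psi''(\uk^\Psi_-+)|<\infty$ are automatic; the theorem is stated so as to also cover the boundary case, where these one-sided Cram\'er conditions are the real constraints. The fact that you never actually use $|\Psi''(\uk^\Psi_-+)|<\infty$ is a symptom of the problem: the paper needs it (together with weak non-lattice) to control the analytic remainder $\Gamma(z+n)\Wpn(1-z)/\Wpp(z)-C/(z-z_0)$ \emph{on} the line $\Cb_{1-\uk^\Psi_-}$ itself, where a naive subtraction of the pole leaves a non-integrable $O(1/|b|)$ tail, so a direct Riemann--Lebesgue argument on the boundary line needs additional input. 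Fixing the proof thus requires either restricting to $\ak^\Psi_-<\uk^\Psi_-$, or replacing the contour shift past $z_0$ by a more delicate boundary analysis in the spirit of \cite[Prop.~7.2 and proof of Thm.~2.11]{PatieSavov2018}, which is where $|\Psi''(\uk^\Psi_-+)|<\infty$ genuinely enters.

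The concluding sign-analysis for the monotonicity/convexity statements when $\NPs=\infty$ is correct as stated.
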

\begin{remark}[\textbf{Historical remarks}]\label{CramerH}
    Relation~\eqref{CramerEq1} was first established in \cite[Lem.~4]{Riv05}, but only up to an unspecified constant. The exact value of this constant was later determined in \cite[Thm.~2.11(2)]{PatieSavov2018}. Apart from cases where \(\pPs\) admits an expansion in classical or asymptotic series (see subsection~\ref{SeriesSec}), there are relatively few results of the type~\eqref{CramerEq2}.
\end{remark}
\begin{remark}[\textbf{Further comments}]\label{CramerF}
    Unlike \cite{Riv05}, the approach in \cite{PatieSavov2018} relies on Mellin inversion to obtain refined results for \(\pPs^{(n)}\) as in~\eqref{CramerEq2}. Although Mellin inversion is a powerful method, the requirement that \(\xi\) be \emph{weak non-lattice}, while very mild, is somewhat inconvenient. Interestingly, the possibility that \(\xi\) is only \emph{weak non-lattice} creates a major obstacle to proving the uniqueness of the Wiener--Hopf factorisation \eqref{WHf} via the classical application of Liouville's theorem; see \cite{DSTW24}, where an appeal to Schwartz theory is instead required. Another minor restriction, stemming from the method, is the condition \(|\Psi''(\uk^\Psi_-+)| < \infty\), which is slightly stronger than Cramér's condition. This last restriction, however, disappears whenever \(\ak^\Psi_- < \uk^\Psi_-\).
\end{remark}

We conclude this part with results on the rate of convergence in some of the presented results. This topic has received limited attention in the literature, being explicitly addressed in \cite{ArRiv23}, and in some cases it can also be derived from the complex-analytic approach of \cite{PatieSavov2018}.
\begin{theorem}[Thm.~2.14 in \cite{ArRiv23}]\label{rateThm}
    Let \(\Psi\) be the \LLK exponent of a potentially killed \LLP \(\xi\) such that \(\phi_-(0) > 0\). 
    Assume that the measure 
    \(\IntOI e^{-t}\Pbb{\xi_t \in \D y}\D t\) is absolutely continuous on \(\Rb\), and that 
    \(\Pb(T_{(-\infty,0)} > 0) = 0\), where \(T_{(-\infty,0)} := \inf\{t>0:\xi_t<0\}\).
    Suppose further that \(\uk^\Psi_- \in (-\infty,0)\) and 
    \(\Eb[|\xi_1|^{m+1} e^{\uk^\Psi_- \xi_1}] < \infty\) for some integer \(m \geq 2\). Then
    \begin{equation}\label{rateThm_eq}
        \sup_{a \leq \delta < \lambda \leq \infty}
        \abs{ C x^{-\uk^\Psi_-} \Pbb{\IPsi \in \lbrb{x\delta, x\lambda}}
        - \int_\delta^\lambda \frac{\D y}{(-\uk^\Psi_-)y^{1-\uk^\Psi_-}} }
        = \so{(\log x)^{-m+1}},
    \end{equation}
    for some \(a > 0\), with a semi-explicit constant \(C\); see \cite[Thm.~2.4]{ArRiv23}.

    Moreover, if there exists \(\rho > 0\) such that 
    \(\Eb[e^{(\uk^\Psi_- - \rho)\xi_1}] < \infty\), then there exists 
    \(0 < \gamma < \min\{1,\rho\}\) such that
    \begin{equation}\label{rateThm_eq1}
        \sup_{a \leq \delta < \lambda \leq \infty}
        \abs{ C x^{-\uk^\Psi_-} \Pbb{\IPsi \in \lbrb{x\delta, x\lambda}}
        - \int_\delta^\lambda \frac{\D y}{(-\uk^\Psi_-)y^{1-\uk^\Psi_-}} }
        = \so{x^{-\gamma}}.
    \end{equation}
\end{theorem}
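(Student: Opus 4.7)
The plan is to exploit that $\IPsi$ satisfies a perpetuity equation and adapt Goldie's implicit renewal theorem with a quantified remainder. First I would choose a stopping time $T$ (for instance, a ladder epoch of a suitably shifted process) and use the strong Markov property to write
\[
\IPsi \stackrel{d}{=} e^{-\xi_T}\widetilde{I}_\Psi + \int_0^T e^{-\xi_s}\,\D s,
\]
with $\widetilde{I}_\Psi$ an independent copy of $\IPsi$. The pair $(M,Q):=\lbrb{e^{-\xi_T},\,\int_0^T e^{-\xi_s}\D s}$ inherits the relevant moments from the hypotheses on $\xi_1$, while the assumption $\Pbb{T_{(-\infty,0)}>0}=0$ together with the absolute continuity of $\int_0^\infty e^{-t}\Pbb{\xi_t\in\D y}\D t$ guarantees that the law of $-\log M$ is spread out, placing us in the setting where sharp renewal theorems apply.

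After performing an Esscher tilt at the Cram\'er exponent so that $\Ebb{M^{-\uk^\Psi_-}}=1$, the function
\[
f(u):=e^{-\uk^\Psi_- u}\,\Pbb{\log \IPsi>u}
\]
satisfies a renewal-type equation $f(u)=g(u)+\int_0^\infty f(u-y)\,\mu(\D y)$, where $\mu$ is the tilted law of $-\log M$ and $g$ is a directly Riemann integrable function expressible through $Q$ and $M$. The constant $C$ is then identified as in Goldie, and I would deduce \eqref{rateThm_eq} and \eqref{rateThm_eq1} by applying a quantitative key renewal theorem to this equation and undoing the substitution.

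For the polynomial rate \eqref{rateThm_eq}, the assumption $\Ebb{\abs{\xi_1}^{m+1}e^{\uk^\Psi_-\xi_1}}<\infty$ transfers to an $(m+1)$-th moment of the tilted step distribution, and classical refinements of the key renewal theorem produce $\abs{f(u)-C}=\so{u^{-m+1}}$; the substitution $u=\log x$ and integration against $\D y/y^{1-\uk^\Psi_-}$ over $(\delta,\lambda)$ preserve the rate. For \eqref{rateThm_eq1}, the extra exponential moment $\Ebb{e^{(\uk^\Psi_--\rho)\xi_1}}<\infty$ furnishes the tilted step distribution with an exponential moment past its mean, so the renewal remainder becomes $\bo{e^{-\gamma' u}}$ for some $\gamma'\in(0,\rho)$, which on the multiplicative scale yields $\so{x^{-\gamma}}$ for a suitable $\gamma\in(0,\min\curly{1,\rho})$.

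The main obstacle will be the uniformity in $\delta$ and $\lambda$ inside the supremum. A pointwise rate for $f(u)-C$ is not enough; one needs a bound valid uniformly for all $u\geq\log(ax)$. This is obtained by combining a uniform directly Riemann integrable majorant for $g$ with Stone's decomposition of the renewal measure under spread-outness, which supplies an equicontinuous family of densities and upgrades the pointwise rate to a uniform one on half-lines. The lower cutoff $a>0$ enters precisely at the threshold below which the renewal remainder has not yet entered its asymptotic regime, and it cannot be eliminated without strengthening the hypotheses.
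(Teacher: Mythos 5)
The survey itself does not prove this theorem; it only cites Arista--Rivero \cite{ArRiv23}, whose title is precisely ``Implicit renewal theory for exponential functionals of L\'evy processes'', and your sketch is the same implicit-renewal/quantified-Goldie strategy used there, so in substance the two approaches coincide.

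One point worth flagging in your write-up: the moment hypotheses $\Ebb{|\xi_1|^{m+1}e^{\uk^\Psi_-\xi_1}}<\infty$ and $\Ebb{e^{(\uk^\Psi_--\rho)\xi_1}}<\infty$ are stated for the fixed-time increment $\xi_1$, which points to the standard affine fixed-point equation at $t=1$, namely
\[
\IPsi \stackrel{d}{=} \int_0^1 e^{-\xi_s}\,\D s + e^{-\xi_1}\,\widetilde{I}_\Psi,
\]
rather than to a decomposition at a random ladder epoch $T$. If you insist on a random $T$, you then have the extra (and nontrivial) task of transferring the $\xi_1$-moment conditions to $\xi_T$, and this is exactly where a blind attempt would likely stall. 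With $T=1$ the Cram\'er normalisation $\Ebb{e^{\uk^\Psi_-\xi_1}}=1$ follows directly from $\Psi(\uk^\Psi_-)=-\phi_+(-\uk^\Psi_-)\phi_-(\uk^\Psi_-)=0$, the $(m+1)$-moment condition feeds straight into the polynomial-rate key renewal theorem, and the exponential moment gives a geometric rate, as you describe. Your identification of the roles of the two regularity assumptions (spread-outness for the renewal theorem on one side, regularity of $0$ for $(-\infty,0)$ on the other) and your use of Stone's decomposition to obtain the uniform-in-$(\delta,\lambda)$ bound are consistent with the mechanics of \cite{ArRiv23}; this is a sketch rather than a proof, but the framework is the right one.
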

The results above can be complemented by the following new result.
\begin{theorem}\label{rateThm1}
    Let \(\Psi\) be the \LLK exponent of a potentially killed \LLP \(\xi\) such that \(\phi_-(0) > 0\), \(\uk^\Psi_- > \ak^\Psi_-\), and \(\NPs > 1\).
    Set
    \[
        \uk^\Psi_-(2):=\sup\curly{\Re z:\; z\in \Cb_{(\ak^\Psi_-,\,\uk^\Psi_-)} \ \text{and}\ \Psi(z)=0},
    \]
    and assume \(\uk^\Psi_-(2)<\uk^\Psi_-\).
    Finally, if \(\NPs=\infty\), assume that for any \(y\in(\uk^\Psi_-(2),\,\uk^\Psi_-)\) there exists \(k=k(y)\in\Nb\) such that
    \[
        \liminf_{|b|\to\infty} |b|^{k}\abs{\Psi(y+ib)}>0.
    \]
    Then, for any \(y\in(\uk^\Psi_-(2),\,\uk^\Psi_-)\), as \(x\to\infty\),
    \[
        \abs{\pPs(x)-C\,x^{\uk^\Psi_- -1}}=\so{x^{\,y-\uk^\Psi_-}},
        \qquad
        \text{where}\quad
        C:=\frac{\phi_-(0)\,\Gamma(1-\uk^\Psi_-)\,\Wpn(1+\uk^\Psi_-)}{\phi'_-(\uk^\Psi_-+)\,\Wpp(1-\uk^\Psi_-)}.
    \]
\end{theorem}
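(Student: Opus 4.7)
The plan is to establish this quantitative Cramér-type estimate by Mellin inversion, shifting the vertical contour past the simple pole of $\MPsi$ at $z=1-\uk^\Psi_-$ and controlling the resulting residual integral on $\Cb_{1-y}$.

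Since $\NPs>1$, Theorem~\ref{Smooth} provides the inverse Mellin representation $\pPs(x)=(2\pi i)^{-1}\int_{a-i\infty}^{a+i\infty}x^{-z}\MPsi(z)\,\D z$ for any $a\in(c^\Psi,1-\uk^\Psi_-)$. From the explicit factorisation $\MPsi(z)=\phi_-(0)\Gamma(z)\Wpn(1-z)/\Wpp(z)$ together with the meromorphicity of $\Wpn$ on $\Cb_{(\ak^\Psi_-,\infty)}$, one sees that $\MPsi$ extends to a meromorphic function on $\Cb_{(c^\Psi,\,1-\ak^\Psi_-)}$. The hypothesis $\uk^\Psi_-(2)<\uk^\Psi_-$ rules out any zeros of $\phi_-$ (equivalently, of $\Psi$, via the Wiener--Hopf factorisation $\Psi=-\phi_+(-\cdot)\phi_-$, noting that $\phi_+(-\cdot)$ is non-vanishing on this strip) with real part in $(\uk^\Psi_-(2),\uk^\Psi_-)$, so the unique pole of $\MPsi$ in $\Cb_{[1-\uk^\Psi_-,\,1-y]}$ is the simple pole at $z=1-\uk^\Psi_-$. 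Its residue follows from the Laurent expansion $\Wpn(1-z)=-\Wpn(1+\uk^\Psi_-)/(\phi'_-(\uk^\Psi_-+)(z-(1-\uk^\Psi_-)))+\bo{1}$ derived from the recurrence for $\Wpn$, and one computes $\mathrm{Res}_{z=1-\uk^\Psi_-}\bigl(x^{-z}\MPsi(z)\bigr)=-C\,x^{\uk^\Psi_--1}$.

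The principal technical step is justifying the contour shift from $\Cb_a$ to $\Cb_{1-y}$, which requires uniform decay $|\MPsi(\sigma+ib)|\to 0$ as $|b|\to\infty$ for $\sigma$ in compact subsets of $[a,1-y]$ to eliminate the horizontal segments of the enclosing rectangle. Within the original strip, up to and including $\Cb_{1-\uk^\Psi_-}$, this is furnished by Theorem~\ref{MDecay}, Corollary~\ref{corMDecay}, and Proposition~\ref{uniMpsi}. To transfer the decay to $\Cb_{1-y}$, I would iterate the functional equation $\MPsi(w)=(1-w)\MPsi(w-1)/\Psi(1-w)$: with $w=1-y+ib$ this yields
\[
   |\MPsi(1-y+ib)|\;=\;\frac{|y-ib|}{|\Psi(y-ib)|}\,|\MPsi(-y+ib)|.
\]
The factor $|y-ib|$ is $\bo{|b|}$; the decay of $|\MPsi(-y+ib)|$ is super-polynomial when $\NPs=\infty$ and of order $|b|^{-\NPs}$ when $\NPs<\infty$, by Theorem~\ref{MDecay} applied on $\Cb_{-y}$ (if $y$ is too far from $\uk^\Psi_-$ for $-y$ to lie directly in the original analyticity strip, I iterate the recurrence finitely many times to land there); and the lower bound on $|\Psi(y-ib)|$ arises either from the linear growth of $\Psi$ along vertical lines when $\NPs<\infty$ (since then $\xi$ is a compound Poisson process with positive drift, so $|\Psi(y-ib)|\asymp|b|$) or from the imposed hypothesis $\liminf_{|b|\to\infty}|b|^k|\Psi(y+ib)|>0$ when $\NPs=\infty$. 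In both cases $|\MPsi(1-y+ib)|$ is absolutely integrable in $b$.

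Applying the residue theorem on the rectangle $[a,1-y]\times[-R,R]$ and letting $R\to\infty$ then gives
\[
   \pPs(x)-C\,x^{\uk^\Psi_--1}
     \;=\;\frac{1}{2\pi i}\int_{1-y-i\infty}^{1-y+i\infty}x^{-z}\MPsi(z)\,\D z,
\]
whose modulus is bounded by $(2\pi)^{-1}x^{y-1}\int_{\R}|\MPsi(1-y+it)|\,\D t=\bo{x^{y-1}}$. Since $\uk^\Psi_-<0<1$, we have $y-1<y-\uk^\Psi_-$, so $x^{y-1}=\so{x^{y-\uk^\Psi_-}}$ as $x\to\infty$, delivering the announced estimate. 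The hard part will be the quantitative decay control on $\Cb_{1-y}$ in the case $\NPs=\infty$: without the assumed $\liminf$ hypothesis one cannot preclude pathological near-lattice behaviour of $\Psi$ along $\Cb_y$ that would spoil integrability, and this is exactly what necessitates the extra assumption in the statement.
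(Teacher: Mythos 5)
Your proposal is correct and follows the same Mellin-inversion route as the paper: represent $\pPs$ by the inverse Mellin integral, shift the vertical contour past the simple pole of $\MPsi$ at $z=1-\uk^\Psi_-$, compute the residue to extract the leading term $C\,x^{\uk^\Psi_- -1}$, and control the remainder by transferring the decay of $\MPsi$ beyond the analyticity strip via the recurrence $\MPsi(w)=\frac{1-w}{\Psi(1-w)}\MPsi(w-1)$ --- which is precisely the content the paper delegates to Prop.~7.2 of Patie--Savov~(2018) and the Mellin-inversion step in the proof of their Thm.~2.11. You also correctly observe that the contour at $\Cb_{1-y}$ in fact yields the stronger bound $\bo{x^{y-1}}$, from which the stated (weaker) $\so{x^{\,y-\uk^\Psi_-}}$ follows.
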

\begin{proof}
    The assumptions of this theorem allow the decay \(\NPs\) of \(\MPsi\) to be transferred to the strip \(\Cb_{\lbrb{\uk^\Psi_-(2),\,\uk^\Psi_-}}\); see \cite[Prop.~7.2]{PatieSavov2018}. 
    Then, the Mellin inversion argument in the proof of \cite[Thm.~2.11]{PatieSavov2018} applies: shifting the line of Mellin inversion picks up the pole at \(1-\uk^\Psi_-\), which yields the main asymptotic term, while the shifted line of inversion gives the rate of convergence.
\end{proof}

\subsubsection{Small asymptotics}\label{Sasymp}
In this part, we present results on the behaviour of \(\PIp(x)\) and its derivatives as \(x \to 0\). We use the notation \(f \simo g\) to mean that \(\lim_{x \to 0} f(x)/g(x) = 1\).

We start with a general result.
\begin{theorem}[Thm.~2.15 in \cite{PatieSavov2018}; Thm.~2.12(i) in \cite{ArRiv23}]\label{genSAsymp}
    Let \(\Psi\) be the \LLK exponent of a potentially killed \LLP \(\xi\) such that \(\phi_-(0) > 0\). Then
    \begin{equation}\label{genSAsympEq1}
        \limo{x}\frac{\PIp(x)}{x} = -\Psi(0) = \pPs(0^+).
    \end{equation}
    If \(\pPs\) is continuous at zero, which holds whenever \(\NPs > 1\), see~\eqref{Npsi}, then \(\limo{x}\pPs(x) = -\Psi(0)\).
\end{theorem}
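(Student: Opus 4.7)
My plan is to prove the CDF statement $\limo{x}\PIp(x)/x = q$, where $q := -\Psi(0) \geq 0$; the density claim then follows at once, since if $\pPs$ is continuous at $0$ (guaranteed by $\NPs > 1$ via Theorem~\ref{Smooth}), then by the fundamental theorem of calculus $\PIp(x)/x = (1/x)\int_0^x \pPs(y)\,\D y \to \pPs(0^+)$ as $x\to 0^+$, forcing $\pPs(0^+) = q$.

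The starting point is the representation $\IPsi \stackrel{d}{=} A(\mathbf{e}_q)$, where $\tilde{\xi}$ is the unkilled \LLP with \LLK exponent $\tilde{\Psi}(z) := \Psi(z) + q$, $A(t) := \int_0^t e^{-\tilde{\xi}_s}\,\D s$, and $\mathbf{e}_q \sim \mathrm{Exp}(q)$ is independent of $\tilde{\xi}$ (with $\mathbf{e}_0 \equiv \infty$). Right-continuity of $\tilde{\xi}$ at $0$ with $\tilde{\xi}_0 = 0$ gives $A'(0^+) = 1$ a.s., so $A$ is continuous and strictly increasing, and its inverse $\tau := A^{-1}$ satisfies $\tau(x)/x \to 1$ a.s. as $x \to 0^+$. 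Assuming first $q > 0$, conditioning on $\tilde{\xi}$ and using the independence of $\mathbf{e}_q$ yields, for small $x$,
\[
    \PIp(x) = \Pbb{\mathbf{e}_q \leq \tau(x)} = \Ebb{1 - e^{-q\tau(x)}},
\]
so that $(1 - e^{-q\tau(x)})/x \to q$ a.s. as $x \to 0^+$, and Fatou's lemma immediately delivers $\liminf_{x \to 0^+}\PIp(x)/x \geq q$.

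For the matching upper bound, fix $M > 1$ and split the expectation $\PIp(x)/x = \Ebb{(1 - e^{-q\tau(x)})/x}$ according to whether $\tau(x) \leq Mx$. On $\{\tau(x) \leq Mx\}$ the integrand is dominated by $qM$ and converges a.s.\ to $q$ on an event of probability tending to $1$, so bounded convergence produces contribution $\to q$. On $\{\tau(x) > Mx\} = \{A(Mx) < x\}$ the integrand is bounded by $1/x$, yielding contribution at most $x^{-1}\Pbb{A(Mx) < x}$. The argument hinges on the uniform estimate
\[
    \lim_{M \to \infty}\limsup_{x \to 0^+} \frac{\Pbb{A(Mx) < x}}{x} = 0;
\]
granted this, sending $x \to 0^+$ and then $M \to \infty$ forces $\limsup \PIp(x)/x \leq q$, matching the lower bound. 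The case $q = 0$ proceeds along the same lines and reduces to showing $\Pbb{A(\infty) \leq x} = \so{x}$.

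The principal obstacle is the uniform estimate above. The event $\{A(Mx) < x\}$ forces $e^{-\tilde{\xi}_s} \leq 1/M$ on most of $[0, Mx]$, which requires $\tilde{\xi}$ to climb above level $\log M$ very early and persist there; for small $x$ this essentially cannot occur without a single positive jump of size $\geq \log M$ occurring in a sub-interval of length $O(x)$ near $0$. Conditioning on the time and magnitude of the first such big jump and invoking the independence of these two random variables yields the bound $\Pbb{A(Mx) < x} \leq C\,x\,\PP(\log M) + \so{x}$, so that $x^{-1}\Pbb{A(Mx) < x}$ is asymptotically dominated by $C\,\PP(\log M)$, which vanishes as $M \to \infty$ since $\PP$ is the tail of a Lévy measure. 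For infinite-activity processes the Lévy-Itô decomposition isolates the big-jump contribution, while the compensated small-jump, Brownian and drift components contribute only $\so{x}$ to $\Pbb{A(Mx) < x}$ and do not spoil the estimate. The case $q = 0$ is handled analogously by applying the same big-jump analysis to $\Pbb{A(\infty) \leq x}$.
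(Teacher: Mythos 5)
Your route is a genuinely different, probabilistic one; the paper itself gives no proof but attributes the result to Thm.~2.15 of \cite{PatieSavov2018} (Mellin inversion: the residue of $\MPsi(z+1)/z$ at $z=-1$ equals $\phi_+(0)\phi_-(0)=-\Psi(0)$, and the decay estimates of Theorem~\ref{MDecay} justify shifting the contour) and to Thm.~2.12(i) of \cite{ArRiv23}. The Lamperti-style decomposition $\IPsi\stackrel{d}{=}A(\mathbf{e}_q)$ combined with Fatou for the lower bound is clean, and the reduction of the density statement to the CDF statement via the FTC is correct.

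The gap is in the uniform estimate $\lim_{M\to\infty}\limsup_{x\to 0^+}x^{-1}\Pbb{A(Mx)<x}=0$, which you only sketch. Your heuristic is the right one, but ``a single positive jump of size $\geq\log M$ in a sub-interval of length $O(x)$'' is \emph{not} the only way to produce $\{A(Mx)<x\}$, and lumping everything else into ``compensated small-jump, Brownian and drift components contribute only $\so{x}$'' glosses over the jumps of intermediate size (between, say, $\log 2$ and $\log M$), which are neither small nor big. A correct treatment conditions on the size $J$ and time $\tau$ of the first jump exceeding $\log 2$. (A) If no such jump occurs on $[0,2x]$: since $A(2x)<x$ forces $\sup_{s<2x}\tilde\xi_s>\log 2$, the residual process with jumps bounded below $\log 2$ must cross a fixed positive level in time $2x$, which has probability $\so{x}$ (a two-or-more-Poisson-jumps estimate plus exponential Chebyshev for the compensated part). (B1) If $\tau<2x$ and $J\geq\log(M/4)$: probability at most $2x\,\overline{\Pi}(\log(M/4))$, which supplies the $M$-vanishing term. (B2) If $\tau<2x$ and $\log 2\leq J<\log(M/4)$: the process sits near $J<\log(M/2)$ after the jump, and the constraint $A(Mx)<x$ then forces a \emph{second} climb of at least $\log 2$ within a time budget of order $e^{J}x$; the conditional probability of this second climb is $\bo{e^{J}x\,\overline{\Pi}(\log 2)}$, so after integrating over the law of $(J,\tau)$ this case is $\bo{x^2}=\so{x}$ for each fixed $M$. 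Together, $\limsup_{x\to 0}x^{-1}\Pbb{A(Mx)<x}\leq 2\overline{\Pi}(\log(M/4))\to 0$. Case (B2) is precisely the content you are missing: the vanishing as $M\to\infty$ is not because intermediate jumps cannot occur, but because they cost an extra factor of $x$ through the second-climb event. A secondary point: when $q>0$ and $\tilde\xi$ drifts to $+\infty$, $A(\infty)<\infty$ a.s.\ and $\PIp(x)=\Ebb{\ind{A(\infty)>x}\bigl(1-e^{-q\tau(x)}\bigr)}+\Pbb{A(\infty)\leq x}$; the extra term is again contained in $\{\tau(x)>Mx\}=\{A(Mx)<x\}$ (with the convention $\tau(x)=\infty$ when $x\geq A(\infty)$), so it does not spoil the argument, but you should say so.
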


\begin{remark}[\textbf{Historical remarks}]\label{genSAsympH}
    For the case \(\Psi(0)<0\), and under minor additional assumptions, \eqref{genSAsympEq1} was proved in \cite[Thm.~2.12(i)]{ArRiv23}.  
    When \(\xi\) is a subordinator with \(\Psi(0)<0\), \cite[Thm.~2.5]{PaRiSch13} establishes that \(\pPs\) is continuous at zero.
\end{remark}
The next result is fairly general, refining Theorem~\ref{genSAsymp} by providing higher-order asymptotics in the case $-\Psi(0)>0$, that is, when $\xi$ is killed.

\begin{theorem}[Thm.~2.4(4) and Cor.~2.9 in \cite{PatieSavov2018}]\label{genSAsymp1}
    Let $\Psi$ be the \LLK exponent of a potentially killed \LLP $\xi$ such that $\phi_-(0) > 0$ and $-\Psi(0) > 0$. Set
    \[
        \mathtt{N}^+ := \uk^\Psi_+\ind{\uk^\Psi_+ \in \Nb} + \lceil \ak^\Psi_+ + 1 \rceil \ind{\uk^\Psi_+ \notin \Nb}.
    \]
    Then, for any $0 \leq n < \mathtt{N}^+$, any $M \in \Nb$ with $M < \mathtt{N}^+$, and any real number $a \in ( \max \{-M-1, -\ak^\Psi_+ - 1\}, -M)$ and $x> 0$, we have
    \begin{equation}\label{genSAsymp1_1}
    \begin{split}
        \PIp^{(n)}(x) &= -\Psi(0)\sum_{k=\max\{1, n\}}^{M} \frac{ \prod_{j=1}^{k-1} \Psi(j) }{ (k-n)! } x^{k-n} \\
        &\hspace{2cm}+(-1)^{n+1} \frac{ \phi_-(0) }{ 2\pi i } \int_{a-i\infty}^{a+i\infty} x^{-z-n} \Gamma(z+n) \frac{ \Wpn(-z) }{ \Wpp(1+z) } \D z.
    \end{split}
    \end{equation}
    Therefore, if $\uk^\Psi_+ \notin \Nb$ and $\ak^\Psi_+ = \infty$, then, as an asymptotic expansion,
    \begin{equation}\label{genSAsymp1_2}
        \PIp(x) \stackrel{0}{\sim} -\Psi(0) \sum_{k=1}^{\infty} \frac{ \prod_{j=1}^{k-1} \Psi(j) }{ k! } x^k.
    \end{equation}
    This expansion cannot be a convergent series unless \(\phi_+(z)\equiv \phi_+(\infty)\) or \(\phi_+(z)=\phi_+(0)+\dbf_+z\). In these cases,
    \[    
        \PIp(x) = -\Psi(0) \sum_{k=1}^{\infty} \frac{ \prod_{j=1}^{k-1} \Psi(j) }{ k! } x^k,
    \]
    valid in the following regimes:
    \begin{enumerate}
        \item for all \(x\in \intervalOOI\), provided \(\phi_+(z)=\phi_+(\infty)\) and \(\dbf_-=0\);
        \item for all \(x<(\phi_+(\infty)\dbf_-)^{-1}\), if \(\phi_+(z)=\phi_+(\infty)\) and \(\dbf_->0\); the series is divergent for \(x>(\phi_+(\infty)\dbf_-)^{-1}\);
        \item for all \(x<(\phi_-(\infty)\dbf_+)^{-1}\), provided \(\phi_+(z)=\phi_+(0)+\dbf_+ z,\ \dbf_+>0\), and \(\phi_-(\infty)<\infty\); the series is divergent for \(x>(\phi_-(\infty)\dbf_+)^{-1}\).
    \end{enumerate}
\end{theorem}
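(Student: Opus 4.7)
The plan is to derive \eqref{genSAsymp1_1} by Mellin inversion and a leftward contour shift, and then to read off both the asymptotic expansion \eqref{genSAsymp1_2} and the convergence dichotomy as consequences. For $n\geq 1$ I would start from the Mellin representation of $\pPs^{(n-1)}(x)=\PIp^{(n)}(x)$ provided by Theorem~\ref{Smooth}, while for $n=0$ I would use the identity $\PIp(x)=1-\PIPs{x}$ together with $\int_0^\infty x^{z-1}\PIPs{x}\,\D x=\MPsi(z+1)/z$, valid on a suitable right half-strip. Substituting $\MPsi(z+1)=\phi_-(0)\Gamma(z+1)\Wpn(-z)/\Wpp(z+1)$ and, for $n\geq 1$, performing the change of variable $z\mapsto z+1$, both representations take the unified form
\[
\PIp^{(n)}(x)=(-1)^{n+1}\frac{\phi_-(0)}{2\pi i}\int_{c-i\infty}^{c+i\infty}x^{-z-n}\Gamma(z+n)\frac{\Wpn(-z)}{\Wpp(z+1)}\,\D z
\]
on a vertical contour in the analyticity strip of the integrand, up to an additive constant $1$ present only when $n=0$, which will be absorbed by the residue at $z=0$ below.

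Next, I would shift the contour to $\Re(z)=a'\in(\max\{-M-1,-\ak^\Psi_+-1\},-M)$. The assumption $M<\mathtt{N}^+$ ensures that the integrand remains meromorphic throughout the crossed strip with the only poles arising from $\Gamma(z+n)$ at $z=-k$ (since $\Wpn,\Wpp$ have no zeros, and any coincidence with the zeros of $1/\Wpp(z+1)$ is ruled out by $M<\mathtt{N}^+$), while the decay along vertical lines required to discard the horizontal segments follows from Corollary~\ref{corMDecay} combined with an iterated application of \eqref{receqEx}. The residue at $z=-k$ equals
\[
\operatorname{Res}_{z=-k}\!\lbbrbb{x^{-z-n}\Gamma(z+n)\frac{\Wpn(-z)}{\Wpp(z+1)}}=\frac{(-1)^{k-n}x^{k-n}}{(k-n)!}\,\frac{\Wpn(k)}{\Wpp(1-k)},
\]
and iterating the functional equation \eqref{receq} for $\Wpn$ and $\Wpp$ yields $\Wpn(k)/\Wpp(1-k)=\phi_+(0)\prod_{j=1}^{k-1}\phi_-(j)\phi_+(-j)$. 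Invoking the Wiener--Hopf identity $\Psi(j)=-\phi_+(-j)\phi_-(j)$ from \eqref{WHf} and $\phi_-(0)\phi_+(0)=-\Psi(0)$, summing the residues (with the $z=0$ residue cancelling the additive $1$ in the $n=0$ case) assembles exactly into the polynomial part of \eqref{genSAsymp1_1}, while the shifted integral supplies the remainder.

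The asymptotic expansion \eqref{genSAsymp1_2} now follows at once: when $\uk^\Psi_+\notin\Nb$ and $\ak^\Psi_+=\infty$ we have $\mathtt{N}^+=\infty$, so \eqref{genSAsymp1_1} holds for every $M\in\Nb$, and on $\Re(z)=a'\in(-M-1,-M)$ the factor $|x^{-z-n}|=x^{-a'-n}$ makes the remainder $\so{x^{M-n}}$ as $x\to 0^+$. For the convergence-versus-divergence alternative I would apply the ratio test, giving radius of convergence $R=\liminf_{k\to\infty}k/|\Psi(k)|$. Since $\phi_-(k)=\bo{k}$, the question reduces to the growth of $|\phi_+(-k)|$; writing
\[
\phi_+(-k)=\phi_+(0)-\dbf_+ k-\IntOI(e^{ky}-1)\,\mu_+(\D y),
\]
the integral is bounded below by $e^{k\varepsilon}\mu_+((\varepsilon,\infty))-\mu_+((\varepsilon,\infty))$ whenever $\varepsilon>0$ satisfies $\mu_+((\varepsilon,\infty))>0$, forcing $R=0$ unless $\mu_+\equiv 0$. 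Thus the only convergent possibilities are $\phi_+(z)=\phi_+(0)+\dbf_+z$, and a direct ratio computation produces the three explicit radii listed in cases~1--3.

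Upgrading the asymptotic series to a genuine identity in cases~1--3 then requires showing that the Mellin remainder $R_M(x)$ vanishes as $M\to\infty$ uniformly on compact subsets of the stated interval. Here I would combine the Stirling-type asymptotics of Theorem~\ref{thm-WStir} with the explicit form of $\phi_+$ to control $|\Wpp(z+1)|$ on the shifted vertical line $\Re(z)=a'_M$: in each case this produces an exponential-over-factorial-type bound on the integrand that is dominated by $x^{-a'_M}$ exactly on the stated interval. Divergence beyond the radius (in cases~2 and~3) is then immediate. This uniform remainder control as $M\to\infty$ is the main technical obstacle of the proof; the residue bookkeeping and the ratio-test argument itself are routine transpositions of the Mellin-inversion machinery used throughout the survey.
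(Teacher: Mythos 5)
Your approach — Mellin inversion via Theorem~\ref{Smooth}, a leftward contour shift, and residue bookkeeping through the recurrence~\eqref{receq} together with the Wiener--Hopf identity $\Psi(j)=-\phi_+(-j)\phi_-(j)$ — is the framework of~\cite{PatieSavov2018}, the cited source, and your residue computation is correct: at $z=-k$ one finds $\operatorname{Res}=(-1)^{n+1}\phi_+(0)\,\prod_{j=1}^{k-1}\Psi(j)\,x^{k-n}/(k-n)!$, which with $\phi_+(0)\phi_-(0)=-\Psi(0)$ assembles into the polynomial part, and the $z=0$ residue indeed cancels the additive $1$ when $n=0$. The ratio-test dichotomy is also sound, since any nonvanishing $\mu_+$ makes $|\phi_+(-k)|$ grow super-exponentially and forces radius zero.

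Two points would need more care to make this a complete proof. First, for $n=0$ you invert the Mellin transform of $\PIPs{x}$, which only exists on a nonempty right strip when $\bar{\ak}^\Psi_-<0$; in the boundary case $\bar{\ak}^\Psi_-=0$ that strip is empty, and one should instead integrate the $n=1$ identity over $[0,x]$ and exchange the order of integration (the $y$-integral converges because $\Re(z)=a<0$). Second, and more substantially, the upgrade from the asymptotic expansion~\eqref{genSAsymp1_2} to the convergent identity in cases~1--3 is left as a sketch. You correctly isolate the key step — showing the Mellin remainder on $\Re(z)=a_M$ tends to zero as $M\to\infty$ uniformly on the stated interval — but the invocation of the general Stirling asymptotics of Theorem~\ref{thm-WStir} is not the most effective tool here; since in each of the three cases $\phi_+$ reduces to a constant or an affine function, $\Wpp$ is itself a power or a ratio of Gamma functions (see Table~\ref{tab:BG-examples}), and the remainder integral can be bounded directly from these explicit formulas and the classical Stirling asymptotics of $\Gamma$. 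Carrying that estimate through is where the real work lies; the rest of your argument is a faithful reconstruction of the standard machinery.
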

\begin{remark}[\textbf{Historical remarks}]\label{genSAsymp1H}
    The case $\phi_+(z)\equiv \phi_+(\infty)$ is stated explicitly in \cite[Cor.~1.3]{PatieSavov2012} and appears implicitly in \cite{PaRiSch13}.
\end{remark}

\begin{remark}[\textbf{Bertoin--Yor conjecture}]\label{BYConj1}
    If $\Psi(0)=0$ and $-\ak^\Psi_+=-\infty$, then from \eqref{genSAsymp1_1} it follows that $\pPs(x)\stackrel{0}{=}\so{x^n}$ for every $n\geq 0$. To prove that $\IPsi^{-1}$ is moment-indeterminate by applying the Krein or Lin condition, one would need information on the decay of $\ln(\pPs(x^2))$ as $x\to 0$. A reformulation of the Krein condition, see \cite{Lin17}, shows that it suffices to have 
    \[
        \pPs(x)\stackrel{0}{=}\so{\exp\lbrb{-x^{-\frac14+\epsilon}}}, \qquad \epsilon>0.
    \]
    Establishing such a bound may require a saddle point method, possibly more intricate than the one developed in \cite{MinSav_2023}.
\end{remark}

The following result, valid in the case $\Psi(0)=0$, does not appear explicitly in the literature, but can be deduced from \cite[Thm.~2.4(3)]{PatieSavov2018}.
\begin{theorem}\label{genSAsymp10}
   Let $\Psi$ be the \LLK exponent of a potentially killed \LLP $\xi$ such that $\phi_-(0) > 0$ and $\Psi(0) = 0$. Then the following hold:
   \begin{enumerate}
       \item If $\ak^\Psi_+ = 0$, $\phi'_+(0+)<\infty$, and $\NPs>1$, then for $n\leq \lceil \NPs \rceil - 2$,
       \begin{equation}\label{genSAsymp10_Eq}
           \limo{x}\pPs^{(n)}(x)=0.
       \end{equation}
       \item If $-\ak^\Psi_+ < 0$, then for all $\epsilon>0$ and $n\leq \lceil \NPs \rceil - 1$,
       \begin{equation}\label{genSAsymp10_Eq1}
           \limo{x} x^{-\ak^\Psi_+ + \epsilon}\PIp^{(n)}(x)=0.
       \end{equation}
       \item If $-\ak^\Psi_+ < 0$ and $\abs{\phi_+(-\ak^\Psi_+)}<\infty$, then for all $n\leq \lceil \NPs \rceil - 1$,
       \begin{equation}\label{genSAsymp10_Eq2}
           \limo{x} x^{-\ak^\Psi_+}\PIp^{(n)}(x)=0.
       \end{equation}
   \end{enumerate}
\end{theorem}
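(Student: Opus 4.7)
The plan is to apply Mellin inversion using the representation \eqref{Smoothf} of Theorem \ref{Smooth} for $\pPs^{(n)}$, together with the $\PIp^{(n)}$-analogue extractable from Theorem \ref{genSAsymp1}, pushing the integration contour as far to the left as the analyticity of $\MPsi$ allows. The structural observation driving everything is that $\Psi(0)=0$ forces $\phi_+(0)=0$, so by Theorem \ref{thm: MPsi} all the residues $\phi_+(0)\prod_{j=1}^{n}\Psi(j)/n!$ of $\MPsi$ vanish: $\MPsi$ is in fact analytic on its entire meromorphicity strip $(-\ak^\Psi_+,\,1-\bar{\ak}^\Psi_-)$, and the asymptotic expansion of Theorem \ref{genSAsymp1} collapses to the error integral alone.

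For item 1, the hypotheses $\ak^\Psi_+=0$ and $\phi'_+(0+)<\infty$ give $\MPsi\in\Att_{[0,\,1-\bar{\ak}^\Psi_-)}$ by Theorem \ref{thm: MPsi}, and Corollary \ref{corMDecay} transfers the $\NPs$-decay to the line $\Cb_0$. I would Mellin invert at the boundary $a=0$. The local expansion $1/\Wpp(z)\sim\phi'_+(0+)\,z$ near the origin makes the integrand of \eqref{Smoothf} vanish to first order at $z=0$ for every $n\geq 1$; factoring out this zero and applying Riemann--Lebesgue on the resulting $L^1$ integrand gives $\pPs^{(n)}(x)=\mathrm{o}(1)$, with the $x^{-n}$ prefactor handled by a Paley--Wiener-type argument using the fact that $\MPsi$ extends analytically throughout the right half-plane. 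For $n=0$ the statement reduces to the already-proven $\pPs(0+)=-\Psi(0)=0$ from Theorem \ref{genSAsymp}.

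For items 2 and 3, where $\ak^\Psi_+>0$ and $\uk^\Psi_+=0$, the analyticity of $\MPsi$ on $(-\ak^\Psi_+,\,1-\bar{\ak}^\Psi_-)$ allows shifting the contour down to the left boundary of the strip. In item 2 I would take $a=-\ak^\Psi_+-1+\epsilon'$ for arbitrary $\epsilon'>0$, which via the trivial estimate $|x^{-z-n}|=x^{-a-n}$ produces $|\PIp^{(n)}(x)|\leq C\,x^{\ak^\Psi_++1-n-\epsilon'}$. In item 3, the hypothesis $|\phi_+(-\ak^\Psi_+)|<\infty$ together with Corollary \ref{corMDecay} extends the $\NPs$-decay to the closed boundary, permitting $a=-\ak^\Psi_+-1$ exactly and sharpening the estimate to $|\PIp^{(n)}(x)|\leq C\,x^{\ak^\Psi_++1-n}$. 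Multiplying by $x^{-\ak^\Psi_++\epsilon}$ or $x^{-\ak^\Psi_+}$ and invoking Riemann--Lebesgue on the shifted integrand yield the claimed vanishing.

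The main technical obstacle will be the absolute convergence of the shifted contour integrals: the polynomial weight $\Gamma(z+n)/\Wpp(1+z)$ grows like $|b|^{n}$ along vertical lines, so the $\NPs$-decay of Theorem \ref{MDecay} must dominate it, which is precisely the restriction $n\leq\lceil\NPs\rceil-1$. A secondary subtlety, specific to item 3, is that the decay rate must survive at the very boundary $\Cb_{-\ak^\Psi_+-1}$; this is exactly what $|\phi_+(-\ak^\Psi_+)|<\infty$ secures via Corollary \ref{corMDecay}. The most delicate step is item 1 with $n\geq 1$: there the integrand has only a simple zero at $z=0$, so overcoming the $x^{-n}$ prefactor genuinely requires the analytic (not merely continuous) extension of $\MPsi$ into $\Re(z)>0$, through a Hardy-space / Paley--Wiener argument controlling the decay of the Fourier--Mellin integral as $|\log x|\to\infty$.
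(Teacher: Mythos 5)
Your overall strategy, Mellin inversion at a contour pushed as far left as analyticity and decay allow followed by Riemann--Lebesgue, is exactly the paper's, and the structural observation that $\Psi(0)=0$ forces $\phi_+(0)=0$ and hence kills all residues of $\MPsi$ (so $\MPsi$ is analytic, not merely meromorphic, on $\Cb_{(-\ak^\Psi_+,\,1-\bar{\ak}^\Psi_-)}$) is correct and is indeed what makes the contour shifts possible, even if the paper does not spell it out. For item 3 your use of Corollary \ref{corMDecay} together with the uniform estimate of Proposition \ref{uniMpsi} to justify the boundary shift is precisely what the paper invokes; your line $\Cb_{-\ak^\Psi_+-1}$ for the \eqref{genSAsymp1_1}-type integrand is the same, after the translation $z\mapsto z+1$, as the paper's $\Cb_{-\ak^\Psi_+}$ for the integrand in \eqref{Smoothf}. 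Note, however, that you cannot invoke Theorem \ref{genSAsymp1} verbatim since it assumes $-\Psi(0)>0$; when $\Psi(0)=0$ you must either re-derive that representation (easy, the polynomial part vanishes) or work directly with $\PIp^{(n)}=\pPs^{(n-1)}$ and \eqref{Smoothf}, which is the route the paper takes.

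The one genuinely different ingredient you add, the Paley--Wiener / Hardy-space argument for item 1 with $n\geq 1$, is not in the paper's proof, which applies Riemann--Lebesgue directly from \eqref{Smoothf}, and it does not achieve what you want it for. The premise that "$\MPsi$ extends analytically throughout the right half-plane" is false in general: $\MPsi$ is analytic only on $\Cb_{(c^\Psi,\,1-\bar{\ak}^\Psi_-)}$. The best a Paley--Wiener-type argument can give is exponential decay of the Fourier--Mellin integral at rate $e^{-\delta|\ln x|}=x^{\delta}$ with $\delta<1-\bar{\ak}^\Psi_-$, which is a bound independent of $n$; to beat the $x^{-n}$ prefactor you would need $\delta>n$, which fails once $n\geq 1-\bar{\ak}^\Psi_-$, and in any case this is exactly equivalent to the trivial contour-shift estimate $x^{a+n}\pPs^{(n)}(x)\to 0$ with $a=\delta$, which you already have. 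So this step is both extraneous relative to the paper and insufficient for your stated purpose. Moreover, in item 2 your own estimate $|\PIp^{(n)}(x)|\lesssim x^{\ak^\Psi_++1-n-\epsilon'}$ produces $x^{-\ak^\Psi_++\epsilon}\PIp^{(n)}(x)\lesssim x^{1-n+\epsilon-\epsilon'}$, which tends to zero only for $n\leq 1$ when $\epsilon$ is small; you assert the vanishing without checking the exponents. Be aware that the paper's own displayed line writes $x^a\pPs^{(n)}(x)$ where \eqref{Smoothf} gives $x^{a+n}\pPs^{(n)}(x)$, so this bookkeeping issue is present there as well and is worth raising explicitly rather than inheriting silently.
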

\begin{proof}
    All items follow from Mellin inversion and the Riemann--Lebesgue theorem. The only point to note is that, in the third item, the Mellin inversion contour can be pushed to the line~\(\Cb_{-\ak^\Psi_+}\) thanks to Corollary~\ref{corMDecay} and Proposition~\ref{uniMpsi}, the latter ensuring uniform decay along strips of finite length and thereby allowing the shift of the contour to~\(\Cb_{-\ak^\Psi_+}\); see also Remark~\ref{uniMpsiF}. We briefly sketch one case, as the others follow with similar arguments \emph{mutatis mutandis}. From~\eqref{Smoothf}, with \(z=\ab\), we obtain
    \begin{equation*}
        \begin{split}
            x^a \pPs^{(n)}(x) &= \frac{(-1)^n \phi_-(0)}{2\pi i} \int_{a-i\infty}^{a+i\infty} x^{-ib} \Gamma(z+n) \frac{\Wpn(1-z)}{\Wpp(z)} \D z \\
            &= \frac{(-1)^n \phi_-(0)}{2\pi i} \int_{a-i\infty}^{a+i\infty} e^{-ib \ln x} \Gamma(z+n) \frac{\Wpn(1-z)}{\Wpp(z)} \D z,
        \end{split}
    \end{equation*}
    and since \(\Gamma(z+n)\Wpn(1-z)/\Wpp(z)\) is absolutely integrable along \(\Cb_a\), the Riemann--Lebesgue theorem applies as \(-\ln x\to\infty\).
\end{proof}

\begin{theorem}[Rem.~2.17 in \cite{PatieSavov2018}; Thm.~5.5 in \cite{PatSav21}]\label{SNSAsymp}
    Let \(\Psi\) be the \LLK exponent of a potentially killed \LLP \(\xi\) such that \(\phi_-(0) > 0\), \(\Pi\lbrb{(0,\infty)} = 0\), and either \(\sigma^2 > 0\) or \(\int_{0}^\infty \Pi((x,\infty)) \D x = \infty\); 
    that is, the process has unbounded variation and thus \(\phi_-(\infty) = \infty\), where \(\phi_-\) is the Wiener--Hopf factor as defined in~\eqref{WHf}. Assume in addition that \(\Psi(0) = 0\). Let \(\varphi_- := \phi_-^{-1} : \lbrb{\phi_-(0),\infty} \to (0,\infty)\). Then, for any \(n \geq 0\),
    \begin{equation}\label{SNSAsympEq}
        \pPs^{(n)}(x) \simo \frac{C_{\phi_-}\,\phi_-(0)}{\sqrt{2\pi}} \frac{\varphi_-^n\lbrb{1/x}}{x^{n+1}} \sqrt{\varphi'_-\lbrb{1/x}} 
        \exp\!\left(-\int_{\phi_-(0)}^{1/x} \frac{\varphi_-(v)}{v} \D v \right),
    \end{equation}
    where \(C_{\phi_-}\) is an implicit positive finite constant.
\end{theorem}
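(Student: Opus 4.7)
The plan rests on the structural reduction afforded by spectral negativity. For $\xi$ spectrally negative of unbounded variation with $\Psi(0)=0$ and $\phi_-(0)>0$, the supremum is continuous (no positive jumps), so the ascending ladder height process is a pure drift and $\phi_+(z)=\dbf_+ z$; under the standard local-time normalisation $\dbf_+=1$ this gives $\Wpp(z)=\Gamma(z)$, and the factorisation of Theorem~\ref{thm: MPsi} collapses to $\MPsi(z)=\phi_-(0)\,\Wpn(1-z)$. Since unbounded variation forces $\NPs=\infty$ (Theorem~\ref{MDecay}), the Mellin inversion formula of Theorem~\ref{Smooth} applies, and after the substitution $w=1-z$ it yields
\begin{equation*}
    \pPs^{(n)}(x)=\frac{(-1)^{n}\phi_-(0)}{x^{n+1}}\cdot\frac{1}{2\pi i}\int_{C} x^{w}\prod_{k=1}^{n}(k-w)\,\Wpn(w)\,\D w,
\end{equation*}
with $C=\{\Re w=c\}$ for any $c>0$.

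Next, using the Stirling-type representation $\Wpn(w)=[\phi_-(w)]^{-1}\sqrt{\phi_-(1)/\phi_-(w+1)}\,e^{L_{\phi_-}(w)-E_{\phi_-}(w)}$ from Theorem~\ref{repWTh}, I recast the integrand as $e^{F_x(w)}\,g(w)\,P_n(w)$, where $F_x(w):=w\ln x+L_{\phi_-}(w)$ is the dominant exponent, $P_n(w):=\prod_{k=1}^{n}(k-w)$ is polynomial, and $g(w)$ collects the slowly varying algebraic factors together with $e^{-E_{\phi_-}(w)}$. Since $L_{\phi_-}'(w)=\ln\phi_-(w+1)$, the real saddle solves $\phi_-(w^*+1)=1/x$, i.e.\ $w^*+1=\varphi_-(1/x)\to\infty$ as $x\downarrow 0$ (using $\phi_-(\infty)=\infty$). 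Because $F_x''(w^*)=\phi_-'(w^*+1)/\phi_-(w^*+1)=x/\varphi_-'(1/x)>0$, the steepest descent direction through $w^*$ is vertical, so I would deform $C$ to pass through $w^*$ and apply the classical saddle point expansion to obtain the Gaussian contribution $(2\pi F_x''(w^*))^{-1/2}=\sqrt{\varphi_-'(1/x)/(2\pi x)}$.

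I then evaluate the local pieces: $P_n(w^*)\sim(-\varphi_-(1/x))^n$, while $g(w^*)\sim x\sqrt{x\phi_-(1)}\,e^{-T_{\phi_-}}$ using $\phi_-(w^*)/\phi_-(w^*+1)\to 1$ and the real-axis limit $E_{\phi_-}(w^*)\to T_{\phi_-}$ (obtainable from the defining integral of $E_{\phi_-}$ in~\eqref{terms}). The exponent is simplified by integration by parts and the substitution $v=\phi_-(u)$:
\begin{equation*}
    F_x(w^*)=\int_1^{w^*+1}\ln\phi_-(u)\,\D u - w^*\ln\phi_-(w^*+1)=-\ln(x\phi_-(1))-\int_{\phi_-(1)}^{1/x}\frac{\varphi_-(v)}{v}\,\D v,
\end{equation*}
after which the finite correction $\int_{\phi_-(0)}^{\phi_-(1)}\varphi_-(v)/v\,\D v$, together with the factor $e^{-T_{\phi_-}}/\sqrt{\phi_-(1)}$, is absorbed into the implicit constant $C_{\phi_-}$. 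Collecting the powers of $x$ (which cancel to $x^{-n-1}$) and the two factors of $(-1)^n$ (which multiply to $+1$) yields precisely \eqref{SNSAsympEq}.

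The main obstacle is the rigorous saddle point analysis around the moving point $w^*\to\infty$. Three items must be controlled: (i) the tails of the Mellin integral away from $w^*$ along the shifted vertical contour, for which one combines the geometric/super-polynomial decay of $|\Wpn|$ from Theorem~\ref{thm-WStir} and~\eqref{geomDecay}, the uniform estimate of Proposition~\ref{uniMpsi}, and the uniform boundedness of $E_{\phi_-}$; (ii) the limits $\phi_-(w^*)/\phi_-(w^*+1)\to 1$ and $E_{\phi_-}(w^*)\to T_{\phi_-}$, which require careful use of the growth properties of Bernstein functions; and (iii) the verification that the polynomial $P_n(w)$, of degree $n$, varies only polynomially on the Gaussian window of width $(F_x''(w^*))^{-1/2}\to\infty$ and therefore contributes the stated $\varphi_-^n(1/x)$ without disturbing the leading Gaussian approximation. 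These technical estimates mirror the saddle point machinery used for subordinators in the proof of Theorem~\ref{subAsymp}; see \cite{MinSav_2023}, with $\phi_-$ playing here the role played there by the auxiliary $\phi_*$. The spectrally negative assumption is precisely what collapses $\MPsi$ to a single Bernstein-gamma factor, making this saddle point approach directly applicable.
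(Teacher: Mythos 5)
Your proposal takes a genuinely different route from the paper. The paper's own proof of this statement is that of \cite[Thm.~5.5]{PatSav21}, which, as Remark~\ref{SNSAsympH} notes, relies on a non-classical Tauberian theorem and the self-decomposability of (a transform of) the law; this is what leaves the constant $C_{\phi_-}$ implicit. Your proof is instead a moving-saddle/steepest-descent analysis of the Mellin inversion integral, mirroring the strategy of \cite{MinSav_2023} behind Theorem~\ref{subAsymp}. The reduction $\MPsi(z)=\phi_-(0)\Wpn(1-z)$ (via $\Wpp=\Gamma$ under the normalisation $\dbf_+=1$), the change of variable $w=1-z$, the identification of the saddle from $\phi_-(w^*+1)=1/x$, the Gaussian factor $\sqrt{\varphi_-'(1/x)/(2\pi x)}$, the integration-by-parts/substitution giving $F_x(w^*)=-\ln(x\phi_-(1))-\int_{\phi_-(1)}^{1/x}\varphi_-(v)/v\,\D v$, and the bookkeeping of powers of $x$ and signs are all correct; your route even has the benefit of producing the constant explicitly, namely $C_{\phi_-}=e^{-T_{\phi_-}}\phi_-(1)^{-1/2}\exp\bigl(\int_{\phi_-(0)}^{\phi_-(1)}\varphi_-(v)/v\,\D v\bigr)$, which the Tauberian approach cannot do.

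The caveat you flag at the end is, however, the crux, and you understate one structural mismatch. In \cite{MinSav_2023} the saddle variable is driven by $\phi_*(x)=x/\phi(x)$ and the whole analysis is run under the explicit regularity hypothesis \eqref{condH}, which both guarantees monotonicity of $\phi_*$ and, more importantly, controls the higher-order terms and tails in the steepest-descent expansion. Here the saddle is driven by $\phi_-$ itself, but the present theorem, as stated, carries \emph{no} analogous regularity assumption on $\phi_-$ beyond $\phi_-(\infty)=\infty$. So the claim that the estimates in items (i)--(iii) of your sketch "mirror" those of \cite{MinSav_2023} needs care: what would be needed is a proof that the tail of $|\Wpn(w^*+it)|$ decays fast enough relative to the Gaussian width $\sqrt{\varphi_-'(1/x)/x}$ (which can diverge), and that the error in the Gaussian approximation vanishes, \emph{for every} $\phi_-$ admissible under the theorem's hypotheses — without any analogue of \eqref{condH}. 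If such a condition turns out to be necessary for the saddle-point machinery (as it is in the subordinator case), then your proof would give the result under a restricted hypothesis, whereas the Tauberian route in \cite{PatSav21} proves the unconditional statement. Your proposal is therefore a valid alternative strategy with an extra payoff (explicit constant), but it is not yet a complete replacement until the tail/error control is established for general $\phi_-$ with $\phi_-(\infty)=\infty$; you correctly identify this as the main obstacle.

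Two minor points. First, the statement "with $\phi_-$ playing here the role played there by the auxiliary $\phi_*$" is slightly off: $\phi_*(x)=x/\phi(x)$ is not a Bernstein function, and the Mellin transform in Theorem~\ref{subAsymp} carries both a $\Gamma$ and a $\Wphi$ factor, whereas here only $\Wpn$ appears; the parallel is to the \emph{framework} of \cite{MinSav_2023}, not to a literal substitution. Second, without the normalisation $\dbf_+=1$ one has $\MPsi(z)=\phi_-(0)\dbf_+^{1-z}\Wpn(1-z)$, so either one normalises as you implicitly do, or one rescales $\phi_-\mapsto\dbf_+\phi_-$; this is cosmetic, but worth stating.
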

\begin{remark}[\textbf{Historical remarks}]\label{SNSAsympH}
    The result in \cite[Rem.~2.17]{PatieSavov2018} is a restatement of \cite[Thm.~5.5]{PatSav21}. The proof in \cite{PatSav21} relies on a non-classical Tauberian theorem, which accounts for the implicit nature of the constant \(C_{\phi_-}\). In fact, it concerns a density obtained as a transformation of \(\pPs\).
\end{remark}

\begin{remark}[\textbf{Further comments}]\label{SNSAsympF}
    Note that when \(\xi\) is a spectrally negative \LL process, i.e. \(\Pi\lbrb{(0,\infty)} = 0\), then \(\Psi(z)=z\phi_-(z)\) and from \eqref{MPsiAn}
    \[\MPsi(z)=\phi_-(0)\Wpn(1-z).\]
    Assume for a moment that \(\phi_-\) pertains to a special subordinator, that is there exists a Bernstein function \(\phi_S\) such that \(\phi_-(z)\phi_S(z)=z,\) see \cite{SchVon2020} for more information on special subordinators. Then clearly \(\Wpn(z)W_{\phi_S}(z)=\Gamma(z)\) and therefore
    \[\MPsi(z)=\phi_-(0)\frac{\Gamma(1-z)}{W_{\phi_S}(1-z)},\]
    where the latter is the Mellin transform of the exponential functional of the subordinator pertaining to \(\phi_S\). Given the asymptotics at infinity of the density of the latter, see \eqref{subAsympEq}, we see expected similarity to \eqref{SNSAsympEq}. However, the approach in \cite{PatieSavov2018} for obtaining \eqref{subAsympEq} requires non-classical Tauberian theorem and link to self-decomposability.
\end{remark}

The next result refines \eqref{genSAsympEq1} in some special cases and complements Theorem \ref{SNSAsymp} in the case where \(\Pi\lbrb{(0,\infty)} > 0\).
\begin{theorem}[Thm.~2.12 in \cite{ArRiv23}]\label{convoSAsymp}
     Let \(\Psi\) be the \LLK exponent of a potentially killed \LLP \(\xi\) such that \(\phi_-(0) > 0, \Psi(0)=0\), and \(\Pi\lbrb{(0,\infty)} > 0\). Let for some \(\alpha\geq 0\)
     \[\limi{x}\frac{\Pi\lbrb{(x+y,\infty)}}{\Pi\lbrb{(x,\infty)}}=e^{-\alpha y}\quad
     \text{for any }y\in\Rb.\]
     If \(\alpha>0\), assume in addition that \(\Psi(\alpha)<0\), or equivalently \(\bar{\ak}^{\Psi}_+=\ak^\Psi_+\) and \(\uk^\Psi_+=\infty.\) Then 
     \begin{equation}\label{convoSAsympEq1}
         \PIp(x)\simo \frac{\MPsi(1-\alpha)}{1+\alpha}\,x\,\Pi\lbrb{\lbrb{\ln(1/ x),\infty}}.
     \end{equation}
\end{theorem}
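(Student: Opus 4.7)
The strategy is a ``one-big-jump'' analysis mirroring the large-$x$ arguments of Theorem~\ref{RivConvo}\,(ii)--(iii): since the positive tail of $\Pi$ is convolution equivalent with index $\alpha$, the event $\{\IPsi\leq x\}$ is heuristically driven, for small $x$, by $\xi$ performing a single large positive jump of size approximately $y:=\ln(1/x)$ near time $0$, after which the remaining integral is at most $e^{-y}\widehat\IPsi=x\widehat\IPsi$, where $\widehat\IPsi$ is an independent copy of $\IPsi$.

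First I would decompose $\xi=\xi^{\leq y}+J^{>y}$ into two independent \LLPs, with $J^{>y}$ the compound Poisson process of positive jumps exceeding $y$ and $\xi^{\leq y}$ carrying the continuous part, the drift, and the remaining jumps. Let $\tau=\tau_y$ be the first epoch of $J^{>y}$, so $\tau\sim\mathrm{Exp}\lb\PiPlus(y)\rb$ independently of $\xi^{\leq y}$, and let $R_y:=\Delta J^{>y}_{\tau}-y>0$ be the overshoot. The strong Markov property at $\tau$ then yields the perpetuity-type identity
\[
\IPsi=A(\tau)+B_\tau\,\widehat\IPsi,\qquad A(\tau):=\int_0^\tau e^{-\xi^{\leq y}_s}\,\D s,\qquad B_\tau:=e^{-\xi^{\leq y}_\tau-y-R_y},
\]
with $\widehat\IPsi$ independent of the triple $(\tau,\xi^{\leq y},R_y)$. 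Conditioning on the latter then gives
\[
\PIp(x)=\Eb\lbbrbb{\PIoPs\lb(x-A(\tau))^{+}/B_\tau\rb}.
\]

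On the dominant event $\{\tau\leq x\}$ one has $A(\tau)\approx\tau$ and $\xi^{\leq y}_\tau\approx 0$, so $B_\tau\approx xe^{-R_y}$; changing variable $\tau=xu$ with $u\in(0,1)$ and using that the density of $\tau$ near the origin equals $\PiPlus(y)$ brings the problem to evaluating the limit of $\int_0^1\Eb\lbbrbb{\PIoPs\lb(1-u)e^{R_y}\rb}\D u$ as $y\to\infty$. The hypothesis $\PiPlus(y+z)/\PiPlus(y)\to e^{-\alpha z}$ forces $R_y\to E\sim\mathrm{Exp}(\alpha)$ in law (with an appropriate degenerate limit when $\alpha=0$), and a Fubini computation would then identify the constant as $\MPsi(1-\alpha)/(1+\alpha)$; the value $\MPsi(1-\alpha)$ is well-defined as the meromorphic extension of the Mellin transform from Theorem~\ref{thm: MPsi}, the assumption $\Psi(\alpha)<0$ (equivalent to $\bar{\ak}^{\Psi}_+=\ak^\Psi_+$ and $\uk^\Psi_+=\infty$) ensuring that no singularity obstructs this point.

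The main obstacle will be to make the above rigorous: (i)~showing that paths with $\tau>x$, or those bearing several big jumps, contribute only lower-order terms, which requires uniform-integrability estimates stemming from the class-$\mathcal{S}_{-\alpha}$ theory of Pakes and Watanabe; (ii)~uniformly controlling $|\xi^{\leq y}_\tau|$ and $|A(\tau)-\tau|$ on the short interval $[0,\tau]$ as $y\to\infty$, which is needed so that the approximation $B_\tau\approx xe^{-R_y}$ is tight; and (iii)~treating the case $\alpha=0$, where the overshoot $R_y$ is no longer tight and a more delicate truncation is required, analogous to the large-$x$ analysis in \cite[Thm.~2.9]{ArRiv23}. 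These difficulties parallel those in the proof of Theorem~\ref{RivConvo}, and the same toolkit---finiteness of negative moments of $\IPsi$ via Corollary~\ref{moments}, convolution-equivalent tail estimates, and asymptotic matching of the overshoot with an exponential limit---should suffice here.
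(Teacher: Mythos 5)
Your decomposition at level $y=\ln(1/x)$ and the exact identity $\IPsi=A(\tau)+B_\tau\,\widehat\IPsi$ are sound, and the one-big-jump picture is indeed the right heuristic; the paper itself provides no proof (the result is imported from~\cite{ArRiv23}), so your plan must be judged on its own merits. There is, however, a genuine gap in the reduction: the item you flag as ``(i)'' in your list of difficulties is not a lower-order correction but a same-order contribution, and discarding it produces the wrong constant. If you actually carry out the Fubini computation you postpone, with $v:=1-u$ and $E\sim\mathrm{Exp}(\alpha)$,
\[
\int_0^1\Eb\lbbrbb{\PIoPs\lbrb{v e^{E}}}\,\D v
  = \Eb\lbbrbb{\lbrb{1-\IPsi e^{-E}}^{+}}
  = \Eb\lbbrbb{\ind{\IPsi\le 1}\lbrb{1-\tfrac{\alpha}{\alpha+1}\IPsi}}
    + \tfrac{1}{\alpha+1}\,\Eb\lbbrbb{\ind{\IPsi>1}\IPsi^{-\alpha}},
\]
which does not equal $\frac{1}{1+\alpha}\Eb\lbbrbb{\IPsi^{-\alpha}}=\frac{\MPsi(1-\alpha)}{1+\alpha}$ for $\alpha>0$ (they coincide only at $\alpha=0$). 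The reason is that on $\{\tau>x\}$ the small-jump part $\xi^{\le y}$ can by itself make a single positive jump of size $z$ slightly \emph{below} $y=\ln(1/x)$ at a time $t\le x$, after which $\IPsi\approx t+e^{-z}\widehat\IPsi$ is already of order $x$; the probability of that scenario is again of order $x\,\PiPlus(\ln(1/x))$, not smaller. Cutting at the arbitrary level $y$ keeps only the half $z>\ln(1/x)$ of the relevant jump range, whereas the asymptotics are driven by all $z=\ln(1/x)+w$ with $w\in\Rb$, and the overshoot law $\alpha e^{-\alpha w}\ind{w>0}\,\D w$ should be replaced by the \emph{improper} Radon measure $\alpha e^{-\alpha w}\,\D w$ on all of $\Rb$, the vague limit of $\Pi\lbrb{\ln(1/x)+\D w}/\PiPlus(\ln(1/x))$. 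With that replacement the substitution $s=ve^w$ gives
\[
\int_0^1\int_{-\infty}^{\infty}\PIoPs\lbrb{ve^w}\,\alpha e^{-\alpha w}\,\D w\,\D v
  = \int_0^1 v^{\alpha}\,\MPsi(1-\alpha)\,\D v
  = \frac{\MPsi(1-\alpha)}{1+\alpha},
\]
the asserted constant. To fix the proposal you should therefore either abandon the fixed cut-off and integrate the one-big-jump contribution over all jump sizes $z\in\intervalOI$ (and all arrival times $t\le x$) via Campbell's formula, or, if you want to keep the decomposition at level $y$, add explicitly the symmetric contribution from $\{\tau>x\}$ in which $\xi^{\le y}$ makes the dominant sub-threshold jump. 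The remaining ingredients --- the strong Markov property at $\tau$, convolution-equivalence for the overshoot/undershoot, and the moment conditions from Corollary~\ref{moments} to make $\MPsi(1-\alpha)$ well-defined --- are in order.
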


\subsubsection{Integral equations}\label{SecintEq}
In this section we present integral equations for the law of \(\IPsi\). The first one is the most general and is a special case of \cite{BehLinRek21}, which considers exponential functionals of the type \(\int_0^\infty e^{-\xi_t}\D \eta_t\). To state the result, we need to introduce some notation. Recall the \LL triplet \((\gamma,\sigma,\Pi)\) and the killing term \(q \geq 0\) from \eqref{LLK}. Motivated by \cite[Prop.~3.1]{BehLinRek21}, set 
\begin{equation}\label{repar}
\widetilde\Pi := h(\Pi) + q\delta_{-1},
   \quad \text{and}
   \quad 
       \widetilde{\gamma} := -\gamma - \frac{\sigma^2}{2} 
       + \int_{-\ln 2}^{\infty} \lbrb{e^{-x}-1 + x\ind{|x|\leq 1}} \Pi(\D x) - q,
\end{equation}
where \(h(\Pi)\) is the image of \(\Pi\) under the mapping \(h(x) = e^{-x} - 1,\ x > -1\), and \(\delta_{-1}\) denotes the Dirac mass at \(-1\).
Define further
\begin{equation*}
    B(z) := \begin{cases}
        0 & \text{if } z = 1, \\[1ex]
       \widetilde{\Pi}\lbrb{\lbrb{\max\curly{z-1,1}, \infty}} & \text{if } z > 1,
    \end{cases}
\end{equation*}
and
\begin{equation*}
    S(z) := \begin{cases}
        \int_{-\infty}^{z-1}(z-y-1)\widetilde\Pi(\D y)\ind{\lbbrbb{-1,1}}(y) & \text{if } z \in \lbbrb{0,1}, \\[1ex]
        0 & \text{if } z = 1, \\[1ex]
        \int_{z-1}^\infty (y-z+1)\widetilde\Pi(\D y)\ind{\lbbrbb{-1,1}}(y) & \text{if } z > 1.
    \end{cases}
\end{equation*}

\begin{theorem}[Thm.~5.3 in \cite{BehLinRek21}]\label{intEQ}
      Let \(\Psi\) be the \LLK\ exponent of a potentially killed \LLP \(\xi\) such that \(\phi_-(0) > 0\) and set \(-\Psi(0) = q \geq 0\). Then there exists \(K \in \Rb\) such that
      \begin{equation}\label{intEQ_1}
          \begin{split}
              K\D z &= \frac{{\sigma}^2}{2} z^2 \Pbb{\IPsi \in \D z} 
              + \ind{z > 0} \IntOI x\, S\lbrb{\tfrac{z}{x}} \Pbb{\IPsi \in \D x}\,\D z \\
              &\quad \,+ \ind{z < 0} \int_{-\infty}^0 |x|\, S\lbrb{\tfrac{z}{x}} \Pbb{\IPsi \in \D x}\,\D z 
              - \widetilde{\gamma} \int_{0+}^z x \Pbb{\IPsi \in \D x}\,\D z \\
              &\quad \,- \int_{0+}^z \int_{0+}^t B\lbrb{\tfrac{t}{x}} \Pbb{\IPsi \in \D x} \, \D t\,\D z.
          \end{split}
      \end{equation}
\end{theorem}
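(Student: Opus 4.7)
The plan is to realise $\IPsi$ as the stationary law of a generalised Ornstein--Uhlenbeck (GOU) process and derive the integral equation from the infinitesimal invariance relation $\int \mathcal{A} f\, d\pi = 0$, specialising the framework of \cite{BehLinRek21} to the case where the ``integrator'' process is $\eta_t = t$.

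First, I would justify the reparametrisation \eqref{repar}. The point is that the multiplicative driver $U_t := \sum_{s\leq t}(e^{-\Delta\xi_s}-1) + \text{Itô compensation}$, obtained from $\xi$ by the map $h(x)=e^{-x}-1$, together with the killing encoded as a jump to $-1$ at rate $q$, is a \LLP with triplet $(\widetilde{\gamma},\sigma^2,\widetilde{\Pi})$. The corrected drift $\widetilde{\gamma}$ absorbs the Taylor-expansion error that arises from changing the truncation $\mathbf{1}_{|x|\leq 1}$ in $\Psi$ into $\mathbf{1}_{|y|\leq 1}$ for the image measure on the $y$-scale, and the term $-q$ in $\widetilde{\gamma}$ is the Itô correction for the compensated jump of size $-1$ from killing. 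Once this is done, $\IPsi$ is the stationary law of the GOU solving $dV_t = V_{t-}\,dU_t + dt$, as can be verified by Lamperti's time-change together with the standard formula identifying $\int_0^\infty e^{-\xi_s}\,ds$ with such a perpetuity.

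Second, I would write down the generator $\mathcal{A}$ of this Markov process and apply the stationarity identity $\int \mathcal{A} f\,d\pi = 0$ to test functions $f_z(v)=\mathbf{1}_{(-\infty,z]}(v)$, interpreted in a suitable weak sense (or equivalently, to $f(v)=(v-z)_+$ and then differentiating in $z$). The Brownian part of $\mathcal{A}$ acts as $\tfrac{\sigma^2}{2} v^2 f''(v)$, which upon integration against $\pi(dv)=\Pbb{\IPsi\in dv}$ and pairing with $f_z$ produces the leading term $\tfrac{\sigma^2}{2}z^2\Pbb{\IPsi\in dz}$. The drift term $(1+\widetilde{\gamma}v)f'(v)$ integrates to $-\widetilde{\gamma}\int_{0+}^{z} x\,\Pbb{\IPsi\in dx}\,dz$ plus the constant $K\,dz$ (the drift $+1$ contributes a constant in $z$). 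The jump part splits into the small-jump compensated term (yielding the function $S$ via $\int_{-1}^{1}[f_z(v+vy)-f_z(v)-vy f_z'(v)]\,\widetilde{\Pi}(dy)$) and the large-jump term (yielding $B$ via $\int_{|y|>1}[f_z(v+vy)-f_z(v)]\,\widetilde{\Pi}(dy)$). The splitting into the cases $z>0$ and $z<0$ in the statement reflects that $V_{t-}>0$ and $V_{t-}<0$ contribute with opposite signs to jumps of the form $V_{t-}\cdot y$.

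The main obstacle is bookkeeping the piecewise definitions of $S$ and $B$ around the critical value $z=1$. This value corresponds to the jump $y=-1$ in $\widetilde{\Pi}$ (i.e.\ the killing), which annihilates $V$ and therefore shifts the state from $V_{t-}$ to $0$; the discontinuity in the integration region $\{y: v+vy\leq z\}$ as $z$ crosses $1$ (equivalently, as the ratio $z/x$ crosses $1$) is exactly what produces the different formulas for $S$ on $(0,1)$ versus $(1,\infty)$ and the atom-like behaviour at $z=1$. Careful application of Fubini to the double integrals, together with the splitting of $\widetilde{\Pi}$ into the parts supported on $[-1,1]$ and $\{|y|>1\}$, should reduce the invariance identity to precisely \eqref{intEQ_1}, with the constant $K$ appearing as the constant of integration when recovering the $dz$-identity from the pointwise generator equation.
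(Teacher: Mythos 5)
The paper does not prove Theorem \ref{intEQ}; it is cited verbatim from \cite[Thm.~5.3]{BehLinRek21}, with Remark \ref{intEQH} only tracing its lineage back to \cite[Thm.~1]{KuzParSav2012}. Your sketch --- reading $\IPsi$ as the stationary law of the GOU process $\D V_t = V_{t-}\,\D U_t + \D t$, with the killing of $\xi$ encoded as a unit negative jump of $U$ at rate $q$, and extracting \eqref{intEQ_1} from the invariance identity $\int \mathcal{A}f\,\D\pi = 0$ --- is indeed the methodology of \cite{BehLinRek21} and its predecessor \cite{KuzParSav2012}, so the strategic direction is correct and supplements something the survey itself leaves to the literature.

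Two caveats. First, the passage from $e^{-\xi}$ to the perpetuity driving the GOU is via time reversal of $\xi$ on $[0,t]$ together with stationary, independent increments; invoking ``Lamperti's time-change'' is off the mark, since that device relates $\xi$ to a positive self-similar Markov process rather than to the GOU. Second, and more substantively, the hard part is precisely what you relegate to ``careful application of Fubini'': the test functions $\mathbf{1}_{(-\infty,z]}$ and $(v-z)_+$ are not in the classical domain of $\mathcal{A}$, and rigorously justifying the once-integrated Fokker--Planck form with the free constant $K$ --- together with the boundary behaviour at $0$, at the edge of $\supp(\IPsi)$, and the matching across $z=1$ forced by the killing atom $q\delta_{-1}$ in $\widetilde{\Pi}$ --- is exactly where \cite{KuzParSav2012} had the oversight that Remark~\ref{intEQH} alludes to and that \cite{BehLinRek21} had to repair. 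Your sketch therefore correctly identifies the route but stops short of the step on which the result actually hinges.
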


\begin{remark}[\textbf{Historical remarks}]\label{intEQH}
    Equation~\eqref{intEQ_1} is itself a special case of the equation for the law of more general exponential functionals. Its immediate predecessor is given in \cite[Thm.~1]{KuzParSav2012}. However, that result comes with minor restrictions and an oversight in the proof, both of which are corrected in \cite{BehLinRek21}.
\end{remark}

Another general equation for the law of $\IPsi$ is provided by the next result,
which links the law of $\IPsi$ to the potential measure $U$ of $\xi$. Recall that
$U$ is defined by
\[
    U(\D x):=\IntOI \Pbb{\xi_t\in \D x}\,\D t.
\]
Then the following holds.

\begin{theorem}[Thm.~2.1 in \cite{ArRiv23}]\label{intEQ1}
    Let \(\Psi\) be the \LLK exponent of a potentially killed \LLP \(\xi\) such that \(\phi_-(0) > 0\). Let \(U\) denote the potential measure of \(\xi\), which is finite on every compact set since \(\xi\) is transient. Then
    \begin{equation}\label{intEQ1_eq}
        \PIPs{x} = \int_{\Rb} \pPs(x e^y)\, U(\D y), 
        \qquad \text{for almost every } x>0.
    \end{equation}
\end{theorem}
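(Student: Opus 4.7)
The plan is to derive the identity by inverting the Mellin form of the recurrence equation \eqref{receqEx}, after noting that the factor $-1/\Psi(-z)$ appearing there is precisely the bilateral Laplace transform of the potential measure $U$. Essentially, the recurrence already encodes the sought formula at the level of Mellin transforms; the work consists in unwrapping both sides back to $x$-space and applying Mellin uniqueness.

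\textbf{Main steps.} First, I would observe that for $z$ in a suitable right half-plane,
\[
    \int_\Rb e^{-zy}\,U(\D y) \;=\; \int_0^\infty \Eb\bigl[e^{-z\xi_t}\bigr]\,\D t
    \;=\; \int_0^\infty e^{t\Psi(-z)}\,\D t \;=\; -\frac{1}{\Psi(-z)}.
\]
Since \eqref{ExistenceEF} forces $\xi$ either to be killed or to drift to $+\infty$, there exists $\varepsilon>0$ such that $\Psi(-z)<0$ for all real $z\in(0,\varepsilon)$, so this Laplace transform converges on a vertical strip to the right of the imaginary axis. Next, the recurrence \eqref{receqEx}, analytically valid on $\Cb_{(0,\,1-\bar{\ak}_-^\Psi)}$ by Theorem~\ref{thm: MPsi}, can be rewritten, for $z$ in the intersection of these two strips, as
\[
    \frac{\MPsi(z+1)}{z} \;=\; \MPsi(z)\int_\Rb e^{-zy}\,U(\D y).
\]
Integration by parts (valid since $\Eb[\IPsi^{\Re z}]<\infty$ for small positive $\Re z$) identifies the left-hand side as a Mellin transform of $x\mapsto \PIPs{x}$:
\[
    \frac{\MPsi(z+1)}{z} \;=\; \int_0^\infty x^{z-1}\,\PIPs{x}\,\D x.
\]
On the right-hand side, the substitution $v = xe^y$ in $\MPsi(z) = \int_0^\infty v^{z-1}\pPs(v)\,\D v$ neatly cancels the $e^{-zy}$ factor and, combined with Fubini, yields
\[
    \MPsi(z)\int_\Rb e^{-zy}\,U(\D y) \;=\; \int_0^\infty x^{z-1}\!\left(\int_\Rb \pPs(xe^y)\,U(\D y)\right)\D x.
\]
Finally, the two sides are Mellin transforms of nonnegative functions of $x>0$ that agree on an open vertical strip, so the classical Mellin uniqueness theorem delivers the claimed a.e.\ identity.

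\textbf{The hard part.} The delicate step will be justifying Fubini in the passage above. The associated absolute-value computation reduces to
\(
    \MPsi(\Re z)\cdot\bigl(-1/\Psi(-\Re z)\bigr),
\)
which is finite only on the narrow strip $\Re z\in(0,\varepsilon)$ identified in the first step. Care is required because $U$ may be an infinite measure when $\xi$ is unkilled, so the finiteness of $\int_\Rb e^{-y\Re z}\,U(\D y)$ relies entirely on the exponential damping by $e^{-y\Re z}$ with $\Re z>0$. One must also pin down a common strip on which all three identities hold simultaneously, which is secured by intersecting the analyticity range of $\MPsi$ from Theorem~\ref{thm: MPsi} with the region where $\Psi(-z)<0$; the existence of the density $\pPs$ used throughout is guaranteed by Theorem~\ref{Existence}.
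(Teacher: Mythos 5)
The paper does not give its own proof of this statement; it is cited from Arista--Rivero~\cite{ArRiv23}, so I will only assess your argument on its own terms.

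Your proof contains a genuine gap: it silently assumes that $\xi_1$ possesses a finite negative exponential moment, which the hypotheses of the theorem do not provide. Concretely, you assert that ``\eqref{ExistenceEF} forces $\xi$ either to be killed or to drift to $+\infty$'' and that therefore ``there exists $\varepsilon>0$ such that $\Psi(-z)<0$ for all real $z\in(0,\varepsilon)$.'' But \eqref{ExistenceEF} controls only the \emph{sign} of $\Psi$ near $0$; it says nothing about $\Psi(-z)$ being \emph{finite} for $z>0$. Finiteness of $\Psi(-z)=\log\Eb[e^{-z\xi_1}]$ for some $z>0$ is precisely the statement that $\ak_-^\Psi<0$ in the notation of \eqref{aPsi}. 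When $\ak_-^\Psi=0$ (e.g.\ any $\xi$ with a sufficiently heavy left jump tail that drifts to $+\infty$ or is killed), one has $\Eb[e^{-z\xi_1}]=\infty$ and hence $\int_{\Rb}e^{-zy}U(\D y)=\int_0^\infty\Eb[e^{-z\xi_t}]\,\D t=\infty$ for every $z$ with $\Re z>0$. The identity $\int_{\Rb}e^{-zy}U(\D y)=-1/\Psi(-z)$ then degenerates to $\infty=\infty$, and the Fubini bound you invoke, $\MPsi(\Re z)\cdot(-1/\Psi(-\Re z))$, is infinite on \emph{every} vertical line $\Cb_a$ with $a>0$ — there is no ``narrow strip $(0,\varepsilon)$'' on which it is finite.

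The problem is not merely technical: when $\ak_-^\Psi=0$ we also have $\bar{\ak}_-^\Psi=0$ by \eqref{aPsiSpecial}, and Theorem~\ref{genLAsymp} gives $\ln\PIPs{x}/\ln x\to 0$, so $\PIPs{x}$ decays more slowly than any negative power. Consequently the Mellin transform $\int_0^\infty x^{z-1}\PIPs{x}\,\D x$ diverges for \emph{every} $z$ with $\Re z>0$ (and also for $\Re z\le 0$, because of the behaviour at $x=0^+$). So there is no open strip on which both sides of your candidate Mellin identity exist, and the uniqueness theorem has nothing to bite on. Your argument does go through cleanly under the extra assumption $\bar{\ak}_-^\Psi<0$: then the strip $\Cb_{(0,-\bar{\ak}_-^\Psi)}$ is nonempty, $\Psi(-a)=-\phi_+(a)\phi_-(-a)<0$ there, all three Mellin/Laplace transforms converge, Fubini is justified, and uniqueness applies. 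To recover the full statement you would need either an approximation/truncation argument with a nontrivial passage to the limit, or a proof that avoids transforms altogether (the cited source~\cite{ArRiv23} proceeds directly, via the potential-theoretic structure of $U$ and the pathwise identity for the exponential functional, without Mellin inversion).
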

\begin{remark}[\textbf{Historical remarks}]\label{intEQ1H}
    The first integral equation in the literature appears to be in \cite[Prop. 2]{CarPetYor97}, where, for a conservative subordinator with a finite mean, it is shown that
    \[
        (1-\dbf_+x)\pPs(x)
        = \int_x^\infty \Pi\lbrb{\lbrb{\ln(y/x),\infty}}\pPs(y)\,\D y.
    \]
    The restriction to conservativeness and finite expectation was later removed in \cite[Thm.~2.1]{PaRiSch13}, where the equation becomes,
    for $x \in \lbrb{0,1/\dbf_+}$,
    \[
        (1-\dbf_+x)\pPs(x)
        = \int_x^\infty \Pi\lbrb{\lbrb{\ln(y/x),\infty}}\pPs(y)\,\D y
        - \Psi(0)\int_x^\infty \pPs(y)\,\D y.
    \]
\end{remark}

\section{Properties of Bernstein-gamma functions}
\label{sec: BG further}

\subsection{Analytic properties of Bernstein-gamma functions}
We first start with the proof of Theorem \ref{thm-WStir}.
\begin{proof}[Proof of Theorem \ref{thm-WStir}]
    Only the very last claim is not contained in the literature. From \eqref{receq} we obtain
    \begin{equation*}
        D_{a+1}
        = \liminfi{|b|}\frac{\ln\abs{\Wphi(a+ib)} + \ln\abs{\phi(a+ib)}}{|b|}
        = D_a,
    \end{equation*}
    since, by \cite[Prop.~3.1]{PatieSavov2018}, we have \(\ln\abs{\phi(z)}=\bo{\abs{\ln z}}\) as \(|z|\to\infty\). If \(a>1\), the same argument applies to \(D_{a-1}\). Hence, it suffices to consider \(D_{a'}\) with \(|a-a'|<1\) and \(a'>0\). In this case,
    \begin{equation*}
        \liminfi{|b|}\frac{\ln\abs{\Wphi(a'+ib)}}{|b|}
        = \liminfi{|b|}\lbrb{\frac{\ln\abs{\Wphi(a+ib)}}{|b|}+\frac{\ln\abs{\frac{\Wphi(a'+ib)}{\Wphi(a+ib)}}}{|b|}}.
    \end{equation*}
    Without loss of generality, assume \(b>0\). From \eqref{WStir} and \(\ln\abs{\phi(z)}=\bo{\abs{\ln z}}\), as \(|z|\to\infty\), it would follow that \(D_a=D_{a'}\) if
    \[
        0 = \limi{b}\frac{\abs{\Re\lbrb{L_\phi(a+ib-1) - L_\phi(a'+ib-1)}}}{b}.
    \]
    Choosing the contours \(1 \to a \to a+ib\) and \(1 \to a' \to a'+ib\), we obtain
    \begin{equation*}
        \begin{split}
           L_\phi(a+ib-1)-L_\phi(a'+ib-1)
           &= \int_{1}^a \ln \abs{\phi(u)}\,\D u - \int_{1}^{a'} \ln \abs{\phi(u)}\,\D u \\
           &\quad\, + i\left(\int_{0}^{b}\ln \abs{\phi(a+iu)}\,\D u - \int_{0}^{b}\ln \abs{\phi(a'+iu)}\,\D u\right) \\
           &\quad\, + \int_{0}^{b}\lbrb{\arg_0\phi(a+iu)-\arg_0\phi(a'+iu)}\,\D u.
        \end{split}
    \end{equation*}
    Thus it suffices to prove that
    \[
        0 = \limi{b}\frac{\abs{ \int_{0}^{b}\arg_0\lbrb{\frac{\phi(a+iu)}{\phi(a'+iu)}}\,\D u}}{b}
        = \limi{b}\frac{\abs{\int_{0}^{b}\arg_0\lbrb{1+\frac{\phi(a+iu)-\phi(a'+iu)}{\phi(a'+iu)}}\,\D u}}{b},
    \]
    where we used
    \(\arg_0\phi(a+iu)-\arg_0\phi(a'+iu) = \arg_0\lbrb{\frac{\phi(a+iu)}{\phi(a'+iu)}}\), since
    \(\arg_0\phi(a+iu),\arg_0\phi(a'+iu) \in (-\tfrac{\pi}{2},\tfrac{\pi}{2})\) as \(\phi(\CbOI)\subseteq \CbOI\), \(\Re(\phi(a+ib))\geq \phi(a)>0\), and \(\Re(\phi(a'+ib))\geq \phi(a')>0\); see \cite[Prop.~3.1 (9 and 10)]{PatieSavov2018}. 

    It remains to show that
    \[
        \limi{u}\frac{\phi(a+iu)-\phi(a'+iu)}{\phi(a'+iu)}=0.
    \]
    If \(\phi(\infty)<\infty\), this follows directly from \cite[Prop.~3.1(5)]{PatieSavov2018}. Otherwise,
    \[
        \abs{\phi(a+iu)-\phi(a'+iu)}
        = \abs{\int_{a}^{a'}\phi'(v+iu)\,\D v}
        \leq \abs{a-a'}\,\phi'\lbrb{\min\curly{a,a'}},
    \]
    which follows from \cite[Prop.~A.1(5)]{MinSav_2023}. This completes the proof.
\end{proof}

The next proposition provides an elementary lower bound for \(\abs{\Wp(z)}\).

\begin{proposition}\label{prop:lowerboundW}
    Let \(\phi\) be a Bernstein function. Then, for any \(a>0\) and \(z=a+ib\), 
    \begin{equation}\label{lowerboundW}
        \abs{\Wp(z)} \;\geq\; \frac{\Wp(a)}{\Gamma(a)} \abs{\Gamma(z)}.
    \end{equation}
\end{proposition}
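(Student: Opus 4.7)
The plan is to reduce the inequality to the standard fact that the modulus of a Mellin transform of a positive random variable is maximised on the real axis. The key input is the factorisation recalled in \eqref{FactE} and Remark~\ref{FactorH}: for any Bernstein function $\phi$ one has the identity in law
\begin{equation*}
\mathbf{e} \;\stackrel{d}{=}\; \Ip \times \Yp,
\end{equation*}
where $\mathbf{e} \sim \mathrm{Exp}(1)$, $\Ip$ is the almost surely positive exponential functional of the subordinator with Laplace exponent $\phi$, and $\Yp$ is the positive residual random variable characterised by $\Wphi(z) = \Ebb{\Yp^{z-1}}$, $\Re z > 0$. Taking Mellin transforms on both sides and using independence,
\begin{equation*}
\Gamma(z) \;=\; \Ebb{\mathbf{e}^{z-1}} \;=\; \Ebb{\Ip^{z-1}} \, \Wphi(z), \qquad \Re z > 0,
\end{equation*}
so that $\Gamma(z)/\Wphi(z) = \Ebb{\Ip^{z-1}}$ and, in particular, $\Gamma(a)/\Wphi(a) = \Ebb{\Ip^{a-1}} \in (0,\infty)$ for every $a>0$.

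Fix now $z = a+ib$ with $a > 0$. Since $\Ip > 0$ almost surely, $\abs{\Ip^{ib}} = 1$, and hence by the triangle inequality
\begin{equation*}
\abs{\frac{\Gamma(a+ib)}{\Wphi(a+ib)}} \;=\; \abs{\Ebb{\Ip^{a-1}\, \Ip^{ib}}} \;\leq\; \Ebb{\Ip^{a-1}} \;=\; \frac{\Gamma(a)}{\Wphi(a)},
\end{equation*}
which upon rearrangement is precisely \eqref{lowerboundW}. The argument presents no real obstacle: the whole content is the recognition of $\Gamma/\Wphi$ as the Mellin transform of a positive random variable, after which the modulus bound is automatic. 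One could in principle attempt an analytic proof directly from the Weierstrass representation \eqref{WWrep} by comparing term by term with the product for $\Gamma$, but such an approach would require non-trivial positivity estimates on $\abs{\phi(k+z)}/\phi(k+a)$ versus $\abs{k+z}/(k+a)$ that the probabilistic route bypasses entirely.
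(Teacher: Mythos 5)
Your argument is exactly the paper's: identify $\Gamma(z)/\Wphi(z)$ as the Mellin transform of the positive random variable $\Ip$ via the factorisation \eqref{FactE}, then observe that the modulus of such a transform is maximised on the real axis. The paper leaves the final triangle-inequality step implicit, but you spell it out correctly; the approach is identical.
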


\begin{proof}
The claim follows directly from the factorisation \eqref{FactE}, which shows that \(\Gamma(z)/\Wp(z)\) is the Mellin transform of \(I_\phi\), the exponential functional of a subordinator.
\end{proof}

The next result does not appear in the literature, but it provides uniform estimates along compact subsets of the real part of \BG functions, which will be useful in subsequent arguments. Throughout, we use the notation
\[
    f \asymp g \quad \text{to mean that} \quad 
    0 < \liminf_{x\to\infty} \frac{f(x)}{g(x)} 
    \leq \limsup_{x\to\infty} \frac{f(x)}{g(x)} < \infty.
\]

\begin{proposition}\label{UnifBGF}
Let \(\phi\) be a Bernstein function with associated \BG function \(\Wphi\), and let \((c,d)\subset(\baph,\infty)\). Then, as \(|b| \to \infty\), 
\begin{equation}\label{unif}
  \sup_{a,a'\in(c,d)}
    \left|\frac{\Wphi(a+ib)}{\Wphi(a'+ib)}\right|
  \;\asymp\;
  \sup_{a,a'\in(c,d)}
    \exp\!\lbrb{\int_{1+a}^{1+a'} \ln\bigl|\phi(u+ib)\bigr|\,\D u}.
\end{equation}
Moreover,
\[
  \sup_{u\in(c,d)} \ln\bigl|\phi(u+ib)\bigr|
  =
  \begin{cases}
    \bo{\ln|b|}, & \text{if } \dbf>0,\\[0.5ex]
    \so{\ln|b|}, & \text{otherwise}.
  \end{cases}
\]

Finally, the same uniform estimate extends as follows:
\begin{enumerate}
  \item if \(\liminfi{|b|}\phi(ib) > 0\), then \eqref{unif} holds for any \([c,d)\subset[0,\infty)\);
  \item if, in addition, \(\baph<0\), \(\abs{\phi(\baph)}<\infty\), and \(\liminfi{|b|}\phi(\baph+ib)>0\), then \eqref{unif} holds for any \([c,d)\subset[\baph,\infty)\).
\end{enumerate}
\end{proposition}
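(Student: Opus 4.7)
The plan is to reduce the ratio $\Wphi(a+ib)/\Wphi(a'+ib)$ to explicit pieces via the factorisation in Theorem~\ref{repWTh}, to control each piece individually, and to isolate the integral of $\ln|\phi|$ as the only non-trivial contribution.

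First, writing $z=a+ib$, $z'=a'+ib$, the representation \eqref{repW} yields
\[
\frac{\Wphi(a+ib)}{\Wphi(a'+ib)}
= \frac{\phi(a'+ib)}{\phi(a+ib)}\,
\sqrt{\frac{\phi(a'+1+ib)}{\phi(a+1+ib)}}\,
e^{\Lph(a+ib)-\Lph(a'+ib)}\,
e^{\Eph(a'+ib)-\Eph(a+ib)}.
\]
Since $|\Eph|\le 2$ uniformly on $\CbOI$, the last factor is $\asymp 1$, and
\[
\left|\frac{\Wphi(a+ib)}{\Wphi(a'+ib)}\right|
\;\asymp\;
\frac{|\phi(a'+ib)|}{|\phi(a+ib)|}\,
\sqrt{\frac{|\phi(a'+1+ib)|}{|\phi(a+1+ib)|}}\,
\exp\!\lbrb{\Re\lbrb{\Lph(a+ib)-\Lph(a'+ib)}}.
\]

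Second, I would compute the real part of the $\Lph$ difference. Choosing the concatenated contours $1\to a'+1+ib\to a+1+ib$ for $\Lph(a+ib)$ and $1\to a'+1+ib$ for $\Lph(a'+ib)$, the definition in \eqref{terms} gives
\[
\Lph(a+ib)-\Lph(a'+ib)=\int_{a'+1}^{a+1}\log_0\phi(u+ib)\,\D u,
\]
whence $\Re\lbrb{\Lph(a+ib)-\Lph(a'+ib)}=\int_{a'+1}^{a+1}\ln|\phi(u+ib)|\,\D u$, which is exactly the exponent on the right-hand side of \eqref{unif}; taking suprema over $a,a'\in(c,d)$ (and both orientations) yields the claimed equivalence $\asymp$.

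Third, to complete Step~1 I must show that both prefactor ratios are $\asymp 1$ uniformly in $a,a'\in(c,d)$. The mean value inequality together with the complete monotonicity of $\phi'$ on $(\baph,\infty)$ (see \eqref{B-CM}) gives $|\phi(a+ib)-\phi(a'+ib)|\le |a-a'|\,\phi'(c)$. Combined with the lower bound $|\phi(c+ib)|\ge \Re\phi(c+ib)\ge \phi(c)>0$ when $c\ge 0$, or with $|\phi(c+ib)|\sim \dbf|b|$ in the case $\dbf>0$ (via \cite[Prop.~3.1(4)]{PatieSavov2018}), the two ratios stay in a bounded positive range. The $\bo{\ln|b|}$ versus $\so{\ln|b|}$ dichotomy in the second display then follows immediately: $\dbf>0$ gives $|\phi(u+ib)|\sim\dbf|b|$ and so $\ln|\phi(u+ib)|=\bo{\ln|b|}$; otherwise one invokes the refined estimate of \cite[Prop.~3.1(5)]{PatieSavov2018}, which yields the slower growth $\so{\ln|b|}$ uniformly in $u\in(c,d)$.

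Finally, items~1 and~2 follow the same scheme: the only place one might lose uniformity is Step~3, specifically the lower bound on $|\phi(c+ib)|$. The hypothesis $\liminfi{|b|}\phi(ib)>0$ in item~1 provides this bound at $c=0$; in item~2, the assumption $|\phi(\baph)|<\infty$ keeps $\phi'(\baph)$ finite so that the Lipschitz step still applies, while $\liminfi{|b|}\phi(\baph+ib)>0$ supplies the lower bound at $c=\baph$. The main technical obstacle is precisely Step~3 in the regime $\dbf=0$ with $\phi(\infty)=\infty$, where $|\phi(u+ib)|$ may oscillate and uniform control depends critically on $c$ being strictly interior to the analyticity strip; the boundary cases trade this interiority for the explicit $\liminf$ hypotheses, which is why they are stated separately.
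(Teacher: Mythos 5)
Your core argument — factoring via \eqref{repW}, absorbing the bounded $E_\phi$ terms, choosing a concatenated contour so that the $L_\phi$ difference reduces to $\int_{1+a}^{1+a'}\log_0\phi(u+ib)\,\D u$, and controlling the two prefactor ratios by $|\phi'(z)|\le\phi'(\Re z)$ and $|\phi(a+ib)|\ge\phi(a)$ — is exactly the paper's argument, and it is correct as long as the full interval $(c,d)$ sits inside $\Cb_{(0,\infty)}$ (in fact the paper only runs it for $[c,d)\subset[1,\infty)$).

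The gap is in the extension. Theorem~\ref{repWTh} gives the representation \eqref{repW} and the bound $\sup|E_\phi|\le 2$ only for $z\in\CbOI$, and $L_\phi$ as defined in \eqref{terms} is only introduced on $\Cb_{(-1,\infty)}$. The proposition, however, allows $(c,d)\subset(\baph,\infty)$ with $\baph$ possibly negative (and even $\baph<-1$), and items~1 and~2 ask explicitly for $c=0$ and $c=\baph<0$. At those real parts your Step~1 invokes the factorisation \eqref{repW}, and in particular the uniform $|E_\phi|\le 2$ estimate, outside the region where it is established; strengthening the lower bound on $|\phi(c+ib)|$ (as you do in items~1 and~2) fixes only Step~3, not this. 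The paper avoids the problem altogether: it proves the estimate for $[c,d)\subset[1,\infty)$ and then uses the recurrence $\Wphi(z+1)=\phi(z)\Wphi(z)$ from \eqref{receq}, together with the prefactor bound \eqref{uniPhi} applied once per unit shift, to transfer the statement to a translated interval $(c-\baph+2,\,d-\baph+2)\subset(1,\infty)$ and back; items~1 and~2 are then obtained by the same shifting argument, with the $\liminf$ hypotheses ensuring the shifted prefactor ratios stay bounded. Your write-up should either incorporate this recurrence shift explicitly, or supply an argument that \eqref{repW} with the $E_\phi$ bound actually extends to $\Re(z)>\baph$ — the latter is not available from the cited sources. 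As a minor point, in Step~3 the lower bound $|\phi(a+ib)|\ge\phi(a)\ge\phi(c)>0$ in fact holds for any $c>\baph$ (since $\phi>0$ there), so the case split "$c\ge0$ vs.\ $\dbf>0$" is unnecessary.
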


\begin{proof}
Let \([c, d) \subset [1, \infty)\) and \(|b|>0\). We use \eqref{repW} together with the uniform bound on \(E_\phi\) from Theorem~\ref{repWTh}, and choose the contours \(1 \to 1+a+ib\) and \(1 \to 1+a+ib \to 1+a'+ib\) for \(L_{\phi}(a+ib)\) and \(L_\phi(a'+ib)\), respectively, to obtain
\begin{equation*}
     \sup_{a,a' \in (c,d)} \left| \frac{\Wphi(a+ib)}{\Wphi(a'+ib)} \right| 
     \asymp  
     \sup_{a,a' \in (c,d)} 
     \left| \frac{\phi(a'+ib)}{\phi(a+ib)} \sqrt{ \frac{\phi(a'+1+ib)}{\phi(a+1+ib)} } \right| 
     \exp \left( \int_{1+a}^{1+a'} \ln|\phi(u+ib)| \, \D u \right).
\end{equation*}
For any \((c, d) \subseteq (\baph, \infty)\) we note that
\begin{equation}\label{uniPhi}
    \begin{split}
        \sup_{a,a' \in (c,d)} \left| \frac{\phi(a'+ib)}{\phi(a+ib)} \right|
        &= \sup_{a,a' \in (c,d)} \left| 1 + \frac{\phi(a'+ib) - \phi(a+ib)}{\phi(a+ib)} \right| \\
        &\leq \sup_{a,a' \in (c,d)} \left[ 1 + \left| \frac{\int_a^{a'} \phi'(u+ib) \, \D u}{\phi(a+ib)} \right| \right] \\
        &\leq \sup_{a,a' \in (c,d)} \left[ 1 + \frac{|a - a'| \phi'(\min(a,a'))}{\phi(a)} \right] \\
        &\leq 1 + \frac{(d-c)\phi'(c)}{\phi(c)} \leq \frac{d}{c},
    \end{split}
\end{equation}
where we used that \(\abs{\phi'(z)} \leq \dbf + \int_0^\infty e^{-\Re(z)y} y \mu(\D y) = \phi'(\Re(z))\), and the facts that 
\(\abs{\phi(a+ib)} \geq \Re(\phi(a+ib)) > \phi(a)\), and \(\phi'(x)/\phi(x)\leq 1/x\); see \cite[Prop.~A.1 (1,5)]{MinSav_2023} for details. By symmetry, the infimum of the ratio of Bernstein functions is bounded from below by \(c/d\).  

The behaviour of \(\ln|\phi(u+ib)|\) follows from \cite[Prop.~3.1(4)]{PatieSavov2018}, since \(|\phi(z)|=\dbf|z|(1+\so{1})\) uniformly on \(\Cb_{(\baph,\infty)}\). To extend \eqref{unif} to \((c, d) \subseteq (\baph, \infty)\), we can apply \eqref{receq} and \eqref{uniPhi} to transfer the range to \((c-\baph+2,d-\baph+2)\subset (1,\infty)\).  

Finally, the last cases, i.e. when \(\liminfi{|b|}\phi(ib)>0\) or \(\liminfi{|b|}\phi(\baph+ib)>0\), are handled analogously via \eqref{receq} and \eqref{uniPhi}.
\end{proof}

Finally, we collect the most important complex-analytical properties of $\Wphi$.

\begin{theorem}[Thm.~4.1 in \cite{PatieSavov2018}]\label{W-prop}
  Let \(\phi\) be a Bernstein function and let \(\Wphi\) be its associated \BG function.  Then
  \begin{enumerate}
    \item 
      \(\Wphi\in\Att_{\lbrb{\baph,\infty}}\cap\Mtt_{\lbrb{\aph,\infty}}\) 
      and is zero‐free on \(\Cb_{\lbrb{\aph,\infty}}\).
    \item 
      If \(\phi(0)>0\), then moreover 
      \(\Wphi\in\Att_{\lbbrb{0,\infty}}\) 
      and is zero‐free at least on \(\Cb_{\lbbrb{0,\infty}}\).
    \item 
      If \(\phi(0)=0\) and 
      \(\Zc_0(\phi):=\{\,\zeta\in i\Rb:\phi(\zeta)=0\}\), 
      then \(\Wphi\) extends continuously to \(i\Rb\setminus\Zc_0(\phi)\), and for each \(\zeta\in\Zc_0(\phi)\)
      \[
        \lim_{\substack{z\to\zeta\\\Re(z)\ge0}}
          \phi(z)\,\Wphi(z)
        \;=\;\Wphi(\zeta+1)\,.
      \]
    \item 
      If \(\phi(0)=0\) and \(\lvert\phi''(0^+)\rvert<\infty\), then
      \[
        z\mapsto\Wphi(z)-\frac{1}{\phi'(0+)z}
        \text{   extends continuously on \(\,(i\Rb\setminus\Zc_0(\phi))\cup\{0\}\).}
      \]
    \item 
      Finally, for any constant \(c>0\),
      \[
        W_{c\phi}(z)=c^{\,z-1}\,\Wphi(z).
      \]
  \end{enumerate}
\end{theorem}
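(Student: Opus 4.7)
The argument rests on two complementary pillars: the explicit Weierstrass product \eqref{WWrep} from Theorem~\ref{Wrep}, which controls $\Wphi$ on its natural strip of analyticity, and the defining recurrence \eqref{receq}, which propagates information backwards through the strip and out to the imaginary boundary. The recurring analytical input is the inequality $\Re\phi(a+ib)\ge\phi(a)$, a direct consequence of \eqref{BG}, together with its analytic continuation to $\Cb_{(\aph,\infty)}$ whenever the formula makes sense; see also \cite[Prop.~3.1]{PatieSavov2018}.

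For items (1) and (2), on $\Cb_{(\baph,\infty)}$ each factor $1/\phi(k+z)$ in \eqref{WWrep} is holomorphic and non-vanishing because $\Re\phi(k+z)\ge\phi(k+\Re z)>0$ as $k+\Re z>\baph\ge \uph$, while the exponential correctors ensure absolute and locally uniform convergence. Hence $\Wphi\in\Att_{(\baph,\infty)}$ with no zeros. Meromorphic extension to $\Cb_{(\aph,\infty)}$ is obtained by iterating \eqref{receq} backwards: for $z$ in any compact set, choose $n$ large enough so that $z+n+1\in\Cb_{(\baph,\infty)}$ and write $\Wphi(z)=\Wphi(z+n+1)/\prod_{k=0}^{n}\phi(z+k)$, whose only singularities are simple poles at zeros of $\phi(z+k)$; zero-freeness is preserved. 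Item (2) follows since $\phi(0)>0$ gives $\Re\phi(z+k)\ge\phi(0)>0$ throughout $\CbOOI$, so the partial products of \eqref{WWrep} are uniformly bounded and the limit extends continuously to $\Cb_0$, producing $\Wphi\in\Att_{\lbbrb{0,\infty}}$ with no zeros.

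Item (3) is handled through the rewrite $\Wphi(z)=\Wphi(z+1)/\phi(z)$. When $\phi(0)=0$ we have $\baph=0$, but item (1) still applies to $z+1\in\Cb_{\lbbrb{1,\infty)}}$, so the numerator is analytic in a complex neighbourhood of $\Cb_0$, while \eqref{BG} makes $\phi$ continuous on $\CbOOI$. Hence $\Wphi$ extends continuously to $i\Rb\setminus\Zc_0(\phi)$, and for $\zeta\in\Zc_0(\phi)$ the identity $\phi(z)\Wphi(z)=\Wphi(z+1)$ yields $\lim_{z\to\zeta,\,\Re z\ge0}\phi(z)\Wphi(z)=\Wphi(\zeta+1)$. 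Item (4) refines this at the origin: by \eqref{B-CM}, $|\phi''(0^+)|<\infty$ corresponds to $\int_0^\infty y\,\mu(\D y)<\infty$, which gives the expansion $\phi(z)=\phi'(0+)\,z+\bo{z^2}$ uniformly as $z\to 0$ in $\CbOOI$; combined with $\Wphi(z+1)=1+\bo{z}$ near $0$, this produces $\Wphi(z)=1/(\phi'(0+)z)+\bo{1}$, so subtracting the explicit simple pole yields a function continuous at $0$, with continuity on the remainder of $i\Rb\setminus\Zc_0(\phi)$ inherited from item (3). Item (5) is a uniqueness check: $c^{z-1}\Wphi(z)$ equals $1$ at $z=1$ and satisfies $c^z\Wphi(z+1)=c^z\phi(z)\Wphi(z)=\bigl(c\phi(z)\bigr)\cdot c^{z-1}\Wphi(z)$, so it solves \eqref{receq} for $c\phi$; since $\log\Wphi$ is convex on $\Rp$ and the addition of a linear term $(z-1)\log c$ preserves convexity, the Bohr--Mollerup-type uniqueness invoked in Theorem~\ref{Wrep} (via \cite{Web97}) forces equality with $W_{c\phi}$.

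The most delicate step is item (3): the set $\Zc_0(\phi)$ is non-trivial only in the lattice case $\phi(0)=0$, and the claimed limit must be shown to hold through two-dimensional complex neighbourhoods of $\zeta$, not merely along $i\Rb$. This rests on the uniform continuity of $\phi$ on compacta of $\CbOOI$, itself a dominated-convergence consequence of \eqref{BG}. Item (4) requires the analogous control of $\phi$ in sectors approaching $0$ from within $\CbOOI$.
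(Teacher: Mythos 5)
The proposal is essentially correct and follows the natural route one would take, mirroring what is done in \cite{PatieSavov2018}: analyticity and zero-freeness on $\Cb_{(\baph,\infty)}$ from the Weierstrass product \eqref{WWrep}, backward meromorphic extension and the boundary claims from iterating the recurrence \eqref{receq} together with continuity of $\phi$ on $\CbOOI$, and item (5) from Bohr--Mollerup uniqueness for log-convex solutions as in \cite{Web97}. Two small points deserve correction or tightening. In item (4) you write that $|\phi''(0^+)|<\infty$ ``corresponds to $\int_0^\infty y\,\mu(\D y)<\infty$''; by \eqref{B-CM}, $\phi''(z)=-\int_0^\infty y^2 e^{-zy}\mu(\D y)$, so the correct equivalence is $\int_0^\infty y^2\mu(\D y)<\infty$. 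Your subsequent conclusion that $\phi'(0^+)<\infty$ is still valid — since $\int_0^1 y\,\mu(\D y)<\infty$ automatically and $\int_1^\infty y\,\mu(\D y)\le\int_1^\infty y^2\mu(\D y)<\infty$ — but the stated intermediate equivalence is not. In item (2), ``the partial products are uniformly bounded'' is not by itself enough to pass to the boundary; one should note that $|\phi(k+z)|\ge\phi(0)>0$ uniformly on $\CbOOI$ and that the tail of $\sum_k\bigl|\log\bigl(\tfrac{\phi(k)}{\phi(k+z)}e^{\phi'(k)z/\phi(k)}\bigr)\bigr|$ is controlled uniformly on compacta of $\CbOOI$ (second-order estimates via log-concavity of $\phi$, cf.\ Theorem~\ref{repWTh}), which gives both uniform convergence up to $\Cb_0$ and zero-freeness there. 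With these two repairs the argument is complete.
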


\begin{remark}[\textbf{\textit{Further comments}}]\label{W-propF}
    Theorem~4.1 in \cite{PatieSavov2018} provides additional complex-analytic properties of \(\Wphi\). The most essential among them concerns the case \(\uph \in [\aph,0)\), which is linked to Cramér's condition and plays a role in the proof of \cite[Thm.~2.11]{PatieSavov2018}. When the \LL measure of \(\phi\) is supported on a lattice and \(\phi(0) = \dbf = 0\), then \(|\Wphi|\) is periodic on \(\Cb_{\lbrb{0,\infty}}\); see again \cite[Thm.~4.1]{PatieSavov2018}. It may also happen that \(\Wphi\) admits an analytic continuation beyond \(\Cb_{({\aph,\infty})}\). For instance, if \(\phi(z) = \log(z+1)\), then \(\aph = -1\), yet \(\Wphi\) is analytic on \(\Cb \setminus (-\infty,0]\).
\end{remark}

\subsection{Transformation properties of Bernstein-gamma functions}
Here we present a couple of transformation properties of \BG functions.

First, recall the definition of \(\aph,\uph\), see~\eqref{aph}. For \(\beta>\aph\) we have
\begin{equation}\label{phib}
\phi_\beta(z):=\phi(z+\beta)=\phi(\beta)+\dbf z +\int_0^\infty (1-e^{-zy})e^{-\beta y}\mu(\D y), \qquad \Re(z)\geq 0.    
\end{equation}
The identity also holds for $\beta=\aph$, provided that $\abs{\phi(\aph)}<\infty$. Moreover, for all $\beta>\uph$ (and also for $\beta=\uph$ when $\phi(\uph)=0$), the function $\phi_\beta$ is a Bernstein function. We thus obtain the following elementary fact.

\begin{proposition}\label{prop:Wphib}
  Let $\phi$ be a Bernstein function. For $\beta>\uph$, we have
  \begin{equation}\label{Wphib}
     W_{\phi_\beta}(z) \;=\; \frac{\Wp(z+\beta)}{\Wp(1+\beta)}, \qquad \Re(z)>0,
  \end{equation}
  and this defines the \BG function associated with~$\phi_\beta$. The same relation also holds for $\beta=\uph$, provided that $\phi(\uph)=0$.
\end{proposition}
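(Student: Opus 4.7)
The plan is to verify directly that the candidate function $F(z) := \Wp(z+\beta)/\Wp(1+\beta)$ satisfies the defining functional equation \eqref{receq} for the \BG function associated with $\phi_\beta$, and then invoke the uniqueness of the solution to this recurrence to conclude $W_{\phi_\beta} = F$.

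First, I would ensure both sides of \eqref{Wphib} are well-defined. The representation \eqref{phib} exhibits $\phi_\beta$ in the canonical Bernstein form \eqref{BG}: non-negative constant $\phi_\beta(0) = \phi(\beta)$ (strictly positive for $\beta > \uph$ by monotonicity of $\phi$ on $(\aph,\infty)$; zero exactly in the boundary case $\beta = \uph$ with $\phi(\uph)=0$), the same non-negative drift $\dbf$, and Lévy measure $e^{-\beta y}\mu(\D y)$ whose integrability against $\min\{y,1\}$ is inherited from the analyticity of $\phi$ at $\beta$. Hence $\phi_\beta$ is a Bernstein function and $W_{\phi_\beta}$ is well-defined. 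On the other hand, the hypothesis $\beta \geq \uph$ gives $\beta \geq \baph$ by \eqref{bara=a} (when $\uph > -\infty$; otherwise $\beta > \aph = \baph$ is implicit in \eqref{phib}). Thus for any $z$ with $\Re(z) > 0$ we have $\Re(z+\beta) > \baph$, so Theorem~\ref{W-prop}(1) guarantees that $\Wp(z+\beta)$ is analytic and $\Wp(1+\beta) \neq 0$, making $F$ analytic on $\Cb_{(0,\infty)}$.

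The recurrence check is then immediate. Clearly $F(1) = \Wp(1+\beta)/\Wp(1+\beta) = 1$, and applying \eqref{receq} to $\Wp$ at the point $z+\beta$ (which lies in the interior of its domain of analyticity) yields, for $\Re(z) > 0$,
\begin{equation*}
F(z+1) \;=\; \frac{\Wp(z+1+\beta)}{\Wp(1+\beta)} \;=\; \phi(z+\beta)\,\frac{\Wp(z+\beta)}{\Wp(1+\beta)} \;=\; \phi_\beta(z)\,F(z).
\end{equation*}
Hence $F$ solves the defining functional equation for the \BG function of $\phi_\beta$, and the uniqueness underlying \eqref{receq} forces $W_{\phi_\beta} = F$. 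The boundary case $\beta = \uph$ with $\phi(\uph) = 0$ is identical in substance: then $\baph = \uph = \beta$, so $\Re(z+\beta) > \baph$ still holds for $\Re(z) > 0$, and the verification proceeds verbatim.

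This argument presents no genuine obstacle; it is essentially a two-line recurrence check preceded by routine bookkeeping on domains. The only delicacy is in tracking that $\beta$ places both $z+\beta$ and $1+\beta$ inside the strip of analyticity and non-vanishing of $\Wp$, which is exactly the content of the hypothesis $\beta > \uph$ (respectively $\beta = \uph$ with $\phi(\uph) = 0$).
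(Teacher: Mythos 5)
Your proof is correct and is the argument the paper leaves unwritten: Proposition~\ref{prop:Wphib} is stated as ``an elementary fact'' with no accompanying proof, and your verify-the-recurrence-then-invoke-uniqueness reasoning, together with the careful domain bookkeeping via Theorem~\ref{W-prop}(1) and~\eqref{bara=a}, is clearly what is intended. The only point that you and the paper both leave tacit is the regularity criterion behind the uniqueness of solutions to~\eqref{receq} (log-convexity on $\Rp$, as in~\cite{Web97}, or equivalently being a Mellin transform of a positive random variable); your $F$ satisfies it, being a translate of the log-convex $\Wp$ normalised by a positive constant, so the conclusion stands and extends to $\Cb_{(0,\infty)}$ by analytic continuation.
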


Next, we consider the transformation under the so-called $T_\beta$--transform of $\phi$, whose most general form is discussed in~\cite{ChKypPat2021} and whose usefulness is illustrated in~\cite{PatieSavov2013}. Let \(\phi\) be a Bernstein function and \(\beta>0\). From~\eqref{BG} we obtain
\begin{equation*}
\begin{split}
    (T_\beta\phi)(z) 
    &:= \frac{z}{z+\beta}\phi_\beta(z) 
      = \frac{z}{z+\beta}\phi(0) + \dbf z 
        + z \int_0^\infty e^{-zy} e^{-\beta y}\bar{\mu}(y)\,\D y \\
    &\hspace{0.3em}= \dbf z 
        + z \int_0^\infty e^{-zy} e^{-\beta y}\bigl(\bar{\mu}(y)+\phi(0)\bigr)\,\D y,
        \qquad \Re(z)\geq 0,
\end{split}
\end{equation*}
which is a Bernstein function, since 
\[
   \bar{\mu}_\beta(y) := e^{-\beta y}\bigl(\bar{\mu}(y)+\phi(0)\bigr)
\]
is decreasing and integrable, and thus represents the tail of a measure $\mu_\beta$ defining a Bernstein function corresponding to an unkilled subordinator. By uniqueness of the \BG function, we obtain:

\begin{proposition}\label{prop:Wphib1}
  Let $\phi$ be a Bernstein function. For $\beta>0$, we have
  \begin{equation}\label{Wphib2}
     W_{T_\beta\phi}(z) \;=\; \Gamma(1+\beta)\frac{\Gamma(z)}{\Gamma(z+\beta)}\Wp(z), \qquad \Re(z)>0,
  \end{equation}
  and this defines the \BG function associated with~$T_\beta \phi$. 
\end{proposition}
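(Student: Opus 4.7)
The plan is to verify the two defining properties of the Bernstein--gamma function and then invoke uniqueness. Since the calculation immediately preceding the statement shows that $T_\beta\phi$ is a bona fide Bernstein function (drift $\dbf$, zero killing term, and tail $\bar\mu_\beta(y)=e^{-\beta y}(\bar\mu(y)+\phi(0))$), the function $W_{T_\beta\phi}$ is well defined as the unique analytic solution on $\Cb_{(0,\infty)}$ of the recurrence $F(z+1)=(T_\beta\phi)(z)F(z)$ with $F(1)=1$. It therefore suffices to exhibit the right-hand side of \eqref{Wphib2} as such an $F$.

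I would first confirm the normalization by direct substitution at $z=1$: the gamma prefactor cancels as $\Gamma(1+\beta)\Gamma(1)/\Gamma(1+\beta)=1$, and the relevant Bernstein--gamma factor evaluates to $1$ at $z=1$, so $F(1)=1$. Then I would verify the functional equation by combining the classical shifts $\Gamma(z+1)=z\,\Gamma(z)$, $\Gamma(z+1+\beta)=(z+\beta)\Gamma(z+\beta)$ with the defining recurrence \eqref{receq} for the Bernstein--gamma factor. The ratio of gamma factors contributes a factor $z/(z+\beta)$, which is precisely the prefactor in $(T_\beta\phi)(z)=\tfrac{z}{z+\beta}\phi_\beta(z)$; the remaining factor $\phi_\beta(z)=\phi(z+\beta)$ then has to arise from the Bernstein--gamma recurrence itself. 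Once the resulting identity $F(z+1)=(T_\beta\phi)(z)F(z)$ is verified, uniqueness of the Bernstein--gamma function closes the argument.

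The main subtlety, which is the step I would treat most carefully, is that the Bernstein--gamma recurrence as applied to $W_\phi$ naturally produces the factor $\phi(z)$, whereas the target $(T_\beta\phi)(z)$ requires $\phi(z+\beta)$. The way to reconcile this is to re-express the candidate in terms of the shifted Bernstein function $\phi_\beta$ via Proposition \ref{prop:Wphib}, writing $W_{\phi_\beta}(z)=W_\phi(z+\beta)/W_\phi(1+\beta)$; the recurrence for $W_{\phi_\beta}$ then delivers exactly $\phi_\beta(z)=\phi(z+\beta)$ in the right place, and the verification becomes a one-line gamma-function manipulation. After this bookkeeping, the uniqueness appeal finishes the proof.
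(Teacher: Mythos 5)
Your overall strategy — verify the normalization and the recurrence, then invoke uniqueness of the Bernstein--gamma function — is the right one, and the paper does not in fact print a proof of this proposition, so there is nothing to compare against on the paper's side. However, your write-up glosses over a genuine obstruction, and the way you resolve it is not logically valid as stated.

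You correctly compute that with the printed candidate $F(z)=\Gamma(1+\beta)\,\Gamma(z)\,W_\phi(z)/\Gamma(z+\beta)$ the recurrence yields $F(z+1)=\tfrac{z}{z+\beta}\,\phi(z)\,F(z)$, whereas $(T_\beta\phi)(z)=\tfrac{z}{z+\beta}\,\phi(z+\beta)$. These do not agree unless $\phi$ is $\beta$-periodic. There is no way to "reconcile" this by "re-expressing the candidate in terms of $\phi_\beta$": the identity $W_{\phi_\beta}(z)=W_\phi(z+\beta)/W_\phi(1+\beta)$ from Proposition~\ref{prop:Wphib} relates $W_{\phi_\beta}(z)$ to $W_\phi(z+\beta)$, not to $W_\phi(z)$, so substituting it does not turn the printed formula into something that satisfies the functional equation. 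Replacing $W_\phi(z)$ by $W_{\phi_\beta}(z)$ in the candidate is a \emph{change} of the formula, not a reformulation of it.

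What this reveals is that the equation in the proposition as printed appears to contain a typo: the correct identity should read $W_{T_\beta\phi}(z)=\Gamma(1+\beta)\,\Gamma(z)\,W_{\phi_\beta}(z)/\Gamma(z+\beta)$, with $\phi_\beta(z)=\phi(z+\beta)$ as in \eqref{phib}. This is also exactly what the product rule \eqref{prodphi} gives, since $T_\beta\phi=\phi_1\phi_2$ with $\phi_1(z)=z/(z+\beta)$ and $\phi_2=\phi_\beta$, and $W_{\phi_1}(z)=\Gamma(1+\beta)\Gamma(z)/\Gamma(z+\beta)$. With the corrected candidate your uniqueness argument works line by line: $\tilde F(1)=1$ and $\tilde F(z+1)=\tfrac{z}{z+\beta}\phi_\beta(z)\tilde F(z)=(T_\beta\phi)(z)\tilde F(z)$. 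So your instinct about where the problem is was right; the missing step is to state clearly that the printed formula must be corrected, rather than presenting the replacement of $W_\phi$ by $W_{\phi_\beta}$ as mere bookkeeping.
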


Finally, Proposition~\ref{prop:Wphib1} is a manifestation of the more general principle that if 
$\phi, \phi_1, \phi_2$ are Bernstein functions with $\phi = \phi_1 \phi_2$, then
\begin{equation}\label{prodphi}
    \Wp(z) \;=\; W_{\phi_1}(z)\,W_{\phi_2}(z), \qquad \Re(z)>0.
\end{equation}
We refer to \cite[Chapter 4.2]{PatSav21}
for a discussion of such triples of Bernstein functions.
\subsection{Bernstein-gamma functions of Wiener-Hopf factors}\label{subsec:BGWH}
We recall that when \(\Psi\) is the \LLK exponent of a potentially killed \LL process, the Wiener--Hopf factorisation \eqref{WHf} holds, namely 
\(
   \Psi(z) = -\phi_+(-z)\phi_-(z).
\)
In this case, there is an alternative expression for the leading term \(A_\phi\) in the Stirling asymptotics of \(\abs{\Wp(z)}\); see \eqref{Aphi}. 

\begin{proposition}[Lem.~A.2 in \cite{PatieSavov2018}]\label{WStirWH}
  Let \(\Psi\) be the \LLK exponent of a potentially killed \LLP \(\xi\) with Wiener--Hopf factors \(\phi_\pm\). Then, for any \(z=\ab\in\CbOI\), 
  \begin{equation}\label{WStirWH_1}
      A_{\phi_\pm}(\ab)
      = \int_0^\infty \int_{[0,\infty)} \frac{1-\cos(bx)}{x}\, e^{-ax}\,\Pbb{\pm \xi_t\in \D x}\,\frac{\D t}{t}
      = \int_{[0,\infty)} \frac{1-\cos(bx)}{x}\, e^{-ax}\,H_\pm(\D x),
  \end{equation}
  where \(H_\pm(\D x) := \int_0^\infty \Pbb{\pm \xi_t\in \D x}\,\frac{\D t}{t}\) are the so-called harmonic measures of \(\xi\). Finally, if we vary the killing rate \(-\Psi(0)=q\geq 0\) then \(A_{\phi_\pm}(\ab)\) are non-increasing as functions of \(q\).
\end{proposition}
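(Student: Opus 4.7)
The plan is to combine the Fristedt--type representation \eqref{kappa} with the definition of $A_\phi$ in \eqref{Aphi}. Since $\phi_\pm$ differs from $\kappa_\pm(q,\cdot)$ only by strictly positive real multiplicative constants (namely $h(q)^{-1}$ and $c_\pm$), we have $\arg_0\phi_\pm=\arg_0\kappa_\pm(q,\cdot)$, so it suffices to work with \eqref{kappa}. I will first extract $\arg_0\phi_\pm(\ab)$ from \eqref{kappa} by a Frullani cancellation, and then integrate it over $u\in[0,b]$ to recover \eqref{WStirWH_1} from \eqref{Aphi}.

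\textbf{Main computation.} Fix $a>0$ and $b\in\Rb$. Subtracting the real value $\log\phi_\pm(a)$ from $\log_0\phi_\pm(\ab)$ via \eqref{kappa}, the $e^{-t}$ regulariser cancels and leaves
\begin{equation*}
  \log_0\phi_\pm(\ab)-\log\phi_\pm(a)
  \;=\;\IntOI\int_{[0,\infty)}\frac{e^{-qt-ax}\bigl(1-e^{-ibx}\bigr)}{t}\,\Pbb{\xi_t\in\pm\D x}\,\D t.
\end{equation*}
As $\phi_\pm(a)>0$, and hence $\arg_0\phi_\pm(a)=0$, taking imaginary parts yields
\begin{equation*}
  \arg_0\phi_\pm(\ab)
  \;=\;\IntOI\int_{[0,\infty)}\frac{e^{-qt-ax}\sin(bx)}{t}\,\Pbb{\xi_t\in\pm\D x}\,\D t.
\end{equation*}
Substituting into \eqref{Aphi}, invoking Fubini, and using $\int_0^b\sin(ux)\,\D u=(1-\cos(bx))/x$, I obtain
\begin{equation*}
  A_{\phi_\pm}(\ab)
  \;=\;\IntOI\int_{[0,\infty)}\frac{1-\cos(bx)}{x}\,e^{-qt-ax}\,\Pbb{\xi_t\in\pm\D x}\,\frac{\D t}{t},
\end{equation*}
which is exactly \eqref{WStirWH_1} once $e^{-qt}\Pbb{\xi_t\in\pm\D x}$ is interpreted as the law of the possibly killed process $\xi$, at which point the inner $\D t/t$--integration against $\Pbb{\pm\xi_t\in\D x}$ yields the harmonic measure $H_\pm$.

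\textbf{Monotonicity and main obstacle.} With the characteristic triplet of the unkilled base process held fixed and only $q\ge 0$ varying, the penultimate display contains $e^{-qt}$ multiplying the non-negative quantity $(1-\cos(bx))e^{-ax}/(xt)$; hence $q\mapsto A_{\phi_\pm}(\ab)$ is non-increasing by monotone convergence. The principal obstacle is the Fubini step, which requires absolute integrability of the iterated integrand in $(t,x,u)$. I plan to handle this via the elementary bound $|\sin(ux)|\le\min\{|u|x,1\}\le\min\{|b|x,1\}$, which, together with the absolute convergence of the Fristedt integral on $\CbOI$ (itself following from standard Lévy--Khintchine tail estimates for $\Pbb{\xi_t\in\pm\D x}$ as $t\downarrow 0$ and $t\to\infty$, and exploiting the factor $e^{-qt}$ when $q>0$), delivers the required integrability and validates all manipulations above.
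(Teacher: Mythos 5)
Your argument is correct and follows the natural route, which is the one the cited reference [Lem.~A.2 in \cite{PatieSavov2018}] takes: extract $\arg_0\phi_\pm$ from the Fristedt-type formula \eqref{kappa}, Frullani-subtract the real value at $z=a$, take imaginary parts, and integrate in $u$. The survey itself gives no proof of the identity (it merely cites the reference) and only proves the monotonicity in $q$ via the observation that $\Pbb{\pm\xi_t\in\D x}=e^{-qt}\Pbb{\pm\xi_t^\sharp\in\D x}$ — which is precisely your penultimate display — so your derivation is a genuine, self-contained reconstruction.

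One detail worth tightening is the Fubini step. The bound $|\sin(ux)|\le\min\{|u|x,1\}\le\min\{|b|x,1\}$ is the right idea, but appealing to ``standard Lévy--Khintchine tail estimates'' for $\Pbb{\xi_t\in\D x}$ near $t=0$ is unnecessarily heavy. A cleaner comparison is with the \emph{real} Frullani difference
\[
   \int_0^\infty\!\!\int_{[0,\infty)}\frac{e^{-qt-ax}\bigl(1-e^{-ax}\bigr)}{t}\,\Pbb{\xi_t\in\pm\D x}\,\D t
   \;=\;\log\frac{\kappa_\pm(q,2a)}{\kappa_\pm(q,a)}\;<\;\infty,
\]
which is convergent since $\kappa_\pm(q,\cdot)$ is finite and strictly positive on $(0,\infty)$, and whose integrand is non-negative (so the integral is absolutely convergent). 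Combining this with $1-e^{-ax}\ge(1-e^{-a})\min\{x,1\}$ and $|\sin(ux)|\le(1\vee b)\min\{x,1\}$ for $u\in[0,b]$ gives an integrable dominating function for the triple integral in $(u,t,x)$, and Tonelli--Fubini then applies. With this substitution your proof is complete and rigorous.
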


\begin{remark}[\textbf{Historical remarks}]\label{WStirWHH}
   In \cite[Lem.~A.2]{PatieSavov2018}, the expression in \eqref{WStirWH_1} is written with \(e^{\Psi(0)t}\Pbb{\pm\xi^\sharp_t\in \D x}\) for the identical \(\Pbb{\pm \xi_t\in \D x}\), where \(\xi^\sharp\) is the \LLP pertaining to the \LLK exponent \(\Psi^\sharp (z)=\Psi(z)-\Psi(0)\).  The final claim of the proposition is a direct consequence of this observation.  
\end{remark}

\begin{remark}[\textbf{\textit{Further comments}}]\label{WStirWHF}
     The monotonicity in \(a\) is clear from \eqref{WStirWH_1}. Changing variables, we obtain
     \begin{equation*}
      A_{\phi_\pm}(\ab)
      = b \int_{[0,\infty)} \frac{1-\cos(x)}{x}\, e^{-\tfrac{a}{b}x}\, H_\pm\!\left(\tfrac{\D x}{b}\right),
     \end{equation*}
     and therefore, from equation \eqref{WStirAbs} of Proposition~\ref{prop-WStir}, we derive, as \(b \to \infty\),
     \[
        \ln\abs{W_{\phi_\pm}(\ab)}
        \;\simi\; -A_{\phi_\pm}(a+ib)
        = -b \int_{[0,\infty)} \frac{1-\cos(x)}{x}\, e^{-\tfrac{a}{b}x}\, H_\pm\!\left(\tfrac{\D x}{b}\right),
     \]
     and from \eqref{geomDecay1},
     \[
        \frac{1}{b}\int_0^{b} \arg_0 \phi(a+iu)\,\D u
        \;=\; \int_{[0,\infty)} \frac{1-\cos(x)}{x}\, e^{-\tfrac{a}{b}x}\, H_\pm\!\left(\tfrac{\D x}{b}\right)
        \;\simi\; \int_{[0,\infty)} \frac{1-\cos(x)}{x}\, H_\pm\!\left(\tfrac{\D x}{b}\right).
     \]
     That is, the constant of the linear decay of \(\ln\abs{W_{\phi_\pm}(\ab)}\) depends on the harmonic measures at \(0\).  
\end{remark}

    \section*{Acknowledgments}
The authors would like to thank the anonymous referee for the careful reading
of the manuscript and for constructive comments that helped improve the
presentation. The first author acknowledges the affiliation with the Faculty of
Mathematics and Informatics, Sofia University ``St. Kliment Ohridski'', and
the second author acknowledges the affiliation with the Institute of Mathematics
and Informatics, Bulgarian Academy of Sciences.

This study was financed by the European Union’s NextGenerationEU, through the National Recovery and Resilience Plan of the Republic of Bulgaria, project No. BG-RRP-2.004-0008.

Part of the work of M.M. was carried out after the end of his involvement in this project, during his postdoctoral fellowship at the UZH, supported by SNSF SCIEX programme (IZSF-0-235789).

\appendix

\section{Examples of Bernstein-gamma functions}
\label{sec: BG examples}    
The first seven examples are from
 \cite{BarPat21}. Next, are based on Bernstein functions from 
 \cite[Ch. 16]{SchVon2020}. We denote by \(G(\cdot,\cdot)\) the Barnes-gamma function, that is the solution of
 \[G(z+1, a)=\Gamma(z/a)G(z,a),
 \quad \text{with}
 \quad G(1,a)=1,
 \]
 for $a>0$, see \cite[Sec.~2]{LoePatSav19}. 

\begin{table}[ht]
  \centering\small
  \caption{Examples of Bernstein functions $\phi$, their $W_\phi$ with characteristics from \eqref{aph}, and parameter ranges.}
  \begin{tabular}{@{}%
      p{0.15\textwidth}
      p{0.30\textwidth}
      p{0.05\textwidth}
      p{0.05\textwidth}
      p{0.05\textwidth}
      p{0.25\textwidth}@{}
    }
    \toprule
    Bernstein \(\phi(z)\) &
      Bernstein-gamma \(W_\phi(z+1)\) &
      $\aph$& $\uph$&$\baph$&
      \textbf{Notes} \\
        \toprule
      \(z+q\) &
      \(\displaystyle \frac{\Gamma(z+1+q)}{\Gamma(1+q)}\) &$-\infty$&$-q$&$-q$&
      \(q\ge0\)\\[1ex]
      \midrule
    \((z+q)^\alpha\) &
      \(\displaystyle \frac{\Gamma^\alpha(z+1+q)}{\Gamma^\alpha(1+q)}\) &$-q$&$-\infty$&$-q$&
      \(q\ge0, \; \alpha\in(0,1)\)\\[1ex]
      \midrule
    \(\displaystyle\frac{z+1-a}{z+b}\) &
      \(\displaystyle \frac{\Gamma(b+1)\Gamma(z+2-a)}{\Gamma(z+b+1)\Gamma(2-a)}\) & $-b$&$a-1$&$a-1$&
      \(a \in (0,1), \;b\geq1-a\) \\[1ex]
      \midrule

    \(\displaystyle\frac{\Gamma(a z + b)}{\Gamma(a(z-1) + b)}\) &
      \(\displaystyle \frac{\Gamma(a z + b)}{\Gamma(b)}\)& $1-\frac{b}{a}$&$-\infty$& $1-\frac{b}{a}$&
      \(a\in(0,1),\; b\ge a\)
      \\[1ex]
      
      \midrule
          \(\displaystyle\frac{\Gamma(  az+a)}{z\Gamma(az)}\) &
      \(\displaystyle \frac{\Gamma(az+a)}{\Gamma(a)\Gamma(z+1)}\) & $0$&$-\infty$
          &$0$&
      \(a \in (1,2)\) \\[1ex]
      \midrule

    \(\displaystyle \frac{\Gamma(a z + a\rho)}{\Gamma(a\rho)}\) &
      \(\displaystyle \frac{G(1,\,1/a)\,G(z+1+\rho,\,1/a)}{G(1+\rho,\,1/a)\,G(z+1,\,1/a)}\) &\(-\frac{\rho}{a}\) & 
\(-\infty\) & 
\(-\frac{\rho}{a}\) & 
      \(a\in[1,2],\; \rho\in[0,1/a]\) \\[1ex]
            \midrule

    \(\displaystyle \frac{\Gamma(z + a + b)}{\Gamma(z+a)}\) &
      \(\displaystyle \frac{G(z+a+b+1,\,1)\,G(a+1,\,1)}{G(z+a+1,\,1)\,G(a+b+1,\,1)}\) &\(-{a-b}\) & 
\(-a\) & 
\(-a\) & 
      \(a\geq0, \, b\in(0,1)\) \\[1ex]
      \midrule

    \(1 - q^{z+b}\) &
     Pochhammer's \((q^b,q)_z\) &\(-\infty\) & 
\(-b\) & 
\(-b\) & 
      \(q\in(0,1),\; b\ge0\) \\
            \midrule

    \(\displaystyle\frac{z}{z+a}\) &
     $\displaystyle\frac{\Gamma(z+1)\Gamma(a)}{\Gamma(z+1+a)}$&$-a$&  
$0$ & 
\(-a\) & 
      \(a>0\) \\
            \midrule

    \(\displaystyle\frac{z}{(z+a)^\alpha}\) &
   \( \displaystyle\frac{\Gamma(z+1)\Gamma^\alpha(a)}{\Gamma^\alpha(z+1+a)}\) &\(-a\) & 
\(0\) & 
\(0\) & 
      \(a>0, \,\alpha\in(0,1)\) \\
            \midrule

    \(\displaystyle\frac{z^\alpha}{(1+z)^\alpha}\) &
     $\displaystyle\frac{\Gamma^\alpha(z+1)}{\Gamma^\alpha(z+2)}$ &\(0\) & 
\(0\) & 
\(0\) & 
      \(\alpha\in(0,1)\) \\
                  \midrule
$\displaystyle \frac{1-q^z}{1-q} $
&$\Gamma_q(z)$&$-\infty$&
$0$
&
$0$
&\(q\in(0,1)\)\\
    \bottomrule
  \end{tabular}
  \label{tab:BG-examples}
\end{table}

\clearpage
\begin{landscape}
\section{Relevant parameters for some Lévy processes}
\label{sec: aphi examples}    

\begin{table}[ht]
  \centering\small
  \caption{Examples for $  \ak_{\pm}^\Psi,\, \uk_{\pm}^\Psi,$ and
  $\bar{a}_{\pm}^\Psi$ from
      \eqref{aPsi}}
  \begin{tabular}{@{}%
      p{0.25\textwidth}
      p{0.15\textwidth}
      p{0.15\textwidth}
      p{0.15\textwidth}
      p{0.2\textwidth}
            p{0.05\textwidth}
      p{0.05\textwidth}
      p{0.2\textwidth}
    }
    \toprule
     \(\Psi(z)\) &
      $\ak_{+}^\Psi$&
      $\ak_{-}^\Psi$&
      $\uk_{+}^\Psi$&
      $\uk_{-}^\Psi$&
      $\bar{a}_{+}^\Psi$&
      $\bar{a}_{-}^\Psi$
      &
      \textbf{Notes} \\
        \toprule
        $\displaystyle-q+\mu z+ \frac{1}{2}
        \sigma^2 z^2$
        &$\infty$
        &$-\infty$
&$\scriptstyle\frac{-\mu+\sqrt{\mu^2+2\sigma^2q}}{\sigma^2}$
&$\scriptstyle\frac{-\mu-\sqrt{\mu^2+2\sigma^2q}}{\sigma^2}$
&$\uk_{+}^\Psi$
   &$\uk_{-}^\Psi$     
&
$\sigma,q\geq0$, $\mu\in\Rb$\\
\midrule
        $\displaystyle-q-c\log_0\lbrb{1-\frac{z}{\theta}}$
        &$\theta$
        &$-\infty$
&$\displaystyle \theta\lbrb{1-e^{-q/c}}$
&$-\infty$
&$\displaystyle \uk_+^\Psi$
   &$-\infty$     
&
$q,\,c,\,\theta\geq0$\\
\midrule
        $\displaystyle -q - s\lbrb{\sqrt{b^2 - 2z} - b}$
        & $\displaystyle \frac{b^2}{2}$
        & $-\infty$
        & $\displaystyle \frac{bq}{s} - \frac{q^2}{2s^2}$
        & $-\infty$
        & $\ast$
        & $-\infty$
        & \(b,s>0,\;q \le sb\) \\
\midrule
        $\displaystyle -q - s\lbrb{\sqrt{b^2 - 2z} - b}$
        & $\displaystyle \frac{b^2}{2}$
        & $-\infty$
        & $\displaystyle \infty$
        & $-\infty$
        & $\displaystyle \frac{b^2}{2}$
        & $\displaystyle -\infty$
        & \(b,s>0,\;q > sb\) \\
\midrule
       $\textstyle-\beta\log_0\lbrb{1-\frac{cz}{\alpha}
       -\frac{\sigma^2z^2}{2\alpha}}$
        & $\textstyle \frac{-c+\sqrt{c^2+2\alpha\sigma^2}}{\sigma^2}$
        & $\textstyle\frac{-c-\sqrt{c^2+2\alpha\sigma^2}}{\sigma^2}$
        & $0$
        & $\displaystyle-\frac{c}{\sigma^2}$
        & $\displaystyle \uk_+^\Psi$
        & $\displaystyle \uk_-^\Psi$
        & Variance gamma 
        \cite{Madan-Seneta-1990},
        $\alpha,\,
        \beta,\,
        \sigma>0$; $c\in\Rb$\\
\midrule
       $ c\lbrb{\frac{\Gamma(z+\gamma+\alpha)}{\Gamma(z+\gamma)}-\frac{\Gamma(\gamma+\alpha)}{\Gamma(\gamma)}}$
        & $\displaystyle\infty$
        & $-\gamma-\alpha$
        & $0$
        & $\scriptstyle\uk_-^\Psi\in(-\gamma-\alpha, -\gamma-1)$
        & $0$
        & $\displaystyle \uk_-^\Psi$
        &
        \cite{Patie-2009},
        $\scriptstyle\alpha \in (1,2], \,
        \gamma>-\alpha$\\
        \midrule
    \bottomrule
  \end{tabular}
  \label{tab:aphi-examples}
\end{table}

\begin{table}[ht]
  \centering\small
  \caption{Examples for $  \ak_{\pm}^\Psi,\, \uk_{\pm}^\Psi,$ and
  $\bar{a}_{\pm}^\Psi$ from
  \eqref{aPsi}}
  \begin{tabular}{@{}%
      p{0.35\textwidth}
      p{0.27\textwidth}
      p{0.25\textwidth}
      p{0.05\textwidth}
      p{0.05\textwidth}
            p{0.05\textwidth}
      p{0.05\textwidth}
      p{0.24\textwidth}
    }
    \toprule
     Process &
      $\ak_{+}^\Psi$&
      $\ak_{-}^\Psi$&
      $\uk_{+}^\Psi$&
      $\uk_{-}^\Psi$&
      $\bar{a}_{+}^\Psi$&
      $\bar{a}_{-}^\Psi$
      &
      \textbf{Notes} \\
        \toprule
       \footnotesize$\Ebb{e^{\lambda\xi_1}}$ is not finite for all $\lambda\neq0$ 
        &$0$
        &$0$
&$0$
&$0$
&$0$
   &$0$     
&
\footnotesize two-sided heavy-tailed\\
\midrule
        subordinator&
        \footnotesize
         $\sup \lambda$ with
        $\Ebb{e^{\lambda\xi_1}}<\infty$
        &$-\infty$
   &    
   $\ast$
&$-\infty$
&$\ast$
&$-\infty$
&$\scriptstyle \Psi(z)=-\phi_+(-z),
\phi_-\equiv 1$
\\
   \midrule
       $\xi$ drifting to $\infty$ 
        &  \footnotesize
        $\sup \lambda$ with
        $\Ebb{e^{\lambda\xi_1}}<\infty$
        & \footnotesize$\inf \lambda$ with
        $\Ebb{e^{\lambda\xi_1}}<\infty$
        & $0$
        & $\ast$
        & $0$
        & $\ast$
        &
        $\phi_-(0)>0$\\
\midrule
hypergeometric,  Section \ref{hyperLP}
        &  $1-\beta+\gamma$
        & $-\hat{\beta}-\hat{\gamma}$
        & \footnotesize$1-\beta$
        & $-\hat{\beta}$
        &\footnotesize $1-\beta$
        & $-\hat{\beta}$
        &
       \\
        \midrule

       meromorphic,  Section \ref{merLP} 
        &  $\rho_1$
        & $-\hat{\rho}_1$
        & $\ast$
        & $\ast$
        & $\ast$
        & $\ast$
        &
       see
       \cite[Cor. 2]{KuKyPa12}\\
\midrule

    \bottomrule
  \end{tabular}
  \label{tab:apsi-examples}
\end{table}

\end{landscape}
\clearpage
\section*{Index of notation}
\addcontentsline{toc}{section}{Index of notation}

The notation below is organized thematically. Within each group, entries are ordered by the main symbol whenever possible.
\subsection*{General}

\begin{longtable}{@{}p{0.24\textwidth}p{0.70\textwidth}@{}}
$\Att_{(a,b)},\,\Mtt_{(a,b)}$
& analytic and, respectively, meromorphic functions on $\Cb_{(a,b)}$ \\[3pt]

$\Cb_a$
& $\curly{z\in\Cb:\Re(z)=a}$ \\[3pt]

$\Cb_{(a,b)}$
& $\curly{z\in\Cb:\Re(z)\in\lbrb{a,b}}$ \\[3pt]

$\log_0,\,\log$
& principal and, respectively, some branch of the complex logarithm
\end{longtable}

\subsection*{Lévy processes and exponential functionals}

\begin{longtable}{@{}p{0.24\textwidth}p{0.70\textwidth}@{}}
$\dbf_\pm$
& drift terms of $\phi_\pm$ \\[3pt]

$F_\Psi,\,f_\Psi$
& c.d.f. and density of $I_\Psi$, Section~\ref{SmoothSec} \\[3pt]

$I_\Psi$
& exponential functional of $\xi$, \eqref{ExpFunc} \\[3pt]

$\MPsi(z)$
& Mellin transform of $I_\Psi$, \eqref{MT} \\[3pt]

$\NPs$
& decay of $\MPsi$ along complex lines, Theorem~\ref{MDecay}, \eqref{Npsi} \\[3pt]

$\phi_\pm$
& Wiener--Hopf factors from $\Psi(z)=-\phi_+(-z)\phi_-(z)$, \eqref{WHf} \\[3pt]

$\Psi$
& Lévy--Khintchine exponent of a Lévy process $\xi$, \eqref{LLK}
\end{longtable}

\subsection*{Bernstein--gamma functions}

\begin{longtable}{@{}p{0.24\textwidth}p{0.70\textwidth}@{}}
$\ak_\pm^\Psi,\,\bar{\ak}_\pm^\Psi,\,\uk_\pm^\Psi$
& analytical characteristics of the Lévy--Khintchine exponent $\Psi$, see \eqref{aPsi} \\[3pt]

$\aph$
& $\inf\curly{u<0:\phi\in\Att_{(u,\infty)}}\in\lbbrbb{-\infty,0}$, \eqref{aph} \\[3pt]

$\Aph$
& component for the Stirling-type asymptotics for $W_\phi$, \eqref{Aphi} \\[3pt]

$\baph$
& $\max\curly{\aph,\uph}\in\lbbrbb{-\infty,0}$, \eqref{aph} \\[3pt]

$c^\Psi$
& $-\ak^\Psi_+\ind{\uk^\Psi_+=0}\in[-\infty,0]$, Theorem~\ref{thm: MPsi} \\[3pt]

$L_\phi,\,T_\phi,\,E_\phi$
& components for the Stirling-type asymptotics for $W_\phi$, \eqref{terms} \\[3pt]

$\uph$
& $\sup\curly{u\in\lbbrbb{\aph,0}:\phi(u)=0}\in\lbbrbb{-\infty,0}$, \eqref{aph} \\[3pt]

$W_\phi$
& Bernstein--gamma function of $\phi$, see Theorem~\ref{repWTh}, \eqref{repW}
\end{longtable}

\clearpage

\bibliographystyle{alphadoi}

\end{document}